\documentclass[11pt,reqno]{amsart}


                \usepackage[margin=2.75cm]{geometry}

\usepackage{amssymb,amsfonts}
\usepackage[all,arc]{xy}
\usepackage{enumerate}
\usepackage{mathrsfs}
\usepackage{color}   
\usepackage{hyperref}
\hypersetup{
    colorlinks=true, 
    linktoc=all,     
    linkcolor=blue,  
}
\usepackage{amsthm}
\usepackage{amsmath}
\usepackage{graphicx}
\usepackage{wasysym}
\usepackage{pgf,tikz}
\usetikzlibrary{positioning}
\usepackage{calc}

\usepackage{xypic}
\usepackage{imakeidx}
\usepackage[toc,page]{appendix}
\usepackage{amscd}
\usepackage{tikz-cd}



\makeatletter
\def\Ddots{\mathinner{\mkern1mu\raise\p@
\vbox{\kern7\p@\hbox{.}}\mkern2mu
\raise4\p@\hbox{.}\mkern2mu\raise7\p@\hbox{.}\mkern1mu}}
\makeatother





\newcommand{\cc}{\mathbb C}

\newcommand{\zz}{\mathbb Z}

\newcommand{\A}{\mathbb A}

\newcommand{\la}{\langle}
\newcommand{\ra}{\rangle}
\newcommand{\lra}{\longrightarrow}
\newcommand{\hra}{\hookrightarrow}
\newcommand{\bs}{\backslash}

\newcommand{\al}{\alpha}
\newcommand{\be}{\beta}
\newcommand{\ga}{\gamma}

\newcommand{\ep}{\epsilon}
\newcommand{\vp}{\varphi}
\newcommand{\lam}{\lambda}
\newcommand{\Lam}{\Lambda}

\DeclareMathOperator{\ind}{ind}
\DeclareMathOperator{\G}{G}
\DeclareMathOperator{\GL}{GL}
\DeclareMathOperator{\Hom}{Hom}
\DeclareMathOperator{\Ind}{Ind}

\DeclareMathOperator{\PGL}{PGL}

\DeclareMathOperator{\OO}{O}
\DeclareMathOperator{\SO}{SO}
\DeclareMathOperator{\SL}{SL}
\DeclareMathOperator{\Mp}{Mp}
\DeclareMathOperator{\Sp}{Sp}
\DeclareMathOperator{\Sym}{Sym}
\DeclareMathOperator{\Lie}{Lie}
\DeclareMathOperator{\sspan}{span}
\DeclareMathOperator{\Mat}{Mat}
\DeclareMathOperator{\diag}{diag}
\DeclareMathOperator{\Stab}{Stab}
\DeclareMathOperator{\Spec}{Spec}
\DeclareMathOperator{\Rep}{Rep}


\newcommand{\fg}{\mathfrak g}


\newcommand{\calo}{\mathcal{O}}

\newcommand{\OF}{\mathfrak{O}_F}
\newcommand{\GF}{\overline{\Sp}_{2r}(F)}
\newcommand{\GA}{\overline{\Sp}_{2r}(\A)}
\newcommand{\Gn}{\overline{\mathrm{G}}^{(n)}}



\newtheorem{Thm}{Theorem}[section]
\newtheorem{Prop}[Thm]{Proposition}
\newtheorem{Lem}[Thm]{Lemma}
\newtheorem{Cor}[Thm]{Corollary}

\newtheorem{Conj}[Thm]{Conjecture}

\theoremstyle{definition}
\newtheorem{Def}[Thm]{Definition}

\theoremstyle{remark}
\newtheorem{Rem}[Thm]{Remark}

\theoremstyle{definition}



\title[Theta liftings, CAP representations, and Arthur Parameters]{A Generalized Theta lifting, CAP representations, and Arthur parameters}

\author{Spencer Leslie}

\date\today

\address{Department of Mathematics, Boston College, Chestnut Hill MA 02467-3806, USA}

\email{lesliew@bc.edu}

\subjclass[2010]{Primary 11F70; Secondary 11F30, 22E50, 22E55}
\keywords{Automorphic Representation, Brylinski-Deligne Covering Group, Theta Correspondence, Unipotent orbit, Arthur Parameter}

\begin{document}

\begin{abstract}
 We study a new lifting of automorphic representations using the theta representation $\Theta$ on the $4$-fold cover of the symplectic group,  $\overline{\Sp}_{2r}(\A)$. This lifting produces the first examples of CAP representations on higher-degree metaplectic covering groups. Central to our analysis is the identification of the maximal nilpotent orbit associated to $\Theta$.
 
 We conjecture a natural extension of Arthur's parameterization of the discrete spectrum to $\overline{\Sp}_{2r}(\A)$.  Assuming this, we compute the effect of our lift on Arthur parameters and show that the parameter of a representation in the image of the lift is non-tempered. We conclude by relating the lifting to the dimension equation of Ginzburg to predict the first non-trivial lift of a generic cuspidal representation of $\overline{\Sp}_{2r}(\A)$.
\end{abstract}

\maketitle
\setcounter{tocdepth}{1}
\tableofcontents

\section{Introduction}\label{sec: Intro}

Let $K$ be a number field, and let $\G$ be a reductive group over $K$. In \cite{A}, Arthur conjectures a precise decomposition of the discrete spectrum $L^2_{disc}(\G(K)\backslash \G(\A))$ in terms of parameters
\[
\Psi:L_K\times \SL_2(\cc)\to {}^L\G.
\]
Here $L_K$ is the conjectural Langlands group attached to $K$, and ${}^L\G=\G^\vee\rtimes W_K$ is the Langlands $L$-group associated to $\G$. According to Arthur, understanding parameters $\Psi$ which are nontrivial when restricted to $\SL_2(\cc)$ is related to the classification of non-tempered cuspidal representations of $\G(\A)$. These representations, which are called CAP representations, provide counterexamples to the naive generalization of Ramanujan's conjecture to groups other than $\GL_n$. 

Assuming $K$ contain the $n^{th}$ roots of unity, Brylinski and Deligne \cite{BD} have classified degree $n$ topological extensions $\overline{\G}_\A$ of $\G(\A)$ by $\mu_n(K)$ which arise from the algebraic K-theory of the field $K$. This classification of (BD-)covering groups also works for local fields $F$ containing the $n^{th}$ roots of unity.
Given such  a BD-covering group $\overline{\G}$, we may associate to it a natural complex dual group $\overline{\G}^\vee$  which can be distinct from the dual group of $\G$. Recently, Weissman has put forward a promising candidate, ${}^L\overline{G},$ for the $L$-group of such covering groups (see \cite{W2} and \cite{W3}) as an extension
\begin{align}\label{eqn: L-group}
1\lra \overline{\G}^\vee \lra {}^L\overline{G}\lra W_K\lra1.
\end{align}
It is thus important to test if this $L$-group allows us to extend Langlands functoriality and Arthur's conjectures to the case of BD-covering groups. For example, the construction of the metaplectic tensor product has been shown to be functorial by Gan \cite{Gan}. 

Historically, the metaplectic $2$-fold cover of the symplectic group, $\Mp(W)$, where $(W,\la\cdot,\cdot\ra)$ is a symplectic vector space over $K$, was the first example of a BD-covering group to be studied. While much is known about the representation theory of $\Mp(W)$ (in particular, the local Langlands correspondence has been extended to this group via work of Adams-Barbasch \cite{AB1}, \cite{AB2} in the real case, and Gan-Savin \cite{GS} in the nonarchimedean case), few explicit results towards understanding the representation theory of higher degree BD-covering groups are known.

Fix an additive character $\psi:K\backslash \A\to \cc^\times$. A primary tool in the study of the representation theory of $\Mp(W)$ has been the theta correspondence, utilizing the classical oscillator representation $\omega_\psi$. For example, such liftings were recently used by Gan-Ichino to extend the notion of Arthur parameter to $\Mp(W)_\A$ \cite{GI}. Motivated by the work of Bump-Friedberg-Ginzburg \cite{BFG2}, one may attempt to study generalizations of theta liftings on BD-covering groups in the hope of gaining insight into the representation theory of higher degree covers. 

In this direction, assume that the number field $K$ contains the $4^{th}$ roots of unity. In this paper, we consider the $4$-fold cover $\overline{\Sp}(W)_\A$ of $\Sp(W)_\A$ and a generalization $\Theta$ of the oscillator representation $\omega_\psi$. This representation is isomorphic to a residual representation obtained from a Borel Eisenstein series, a process which also gives a construction of $\omega_\psi$. Such \emph{(generalized) theta representations} have been studied on several different covering groups (see \cite{KP}, \cite{BFG1}, \cite{FG},\cite{Gao}, and \cite{C}) and may be defined for a large class of BD-covering groups. 

One of the most important properties of $\omega_\psi$ is that it has Schr\"{o}dinger-type models. This both implies that $\omega_\psi$ is a minimal representation (in the sense of minimal Gelfand-Kirillov dimension) and enables explicit computations. A crucial distinction in our case is the lack of such a model for $\Theta$ or its local components. Moreover, $\Theta$ is not even a minimal representation. Nevertheless, \emph{in the case of the $4$-fold cover of $\Sp(W)_\A$}, $\Theta$ is sufficiently small in a sense to be made precise below. This allows it to effectively function as a kernel to construct a lifting of automorphic representations.

 More specifically, to any decomposition $W = W_1\oplus W_2$ into the sum of two lower dimensional symplectic spaces, we obtain an embedding
\[
\Sp(W_1)\times\Sp(W_2)\hra \Sp(W).
\] 
We study the problem (both locally and globally) of restricting the theta representation $\Theta_W$ to the subgroup $\overline{\Sp}(W_1)\times_{\mu_4}\overline{\Sp}(W_2)$. In the global setting, this problem is best viewed in terms of theta liftings: given an irreducible cuspidal automorphic representation $(\pi,V)$ of $\overline{\Sp}({W_1})_\A$, consider the space $\Theta_W(\pi)$, defined as the Hilbert space closure of the space of functions on $\overline{\Sp}({W_2})_\A$ of the form
\[
F(h)=\displaystyle \int_{[\Sp(W_1)]}\overline{\varphi(g)}\theta(h,g)\,dg,
\]
where $\varphi\in V$ and $\theta\in \Theta_W$. This gives an automorphic representation of $\overline{\Sp}({W_2})_\A$. We study properties of this representation from another perspective.

 Using ideas of Rallis in the theory of the classical theta correspondence \cite{R}, given a symplectic space $W=W_0$, one can form the larger symplectic space
\[
W_n = X_n\oplus W_0\oplus X_n^\ast,
\]
where $X_n$ is totally isotropic and $\dim_K(X_n) =n$, thus giving an embedding $\Sp(W_0)\hra \Sp(W_n)$ for each $n\geq 0$. Setting $V_n = X_n\oplus X_n^\ast$, we may then consider $\Theta_n(\pi)$, the representation of $\overline{\Sp}({V_n})_\A$ formed by the pairing vectors of $\pi$ with vectors in $\Theta_{W_n}$, and study properties of this lifting as $n$ varies. In this spirit, we prove the following towering property of this theta lift:
\begin{Thm}\label{Thm: intro cuspidality}
Suppose that $\pi$ is a cuspidal automorphic representation of $\overline{\Sp}(W_0)_\A$. Then
\begin{enumerate}
\item If $\Theta_k(\pi)=0$, then $\Theta_{k-1}(\pi)=0$;
\item There exists an $n\gg 0$  such that $\Theta_n(\pi)\neq 0$;
\item\label{part 3} If $\Theta_{k-1}(\pi)=0$, then $\Theta_k(\pi)$ is cuspidal.
\end{enumerate}
\end{Thm}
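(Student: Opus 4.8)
The plan is to transport Rallis's tower argument \cite{R} to the present setting, with the small (but non-minimal) theta representation $\Theta$ playing the role of the oscillator representation. The computation that drives all three statements is the evaluation of the constant term of a lift function
\[
F(h)=\int_{[\Sp(W_0)]}\overline{\varphi(g)}\,\theta(i(h,g))\,dg,\qquad F\in\Theta_k(\pi),\ \theta\in\Theta_{W_k},
\]
along the unipotent radical of an arbitrary maximal parabolic of $\Sp(V_k)$; here $i\colon\Sp(V_k)\times\Sp(W_0)\hookrightarrow\Sp(W_k)$ is the embedding attached to $W_k=X_k\oplus W_0\oplus X_k^\ast$ and $V_k=X_k\oplus X_k^\ast$. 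The essential input is the identification, carried out earlier in the paper, of the maximal nilpotent orbit attached to $\Theta$: it forces every Fourier coefficient of $\theta$ indexed by an orbit strictly larger than this one to vanish, so that only a short list of ``shallow'' terms survives, and these are pinned down using the construction of $\Theta$ as a residue of a Borel Eisenstein series.

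Concretely, fix $1\le j\le k$ and let $Q_j\subset\Sp(V_k)$ be the maximal parabolic stabilizing a $j$-dimensional isotropic subspace $Y_j\subset X_k$, with Levi $\GL_j\times\Sp(V_{k-j})$. Under $i$ it is carried into the parabolic $P_j\subset\Sp(W_k)$ with Levi $\GL_j\times\Sp(W_{k-j})$, compatibly with the level-$(k-j)$ embedding $\Sp(V_{k-j})\times\Sp(W_0)\hookrightarrow\Sp(W_{k-j})$, and $N_{Q_j}$ lands inside $N_{P_j}$ with abelian quotient $N_{P_j}/i(N_{Q_j})\cong\Hom(W_0,Y_j)$. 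Integrating $F$ over $[N_{Q_j}]$ is therefore a partial integration of $\theta$ over $[N_{P_j}]$; carrying out the remaining Fourier expansion and invoking the maximal-orbit bound to annihilate the deep coefficients, one obtains an identity expressing $F_{Q_j}$, up to explicit twists by characters of $\GL_j(\A)$ and integration over the extra directions $\Hom(W_0,Y_j)$, as a combination of the lower-level lifts $\Theta_{k-j}(\pi)$ attached to $\Theta_{W_{k-j}}$. For $j=1$ this is the exact analogue of the classical relation between consecutive steps of a theta tower.

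Granting this identity, the theorem follows in the usual way. For (1) the contrapositive asserts that non-vanishing propagates up the tower: given a non-zero $F'\in\Theta_{k-1}(\pi)$, the surjectivity of the $P_1$-constant-term map on $\Theta_{W_k}$ onto the $\Theta_{W_{k-1}}$-isotypic slot (a consequence of the residual construction of $\Theta$) lets me choose $\theta$ and $\varphi$ so that the $Q_1$-constant term of the resulting $F\in\Theta_k(\pi)$ is a non-zero multiple of $F'$, whence $\Theta_k(\pi)\neq0$. For (3), if $\Theta_{k-1}(\pi)=0$ then iterating (1) gives $\Theta_{k-j}(\pi)=0$ for all $1\le j\le k$, so $F_{Q_j}=0$ for every maximal parabolic $Q_j\subset\Sp(V_k)$ and every $F\in\Theta_k(\pi)$ is cuspidal. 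For (2) I would run the stable-range argument: once $\dim V_n$ passes the bound dictated by the support of $\Theta$, the lift is non-zero. Lacking a Schr\"odinger model to exhibit an explicit non-zero vector, I would instead compute a suitable degenerate Fourier coefficient of $F$ along a unipotent subgroup of $\Sp(V_n)$, unfold it using the expansion of $\theta$ supplied by the constant terms of the inducing Borel Eisenstein series, and reduce the non-vanishing to that of $\int_{[\Sp(W_0)]}|\varphi(g)|^2(\cdots)\,dg$ together with a local non-vanishing statement for the components of $\Theta$.

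The main obstacle is the constant-term bookkeeping behind the master identity, and hence behind (1) and (3). In the classical case $\omega_\psi$ is minimal, exactly one Fourier coefficient survives, and the computation is immediate; here $\Theta$ is genuinely larger, so one must use the full strength of the maximal-nilpotent-orbit analysis both to rule out any unexpected coefficient of $\theta$ along $N_{Q_j}$ and to match the surviving terms cleanly with theta functions on the smaller covers $\overline{\Sp}(W_{k-j})_\A$; tracking the genuine-versus-non-genuine behaviour of the $\mu_4$-covers through the Levi decompositions is a further delicate point. The stable-range non-vanishing in (2) is a secondary difficulty of the same flavour — soft in the classical theory because of the explicit model, but here requiring a genuine computation with the Eisenstein-series constant terms.
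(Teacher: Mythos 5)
Your overall architecture (Rallis tower via constant terms along the maximal parabolics $Q_j$ of $\Sp(V_k)$, Fourier expansion of $\theta$ along $N_{P_j}/i(N_{Q_j})$, reduction to lower rungs of the tower) is the one the paper follows, but your ``master identity'' has a genuine gap in its mechanism. You assert that after the maximal-orbit bound annihilates the deep coefficients, $F_{Q_j}$ is expressed as a combination of the lower-level lifts $\Theta_{k-j}(\pi)$. That is not what happens. When one expands along the Heisenberg quotient $H_1/Z(H_1)$ (and its successors), only the \emph{trivial} character orbit produces a lower-level lift; the non-trivial orbit contributes coefficients of $\theta$ such as $\theta^{V_{1},\psi^0_{1}}$, which are emphatically \emph{not} killed by the orbit bound --- indeed the split coefficient $\theta^{U_r,\psi_r^0}$, of exactly this shape, is proved \emph{non-zero} in Section \ref{Section: Non-vanishing}, since $(2^r)$ supports $\Theta_{2r}$. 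These surviving shallow terms die only after pairing against $\overline{\varphi}$: one unfolds over $Q'_{1,m-1}(K)\backslash \Sp_{2m}(\A)$, conjugates by Weyl elements and performs root exchanges, uses the invariance of the theta period under the conjugated unipotent radical (Lemma \ref{Lem: lemma 2}, which is where the orbit bound actually enters), and recognizes a constant term of $\varphi$ as an inner integration, which vanishes because $\pi$ is cuspidal. So cuspidality of $\pi$ is used repeatedly and essentially inside the identity itself, not merely as the hypothesis $\Theta_{k-1}(\pi)=0$; without it the identity you propose is false, and supplying this step is the actual content of the proofs of (1) and (3), not bookkeeping.

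For (2) your sketch is also missing the decisive inputs. The paper does not exhibit a non-zero vector by a soft stable-range count: it picks the specific Fourier coefficient of $\Theta_{W_{4m}}$ attached to $\calo=(2^{4m}1^{2m})$, uses the Heisenberg structure of $V_1(\calo)$ together with Ikeda's density theorem for Fourier--Jacobi coefficients to write that coefficient as a limit of pairings against classical theta functions, applies Rallis's inner-product argument to conclude $H\equiv 0$, and then derives a contradiction from the \emph{global} non-vanishing of the $(2^{5m})$-coefficient established in Theorem \ref{Thm: non-vanishing}. Your reduction ``to a local non-vanishing statement for the components of $\Theta$'' would not close the argument; what is needed is the global non-vanishing of a specific coefficient of the kernel, which is exactly why the computation of $\calo(\Theta_{2r})$ occupies Sections \ref{Section: vanishing} and \ref{Section: Non-vanishing}.
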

Note that these results are analogous to those of Rallis \cite{R} in the context of liftings between dual reductive pairs of symplectic-orthogonal type, and of those in \cite{BFG2} for the double cover of odd orthogonal groups. This is the first example of such results for higher degree covers. We remark here that the restriction to the $4$-fold cover is not for convenience, but is necessary for the construction to be well-behaved. In particular, part (\ref{part 3}) of this theorem is not true when considering this construction on the $2n$-th degree cover when $n>2$. See the comments after Theorem \ref{Thm: intro Nilpotent orbit} below.

As a corollary of this theorem, we see that for a given cuspidal representation $\pi$ there is a first index  $n(\pi)$, where $\Theta(\pi):=\Theta_{n(\pi)}(\pi)$ is a non-zero cuspidal representation. One of the main results of the paper is that this representation is a CAP representation of $\overline{\Sp}(V_{n(\pi)})_\A$. 
\begin{Thm}\label{Thm: intro CAP}
Suppose $\pi$ is a cuspidal representation of $\overline{\Sp}(W_0)$, and suppose $\Theta(\pi):= \Theta_{n(\pi)}(\pi)$ is the first nontrivial lift. Then each irreducible summand of $\Theta(\pi)$ is CAP with respect to the triple $(Q_{t(\pi),n},\pi, \chi_{\Theta})$.
\end{Thm}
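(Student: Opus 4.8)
The plan is to follow the classical Rallis strategy for proving CAP-ness of theta lifts, adapted to the $4$-fold cover. The key point is that ``CAP with respect to $(Q_{t(\pi),n},\pi,\chi_\Theta)$'' means that $\Theta(\pi)$ is nearly equivalent to an irreducible constituent of the representation parabolically induced from the parabolic $Q_{t(\pi),n}$ with Levi datum built from $\pi$ twisted by the relevant piece of $\Theta$ (the character $\chi_\Theta$ coming from the exceptional representation on the lower-rank cover sitting in the Levi). So the whole thing reduces to an unramified computation: at almost every place $v$, I must identify the local theta lift $\Theta_v(\pi_v)$ as the unramified constituent of the appropriate induced representation, and then invoke strong multiplicity one / near-equivalence to globalize. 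First I would fix a finite set $S$ of places outside of which everything (the cover, $\psi$, $\pi$) is unramified, and reduce to computing the Satake parameter of $\Theta_v(\pi_v)$ for $v\notin S$.

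The main technical step is the local unramified computation. Here I would use the construction of $\Theta$ as a residue of a Borel Eisenstein series, which at an unramified place realizes $\Theta_v$ as the unramified subquotient of a principal series on $\overline{\Sp}_{2r}(F_v)$ whose inducing character is the one picked out by the residue (this is where the precise arithmetic of the $4$-fold cover enters, and where the analogue fails for $2n$-fold covers with $n>2$). The restriction-to-a-subgroup problem $\overline{\Sp}(W_1)\times_{\mu_4}\overline{\Sp}(W_2)\hookrightarrow \overline{\Sp}(W)$ then becomes a computation with unramified principal series and Jacquet modules: one computes the Jacquet module of $\Theta_{W,v}$ along the parabolic stabilizing $X_n$, matches it against $\pi_v\boxtimes(\text{theta piece on }\overline{\Sp}(V_n)_v)$, and reads off that the $\pi_v$-isotypic part gives precisely the unramified constituent of $\mathrm{Ind}_{Q_{t(\pi),n}}^{\overline{\Sp}(V_n)}(\chi_\Theta\otimes\sigma_v)$ for the appropriate $\sigma_v$. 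The first-occurrence hypothesis $\Theta_{n(\pi)-1}(\pi)=0$ from Theorem~\ref{Thm: intro cuspidality} is what forces the Jacquet module along smaller parabolics to vanish, pinning down that the support is exactly $Q_{t(\pi),n}$ and not a smaller parabolic; this is the input that makes the identification of the cuspidal support rigid.

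The remaining steps are more formal. Having the Satake parameters, I would invoke the cuspidality from Theorem~\ref{Thm: intro cuspidality}(\ref{part 3}) together with the residual Eisenstein series whose constant term realizes the CAP comparison: one builds the Eisenstein series on $\overline{\Sp}(V_{n(\pi)})_\A$ induced from $\chi_\Theta\otimes\sigma$ on $Q_{t(\pi),n}$, takes its residue (or an appropriate constituent), and checks it is nearly equivalent to $\Theta(\pi)$ place by place using the unramified computation above. Finally, because each irreducible summand of $\Theta(\pi)$ is abstractly determined at almost all places by $\pi$, and they all share the same near-equivalence class, each summand is CAP with respect to the stated triple. The main obstacle I anticipate is the unramified Jacquet-module computation for $\Theta_{W,v}$: unlike the classical oscillator representation, $\Theta$ has no Schr\"odinger model and is not minimal, so one cannot compute its restriction by hand on an explicit model — instead one must control its Jacquet modules purely through the principal-series realization and the location of the poles of the Borel Eisenstein series, and verify that for the $4$-fold cover (and only then) the resulting module is small enough that exactly one constituent survives. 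This is precisely the ``sufficiently small'' property alluded to in the introduction, and making it precise and leveraging it correctly is the crux of the argument.
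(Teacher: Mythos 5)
Your proposal follows essentially the same route as the paper: the global statement is reduced, via the nonvanishing of the lifting pairing (in the paper, a Petersson-product/trilinear-form argument) and factorization into local components, to the unramified local restriction problem for $\Theta_{2(m+n)}$ along $\overline{\Sp}(W_0)\times_{\mu_4}\overline{\Sp}(V_n)$, which is solved by a Jacquet-module analysis (Theorem \ref{Thm: local correspondence}, resting on the Bernstein--Zelevinsky filtration of Appendix \ref{Appendix A} and the smallness results of Section \ref{Section: vanishing}); you have correctly identified this as the crux. Two small corrections: the first-occurrence hypothesis is \emph{not} what pins down the parabolic in the local computation --- the number $t(\pi)$ of exceptional-character entries forced on the $\overline{\Sp}(V_n)$ side is determined purely by the rank difference in Theorem \ref{Thm: local correspondence}, while first occurrence enters only through Theorem \ref{Thm: cuspidality} to guarantee that $\Theta(\pi)$ is cuspidal, hence semisimple, so that one can speak of its irreducible summands and pair each against $\pi\otimes\Theta_{n(\pi)}$. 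Also, no auxiliary residual Eisenstein series on $\overline{\Sp}(V_{n(\pi)})_\A$ is needed: the definition of CAP used here only requires matching Satake parameters with the unramified constituent of $\Ind_{Q_{t(\pi),n}}(\pi\otimes\chi_\Theta)$ at almost all places, which the local correspondence already supplies.
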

The precise notations and results are stated in Section \ref{Section: CAP}. While CAP representations have previously been studied for $\Mp(W_0)$ (see \cite{Y}, for example), this constitutes the first construction of CAP representations on higher degree covering groups. 

 Motivated by a natural unramified correspondence between $\overline{\Sp}_{2r}$ and $\SO_{2r+1}$, we conjecture in Section \ref{subsection: Shimura} an extension of Arthur's parameterization of the discrete spectrum to $\overline{\Sp}_{2r}$. Such a result has only recently been achieved in the case of $\Mp_{2r}$ \cite{GI}, and relies heavily on the theta lifting associated to $\omega_\psi$. Assuming such a parameterization, in Section \ref{subsection: Arthur} we compute the effect of our lift on Arthur parameters. In particular, we see that the parameter of the lift is necessarily non-tempered. This may be viewed simultaneously as evidence of our conjectural parameterization as well as an extension of Arthur's conjectures to BD-covers using the definition of $L$-group provided by Weissman. 

Of critical technical importance for the proof of this theorem is the computation of the maximal nilpotent orbit $\calo(\Theta)$ associated to $\Theta$. Recall that nilpotent orbits of $\mathfrak{sp}_{2r}(\cc)$ may be parametrized by certain partitions $(p_1^{e_1}\cdots p_k^{e_k})$ of $2r$, and form a partially ordered set under $\calo\leq\calo' $ if $\calo \subset \overline{\calo'}$. We say that a nilpotent orbit $\calo$ supports an automorphic representation $\pi$ if certain Fourier coefficients attached to $\calo$ do not vanish on $\pi$. For details, see Section \ref{Section: Fourier}. The results of Sections \ref{Section: vanishing} and   \ref{Section: Non-vanishing} may be stated as follows:
\begin{Thm}\label{Thm: intro Nilpotent orbit}
Let $\Theta_{2r}$ denote a theta representation of $\overline{\Sp}_{2r}(\A)$. Let $\calo_{\Theta,r}= (2^{r})$.
\begin{enumerate}
\item\label{part a} $\calo_{\Theta,r}$ supports the representation $\Theta_{2r}$.
\item \label{part b} If $\calo'$ is any nilpotent orbit greater than or not comparable to $\calo_{\Theta,r}$, then $\calo'$ does not support the representation $\Theta_{2r}$.
\end{enumerate}
Utilizing the notation from \cite{G3}, we have that
\[
\calo(\Theta_{2r}) = \left\{(2^r)\right\}.
\]
\end{Thm}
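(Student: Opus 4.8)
The plan is to establish parts~(\ref{part a}) and~(\ref{part b}) separately; together they identify $(2^r)$ as the unique maximal orbit supporting $\Theta_{2r}$, which in the conventions of \cite{G3} is exactly the assertion $\calo(\Theta_{2r}) = \{(2^r)\}$. It is natural to induct on $r$, the base case $r=1$ (and perhaps $r=2$) being checked directly: for $r=1$ one has $\overline{\Sp}_2 = \overline{\SL}_2$ and $(2)$ is the regular orbit, so part~(\ref{part a}) there is just the statement that $\Theta_2$ on the $4$-fold cover of $\SL_2$ is globally $\psi$-generic, which is visible from its realization as a residue of a Borel Eisenstein series.

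For the vanishing in part~(\ref{part b}), I would first use that the collection of nilpotent orbits supporting $\Theta_{2r}$ is closed under passing to smaller orbits in the closure ordering, so that it suffices to rule out the orbits that cover $(2^r)$ --- such as $(4,2^{r-2})$ and $(3^2,2^{r-3})$ --- together with the minimal orbits incomparable to $(2^r)$, such as $(3^2,1^{2r-6})$. For each such orbit $\calo'$ I would write out the unipotent integral defining the associated Fourier coefficient and, by a sequence of root exchanges and Fourier--Jacobi manipulations, rewrite it as a constant-term integral along a proper parabolic $P\subset\overline{\Sp}_{2r}$ whose Levi has the form $\prod_i \GL_{a_i}\times\overline{\Sp}_{2m}$ with $m<r$. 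Since $\Theta_{2r}$ is a residue of a Borel Eisenstein series, its $P$-constant term is under good control --- it is assembled from residues of degenerate principal series on the $\GL$-factors and a theta-type representation $\Theta_{2m}$ on the smaller symplectic factor --- and the inductive hypothesis says that $\Theta_{2m}$ does not support the orbit of $\fsp_{2m}(\cc)$ left over after the reduction; hence the original coefficient vanishes. The degree-$4$ hypothesis enters precisely in the bookkeeping of which characters remain non-degenerate through these reductions, i.e.\ of the exponents appearing in these constant terms: for a cover of degree $2n$ with $n>2$ the constant terms of $\Theta$ are supported so as to force larger orbits on the $\overline{\Sp}_{2m}$-factor, the same manipulations no longer produce vanishing, and correspondingly the analogue of part~(\ref{part 3}) of Theorem~\ref{Thm: intro cuspidality} fails.

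For the non-vanishing in part~(\ref{part a}), observe that $(2^r)$ is the Richardson orbit of the Siegel parabolic $P_r=M_rN_r$ of $\Sp_{2r}$, with $M_r\cong\GL_r$ and $N_r\cong\Sym^2$; accordingly the $(2^r)$-Fourier coefficient of $\Theta_{2r}$ is the integral of $\theta$ over $[N_r]$ against a non-degenerate character $\psi_T$ of $N_r$ attached to an invertible symmetric matrix $T$, with no Fourier--Jacobi ambiguity since there is no degree-one part. To see it is non-zero I would commute the residue defining $\Theta_{2r}$ past the integral, reducing to a $\psi_T$-Fourier coefficient of the Borel Eisenstein series on $\overline{\Sp}_{2r}$; unfolding via the Bruhat decomposition expresses this as an Euler product of local degenerate-Whittaker integrals, evaluated at the unramified places by a Casselman--Shalika-type computation, the product being a ratio of completed zeta functions at a point where it is finite and non-zero. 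Alternatively one can extract a Fourier--Jacobi coefficient to reduce the $(2^r)$-coefficient of $\Theta_{2r}$ to the $(2^{r-1})$-coefficient of $\Theta_{2r-2}$ and quote the inductive hypothesis.

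The step I expect to be the main obstacle is the vanishing, part~(\ref{part b}). In the classical case $n=2$ the representation $\Theta=\omega_\psi$ carries a Schr\"{o}dinger model that makes the vanishing of Fourier coefficients essentially transparent; here no such model exists, and one must instead extract everything from the residual construction and the combinatorics of the root-exchange reductions, carried out uniformly in $r$ while keeping exact track of the covering data so that the degree-$4$ hypothesis is used exactly where it is needed. By comparison the non-vanishing is routine once the constant term of $\Theta_{2r}$ along the Siegel parabolic has been pinned down, the only delicate point being to confirm that the resulting special value of zeta functions is non-zero.
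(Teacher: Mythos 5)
Your outline has the right global architecture (reduce to a short list of minimal ``bad'' orbits, then induct using root exchange and constant terms), but it is missing the two ingredients that actually make the paper's proof close. For the vanishing, the reduction to a finite list is carried out in the paper not by a general downward-closure principle but via \cite[Lemma 2.6]{GRS2} and \cite[Props.\ 3.2--3.3]{JL2}, which bring everything down to the orbits $((2k)1^{2r-2k})$, $k\geq 2$, and $(3^21^{2r-6})$, and ultimately to the single orbit $\calo_2=(41^{2r-4})$. Your plan to dispose of each such orbit by rewriting its coefficient as a constant term along a proper parabolic and invoking induction on the $\overline{\Sp}_{2m}$-factor fails precisely at this decisive orbit: the character $\psi(v_{1,2}+\al v_{2,2r-1})$ is nontrivial on the root space of the long root $\mu_2$, and the coefficient does not factor through any constant term. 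The paper's mechanism here is genuinely local: the Fourier--Jacobi module $FJ_{2,\al}(\Theta_{2r,\nu})$ is an unramified genuine admissible representation of $\overline{\Sp}_{2r-4}(K_\nu)$, and a chain of local root exchanges plus the inductive hypothesis shows all its Jacquet modules along unipotent radicals of maximal parabolics vanish, so it is supercuspidal; since an unramified representation of a covering group cannot be supercuspidal (Proposition~\ref{Prop: super unram}), the module is zero, and Proposition~\ref{Prop: local-global} kills the global coefficient. This unramified-versus-supercuspidal dichotomy is the missing idea, and without it your induction does not terminate.

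For the non-vanishing, neither of your proposed routes is substantiated, and neither is what the paper does. A direct unfolding and Casselman--Shalika evaluation of the $\psi_T$-coefficient of the residue of the Borel Eisenstein series is far from routine on a $4$-fold cover: Whittaker functionals are not unique, the unramified computation involves quartic exponential sums rather than a clean ratio of zeta functions, and the paper explicitly identifies the analogous genericity of residues of Eisenstein series as a fundamental obstruction for higher covers (see the remark before Proposition~\ref{Prop: odd rank}). The one-step Fourier--Jacobi induction $(2^r)\to(2^{r-1})$ also does not go through as stated: the constant-term reduction only controls a partial coefficient, with a corner entry of the symmetric matrix deleted, and recovering the full coefficient requires a separate Fourier-expansion argument on an embedded $\overline{\SL}_2$, which the paper can only justify in the step from even to odd rank. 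What the paper actually does is anchor everything on an external non-vanishing input: for rank $4r$ a long sequence of root exchanges identifies the split $(2^{2r})$-coefficient with the split coefficient $\theta^{U_{2r},\psi^0_{2r}}$, whose constant term is a semi-Whittaker coefficient of the theta representation on the double cover of $\GL_{2r}$, nonzero by Bump--Ginzburg \cite{BG}; the odd-rank case is then deduced from the even one. Without some such input your induction has nothing to bottom out on beyond $r=1$, which the reduction steps cannot reach.
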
 This result is interesting in its own right, and is crucial to the proofs of Theorems \ref{Thm: intro cuspidality} and \ref{Thm: intro CAP}. We note in Appendix \ref{Appendix A} that the corresponding lift is degenerate in the $2$-fold covering case as $\omega_\psi$ is in some sense ``too small'' for this lift. 
\begin{Rem}
It is natural to hope that this result may be extended to the $2n$-th degree cover. Unfortunately, it is an open question what the nilpotent orbit associated to the theta representation $\Theta_{2r}^{(n)}$ on higher degree covers of the symplectic group ought to be. While Friedberg and Ginzburg have presented a conjecture in the case of odd-degree covers \cite{FG}, this is still open and there are fundamental obstructions to verifying this conjecture. See the comments after Proposition \ref{Prop: odd rank}. In any case, producing an analogous conjecture for even degree covers does not seem straightforward.  We remark that the proof of Theorem \ref{Thm: intro Nilpotent orbit} (\ref{part a}) shows that for $n>2$, we have
\[
\calo\left(\Theta^{(2n)}_{2r}\right)\geq (2^r),
\]
and in general the inequality is strict. In particular, these representations are too large for the integral lift considered here, so one must consider different constructions if one hopes to develop a generalization of this lift to higher degree covers. We expect a variation on the descent constructions of Friedberg-Ginzburg \cite{FG} should replace the theta kernel used in our construction, though this will change the nature of the local analysis and still requires knowing $\calo(\Theta^{(2n)}_{2r})$.
\end{Rem}
Our construction, as well as the value of $\calo(\Theta_{2r})$, is motivated by Ginzburg's dimension equation, which relates first occurrences of liftings defined in terms of integral pairings such as ours a certain equality relating the dimensions of the groups being integrated over to the Gelfand-Kirillov dimension of certain representations. For more details, see \cite{G3} and Section \ref{Section: Whittaker coefficients}. In the case at hand, the dimension equation suggests that in the case that if $\pi$ is cuspidal and generic, then $n(\pi) =\frac{1}{2}\dim(W) +1$. In Section \ref{Section: Whittaker coefficients}, we provide evidence for this conjecture by relating the Whittaker coefficients of the lift to those of $\pi$.

The paper is organized as follows: 

We begin by setting up notation and recalling the definition of BD-covering groups as well as local and global versions of theta representations. Section \ref{Section: Fourier} recalls the theory of Fourier coefficients and Jacquet modules associated to a nilpotent orbit $\calo$, and in Section \ref{Section: local FJ} we define the local Fourier-Jacobi modules associated to specific orbits. The study of these modules is our main tool to prove vanishing of certain global Fourier coefficients. In Section \ref{Section: Root Exchange}, we recall the tool (both locally and globally) of root exchange.

In Section \ref{Section: vanishing}, we prove Theorem \ref{Thm: intro Nilpotent orbit} (\ref{part b}). The method of proof combines in a new way global results of \cite{GRS2} and \cite{JL2} with a detailed study of local Fourier-Jacobi modules to force vanishing of the relevant Fourier coefficients.  We prove Theorem \ref{Thm: intro Nilpotent orbit} (\ref{part a}) in Section \ref{Section: Non-vanishing} in two stages. In the even rank case, we show that the non-vanishing of the certain Fourier coefficients of $\Theta_{4r}$ associated to $\calo_{\Theta,2r}=(2^{2r})$ is equivalent to the non-vanishing of certain semi-Whittaker coefficients of theta functions on the double cover of $\GL_{2r}(\A)$, which are non-zero by results of Bump-Ginzburg \cite{BG}. We then show that this forces non-vanishing in the odd rank case.

In Section \ref{Section: local correspondence}, we study the corresponding local lifting in the case of principal series representations. Despite the lack of a Schr\"{o}dinger model, we establish the existence of and analyze a filtration of certain Jacquet modules of the local theta representation $\Theta$ (see Appendix \ref{Appendix A}). Using the local results obtained in the proof of Theorem \ref{Thm: intro Nilpotent orbit}, we explicate the effect of the theta lift on Satake parameters. See Theorem \ref{Thm: local correspondence}. Happily, these local results may be interpreted in terms of Arthur parameters globally. 
In Sections \ref{Section: Global Lift} and \ref{Section: cuspidality}, we study the global lifting question and prove Theorem \ref{Thm: intro cuspidality}.  In Section \ref{Section: CAP}, we prove Theorem \ref{Thm: intro CAP} establishing that the lift produces CAP representations, and conjecture a natural extension of Arthur's parametrization of the discrete spectrum to the case of $\overline{\Sp}_{2r}(\A)$. We then relate these results  to Arthur's conjectures by computing the effect of the generalized theta lift on Arthur parameters. In particular, we show that when one combines the natural lifting of unramified data from $\overline{\Sp}_{2r}$ to $\SO_{2r+1}$ with Theorem \ref{Thm: local correspondence}, we may determine the (non-tempered) Arthur parameter of the lift.

In Section \ref{Section: Whittaker coefficients} , we review the notion of Gelfand-Kirillov dimension and discuss Ginzburg's philosophy of the dimension equation \cite{G3} and use it to study the case of a generic cuspidal representation $\pi$ of $\overline{\Sp}_{2r}$.  We show that the Whittaker coefficient of the lift of such a representation to $\overline{\Sp}_{2r+2}$ may be expressed in terms of the Whittaker coefficients of $\pi$ as well as a certain integral over the theta function, supporting the conjecture that $n(\pi)=r+1$ in this case.
\subsection*{Acknowledgments}

I want to thank my advisor Sol Friedberg for suggesting this line of research, as well as for providing unrelenting support. I also wish to thank David Ginzburg, whose conjectures directly led to this project; as well as Yuanqing Cai, for many helpful discussions. I also wish to thank the anonymous referee for helpful comments on an earlier version of this paper. Finally, I thank Marty Weissman for several clarifying discussions and lectures on his notion of $L$-group while attending the ``Conference on Automorphic Forms on Metaplectic Groups and Related Topics'' in IISER Pune. This work will make up a part of my doctoral thesis at Boston College.

\section{Notation}\label{Section: Notation}

We will have cause to work both locally and globally. Therefore, we let $K$ stand for a number field which contains the full set of $4^{th}$ roots of unity, and we use $F$ to denote a non-archimedean local field. Often $F$ will arise as the completion of our number field $K$ with respect to some finite place, $F=K_\nu$. Fix once and for all an injective character $\ep: \mu_4\to \cc^\times$.  Let $\A$ be the associated ring of adeles for the number field $K$, and fix a non-trivial additive character $\psi:K\backslash \A \to \cc^\times.$ Locally, we focus mainly on the case that $F$ is a local field of residue characteristic $p>2$. In this setting, we fix a nontrivial additive character $\psi: F\to \cc^\times$ of level $0$.

\subsection{Subgroups and characters}
Let $(W, \la\cdot,\cdot\ra)$ be a $2r$-dimensional symplectic space over $F$, and let $\Sp(W)_F$ be the group of isometries. Upon fixing a choice of basis for $W \cong F^{2r}$, we may identify $\Sp(W)_F \cong \Sp_{2r}(F)$. When we do so, we always use  the symplectic form given by
\begin{align}\label{eqn: symplectic form}
\left(\begin{array}{cc}
 				&-{J_r}\\
				J_r&
				\end{array}\right),
\end{align}
where
\begin{align}\label{eqn: J}
J_r = \left(\begin{array}{ccc}
						&&1\\
						&\Ddots&\\
						1&
						\end{array}\right).
\end{align}

Note that $J_r$ corresponds to a split quadratic form $Q_J$ on $F^r$. We denote the corresponding set of symmetric matrices by
\[
\Sym_{J}^{r}(F) = \{l \in \Mat_{r\times r} : J_r l - {}^TlJ_r = 0\}.
\]
With this form, we have the maximal torus $T_r$ of diagonal elements, and Borel subgroup $B_r=T_rU_r$ of upper triangular matrices. Here, $U_r$ is the unipotent radical of $B_r$. 
This choice gives us the (standard) based root datum
\[
(X_r,\Phi_r,\Delta_r,Y_r,\Phi_r^\vee,\Delta_r^\vee),
\]
where $X _r= X^\ast(T_r)$ is the character lattice and $Y_r= X_\ast(T_r)$ is the cocharacter lattice. We label the associated simple roots as in Bourbaki: $\Delta = \{ \al_1,\cdots,\al_r\}$, with $\al_r$ the unique long simple root. Also, let $W(\Sp_{2r})$ denote the Weyl group of $\Sp_{2r}$, with $w_i$ the standard representative of the Weyl reflection for the simple root $\al_i$.

We need to also set notations for various parabolic subgroups and characters: Let $\underline{s}=(p_1,\ldots,p_k,r')$ be a partition of $r$. Set $P_{\underline{s}}= M_{\underline{s}}U_{\underline{s}}$ to be the standard parabolic subgroup of $\Sp_{2r}$ with Levi factor $$M_{\underline{s}} \cong \GL_{p_1}\times\cdots\times \GL_{p_{k}}\times\Sp_{2r'}.$$

To ease the notation, we make a separate notation for certain parabolic subgroups. Namely, set $Q_{k,r-k} = P_{(1,\cdots,1, r-k)}$ to be the parabolic subgroups $Q_{k,r-k}=L_{k,r-k}V_{k,r-k}$ such that $$L_{k,r-k}\cong\GL_1^k\times \Sp_{2(r-k)}.$$

We will have cause to consider two main types of character of $V_{k,r-k}$. We set $\psi_k:V_{k,r-k}\to\cc^\times$ to be the Gelfand-Graev character 
\[
\psi_k(v) = \psi(v_{1,2}+v_{2,3}+\cdots + v_{k,k+1}),
\]
and we set $\psi^0_k:V_{k,r-k}\to \cc^\times$ to be the character
\[
\psi^0_k(v) = \psi(v_{1,2}+v_{3,4}+\cdots v_{k_0,k_0+1}),
\]
where $k_0$ is the largest odd integer $\leq k$.\\

For each $k\leq r$, one has the subgroup $$H_k \hra U_r$$ given by
\begin{equation}\label{heisenberg}
  h(x:y:z):= \left( \begin{array}{cccccc}
									I_{k-1}&&&&&\\
									&1&x&y&z&\\
									&&I_{r-k}&&y^*&\\
									&&&I_{r-k}&x^*&\\
									&&&&1&\\
									&&&&&I_{k-1}\\
\end{array}\right) \qquad x,y \in \mathbb{G}_a^{r-k}, \: z\in \mathbb{G}_a.
\end{equation}

We have an isomorphism $H_k \cong \mathcal{H}_{2(r-k)+1}$, the Heisenberg group on $2(r-k)+1$ variables. Note that the center of $H_k$, $Z(H_k)$, is the one dimensional root subgroup corresponding to the long root $\mu_k$ (see the next subsection).\\

We will often need to integrate over various spaces of the form $H(K)\bs H(\A)$ for some $K$-subgroup $H\subset \Sp_{2r}$. We adopt the shorthand $[H] =H(K)\bs H(\A)$. 


\subsection{Formalism on roots}

We work extensively with the root system $\Phi = \mathrm{C}_r$,  so we take a moment to introduce a simple notation for the 3 types of root in this system. Recall that we label the simple roots $\Delta = \{ \al_1,\cdots,\al_r\}$, with $\al_r$ the unique long simple root.

 For $1\leq i\leq k \leq r-1$, set $$\ga_{i,k} = \al_{i}+\cdots +\al_{k}.$$

These parametrize our short roots within the sub-root system of type $\mathrm{A}_{r-1}$, corresponding to the Siegel parabolic. For the long roots, we set$$\mu_k= 2\al_{k}\cdots+2\al_{r-1}+\al_{r},\quad \mu_r=\al_{r}.$$

The positive roots not listed above are of the form
$$
\eta_{i,k} =\gamma_{i,k} + \mu_{k+1}.
$$

\section{Local Theory}\label{Section: local theory}

Let $F$ be a local field equipped with a discrete valuation. In this section, we will develop the theory of local theta representations for $\GF$. 

\subsection{Brylinski-Deligne Extensions}

We develop the notion of local theta (or exceptional) representations to facilitate later comparison with the work of Weissman.

Assume for now that $F$ contains the full set of $n^{th}$ roots of unity, and that $|n|_F=1$ (this is what Gan-Gao refer to as the tame case \cite{GG}). Suppose that $\mathbb{G}$ is a connected reductive group scheme over $F$, which for simplicity we assume is split, semi-simple, and simply-connected. For the more general set up, we refer the reader to \cite{Gao}. \\

We fix a pinning $(\mathbb{G}, \mathbb{T}, \mathbb{B} ,\{x_\al\}_{\al\in \Delta})$ of $\mathbb{G}$, and consider the corresponding to the based root datum $$(X, \Phi, \Delta, Y, \Phi^\vee, \Delta^\vee).$$ Here, $Y = \Hom(\mathbb{G}_m,\mathbb{T})$ is the cocharacter lattice and $X=\Hom(\mathbb{T},\mathbb{G}_m)$ is the character lattice. Recall that $W = N(\mathbb{T})/\mathbb{T}$ is the Weyl group of $\mathbb{G}$.

Let $\mathbb{K}_2$ be Quillen $K_2$-sheaf on the big Zariski site over $\Spec(F)$, and set $\mathrm{CExt}(\mathbb{G},\mathbb{K}_2)$ to be the category of central extensions of $\mathbb{G}$ by $\mathbb{K}_2$ as sheaves over $\Spec(F)$. With our restriction on $\mathbb{G}$, then Brylinski and Deligne give a combinatorial description of $\mathrm{CExt}(\mathbb{G},\mathbb{K}_2)$ (reminiscent of the classification of split reductive groups in terms of root data) in terms of triples $(Q,\mathcal{E}, f)$.
 Here, $Q$ is a $W$-invariant $\zz$-valued quadratic form on $Y$, $\mathcal{E}$ is a central extension of $Y$ by $F^\times$, and $f$ is a certain morphism of abelian groups which is more subtle. See the exposition in \cite{GG} for more details, along with the parallel parameterization in terms of pairs $(D,\eta)$ where $D$ developed by Weissman \cite{W2}. Note that if $\mathbb{G}$ is semi-simple and simply connected, then we may ignore the more subtle data of $f$ (or $\eta$) in the classification.

If we further assume that $\mathbb{G}$ is simple and simply connected, then any $W$-invariant quadratic form $Q:Y\to \zz$ is completely determined by its value on $\al^\vee$, where $\al\in\Delta$ is any long simple root. In other words,
\[
\mathrm{CExt}(\mathbb{G},\mathbb{K}_2)\cong \zz,
\] 
where we view $\zz$ as a discrete category. In this case, we will always choose the quadratic form such that $Q(\al^\vee)=-1$ for any short coroot $\al^\vee$. 

For simplicity, we assume $\mathbb{G}$ is semi-simple and simply connected for the remainder of the section and fix a central extension $\overline{\mathbb{G}}\in \mathrm{CExt}(\mathbb{G},\mathbb{K}_2)$ corresponding to the quadratic form $Q$. By taking $F$-points, we have a short exact sequence of $F$-groups 
\[
1\longrightarrow K_2(F)\longrightarrow \overline{\mathbb{G}}(F)\longrightarrow \G\longrightarrow 1,
\]
where $\G=\mathbb{G}(F)$.
One of the main properties of classical algebraic $K_2$ is that the $n^{th}$ Hilbert symbol $(\cdot, \cdot)_n$ gives a homomorphism 
\[
(\cdot, \cdot)_n : K_2(F) \to \mu_n(F).
\]
Pushing out the above short exact sequence by this morphism gives rise to the exact sequence of topological groups
\[
1\longrightarrow \mu_n(F)\longrightarrow \overline{\G}^{(n)}\longrightarrow \G\longrightarrow 1,
\]
which by the assumptions on $F$ realizes the covering group $\overline{\G}^{(n)}$ as a degree $n$ covering group. This covering group is determined by the data $(Q, n)$, though not uniquely so. For any subset $S\subset \G$, we denote its full inverse image under the covering map by $\overline{S}$.


To the covering group $\Gn$, one may associate a complex dual group which may be distinct from the dual of $\G$ (see \cite{McN}, \cite{FL}, and \cite{Re}). Let $B_Q : Y\times Y\to \zz$ be the symmetric bilinear form associated to the quadratic form $Q$. To the data $(Q,n)$, we may associate the sublattice 
\[
Y_{Q,n} = \{y\in Y : B_Q(y,y') \in n\zz \mbox{ for all } y'\in Y\} \subset Y.
\]
For each $\al^\vee\in \Psi^\vee$, we define $n_\al = n/\gcd(n,Q(\al^\vee))$. Set
\[
\al_{Q,n}^\vee = n_\al\al^\vee,\;\; \al_{Q,n} =\frac{1}{n_\al}\al.
\]
Set $Y_{Q,n}^{sc} =\sspan_\zz\{\al_{Q,n}^\vee : \al\in \Phi\}$. Then the complex dual group $\overline{\mathrm{G}}^\vee$ of $\Gn$ is the split group associated to the root datum 
\[
\left(Y_{Q,n}, \{\al_{Q,n}^\vee\}, \Hom(Y_{Q,n},\zz), \{\al_{Q,n}\}\right).
\]

As $\GF:=\overline{\Sp}_{2r}^{(4)}(F)$ is the covering group of primary interest for this paper, we remark that in this case $\overline{\G}^\vee=\Sp_{2r}(\cc)$.

Finally, we remark that the assumption $|n|_F=1$ implies that the cover $\overline{K}$ over a fixed hyperspecial maximal compact subgroup $K = \mathbb{G}(\OF)$ is split; that is, there exists a homomorphism $\kappa: K \to \Gn$ splitting the cover. Such a splitting need not be unique (in fact, splittings form a $\Hom(K,\mu_n)$-torsor). That being said, we fix a splitting from here on, and nothing shall depend upon this choice.

\subsection{Principal series and Exceptional representations}\label{Section: Principal series}
 
We turn now to the theory of principal series representations for the covering group $\Gn$. We follow the setup and notation in \cite{Gao} closely.

Fix an embedding $\ep :\mu_n(F)\to \cc^\times$. For any subgroup $H\subset \G$, we say that a representation $(\rho,W)$ of $\overline{H}$ is ($\ep$-)genuine if $\rho|_{\mu_n(F)} \equiv \ep$. 

Let $T=\mathbb{T}(F)$ be the split maximal torus in $\G$ contained in $B=\mathbb{B}(F)$, and let $\overline{T}$ be its full inverse image in $\Gn$. In general, $\overline{T}$ is no longer abelian, but rather a $2$-step nilpotent group.  One can show that the center $Z(\overline{T})$ is given by the full inverse image of the isogeny $$i: T_{Q,n}=Y_{Q,n}\otimes F^\times \to T.$$ 

Let $\chi : Z(\overline{T})\to \cc^\times$ be a genuine character of $Z(\overline{T})$. For any maximal abelian subgroup $A\supset Z(\overline{T})$, we set 
\[
i(\chi)=\ind_A^{\overline{T}}(\chi'),
\]
where $\chi'$ is any extension of $\chi$ to $A$. A version of the Stone-Von Neumann theorem tells us that $i(\chi)$ is independent of $A$ or the extension $\chi'$, and that the map
\[
\chi \mapsto i(\chi)
\]
induces a bijection between the irreducible genuine representations of $Z(\overline{T})$ and $\overline{T}$. In particular, we see that any irreducible representation of $\overline{T}$ is determined up to isomorphism by its central character. Despite the independence of $i(\chi)$ from $A$, it is useful to point out that in our setting, we may take $A = Z(\overline{T})(\overline{T}\cap K)$.

Define $\Ind(\chi) := \Ind_{\overline{B}}^{\Gn}(i(\chi))$ be the normalized induced representation. These are the principal series representations of $\Gn$. As in the reductive setting, $\Ind(\chi)$ is unramified ($\Ind(\chi)^K \neq\{0\}$) if and only if $\chi$ is unramified (that is $\chi|_{Z(\overline{T})\cap K} \equiv 1$). Note that unramified characters are the same as characters of $\overline{Y_{Q,n}}$ where
\[
1\lra \mu_n \lra \overline{Y_{Q,n}}\lra Y_{Q,n}\lra 1
\]
is the induced (abelian) extension of the lattice.

We now turn to the definition of an exceptional character. Following the notation in \cite{Gao}, we have for each root $\al$ the natural map $\overline{h}_\al: \mathbb{G}_m \to \Gn$, covering the embedding into $\G$ induced by our choice of pinning.
\begin{Def}
An unramified genuine character $\overline{\chi}:\overline{T}\to \cc^\times$ is called \emph{exceptional} if $\overline{\chi}(\overline{h}_\al(\varpi^{n_\al}))=q^{-1}$ for each simple root $\al\in \Delta$.
\end{Def}

The set of unramified exceptional characters of $\Gn$ is a torsor over $$Z(\overline{\G}^\vee) = \Hom(Y_{Q,n}/Y^{sc}_{Q,n},\cc^\times).$$
To each such character, we may associate a theta representation:
\begin{Def}
For an exceptional character $\overline{\chi}$, the theta representation $\Theta(\Gn,\overline{\chi})$ associated to $\overline{\chi}$ is the unique Langlands quotient of $\Ind(\overline{\chi})$. 
\end{Def}
For properties of theta representations, see \cite{Gao} and \cite{BFG1}.
\subsection{The Case at hand: $G=\Sp_{2r}$ $n=4$}\label{Section: Case at hand}

In this section, we inspect further the case of primary interest in this paper: the 4-fold cover $\GF$ of $\Sp_{2r}(F)$. Thus, from this point forward we assume that $F$ is a local field containing the $4^{th}$ roots of unity.

As before, we choose our quadratic form such that $Q(\al_r^\vee)=-1$, where $\al_r$ is the unique long simple root. In this case, if we identify $Y =Y^{sc}=\sspan_\zz\{\al_i^\vee\} \to \zz^r$ via
\[
\sum_ic_i\al_i^\vee \mapsto (c_1,c_2-c_1, \ldots, c_r-c_{r-1}), 
\]
 we obtain (setting $n=4$)
\[
Y_{4} :=Y_{Q,4}= \{ (x_1,\ldots, x_r) : 2| x_i\},
\:\;
Y^{sc}_{4} = \{ (x_1,\ldots, x_r)\in Y_{4} : 4| \sum_i x_i\}.
\]

This shows that $|Z(\overline{G}^\vee)|=2$, so there are two distinct unramified exceptional characters.

For the sake of explicit computations, we find it useful to work with an explicit cocycle. To this end, we will utilize the bisector $D: Y\otimes Y \to \zz$ given by 
\[
D(\al_i^\vee,\al_j^\vee) =\begin{cases}
			B_Q(\al_i^\vee,\al_j^\vee) &: i<j\\ 
			Q(\al_i^\vee) &: i=j\\ 
				0&: i>j\end{cases}.
\]
With this choice of bisector, we may compute explicitly the cocycle on the maximal torus. Since the map $H^2(\Sp_{2r}(F),\mu_4)\to H^2(T_r, \mu_4)$ is injective, we see that this does in fact determine the cover completely. 

Let us identify $\GF = \Sp_{2r}(F)\times\mu_4$, and parametrize $T_r$ via
\[
M(t_1,\ldots, t_r) = \diag(t_1,\ldots,t_r,t_r^{-1},\ldots,t_1^{-1}).
\]
Then, our bisector induces the cocycle $\sigma_D$ given by
\[
\sigma_D(M(t_1,\ldots,t_r) ,M(s_1,\ldots,s_r)) = \prod_{k=1}^r(t_k,s_k)^{-1}_4,
\]
where $(\cdot,\cdot)_n$ is the $n^{th}$ power Hilbert symbol. 

\begin{Rem}
Note that this choice of bisector (since $\Sp_{2r}$ is simply connected, we may set $\eta =1$; see \cite{GG}), determines our $4$-fold covering group up to isomorphism and identifies it as the pullback along the standard embedding $\Sp_{2r}\hra \SL_{2r}$ of the $4$-fold cover of $\SL_{2r}(F)$ with the BLS block compatible cocycle (see \cite{BLS}).
\end{Rem}
Recall the isogeny of tori 
\[
i:T_{Q,4}\to T_r.
\]
An observation made clear by our choice of bisector is that if $t,t'\in i\left(T_{Q,4}\right)$, then $\sigma_D(t,t') =1$. This implies that there is a bijection between characters of $i(T_{Q,4})$ and genuine characters of $Z(\overline{T})$. In particular, the genuine unramified characters of $Z(\overline{T})$ are in bijection with characters of the lattice $Y_{4}$. The bijection is not, however, canonical. 

In \cite{GG}, Gan-Gao introduce the idea of a genuine distinguished character $\overline{\chi}_0$ of $Z(\overline{T})$. We do not recall the full definition, but wish to clarify the case of $\GF$ as the choice of distinguished character affects the Satake parameter attached to an unramified principal series representation. While there are $4$ distinct distinguished characters for $\GF$, only two of them, $\overline{\chi}^0_+$ and $\overline{\chi}^{0}_-$, are unramified. Here, setting $y_i = 2\al_i^\vee$, these characters are determined by
\[
\overline{\chi}^0_{\pm}(y_i(a),\zeta) =\begin{cases}\zeta\qquad\:\:\;: \:\:\;\; i<r\\ \pm\zeta(a,a)_2: \; i=r. \end{cases}
\]
Here $(\cdot,\cdot)_2$ is the quadratic Hilbert symbol. and $\psi:F\to \cc^\times$ is our fixed additive character. Note that the distinguished character $\overline{\chi}^0_+$ equals the unitary distinguished character $\overline{\chi}^0_\psi$ constructed in \cite{GG}. One can show that in the $4k$-fold covering case, the dependence on the choice of character $\psi:F\to \cc^\times$ disappears. We note that if we were to further assume $\mu_8\subset F^\times$, then $(a,a)_2=1$, and $\overline{\chi}^0_+$ becomes essentially trivial. In any case, precisely one of these characters agrees with the choice of distinguished character implicit in \cite{FG}. 

We fix a choice $\overline{\chi}^0$ of unramified distinguished character to obtain our bijection, $$\chi\mapsto \overline{\chi}^0\chi,$$ for any unramified character $\chi: i(T_{Q,4})\to\cc^\times$, viewed on the right hand side as a non-genuine character of $Z(\overline{T})$. This choice also gives us our exceptional character as follows. Consider the character $$\rho_4=\sum_{i\in\Delta}\omega^\vee_i\in \Hom(Y_{Q,n},\zz) = X_\ast(\overline{T}^\vee),$$
the sum of fundamental weights of the dual group $\overline{G}^\vee$. Setting $\chi_{\rho_4}:Y_4\otimes F^\times\to \cc^\times$ as the corresponding unramified character, we have our chosen genuine unramified exceptional character
\[
\overline{\chi} = \overline{\chi}^0\chi_{\rho_4}.
\] 
This choice will become important in Section \ref{Section: local correspondence} as it fixes a notion of Satake parameters relative to a distinguished splitting of the $L$-group ${}^L\overline{\Sp}_{2r}$ (see \cite{W2}, \cite{W3}, and the work of Weissman more generally). Set $\Theta_{2r}:=\Theta(\overline{\Sp}_{2r},\overline{\chi})$ to be the corresponding theta representation.
\subsection{Jacquet modules of local theta representations}\label{Section: local Levi}

Consider the maximal parabolic subgroup $P_{(k,r-k)}(F) =M_{(k,r-k)}(F)U_{(k,r-k)}(F),$ and note that, by a simple computation using the block compatibility of the cocycle, the induced cover
\[
\overline{M}_{(k,r-k)}(F) = \overline{\GL}^{(2)}_k(F)\times_{\mu_4}\overline{\Sp}_{2(r-k)}(F)
\]
is given by the direct product amalgamated at $\mu_4$ of a metaplectic cover $\overline{\GL}^{(2)}_k(F)$ corresponding to the BD-covering group $\widetilde{\mathbb{GL}}_k[-1,0]$ (see the notation in \cite[Section 2.1]{Gao2}) and the $4$-fold metaplectic cover $\overline{\Sp}_{2(r-k)}(F)$. 

\begin{Rem}We denote the corresponding cover of the general linear group by $ \overline{\GL}^{(2)}_k(F)$ despite the fact that it is a four-fold cover since it is the determinantal twist of the four-fold cover associated to the BD-covering group $\widetilde{\mathbb{GL}}_k[0,2]$. The associated four-fold covering group $\overline{\GL}^{(4)}_k(F)_{S}$\footnote{The $S$ stands for Savin, per \cite{Gao2}} is the pushout of the double cover $\overline{\GL}^{(2)}_k(F)_{KP}$ of the Kazhdan-Patterson cover $\widetilde{\mathbb{GL}}_k[0,1]$; that is, we have a commutative diagram
$$
\begin{tikzcd}
1\arrow[r] & \mu_2 \arrow[r] \arrow[d ] & \overline{\GL}^{(2)}_k(F)_{KP} \arrow[d] \arrow[r] & \GL_k(F) \arrow[d, "="] \arrow[r]& 1 \\
1\arrow[r] & \mu_{4} \arrow[r]& \overline{\GL}^{(4)}_k(F)_{S} \arrow[r] & \GL_k(F) \arrow[r]& 1. 
\end{tikzcd}
$$

It follows that the category of genuine smooth admissable representations of  $\overline{\GL}^{(2)}_k(F)_{KP}$ is equivalent to the category of $\ep$-genuine smooth admissable representations of $ \overline{\GL}^{(4)}_k(F)_{S}$, and for our purposes the determinantal twist does not complicate things (This is essentially contained in the cocycle computation of Friedberg-Ginzburg \cite[Section 2]{FG}.) This is useful for translating results about theta representations on the double cover to this covering group.
\end{Rem}

Under the inclusion of covering tori
\[
Z(\overline{T}_{\GL_k})\times_{\mu_4}Z(\overline{T}_{r-k})\hra Z(\overline{T}_r),
\] 
our choice of distinguished character $\overline{\chi}^0$ induces a distinguished character on both factors, and either choice of character gives the trivial distinguished character for $Z(\overline{T_{\GL_k}})$. Let $\Theta_{2(r-k)}$ be the corresponding theta representation of $\overline{\Sp}_{2(r-k)}(F)$. Additionally, define the unramified exceptional character of $Z(\overline{T}_{\GL_k})$: 
\begin{align}\label{eqn: GL-character}
\chi_{\GL,k}(\diag(t_1,\ldots,t_k),\zeta) = \zeta\prod_{i=1}^k|t_i|^{(2(r-i)+3)/4}
\end{align}
Of central importance to the study of theta representations on symplectic groups is the following version of the ``periodicity theorem'' of \cite{KP} and its corollary (see also \cite{BFG1} for a version in the odd orthogonal case): 
\begin{Thm}\label{Thm: local constant term} 
Let $F$ be a non-archimedean local field, and let $\Theta_{2r}$ be the theta representation on $\overline{\Sp}_{2r}(F)$. Let $\Theta_{\GL_k}^{(2)}$ denote the local theta representation on $\overline{\GL}_{a}^{(2)}(F)$ associated to the exceptional character $\chi_{\GL,k}$. Then for any maximal parabolic subgroup $P_{(k,r-k)}$, we have an isomorphism of $\GL_{k}^{(2)}(F)\times_{\mu_4}\overline{\Sp}_{2(r-k)}(F)$ representations 
\[
J_{U_k}(\Theta_{2r}) \cong \Theta_{GL_k}^{(2)}\otimes\Theta_{2(r-k)}.
\]
\end{Thm}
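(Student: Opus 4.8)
The plan is to compute the Jacquet module $J_{U_k}(\Theta_{2r})$ by realizing $\Theta_{2r}$ as the Langlands quotient of the Borel Eisenstein series / induced representation $\Ind(\overline{\chi})$ and then tracking the constant term along the maximal parabolic $P_{(k,r-k)}$. First I would apply the geometric lemma (Bernstein--Zelevinsky / Casselman) to $J_{U_k}(\Ind_{\overline{B}}^{\GF}(i(\delta_B^{1/2}\overline{\chi})))$, obtaining a filtration indexed by the Weyl-coset representatives $W_{M_{(k,r-k)}}\backslash W(\Sp_{2r})/W_T$, whose subquotients are (normalized) parabolic inductions to $\overline{M}_{(k,r-k)}(F)$ of twists $w\cdot\overline{\chi}$ of the exceptional character. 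The key point is that $\Theta_{2r}$ is the \emph{unique irreducible quotient} (the Langlands quotient), so $J_{U_k}(\Theta_{2r})$ is a quotient of $J_{U_k}(\Ind(\overline{\chi}))$; one then argues that only the summand attached to the ``longest'' relevant coset representative — the one producing the Langlands quotient on each factor — survives, and that this summand is exactly $\Theta_{\GL_k}^{(2)}\otimes \Theta_{2(r-k)}$.

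The identification of that surviving summand is where the specific arithmetic of the $4$-fold cover enters, and the key steps are: (i) Using the block-compatibility of the cocycle $\sigma_D$ and the computation already recorded in Section~\ref{Section: Case at hand}, decompose $\overline{M}_{(k,r-k)}(F)\cong \overline{\GL}_k^{(2)}(F)\times_{\mu_4}\overline{\Sp}_{2(r-k)}(F)$ and correspondingly factor the restriction of $\overline{\chi}$ and of $\delta_B^{1/2}$ through the two factors; (ii) Matching the $\GL_k$-component of the twisted character against the exceptional character $\chi_{\GL,k}$ of \eqref{eqn: GL-character} — this is a bookkeeping computation with the half-sum of roots, where the shift $(2(r-i)+3)/4$ must come out precisely from $\delta_B^{1/2}$ restricted to the $\GL_k$-factor together with the $n_\al$-rescaling built into the exceptional condition; (iii) Observing that the $\Sp_{2(r-k)}$-component of the twisted character is again exceptional (its values on the relevant $\overline{h}_\al(\varpi^{n_\al})$ are still $q^{-1}$), so the corresponding factor is $\Theta_{2(r-k)}$; (iv) Invoking the ``periodicity''/Kazhdan--Patterson mechanism — exactly as in \cite{KP}, \cite{BFG1} — to conclude that taking the constant term is compatible with passing to Langlands quotients, i.e.\ that the non-longest subquotients in the geometric-lemma filtration do not contribute to the quotient $J_{U_k}(\Theta_{2r})$, only the principal one does.

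I expect the main obstacle to be step~(iv): showing that $J_{U_k}$ of the Langlands quotient is precisely the ``top'' piece $\Theta_{\GL_k}^{(2)}\otimes\Theta_{2(r-k)}$ rather than some larger subquotient or proper submodule of it. In the linear-group setting this is the content of the Kazhdan--Patterson periodicity theorem, and the argument there uses both an explicit analysis of the intertwining operators (which poles occur, square-integrability/temperedness estimates forcing which exponents can appear in a quotient) and the uniqueness of the Langlands quotient; I would adapt that argument, using the exceptional-character normalization $\overline{\chi}(\overline{h}_\al(\varpi^{n_\al}))=q^{-1}$ to pin down the relevant exponents and Langlands-quotient positivity to kill the spurious subquotients. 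A secondary (but genuinely arithmetic) subtlety is that in the $4$-fold cover the relevant $n_\al$ differ for long versus short roots ($n_{\al_r}=4$ versus $n_{\al_i}=2$ for $i<r$), so the lattice $Y_{Q,4}$ and the rescaled coroots $\al_{Q,4}^\vee$ must be handled with care when one checks that the surviving $\GL_k$-character is exactly $\chi_{\GL,k}$ and the surviving $\Sp_{2(r-k)}$-character is exactly the distinguished-times-$\rho$ exceptional character — but this is precisely the cocycle computation of Friedberg--Ginzburg \cite[Section 2]{FG} referenced above, so I would cite that for the detailed verification.
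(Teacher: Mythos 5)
Your plan is essentially the paper's proof: the paper simply cites the periodicity theorem of Kazhdan--Patterson as adapted in \cite{BFG1} (Theorem 2.3) — i.e.\ exactly the geometric-lemma-plus-Langlands-quotient mechanism you outline — and its only additional remark is the character bookkeeping of your step~(ii), namely that the $\GL_k$-exceptional character here differs from the odd orthogonal case by the twist $g\mapsto|\det(g)|^{3/4}$, which is permissible since exceptionality only constrains values on root subgroups. The obstacle you flag in step~(iv) is precisely what the cited periodicity theorem resolves, so your reconstruction matches the intended argument.
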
 
\begin{proof}
This follows as in \cite{BFG1}, Theorem 2.3. Note, however, that the exceptional character for the $\GL$-component of the Levi subgroup differs here from the character in the odd orthogonal case by multiplication by the character
\[
g\mapsto |\det(g)|^{3/4}.
\]
This is allowed as the definition of exceptional character depends only on its values on the image of the root subgroups.
\end{proof}

\begin{Cor}\label{Cor: local constant term}
There exists a $\overline{\Sp}_{2r}(F)$-equivariant embedding  $$\Theta_{2r}\hra \Ind\left( \Theta_{GL_k}^{(2)}\otimes\Theta_{2(r-k)}\right).$$ Additionally, there exists a value $s_0\in \cc$ such that there is a $\overline{\Sp}_{2r}(F)$-equivariant surjection $$\Ind\left( \Theta_{GL_k}^{(2)}\otimes\Theta_{2(r-k)}\otimes\delta_{P_k}^{s_0}\right)\twoheadrightarrow \Theta_{2r}.$$
\end{Cor}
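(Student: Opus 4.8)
The plan is to deduce both statements from the isomorphism of Jacquet modules in Theorem \ref{Thm: local constant term} by Frobenius reciprocity, exactly as one does in the linear case. For the embedding, I would argue as follows. Since $\Theta_{2r}$ is the Langlands quotient of $\Ind(\overline{\chi})$, it is in particular an admissible genuine representation of $\overline{\Sp}_{2r}(F)$; let me write $P_{(k,r-k)} = M_{(k,r-k)}U_k$ and apply the (normalized) Jacquet functor $J_{U_k}$. By Theorem \ref{Thm: local constant term} there is an isomorphism $J_{U_k}(\Theta_{2r}) \cong \Theta_{\GL_k}^{(2)}\otimes \Theta_{2(r-k)}$ of $\overline{M}_{(k,r-k)}(F) = \overline{\GL}_k^{(2)}(F)\times_{\mu_4}\overline{\Sp}_{2(r-k)}(F)$-representations. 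Now, the first key input is that the theta representations $\Theta_{\GL_k}^{(2)}$ and $\Theta_{2(r-k)}$ are themselves irreducible, so that $\Theta_{\GL_k}^{(2)}\otimes\Theta_{2(r-k)}$ is an irreducible quotient of $J_{U_k}(\Theta_{2r})$; composing the surjection $J_{U_k}(\Theta_{2r})\twoheadrightarrow \Theta_{\GL_k}^{(2)}\otimes\Theta_{2(r-k)}$ with the counit is not quite what we want, so instead I would pass to the contragredient. Apply $J_{U_k}$ to $\widetilde{\Theta}_{2r}$: by the second adjointness (Bernstein's version, valid for covering groups since the relevant functors are exact on admissible modules) one has $\Hom_{\overline{\Sp}_{2r}}(\widetilde{\Theta}_{2r}, \Ind(\sigma)) \cong \Hom_{\overline{M}}(J_{\overline{U}_k}(\widetilde{\Theta}_{2r}), \sigma)$, and $J_{\overline{U}_k}(\widetilde{\Theta}_{2r})\cong \widetilde{J_{U_k}(\Theta_{2r})}$. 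Dualizing back and using that $\widetilde{\Theta}_{\GL_k}^{(2)}$, $\widetilde{\Theta}_{2(r-k)}$ are again theta-type representations then yields a nonzero, hence (by irreducibility of the inducing data's Langlands quotient) injective, map $\Theta_{2r}\hookrightarrow \Ind(\Theta_{\GL_k}^{(2)}\otimes\Theta_{2(r-k)})$. Concretely: Frobenius reciprocity gives $\Hom_{\overline{\Sp}_{2r}}(\Theta_{2r}, \Ind(\Theta_{\GL_k}^{(2)}\otimes\Theta_{2(r-k)})) \cong \Hom_{\overline{M}}(J_{U_k}(\Theta_{2r}), \Theta_{\GL_k}^{(2)}\otimes\Theta_{2(r-k)})$, and the right side is nonzero because it contains the identity map under the isomorphism of Theorem \ref{Thm: local constant term}; any nonzero map out of $\Theta_{2r}$ is injective since $\Theta_{2r}$ is the unique irreducible (Langlands) quotient of an unramified principal series, hence has irreducible socle equal to... — here one must be slightly careful, so the cleanest route is: the map is nonzero, its image is a subrepresentation, and one checks the image contains the unramified vector, forcing injectivity since $\Theta_{2r}$ is generated by its unramified vector under the unramified-principal-series realization. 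I would phrase this last point via the explicit embedding $\Theta_{2r}\hookrightarrow \Ind(\overline{\chi})$ and transitivity of induction in stages through $P_{(k,r-k)}$.

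For the surjection, the argument is dual and uses the \emph{second} adjointness / the opposite parabolic. Apply $J_{\overline{U}_k}$ (Jacquet functor along the \emph{opposite} unipotent radical) to $\Theta_{2r}$; Bernstein's second adjointness theorem gives $\Hom_{\overline{\Sp}_{2r}}(\Ind_{\overline{P}_k}(\tau), \Theta_{2r}) \cong \Hom_{\overline{M}}(\tau, J_{\overline{U}_k}(\Theta_{2r}))$. Now $J_{\overline{U}_k}(\Theta_{2r})$ differs from $J_{U_k}(\Theta_{2r})$ by a twist by $\delta_{P_k}$ (this is where the normalization and the value $s_0$ enter), so up to reindexing the inducing data by a power $\delta_{P_k}^{s_0}$ of the modular character, $J_{\overline{U}_k}(\Theta_{2r})$ contains $\Theta_{\GL_k}^{(2)}\otimes\Theta_{2(r-k)}\otimes\delta_{P_k}^{s_0}$ as a subrepresentation (using again irreducibility of the two theta factors, so that this summand is a sub of the semisimplification). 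Taking $\tau = \Theta_{\GL_k}^{(2)}\otimes\Theta_{2(r-k)}\otimes\delta_{P_k}^{s_0}$, the inclusion $\tau\hookrightarrow J_{\overline{U}_k}(\Theta_{2r})$ corresponds under second adjointness to a nonzero map $\Ind_{\overline{P}_k}(\tau)\to\Theta_{2r}$, which is surjective because $\Theta_{2r}$ is irreducible. Determining $s_0$ precisely: one computes it from the exponent by which $\delta_{P_k}$ relates $J_{U_k}$ to $J_{\overline{U}_k}$ together with the shift built into $\chi_{\GL,k}$ in \eqref{eqn: GL-character}; since the statement only asserts existence of $s_0\in\cc$, it suffices to note $s_0$ is the unique scalar making the normalized parabolic induction land on the right line, and no explicit value is needed.

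The main obstacle I anticipate is \emph{technical rather than conceptual}: making sure that the standard tools of the smooth representation theory of $p$-adic reductive groups — exactness of Jacquet functors, Frobenius reciprocity, and especially Bernstein's second adjointness — are available verbatim for the genuine representation category of the covering group $\overline{\Sp}_{2r}(F)$, and that $\overline{M}_{(k,r-k)}(F) = \overline{\GL}_k^{(2)}(F)\times_{\mu_4}\overline{\Sp}_{2(r-k)}(F)$ behaves as a genuine ``product''. These facts are known for BD-covering groups (the covering groups are $\ell$-groups in the sense of Bernstein–Zelevinsky, and the genuine category is a direct summand of the full smooth category), so the work is mostly in citing the correct references; but one should also double check that the irreducibility of $\Theta_{\GL_k}^{(2)}$ and $\Theta_{2(r-k)}$ (needed to identify the relevant isotypic pieces as genuine quotients/subs) is either already established in \cite{KP}, \cite{BFG1}, \cite{Gao} or follows from the Langlands-quotient description. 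A secondary subtlety is keeping careful track of the determinantal twist discussed in the preceding Remark, so that the ``$\Theta_{\GL_k}^{(2)}$'' appearing on both sides is literally the same representation; this is bookkeeping but worth a sentence. If second adjointness for covers is something one would rather avoid invoking, an alternative is to prove the surjection directly by exhibiting an intertwining operator (a degenerate Eisenstein-series/intertwining-integral at the special point $s_0$) from $\Ind(\Theta_{\GL_k}^{(2)}\otimes\Theta_{2(r-k)}\otimes\delta_{P_k}^{s_0})$ onto $\Theta_{2r}$, mirroring the global residue construction of $\Theta$; but the adjointness argument is shorter and I would present that.
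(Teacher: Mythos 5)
Your proposal is correct and is essentially the intended argument: the paper states the corollary without proof as an immediate consequence of Theorem \ref{Thm: local constant term}, via Frobenius reciprocity for the embedding and its second-adjointness counterpart along the opposite parabolic for the surjection (which is exactly where the twist $\delta_{P_k}^{s_0}$ enters). The only over-complication is your worry about injectivity of the first map: $\Theta_{2r}$ is by definition the unique Langlands \emph{quotient} of $\Ind(\overline{\chi})$, hence irreducible, so any nonzero $\overline{\Sp}_{2r}(F)$-map out of it is automatically injective and the detour through socles and unramified vectors is unnecessary.
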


\section{Global Theory}\label{Section: global theta}
Let $K$ be a number field, and let $\overline{\Sp}_{2r}(\A)$ be the restricted direct product of the $4$-fold covering groups $\overline{\Sp}_{2r}(K_\nu)$ amalgamated at the central $\mu_4$. We fix a choice of global distinguished character $\overline{\chi}^0 :Z(\overline{T}_r)_K\backslash Z(\overline{T}_r)_\A\to \cc^\times$. This gives us a corresponding global exceptional character
\[
\overline{\chi}_{\Theta,2r} := \overline{\chi}^0 \chi_{\Theta},
\]
where 
\[
\chi_{\Theta}(M(t_1,\ldots,t_r)) = \prod_{i=1}^r|t_i|^{(2(r-i)+1)/4}
\]
Let $\Theta_{2r}$ denote the global theta representation with respect to this choice of character, as described in \cite[Section 2]{FG}. In a manner completely analogous to the local construction as the Langlands quotient of a specific principal series representation, this representation is constructed by taking the residue of the Borel-Eisenstein series at the pole $s_\theta=((2(r-1)+1)/4,
\ldots, 3/4,1/4)$. In particular, $\Theta_{2r}$ is an irreducible genuine automorphic representation and if $\overline{\chi}^0 =\prod_\nu\overline{\chi}^0_\nu$,\footnote{For each finite place, $\overline{\chi}^0_\nu$ is automatically distinguished.} we have $$\Theta_{2r} = \otimes'_\nu\Theta_{2r,\nu}.$$
Note that in \cite{FG}, Friedberg and Ginzburg work with the case $\overline{\chi}^0_\nu$ is the trivial distinguished character for each finite place $\nu$. However, the set-up is the same for a general $\overline{\chi}^0$.

We take note of the following proposition  of \cite[Prop. 1]{FG}: Let $\overline{\chi}_{\GL,k}$ denote the global analogue of the exceptional character (\ref{eqn: GL-character}) of $\overline{\GL}^{(2)}_{k}$ from the previous section, and let $\Theta_{\GL_k}^{(2)}$ be the corresponding global theta representation.
\begin{Thm}\label{Thm: global constant term}
Let $\theta_{2r}\in \Theta_{2r}$. Then there exists $\theta_{\GL_k}\in\Theta_{\GL_k}^{(2)}$ and $\theta_{2(r-k)}\in \Theta_{2(r-k)}$ such that for any diagonal $g\in \GL_{k}^{(2)}(\A)$ which lies in the center of the Levi of the parabolic subgroup $\overline{P}_{(k,r-k)}(\A)$ and for all unipotent $h\in \overline{\Sp}_{2(r-k)}$ and $v\in\GL_k^{(2)}(\A)$, we have
\[
\int_{[U_{(k,r-k)}]}\theta_{2r}(u(gv,h))\,du = \overline{\chi}_{\Theta, k}(g)\theta_{\GL_k}(v)\theta_{2(r-k)}(h).
\]
\end{Thm}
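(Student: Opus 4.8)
The statement is the global analogue of the local constant-term computation in Theorem~\ref{Thm: local constant term}, so the plan is to reduce the global assertion to that local result together with the explicit realization of $\Theta_{2r}$ as a residue of a Borel Eisenstein series. First I would recall that $\Theta_{2r}$ is the residue at $s_\theta$ of the Borel Eisenstein series $E(g,s)$ on $\overline{\Sp}_{2r}(\A)$, so every $\theta_{2r}\in\Theta_{2r}$ is of the form $\mathrm{Res}_{s=s_\theta}E(g,\phi_s)$ for a suitable flat section $\phi_s$ induced from $\overline{B}$. The constant term along $U_{(k,r-k)}$ commutes with taking residues, so $\int_{[U_{(k,r-k)}]}\theta_{2r}(u\,\cdot\,)\,du$ is the residue at $s_\theta$ of the constant term $E_{U_{(k,r-k)}}(g,s)$ of the Eisenstein series. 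The latter is computed by the standard Langlands formula as a sum over Weyl-coset representatives $w\in W(M_{(k,r-k)})\backslash W(\Sp_{2r})/W(T)$ of intertwining operators applied to $\phi_s$; this is the step where one uses that the cocycle is block-compatible so that the unipotent integration factors through the $\GL_k$- and $\Sp_{2(r-k)}$-blocks separately.

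The next step is to identify which of these Weyl terms survive the residue. This is exactly the mechanism behind \cite[Prop.~1]{FG}: at the special point $s_\theta$ only the term corresponding to the longest element $w$ of $W(M_{(k,r-k)})\backslash W(\Sp_{2r})$ that is relevant to the $\GL_k$-block contributes a pole of the correct order, the other terms being holomorphic (or contributing lower-order poles that vanish upon taking the residue). Concretely, I would track the poles of the rank-one intertwining operators attached to the roots $\gamma_{i,k}$, $\eta_{i,k}$, $\mu_k$ crossed by $w$, using the explicit value $s_\theta=((2(r-1)+1)/4,\ldots,3/4,1/4)$; the product of the relevant completed zeta factors has a pole of maximal order precisely along the $\GL_k$-directions, and the residue factors as a tensor product. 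Evaluating the surviving term shows that the constant term, restricted to the center of the Levi, is the product of an unramified character of the $\GL_k$-center times automorphic functions on the two factors, and matching exceptional characters identifies those automorphic functions as vectors $\theta_{\GL_k}\in\Theta_{\GL_k}^{(2)}$ and $\theta_{2(r-k)}\in\Theta_{2(r-k)}$, with the character coming out to be $\overline{\chi}_{\Theta,k}$ by the computation recorded in the proof of Theorem~\ref{Thm: local constant term} (the $|\det|^{3/4}$-shift relative to the odd orthogonal case). The extension from diagonal $g$ in the Levi center to the full stated equivariance in $h$ and $v$ is then automatic from the $\overline{M}_{(k,r-k)}(\A)$-equivariance of the constant term.

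The main obstacle I expect is bookkeeping the poles of the (metaplectic) intertwining operators at $s_\theta$ with enough precision to see that exactly one Weyl term survives and that its residue has the claimed tensor-product shape — in the covering-group setting one must use the Gindikin--Karpelevich formula for the relevant BD-covers (as in \cite{Gao}, \cite{McN}) rather than the linear one, and keep careful track of the amalgamation at $\mu_4$ and of the two choices of distinguished character. A secondary technical point is that, since $\Theta_{2r}$ is only the residue and not the full Eisenstein series, one should check that the residue operation genuinely commutes with the unipotent integration over the compact quotient $[U_{(k,r-k)}]$; this is standard (the constant-term integral is a continuous functional and the Eisenstein series is meromorphic of moderate growth uniformly), but worth stating. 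Once these pole computations are in hand, everything else is the formal identification of exceptional characters already carried out locally.
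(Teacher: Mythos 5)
Your outline is correct, but note that the paper itself gives no proof of this statement: it is quoted directly from \cite[Prop.~1]{FG}, and your argument (realize $\Theta_{2r}$ as the residue of the Borel Eisenstein series, apply the Langlands constant-term formula along $U_{(k,r-k)}$, track which Weyl term survives the residue at $s_\theta$ via the metaplectic Gindikin--Karpelevich factors, and identify the surviving term with $\Theta^{(2)}_{\GL_k}\otimes\Theta_{2(r-k)}$ twisted by $\overline{\chi}_{\Theta,k}$) is precisely the argument carried out there and in the odd-orthogonal analogue \cite{BFG1}. So this is essentially the same approach as the cited source, with the technical caveats you flag (block-compatibility of the cocycle, commuting the residue with the compact unipotent integration) being exactly the points handled in those references.
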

The global analogue of Corollary \ref{Cor: local constant term} also holds. We refer the reader to \cite{FG} for the details.

\section{Fourier Coefficients Associated to a Nilpotent Orbit}\label{Section: Fourier}
In this section, we outline the process of associating  harmonic-analytic data to a nilpotent orbit $\calo$ in a complex Lie algebra $\fg=\Lie(G)$ (or equivalently, a unipotent orbit in any complex connected Lie group $G$ such that $\Lie(G)=\fg$) and a representation $\pi$ of $\G$. To simplify the exposition, we will discuss only $\G=\Sp_{2r}$.  However, as these coefficients are given globally by integration over certain unipotent subgroups and locally by applying certain Jacquet functors, all the definitions and statements in this section apply without change to the case of a BD-covering group. These concepts are discussed in more detail in \cite{G1}, \cite{M}, and \cite{MW1}. 
\subsection{Algebraic Setup}
Let $F$ be any field, and fix an algebraic closure $\overline{F}$. Also, let $\calo$ be a nilpotent orbit in the $\overline{F}$-Lie algebra $\mathfrak{sp}_{2m}$. These orbits are finite in number and are parametrized by symplectic partitions of $2r$; that is, partitions  where each odd number occurs with even multiplicity. If the orbit $\calo$ corresponds to the partition $(p_1^{e_1}p_2^{e_2}\cdots p_r^{e_r})$ where $p_i\leq p_{i+1}$, we simply write
\[
\calo=(p_1^{e_1}p_2^{e_2}\cdots p_r^{e_r}).
\]

There is a natural partial order on both nilpotent orbits (via closure relations) and partitions (via dominance). The above correspondence identifies these partial orders.\\

Let $\calo=(p_1^{e_1}p_2^{e_2}\cdots p_r^{e_r})$ be such an orbit. To each entry $p_i$ which appears in the partition associated to $\calo$ (including mutiplicities) we associate the torus element
\[
h_{p_i}(t)=\diag(t^{p_i-1},t^{p_i-3},\ldots, t^{3-p_i},t^{1-p_i}).
\]
From, these we may produce a semisimple element $h_\calo(t)\in \Sp_{2r}(\overline{F}(t))$ by concatenating all the $h_{p_i}(t)$ and then conjugating by a Weyl group element until the entries are such that the powers of $t$ are decreasing. It is a consequence of the fact that $\calo$ is a symplectic partition of $2r$ that the resulting semisimple element lies in $\Sp_{2r}$. See \cite{CM} for more background. This data induces a filtration on the unipotent radical of $B$ 
\[
I\subset\cdots \subset V_2(\calo)\subset V_1(\calo) \subset V_0(\calo)= U,
\]
where the terms in the filtration are defined in terms of root subgroups
\[
V_l(\calo) = \la x_\al(s) \in U : h_\calo(t) x_\al(s) h_\calo(t)^{-1} = x_\al(t^ks)\:\mbox{ for some } k\geq l\ra.
\]
Also, we define 
\[
M(\calo) = T\cdot \la x_{\pm\al}(s) :  h_\calo(t) x_\al(s) h_\calo(t)^{-1} = x_\al(s)\ra.
\]
Note that all of these subgroups are defined over $\zz$, and that $P_\calo =M(\calo)V_1(\calo)$ is a parabolic subgroup of $\Sp_{2r}(F)$.

Since we are working with an algebraically closed field, $M(\calo)_{\overline{F}}$ acts via conjugation on the quotient $$Z_2(\calo):=V_2(\calo)/[V_2(\calo),V_2(\calo)]$$ with a dense orbit. Let $u_\calo\in V_2(\calo)$ be a representative of this orbit. Set $L(\calo)^{alg}\subset M(\calo)_{\overline{F}}$ to be the stabilizer of $u_\calo$. While the exact group $L(\calo)^{alg}$ depends on the choice of representative, its Cartan type does not.\\
\subsection{Global Setting and Fourier coefficients}
Let $K$ be a number field with adele ring $\A$, and let $(\pi,V)$ be an automorphic representation of $\Sp_{2r}(\A)$.
Since $M(\calo)_{\A}$ acts on $V_2(\calo)_\A$ by conjugation, one obtains an action of $M(\calo)_K$ on the character group  $$\left(Z_2(\calo)_K\backslash Z_2(\calo)_\A\right)^\vee\cong Z_2(\calo)_K.$$ 
We consider only characters $\psi_\calo : V_2(\calo)_K\backslash V_2(\calo)_\A\to \cc^\times$ such that the connected component of the stabilizer $L(\calo)_K := \Stab_{M(\calo),K}(\psi_\calo)$ has the same Cartan type as $L(\calo)^{alg}$ after base changing to $\overline{K}$. Such characters are called \emph{generic characters} associated to $\calo$. There can exist infinitely many $M(\calo)_K$-orbits of generic characters.

\begin{Def}
 Let $\psi_\calo:~V_2(\calo)_K\backslash V_2(\calo)_\A\to \cc^\times$ be a generic character associated to $\calo$.
For a vector $\varphi\in\pi$, we consider the coefficient
\[
F_{\psi_\calo}(\varphi)(g)=\displaystyle \int_{[V_2(\calo)]}\vp(vg)\psi_\calo(v)\,dv.
\]

We say that the orbit $\calo$ supports $\pi$ if there exists a choice of data ($\varphi,\: \psi_{\calo}$) such that the above integral is not identically zero. Otherwise, we say that $\calo$ does not support $\pi$.\qed
\end{Def}

Note that $F_{\psi_\calo}(\varphi)(g)$ is an automorphic function of $L(\calo)_\A$.

\subsection{Local Setting}
 Now suppose that $F$ is a non-archimedean local field. Let $U$ be a unipotent subgroup of an $l$-group $G$ (in the terminology of \cite{BZ}), let $\chi :U\to \cc^\times$ be a character of $U$. Suppose that $M\subset G$ is a subgroup of $G$ normalizing $U$ and leaving $\chi$ invariant. 

Then we have the exact functor 
\[
J_{U,\chi}: Rep(G) \to Rep(M),
\]
the details of which are discussed in \cite{BZ}, and we refer there for properties of Jacquet modules. 
We now define the local analogue of the Fourier coefficients associated to a nilpotent orbit $\calo$. We have the action of $M(\calo)_F$ on $Z_2(\calo)_F$ by conjugation, and hence an action on the Pontryagin dual $Z_2(\calo)_F^\vee \cong Z_2(\calo)_F$. As in the global setup, we consider only characters $\psi_\calo$ such that the stabilizer $L(\calo)_F\subset M(\calo)_F$ is of the same absolute Cartan type as $L(\calo)^{alg}$. We call such characters \emph{generic} characters associated to $\calo$.
\begin{Def}
Let $(\pi,V)$ be a smooth admissible representation of finite length for $\overline{\Sp}_{2r}(F)$. We define the \textbf{twisted Jacquet modules} associated to the nilpotent orbit $\calo$ and the generic character $\psi_\calo$
\[
J_{\psi_\calo}(\pi) := J_{V_2(\calo),\psi_\calo}(\pi).
\]
We say that a nilpotent orbit $\calo$ {supports} $\pi$ if there exists a generic character such that $J_{\psi_\calo}(\pi) \neq 0$.\qed
\end{Def}

The following proposition encodes the well-known relationship between Fourier coefficients and Jacquet modules.
\begin{Prop}\label{Prop: local-global}
Suppose that $\pi \cong \otimes'_\nu \pi_\nu$ is an automorphic representation of $\overline{\Sp}_{2r}(\A)$ and suppose that the nilpotent orbit $\calo$ supports $\pi$.
Then for any finite place $\lam$, $\calo$ also supports $\pi_v$.
\end{Prop}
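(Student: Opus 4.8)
The plan is to contrapose: assuming the local twisted Jacquet module $J_{\psi_\calo}(\pi_\lambda)$ vanishes for \emph{every} generic character $\psi_\calo$ associated to $\calo$, I want to show that every global Fourier coefficient $F_{\psi_\calo}(\varphi)$ vanishes identically on $\pi$, contradicting the hypothesis that $\calo$ supports $\pi$. The link between the two is the standard factorization of Whittaker-type functionals over a restricted tensor product: the space of $(V_2(\calo)_\A, \psi_\calo)$-equivariant functionals on $\pi \cong \otimes'_\nu \pi_\nu$ that occur in the automorphic realization factors as a restricted tensor product of local $(V_2(\calo)_{K_\nu}, \psi_{\calo,\nu})$-equivariant functionals on the $\pi_\nu$, since $V_2(\calo)$ is a unipotent group defined over $\zz$ and $\psi_\calo = \prod_\nu \psi_{\calo,\nu}$ decomposes as a product of local characters. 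Concretely, the map $\varphi \mapsto F_{\psi_\calo}(\varphi)(1)$ is a nonzero such functional whenever $F_{\psi_\calo}$ is not identically zero (by translating in $g$), and by admissibility and the tensor product theorem it must be a finite sum of pure tensors of local functionals; if the $\lambda$-component of every local functional is forced to be zero, the whole thing vanishes.

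First I would fix a decomposition $\psi_\calo = \otimes_\nu \psi_{\calo,\nu}$ into local components, noting that each $\psi_{\calo,\nu}$ is a generic character associated to $\calo$ over $K_\nu$ in the sense of the local definition (the Cartan-type condition on the stabilizer is insensitive to the field, being checked after base change to $\overline{K}$, hence to $\overline{K_\nu}$). Then I would invoke the identification $J_{\psi_\calo}(\pi_\nu) = J_{V_2(\calo),\psi_{\calo,\nu}}(\pi_\nu)$ and recall that, via the exact Jacquet functor of \cite{BZ}, the dual $\Hom_{V_2(\calo)_{K_\nu}}(\pi_\nu, \psi_{\calo,\nu})$ is nonzero if and only if $J_{\psi_{\calo,\nu}}(\pi_\nu) \neq 0$. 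So vanishing of the local twisted Jacquet module at $\lambda$ for all generic $\psi_\calo$ is exactly the statement that $\pi_\lambda$ carries no $(V_2(\calo), \psi_{\calo,\lambda})$-functional for any admissible choice of character. Combined with the tensor-product factorization of the global period functional, this kills $F_{\psi_\calo}(\varphi)$ for all $\varphi$ and all generic $\psi_\calo$, i.e.\ $\calo$ does not support $\pi$.

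The main obstacle, and the step requiring the most care, is the tensor-product factorization of the global Fourier coefficient functional: one must justify that the automorphic period $\varphi \mapsto F_{\psi_\calo}(\varphi)(1)$, which is a priori only a functional on the global space $\pi$, actually lies in the algebraic restricted tensor product of the local $\Hom$-spaces — this is where uniqueness or at least finite-dimensionality of local models, or an argument purely from the exactness and compatibility of Jacquet functors with restricted tensor products (so that $J_{\psi_\calo}(\otimes'_\nu \pi_\nu) \cong \otimes'_\nu J_{\psi_{\calo,\nu}}(\pi_\nu)$, with the understanding that almost all local factors are the line fixed by $\overline{K}_\nu$), enters. I would handle this by working at the level of Jacquet modules rather than functionals: apply the (exact) global-to-local principle that a nonzero $V_2(\calo)$-integral against $\psi_\calo$ produces a nonzero vector in $J_{\psi_{\calo,\lambda}}(\pi_\lambda)$ after projecting onto the $\lambda$-factor of the corresponding local Jacquet module of the restricted tensor product, exactly as in the classical automorphic setting; the covering-group case is identical since $V_2(\calo)$ splits in $\overline{\Sp}_{2r}$ and all the relevant unipotent integrations and Jacquet functors are unaffected by the cover, as already noted in Section \ref{Section: Fourier}. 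Everything else is bookkeeping.
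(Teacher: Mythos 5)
The paper gives no proof of this proposition — it is stated as ``well-known'' — so there is nothing to match your argument against; what you have written is the standard local–global argument, and it is correct in substance. Two remarks. First, your observation that the genericity of the local component $\psi_{\calo,\lam}$ follows because the Cartan-type condition on the stabilizer is checked over the algebraic closure is exactly the point that needs saying, and you say it. Second, the one step you should not lean on is the claim that the functional $\varphi\mapsto F_{\psi_\calo}(\varphi)(1)$ is ``a finite sum of pure tensors of local functionals'' by ``admissibility and the tensor product theorem'': that would require finite-dimensionality (or at least some control) of the local spaces $\Hom_{V_2(\calo)_{K_\nu}}(\pi_\nu,\psi_{\calo,\nu})$, which is neither known nor needed here. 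The clean route — which you also sketch and should promote to the main argument — is either (i) pick a pure tensor $\varphi=\otimes_\nu\varphi_\nu$ on which the global period is nonzero and freeze the components away from $\lam$ to produce a single nonzero $(V_2(\calo)_{K_\lam},\psi_{\calo,\lam})$-equivariant functional on $\pi_\lam$, or (ii) equivalently, use that the twisted Jacquet functor at the place $\lam$ commutes with the restricted tensor product, so $J_{\psi_{\calo,\lam}}(\pi)\cong J_{\psi_{\calo,\lam}}(\pi_\lam)\otimes\bigl(\otimes'_{\nu\neq\lam}\pi_\nu\bigr)$, and the nonzero global functional factors through this quotient. Either version closes the argument without any multiplicity or finiteness input.
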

\section{Local Fourier-Jacobi Coefficients}\label{Section: local FJ}
Our main technique for proving the vanishing of Fourier coefficients associated to certain small nilpotent orbits is to study associated local Fourier-Jacobi models. These are local analogues of the Fourier-Jacobi coefficients studied in  \cite{GRS2}, and the functor we study is intimately related to the Fourier-Jacobi map studied by Weissman \cite{W1} as well as the generalized Whittaker models of \cite{GGS}, though this is the first occurrence of this technique in the context of higher-degree covering groups.

Letting $F$ be a non-archimedean local field, we note the natural compatibility between the various covers of $\Sp_{2r}(F)$ arising from a fixed BD-cover induced by the compatibility between the various Hilbert symbols:
\[
(x,y)_{nm}^n = (x,y)_m \qquad \mbox{ for any } x,y \in F^\times.
\]
Thus, there is a canonical covering group morphsim
\begin{align}\label{eqn: cover}
\overline{\Sp}_{2r}(F) \twoheadrightarrow \Mp_{2r}(F),
\end{align}
where $\Mp_{2r}(F)$ is the classical metaplectic group, viewed as a $2$-fold BD-cover of $\Sp_{2r}(F)$.

 Let $(\pi, V)$ be a smooth admissible representation of $\GF$, and let $\calo$ be a nontrivial nilpotent orbit. To $\calo$ we have attached the unipotent subgroups $V_1(\calo)$ and $V_2(\calo)$. It is shown in \cite{GRS2} that the quotient $$V_1(\calo)/\ker(\psi_\calo)= (X\oplus Y )\oplus Z,$$  is a generalized Heisenberg group $\mathcal{H}_{2m+1}\oplus F^k$, for some $m,k\in \zz_{\geq 0}$. Here $Z = V_2(\calo)/\ker(\psi_\calo)$ forms the center and $$V_1(\calo)/V_2(\calo)=X\oplus Y$$ gives the symplectic space of dimension $2m$ over $F$. One sees that there always exists a choice of polarization $X\oplus Y$ such that both isotropic subspaces embed as subgroups of $U$.

Consider the nilpotent orbits $\calo_m = ((2m)1^{2(r-m)})$ for some $1\leq m\leq r.$ We remark that the following construction works for any orbit, but we need to only consider orbits of this form.
We fix the following choice of generic character $\psi_{m,\al}:=\psi_{\calo_m}$: for $v\in V_2(\calo_m)_F$ and $\al\in F^\times$,
\[
\psi_{m,\al}(v) = \psi(v_{1,2}+\cdots+v_{m-1,m}+\al v_{m,2r-m+1}).
\]
We also define the characters $\psi_{m}(v) = \psi(v_{1,2}+\cdots+v_{m-1,m})$ and $\psi_\al(v)=\psi(\al v_{m,2r-m+1})$.

In terms of matrices, the Heisenberg group $H_m=V_1(\calo_m)/\ker(\psi_\al) \cong \mathcal{H}_{2(r-m)+1}$ associated to the orbit $\calo_m$ may be represented by the natural Heisenberg group occurring as a subgroup of $U$ by matrices of the form $ h(x:y:z)$ as in equation (\ref{heisenberg})

In this way, we may identify $\psi_\al$ as a character of the center $$Z(H_m) =\{h(0:0:z) : z\in F\},$$ and form the corresponding oscillator representation
\[
\omega_{\al} =\ind_{YZ(H_m)}^{H_m}(\psi^{-1}_\al),
\]
where $Y=\{h(0:y:0): y \in F^{r-m}\}$ is a Lagrangian subspace. We may identify this as a representation of $V_1(\calo_m)$, and note that twisting by the character $\psi_m^{-1}$ preserves the induction:
\begin{align*}
\omega_{m,\al} = \ind_{YV_2(\calo_m)}^{V_1(\calo_m)}(\psi_\al^{-1})\otimes\psi_m^{-1} \cong \ind_{YV_2(\calo_m)}^{V_1(\calo_m)}(\psi_{m,\al}^{-1})
\end{align*}
Note that the subgroup $\Mp_{2(r-m)}(F)\subset \Mp_{2r}(F)$, corresponding to the full inverse image of $L(\calo_m)_F \cong \Sp_{2(r-m)}(F)$ acts on this oscillator representation.
Precomposing with the projection (\ref{eqn: cover}), we may view $\omega_{m,\al}$ as a representation of $\overline{L(\calo_m)}_F \cong \overline{\Sp}_{2(r-m)}(F)$.

Consider the diagonal action of $V_1(\calo)\rtimes \overline{L(\calo_m)}_F$ on the tensor product $\pi\otimes \omega_{m,\al}$.

\begin{Def}
The {\bf Fourier-Jacobi module} of $\pi$ with respect to orbit $\calo_m$ and character $\psi_{m,\al}$ is the Jacquet module
\[
FJ_{m,\al}(\pi):=J_{V_1(\calo)}(\pi\otimes\omega_{m,\al}).
\]
It is naturally a $\overline{\ep}$-genuine smooth representation of $\overline{L(\calo)}_F \cong\overline{\Sp}_{2(r-m)}(F)$.
\end{Def}

The purpose for introducing Fourier-Jacobi modules rather than working with the Jacquet modules directly stems largely from the following properties.

\begin{Prop}
Set $H=\Stab_{N_{L(\calo_m)}(YV_2(\calo_m))}(\psi_{m,\al})$. As $YV_2(\calo_m)\rtimes H$-representations, we have an isomorphism
\[
FJ_{m,\al}(\pi) \cong J_{YV_2(\calo_m),\psi_{m,\al}}(\pi).
\]
\end{Prop}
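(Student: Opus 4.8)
The plan is to identify both sides with a single twisted Jacquet module of $\pi$, using the standard interplay between compact induction and Jacquet functors; the only inputs needed are the formalism of \cite{BZ} together with the fact that the unipotent groups appearing are unimodular. The $YV_2(\calo_m)\rtimes H$-equivariance is the one point that requires genuine care, and I will carry this action along at each step.

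The first step is to use the realization $\omega_{m,\al}\cong\ind_{YV_2(\calo_m)}^{V_1(\calo_m)}(\psi_{m,\al}^{-1})$ recorded just above, which holds as smooth representations of $V_1(\calo_m)$ and is equivariant for the conjugation action of any subgroup of $\overline{\Sp}_{2r}(F)$ that normalizes both $V_1(\calo_m)$ and $YV_2(\calo_m)$ and fixes $\psi_{m,\al}$. The group $H=\Stab_{N_{L(\calo_m)}(YV_2(\calo_m))}(\psi_{m,\al})$ is, by construction, exactly such a group --- this is the reason for that definition. Applying the projection formula for compact induction to the diagonal action of $V_1(\calo_m)$ then yields a $V_1(\calo_m)\rtimes H$-equivariant isomorphism
\[
\pi\otimes\omega_{m,\al}\;\cong\;\ind_{YV_2(\calo_m)}^{V_1(\calo_m)}\big(\,\pi|_{YV_2(\calo_m)}\otimes\psi_{m,\al}^{-1}\,\big),
\]
with $V_1(\calo_m)$ acting by right translation on the induced model.

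The second step is to apply $J_{V_1(\calo_m)}$ and invoke the Shapiro-type identity for Jacquet functors: for closed subgroups $U_2\subseteq U_1$ of an $l$-group and a smooth $U_2$-module $M$ one has a natural isomorphism $J_{U_1}(\ind_{U_2}^{U_1}M)\cong M_{U_2}$, compatible with the action of any group normalizing $U_1$ and $U_2$. With $U_1=V_1(\calo_m)$ and $U_2=YV_2(\calo_m)$ both unipotent --- hence unimodular, with $U_2\backslash U_1$ admitting an invariant measure --- there is no modular-character correction, and taking $M=\pi|_{YV_2(\calo_m)}\otimes\psi_{m,\al}^{-1}$ gives
\[
FJ_{m,\al}(\pi)=J_{V_1(\calo_m)}(\pi\otimes\omega_{m,\al})\;\cong\;\big(\,\pi|_{YV_2(\calo_m)}\otimes\psi_{m,\al}^{-1}\,\big)_{YV_2(\calo_m)}\;=\;J_{YV_2(\calo_m),\psi_{m,\al}}(\pi),
\]
where the final equality is the tautological identification of the untwisted coinvariants of $\pi\otimes\psi_{m,\al}^{-1}$ with the $\psi_{m,\al}$-twisted coinvariants of $\pi$; under this identification $YV_2(\calo_m)$ acts by the scalar $\psi_{m,\al}$ and $H$ acts by its natural conjugation action on the Jacquet module of $\pi$. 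Since every isomorphism above was chosen $H$-equivariantly, the composite is an isomorphism of $YV_2(\calo_m)\rtimes H$-representations, and it is $\overline{\ep}$-genuine because $\pi$ is and the remaining operations ($\ind$, coinvariants, tensoring with the honest representation $\omega_{m,\al}$) preserve genuineness.

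The step I expect to be the main obstacle is this equivariance bookkeeping rather than any one displayed isomorphism. Concretely one must verify: that $H$ normalizes the Lagrangian $Y$ (so $YV_2(\calo_m)$ is $H$-stable) and fixes $\psi_{m,\al}$, which is precisely the definition of $H$; that the $H$-action on $\omega_{m,\al}$ coming from its structure as a module over $V_1(\calo_m)\rtimes\overline{L(\calo_m)}_F$ agrees with the conjugation action used in the projection formula, where the chosen normalization of $\omega_{m,\al}$ (including the twist by $\psi_m^{-1}$) is exactly what is needed; and that the $YV_2(\calo_m)$-action on $FJ_{m,\al}(\pi)$ transported through the chain of isomorphisms really is $\psi_{m,\al}$ and is compatible with the $H$-action, so that the two assemble into an action of the semidirect product. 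All of these are routine once the picture is set up, and no stray modular characters intervene because every subgroup between $YV_2(\calo_m)$ and $V_1(\calo_m)$ is unipotent. (Alternatively, one can be fully explicit: model $\omega_{m,\al}$ as functions on $YV_2(\calo_m)\backslash V_1(\calo_m)$ transforming on the left by $\psi_{m,\al}^{-1}$, send $\xi\otimes f$ to $\int_{YV_2(\calo_m)\backslash V_1(\calo_m)}f(v)\,\pi(v)\xi\,dv$ interpreted in $J_{YV_2(\calo_m),\psi_{m,\al}}(\pi)$, check it factors through $V_1(\calo_m)$-coinvariants, and verify directly that the resulting map is an isomorphism of $YV_2(\calo_m)\rtimes H$-modules.)
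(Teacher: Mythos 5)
Your proposal is correct and follows essentially the same route as the paper: both start from the model $\omega_{m,\al}\cong\ind_{YV_2(\calo_m)}^{V_1(\calo_m)}(\psi_{m,\al}^{-1})$ and then collapse $J_{V_1(\calo_m)}(\pi\otimes\ind(\psi_{m,\al}^{-1}))$ to the twisted Jacquet module $J_{YV_2(\calo_m),\psi_{m,\al}}(\pi)$, tracking the $H$-action throughout. The only difference is presentational: the paper cites a single "Frobenius reciprocity" lemma from \cite{GGS} where you unpack it into the projection formula plus the Shapiro-type identity for Jacquet functors of compactly induced modules, which is a legitimate and slightly more self-contained way to say the same thing.
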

\begin{proof}  Restricting $FJ_{m,\al}(\pi)$ to $YV_2(\calo_m)\rtimes H$, the action is preserved under Frobenius reciprocity. Therefore, applying Frobenius reciprocity \cite[Lemma 2.3.6]{GGS}, we obtain
\begin{align*}
FJ_{m,\al}(\pi) 		&= J_{V_1(\calo_m)}\left(\pi\otimes\ind_{YV_2(\calo_m)}^{V_1(\calo_m)}(\psi^{-1}_{m,\al})\right)\\
				&\cong J_{YV_2(\calo_m)}\left(\pi\otimes \psi^{-1}_{m,\al}\right).
\end{align*}
We are done, since as $H$-representations,
\[
J_{YV_2(\calo_m)}\left(\pi\otimes \psi^{-1}_{m,\al}\right)\cong J_{YV_2(\calo_m),\psi_{m,\al}}(\pi).\qedhere
\]
\end{proof}
In particular, we have that the standard Borel subgroup $B_{\calo_m}\subset L(\calo_m)_F$ is contained in $H=\Stab_{N_{L(\calo_m)}(YV_2(\calo_m))}(\psi_{m,\al})$, allowing us to conclude the following.
\begin{Prop}\label{Prop: FJ property}
Let $V\subset B_{\calo_m}$ be a unipotent subgroup of $\overline{L(\calo_m)}_F$, and suppose that $\psi_V$ is a character of $V$. Then we have 
\[
J_{V, \psi_V}(FJ_{m,\al}(\pi)) \cong J_{V,\psi_V}(J_{YV_2(\calo_m),\psi_{m,\al}}(\pi)).
\]
\end{Prop}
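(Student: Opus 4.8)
The plan is to read the statement off almost immediately from the preceding proposition, together with the observation recorded just before it that $B_{\calo_m}$ is contained in $H:=\Stab_{N_{L(\calo_m)}(YV_2(\calo_m))}(\psi_{m,\al})$. The preceding proposition furnishes an isomorphism
\[
FJ_{m,\al}(\pi)\;\cong\;J_{YV_2(\calo_m),\psi_{m,\al}}(\pi)
\]
not merely of vector spaces but of $YV_2(\calo_m)\rtimes H$-representations. Since $V\subset B_{\calo_m}$ (and unipotent subgroups lift canonically through the covering map), $V$ is a subgroup of $H$; hence the displayed isomorphism is in particular an isomorphism of genuine $V$-representations, where on the left $V$ acts through the identification of $FJ_{m,\al}(\pi)$ with a representation of $\overline{L(\calo_m)}_F\cong\overline{\Sp}_{2(r-m)}(F)$.

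Then I would invoke exactness and, more to the point, functoriality of the twisted Jacquet functor $J_{V,\psi_V}$ (as recalled in the local setup following \cite{BZ}): applying it to both sides of the $V$-equivariant isomorphism above yields
\[
J_{V,\psi_V}\bigl(FJ_{m,\al}(\pi)\bigr)\;\cong\;J_{V,\psi_V}\bigl(J_{YV_2(\calo_m),\psi_{m,\al}}(\pi)\bigr),
\]
which is exactly the asserted identity. No integral computation or further structural input is needed.

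The only genuinely delicate point — a matter of bookkeeping rather than mathematics, and the step I would be most careful about — is to verify that the $V$-action on $FJ_{m,\al}(\pi)$ entering the left-hand side coincides, under the isomorphism of the preceding proposition, with the $V\subset H$-action on $J_{YV_2(\calo_m),\psi_{m,\al}}(\pi)$ entering the right-hand side. This is checked by tracing through the Frobenius reciprocity isomorphism \cite[Lemma 2.3.6]{GGS} used in the proof of the preceding proposition: that isomorphism is equivariant for the normalizer of $YV_2(\calo_m)$ inside $\overline{L(\calo_m)}_F$, and $V$ lies in this normalizer precisely because $V\subset B_{\calo_m}\subset H$. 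Once the two actions are matched, the conclusion is immediate, since no rewriting of the right-hand side occurs — the functor $J_{V,\psi_V}$ is simply applied to the representation $J_{YV_2(\calo_m),\psi_{m,\al}}(\pi)$ as written.
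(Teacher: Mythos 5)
Your proposal is correct and follows exactly the paper's route: the paper deduces this proposition immediately from the preceding one by observing that $B_{\calo_m}\subset H=\Stab_{N_{L(\calo_m)}(YV_2(\calo_m))}(\psi_{m,\al})$, so the $YV_2(\calo_m)\rtimes H$-equivariant isomorphism $FJ_{m,\al}(\pi)\cong J_{YV_2(\calo_m),\psi_{m,\al}}(\pi)$ is in particular $V$-equivariant, and one applies $J_{V,\psi_V}$ to both sides. Your extra care about matching the two $V$-actions through the Frobenius reciprocity isomorphism is precisely the equivariance content already packaged into the preceding proposition, so nothing further is needed.
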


Note that the case that $V = \{I\}$ along with local root exchange implies the following local version of \cite[Lemma 1.1]{GRS2}:

\begin{Cor}\label{Cor: local equiv}
As representations of $V_2(\calo_m)_F$, we have isomorphisms
\[
FJ_{m,\al}(\pi)\cong J_{YV_2(\calo_m),\psi_{m,\al}}(\pi) \cong J_{V_2(\calo_m),\psi_{\calo_m}}(\pi)
\]
\end{Cor}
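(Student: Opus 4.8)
The plan is to chain together the identifications already established, filling in only the one genuinely new ingredient: passing from the model $J_{YV_2(\calo_m),\psi_{m,\al}}(\pi)$, in which only the Lagrangian half $Y$ of the Heisenberg group has been integrated out, to the coefficient $J_{V_2(\calo_m),\psi_{\calo_m}}(\pi)$, attached to the orbit $\calo_m$ itself (recall $\psi_{\calo_m}=\psi_{m,\al}$ as a character of $V_2(\calo_m)$ once $\al$ is fixed, so the second isomorphism is really a statement comparing the $Y$-integral with no integral on that complementary unipotent piece). First I would take Proposition \ref{Prop: FJ property} with $V=\{I\}$: this immediately gives $FJ_{m,\al}(\pi)\cong J_{YV_2(\calo_m),\psi_{m,\al}}(\pi)$ as representations of $V_2(\calo_m)_F$, which is the first isomorphism in the statement (and is just the content of the earlier proposition restricted to the trivial unipotent subgroup, since the center $Z(H_m)\cong V_2(\calo_m)/\ker\psi_{\calo_m}$ still acts). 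So the only work is the second isomorphism.

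For that, I would invoke local root exchange (Section \ref{Section: Root Exchange}) in its standard form. The group $V_1(\calo_m)/\ker(\psi_\al)$ is the Heisenberg group $\mathcal H_{2(r-m)+1}$ with polarization $X\oplus Y$ over the center $Z=V_2(\calo_m)/\ker\psi_\al$, and both $X$ and $Y$ embed as unipotent subgroups of $U\subset\Sp_{2r}(F)$. The pair $(X,Y)$, together with the character $\psi_{\calo_m}$ on $V_2(\calo_m)$, satisfies the hypotheses of the root-exchange lemma: $[X,Y]\subset V_2(\calo_m)$, the character $\psi_{\calo_m}$ is nontrivial on $[x,y]$ in a nondegenerate way (this is exactly the nondegeneracy built into $\calo_m$ being the orbit $((2m)1^{2(r-m)})$ with $\al\neq 0$), and $X$, $Y$ normalize $V_2(\calo_m)$ and stabilize $\psi_{\calo_m}$. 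Root exchange then yields an isomorphism of Jacquet modules
\[
J_{YV_2(\calo_m),\psi_{\calo_m}}(\pi)\cong J_{XV_2(\calo_m),\psi_{\calo_m}}(\pi)
\]
and more to the point an isomorphism with $J_{V_2(\calo_m),\psi_{\calo_m}}(\pi)$ once one notes that the quotient $J_{V_2(\calo_m),\psi_{\calo_m}}$ of $\pi$ already kills the action of the remaining half after twisting — i.e. the root-exchange lemma says precisely that integrating over the Lagrangian $Y$ does not change the isomorphism class of the twisted Jacquet module attached to $V_2(\calo_m)$. This is the local counterpart of \cite[Lemma 1.1]{GRS2}, and the argument is formally identical to that reference once one is working in the category $Rep(G)$ with $J_{U,\chi}$ in place of the integral; the only thing to check is that the relevant commutator relations among root subgroups of $\Sp_{2r}$ are as claimed, which is a direct matrix computation with the $h(x:y:z)$ of \eqref{heisenberg}.

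The main obstacle — really the only subtlety — is bookkeeping in the covering group: one must make sure that root exchange is valid on $\GF$ and not merely on $\Sp_{2r}(F)$. This is fine because the subgroups $X$, $Y$, $V_2(\calo_m)$ are all unipotent, hence split canonically in the BD-cover (the cocycle is trivial on unipotent subgroups), so the commutator identities and the Fourier-analytic manipulation underlying root exchange take place inside a genuine copy of the unipotent radical and are insensitive to the cover; the genuineness of $FJ_{m,\al}(\pi)$ as a $\overline{\Sp}_{2(r-m)}(F)$-representation is unaffected since $Y$ and $V_2(\calo_m)$ lie in the kernel of the cover's restriction. With that observed, the three terms are identified and the corollary follows. $\qed$
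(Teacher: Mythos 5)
Your proposal is correct and follows the paper's own (very terse) justification exactly: the first isomorphism is Proposition \ref{Prop: FJ property} with $V=\{I\}$ restricted to $V_2(\calo_m)_F$, and the second is the local root exchange Lemma \ref{Lem: local root exchange} applied to the polarization $X\oplus Y$ of the Heisenberg quotient of $V_1(\calo_m)$, i.e.\ the local analogue of \cite[Lemma 1.1]{GRS2}. Your added remark on the canonical splitting of unipotent subgroups in the BD-cover is a reasonable supplement, not a deviation.
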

Thus, Fourier-Jacobi modules are isomorphic as $V_2(\calo)$-modules to the Jacquet module associated to $\calo_m$, implying that we may study one to ascertain vanishing of the other. The benefit of the Fourier-Jacobi modules is the smooth admissible $\overline{L(\calo_m)}_F$-action. This action allows us to apply the following observation:
\begin{Prop}\label{Prop: super unram}
Suppose that $\mathbb{G}(F) =G$ is a reductive group over a non-archimedean local field $F$ containing a full set of $n^{th}$ roots of unity $\mu_n(F)$, and suppose that $\Gn$ is a central extension of $G$ by $\mu_n(F)$ arising from $\overline{\mathbb{G}}\in CExt(\mathbb{G},\mathbb{K}_2)$. Let $(\pi,V)$  be a non-zero smooth admissible representation of $\overline{G}$. If $\pi$ is unramified, then $\pi$ is not supercuspidal.
\end{Prop}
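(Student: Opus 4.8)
The plan is to reduce the claim to the assertion that $\pi$ has a nonzero Jacquet module along the Borel subgroup. Write $B = TU$ for the Borel of $\mathbb{G}$ with unipotent radical $U$, and recall that $\pi$ is supercuspidal precisely when $J_{V_P}(\pi) = 0$ for the unipotent radical $V_P$ of every proper parabolic $P$; since $B$ is a proper parabolic unless $\mathbb{G}$ is a torus (a degenerate case we may exclude --- in the application $\mathbb{G} = \Sp_{2r}$ is semisimple), it suffices to show $J_U(\pi) \neq 0$.

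To prove this I would use the fixed splitting $\kappa : K \to \Gn$ over the hyperspecial maximal compact. It restricts to a splitting over an Iwahori subgroup $I \subseteq K$ adapted to $B$, and, since the cover splits canonically over unipotent subgroups and $\kappa$ restricts compatibly, the subgroups $\kappa(I)$ and $\kappa(K)$ acquire Iwahori factorizations with respect to $B$, e.g.\ $\kappa(I) = \kappa(I\cap\overline{U})\,\kappa(I\cap T)\,\kappa(I\cap U)$ with $\overline{U}$ the opposite unipotent radical. Because $\pi$ is unramified and $\kappa(I)\subseteq\kappa(K)$, we have $V^{\kappa(I)}\supseteq V^{\kappa(K)}\neq 0$. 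Now I would invoke the covering-group analogue of Casselman's theorem on Iwahori-fixed vectors: for any smooth admissible $(\pi,V)$ the canonical surjection $V \twoheadrightarrow J_U(\pi)$ restricts to an isomorphism
\[
V^{\kappa(I)} \;\xrightarrow{\ \sim\ }\; J_U(\pi)^{\,\overline{T}_0},\qquad \overline{T}_0 := \overline{T}\cap\kappa(I).
\]
Hence $J_U(\pi)^{\overline{T}_0}\neq 0$, so $J_U(\pi)\neq 0$ and $\pi$ is not supercuspidal.

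The one step that genuinely requires care is porting Casselman's Jacquet-module machinery --- the Iwahori factorization of $\kappa(K)$ and the behaviour of $\kappa(I)$-invariants under $J_U$ --- to $\Gn$. This should be routine: Casselman's arguments are purely structural, relying only on the Iwahori factorization and on the contracting action of dominant elements of $T$ on $U$, and are unaffected by the central $\mu_n(F)$; the exactness and adjunction formalism for $J_U$ on $\Rep(\Gn)$ is an instance of the $l$-group theory of \cite{BZ}. If one wishes to avoid even this, two alternatives of comparable weight are available. (i) A matrix-coefficient argument: taking $\mathbb{G}$ semisimple so that $Z(\Gn)$ is compact, if $\pi$ were supercuspidal its matrix coefficients would be compactly supported, and Harish-Chandra's orthogonality relations would exhibit the nonzero bi-$\kappa(K)$-invariant coefficients of $\pi$ as a nonzero direct summand (in fact an ideal) of the genuine spherical Hecke algebra $\mathcal{H}(\Gn/\!\!/K,\ep)$ --- impossible, since by Satake theory for covers this algebra is commutative and embeds in the group algebra of a lattice, hence is an integral domain and has no nontrivial idempotents. (ii) A cuspidal-support argument: $\pi$ unramified forces $\pi$ to be a subquotient of an unramified genuine principal series $\Ind(\overline{\chi})$, and the geometric lemma for covers shows that any such subquotient has cuspidal support attached to the proper Levi $\overline{T}$, whereas a supercuspidal representation of $\Gn$ has cuspidal support $(\Gn,\pi)$ --- a contradiction.
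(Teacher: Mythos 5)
Your argument is correct, but it takes a genuinely different route from the paper. You reduce the claim to $J_U(\pi)\neq 0$ and get this from the covering-group analogue of the Borel--Casselman theorem: the splitting $\kappa$ over $K$ restricts to an Iwahori subgroup, $V^{\kappa(K)}\neq 0$ forces $V^{\kappa(I)}\neq 0$, and the isomorphism $V^{\kappa(I)}\cong J_U(\pi)^{\overline{T}_0}$ then gives a nonzero Jacquet module along the Borel, hence along every standard parabolic by transitivity. The paper instead argues in two steps: first, the Satake isomorphism for covers (citing McNamara and Gan--Gao) shows that every \emph{irreducible} unramified genuine representation embeds in an unramified principal series and so is not cuspidal; second, the Bernstein--Zelevinsky decomposition $\pi=\pi_c\oplus\pi_c^{\perp}$ (with $\pi_c$ cuspidal and $\pi_c^{\perp}$ having no cuspidal subquotients) together with the existence of a spherical composition factor forces $\pi_c^{\perp}\neq 0$, so $\pi$ is not cuspidal. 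Your alternative (ii) is essentially this argument. The trade-off: your main route handles the non-irreducible admissible case in one stroke and avoids the $\pi_c\oplus\pi_c^{\perp}$ decomposition, but it leans on transporting the full Casselman machinery (injectivity of $V^{\kappa(I)}\to J_U(\pi)$, not just Jacquet's surjectivity lemma) to $\Gn$ --- true and standard in the tame case, but a heavier unproved input than the Satake-plus-BZ facts the paper quotes. One shared caveat, which you correctly flag: the statement as given is vacuously false for $\mathbb{G}$ a torus, so one must (as both you and, implicitly, the paper do) assume $\mathbb{G}$ has a proper parabolic, which holds in all applications.
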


\begin{proof}
The Satake isomorphism holds in this context and, as in \cite{McN} and \cite{GG}, one still has that if $(\tau, W)$ is an irreducible unramified representation, then there exists an unramified character $\chi: Z(\tilde{T})\to \cc^\times$ such that 
\[
\Hom_{\overline{G}}(\tau, \Ind^{\overline{G}}_{\overline{B}}(\rho(\chi)))\neq 0.
\]
In particular, $\tau$ cannot be cuspidal.
By \cite[Theorem 2.4.a]{BZ}, which is proven for reductive groups, but the proof goes through without change for metaplectic covers once we replace parabolic subgroups with their covers, we have that our representation decomposes 
\[
\pi = \pi_c\oplus \pi_c^\perp,
\] 
where $\pi_c$ is a cuspidal representation, and $\pi_c^\perp$ has no nontrivial cuspidal sub-quotients.  Since$(\pi,V)$ is unramified, it has at least one spherical composition factor $\tau_\pi$. This forces $\pi_c^{\perp}\neq 0$, and hence $\pi$ is not itself cuspidal. 
\end{proof}

\section{Root Exchange}\label{Section: Root Exchange}
\subsection{Global Root Exchange}
In this section we will introduce the statement of the general lemma on (global) root exchange. The main reference for this is \cite{GRS1}. Let $K$ be a number field with adele ring $\A$. Fix a unipotent subgroup $C\subset \G$, and let $\psi_C$ be a non-trivial character of $C(K)\bs C(\A)$.

Suppose there exist two unipotent $F$-subgroups $X,Y\subset \G$ satisfying the following six properties:
\begin{enumerate}
\item $X$ and $Y$ normalize $C$,
\item $X\cap C$ and $Y\cap C$ are normal in $X$ and $Y$, respectively, and $X\cap C\bs X$, $Y\cap C\bs Y$ are both abelian,
\item $X(\A)$ and $Y(\A)$, acting via conjugation, preserve $\psi_C$,
\item $\psi_C$ is trivial on $(X\cap C)(\A)$ and $(Y\cap C)(\A)$,
\item $[X,Y]\subset C$,
\item The pairing $(X\cap C)(\A)\bs X(\A)\times (Y\cap C)(\A)\bs Y(\A)\to \cc^\times$, given by
\[
(x,y) \mapsto \psi_C([x,y]),
\]
is multiplicative in each coordinate , non-degenerate, and identifies $(Y\cap C)(K)\bs Y(F)$ with the dual of $X(K)(X\cap C)(\A)\bs X(\A)$, and vice versa.
\end{enumerate} 
With this set up, set $B=YC$, $D=XC$ and extend $\psi_C$ trivially to both $B(K)\bs B(\A)$ and $D(K)\bs D(\A)$, referring to each separately as $\psi_B$ and $\psi_D$. Finally, set $A=BX=DY$.
\begin{Lem}\label{Lem: root exchange}
Suppose that the quadruple $(C,\psi_C, X, Y)$ satisfy the above criteria. If $f$ is an automorphic form on $\G$ of uniform moderate growth, then
\begin{equation}\label{eqn: int C}
\displaystyle \int_{[C]}f(cg)\psi_C^{-1}(c)dc \equiv 0, \quad \forall g\in A(\A)
\end{equation}
if and only if 
\begin{equation}\label{eqn: int B}
\displaystyle \int_{[B]}f(vg)\psi_B^{-1}(v)\,dv \equiv 0, \quad \forall g\in A(\A) 
\end{equation}
if and only if 
\begin{equation}\label{eqn: int D}
\displaystyle \int_{[D]}f(ug)\psi_D^{-1}(u)\,du \equiv 0, \quad \forall g\in A(\A),
\end{equation}
\end{Lem}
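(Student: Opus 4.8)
The plan is to prove the equivalence of \eqref{eqn: int C} and \eqref{eqn: int B}; the equivalence with \eqref{eqn: int D} then follows formally, since hypotheses (1)--(6) are symmetric under interchanging $X\leftrightarrow Y$, which swaps $B=YC$ and $D=XC$ while fixing $C$, $\psi_C$ and $A$ (note that $[X,Y]\subset C$ gives $A=YCX=XCY$). So it is enough to treat the pair $(C,B)$.

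First I would introduce the auxiliary function $\phi(h):=\int_{[C]}f(ch)\psi_C^{-1}(c)\,dc$ and check, using (1), (3) and (4), that $\phi$ is left-invariant under $X(K)$, $Y(K)$, $(X\cap C)(\A)$ and $(Y\cap C)(\A)$: in each case this follows from a single change of variables in the $c$-integral (a conjugation $c\mapsto\gamma c\gamma^{-1}$, resp. a translation) together with the automorphy of $f$ and the $\psi_C$-invariance guaranteed by (3) or (4). Using the exact sequence $1\to C\to B\to B/C\to 1$, the identification $B/C\cong Y/(Y\cap C)$, and the standard measure decomposition for unipotent groups, I would then rewrite
\[
\int_{[B]}f(vg)\psi_B^{-1}(v)\,dv=\int_{G_Y}\phi(yg)\,dy,\qquad G_Y:=Y(K)(Y\cap C)(\A)\backslash Y(\A),
\]
where $G_Y$ is a \emph{compact abelian} group by (2). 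This already yields \eqref{eqn: int C}$\Rightarrow$\eqref{eqn: int B}, since $Y(\A)\subset A(\A)$ forces $yg\in A(\A)$ whenever $g\in A(\A)$.

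The substantive direction is the converse, which I would obtain by Fourier analysis on $G_Y$. By (6), the Pontryagin dual $\widehat{G_Y}$ is identified with the discrete group $\Gamma_X:=X(K)/(X\cap C)(K)$ through the pairing $(\gamma,y)\mapsto\psi_C([x_\gamma,y])$, with $x_\gamma\in X(K)$ any representative of $\gamma$ — the multiplicativity and nondegeneracy in (6) are exactly what make this a well-defined identification. A short commutator computation, using $[X,Y]\subset C$ from (5) together with (3), shows that for $x\in X(K)\subset\G_m(K)$ one has $\phi(y\,x\,h)=\psi_C([x^{-1},y])\,\phi(yh)$; integrating this identity over $y\in G_Y$ identifies each Fourier coefficient of $y\mapsto\phi(yg)$ with a value of \eqref{eqn: int B} at a point of the form $x g$ with $x\in X(K)$. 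Expanding,
\[
\phi(yg)=\sum_{\gamma\in\Gamma_X}\Bigl(\int_{[B]}f(v\,x_\gamma\,g)\,\psi_B^{-1}(v)\,dv\Bigr)\,\psi_C([x_\gamma,y]),
\]
so if \eqref{eqn: int B} holds identically on $A(\A)$ then, since each $x_\gamma\in X(\A)\subset A(\A)$, every coefficient vanishes; specializing to $y=e$ gives $\phi(g)=0$ for all $g\in A(\A)$, which is \eqref{eqn: int C}.

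The step I expect to require the most care is justifying the Fourier expansion itself: one must know that $y\mapsto\phi(yg)$ is smooth on the compact group $G_Y$ so the series converges, and uniformly enough in $g$ to interchange the summation with the defining $[C]$-integral. This is precisely where the hypothesis that $f$ has uniform moderate growth enters, and I would appeal to \cite{GRS1} for these analytic points; the remaining ingredients are routine bookkeeping with commutators and Haar measures on unipotent groups.
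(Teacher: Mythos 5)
Your argument is correct and is exactly the standard Fourier-expansion proof of the root-exchange lemma; the paper itself gives no proof but simply cites \cite[Section 7.1]{GRS1}, where this same argument (reduction to the pair $(C,B)$ by symmetry, invariance properties of $\phi$, and Fourier expansion of $\phi(\cdot\,g)$ on the compact group $G_Y$ with coefficients given by translates of the $[B]$-integral by $X(K)$) appears. The one analytic point is even milder than you suggest: you only need injectivity of the Fourier transform for continuous functions on a compact abelian group, not convergence of the series, so smoothness of $f$ already suffices there.
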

\begin{proof}
See \cite[Section7.1]{GRS1}.
\end{proof}
\subsection{Local Root Exchange}
Suppose now that $F$ is a non-archimedean local field, and let $\G$ be the $F$-rational points of a split algebraic group over $F$ or finite cover thereof. As in the global setting,  we consider unipotent subgroups $C, X, Y\subset \G$, and a nontrivial character
\[
\psi_C:C\to\cc^\times.
\] 
As before, set $B=YC$, $D=XC$,  and $A=BX=DY$. We also assume that these subgroups satisfy the local analogue of the six properties preceding Lemma \ref{Lem: root exchange} (For a precise statement and proof, see \cite[Section 6.1]{C}).
\begin{Lem}\label{Lem: local root exchange}
Suppose that $\pi$ is a smooth representation of $A$, and extend the character $\psi_C$ trivially to $\psi_B$ on $B$ and $\psi_D$ on $D$. Then we have an isomorphism of $C$-modules
\[
J_{B,\psi_B}(\pi)\cong J_{D,\psi_D}(\pi).
\]
Moreover,
\[
J_{C,\psi_C}(\pi)=0\iff J_{B,\psi_B}(\pi)=0\iff  J_{D,\psi_D}(\pi)=0.
\]
\end{Lem}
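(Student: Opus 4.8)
The statement is the local analogue of the global root exchange lemma (Lemma~\ref{Lem: root exchange}), so the plan is to mimic its proof, replacing period integrals by Jacquet functors and the Fourier-analytic argument on $[C]$ by the algebraic structure of induced representations. The key point is that $B = YC$ and $D = XC$ differ only by exchanging the roles of $X$ and $Y$, and both $X \cap C \backslash X$ and $Y \cap C \backslash Y$ are abelian with a perfect, $\psi_C$-compatible pairing via $(x,y) \mapsto \psi_C([x,y])$. First I would record the factorization $A = BX = DY$ and the fact that, because $[X,Y] \subset C$ and $X, Y$ normalize $C$, the group $A$ has a two-sided decomposition that lets us compute $J_{A, \psi_A}$ by iterated Jacquet functors: $J_{A, \psi_A}(\pi) \cong J_{X}(J_{B, \psi_B}(\pi))$ and likewise with the roles of $X$ and $Y$, $B$ and $D$ swapped, where the inner character is $\psi_B$ (resp.\ $\psi_D$) and the outer functor is the plain (un-twisted) Jacquet functor along $X$ (resp.\ $Y$) since $\psi_C$ is trivial on $X \cap C$ and on $Y \cap C$.

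\textbf{Main steps.} The heart of the matter is the ``exchange'' isomorphism $J_{B, \psi_B}(\pi) \cong J_{D, \psi_D}(\pi)$ as $C$-modules. To prove it I would show that \emph{both} sides are isomorphic to $J_{A, \psi_A}(\pi')$ for an appropriate auxiliary object, or more directly argue as follows. Start from $J_{B, \psi_B}(\pi)$ and apply the plain Jacquet functor along $X$; by the non-degeneracy of the pairing in hypothesis~(6), the quotient $(X \cap C)\backslash X$ is dual to $(Y \cap C)\backslash Y$, so taking $X$-coinvariants of a module on which $C$ acts through $\psi_C$ forces, after Frobenius reciprocity/the geometric lemma of Bernstein--Zelevinsky, a twist by $\psi_C$ along the $Y$-direction to reappear. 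Concretely, one shows $J_X(J_{B,\psi_B}(\pi)) \cong J_{A, \psi_A}(\pi)$ and symmetrically $J_Y(J_{D,\psi_D}(\pi)) \cong J_{A,\psi_A}(\pi)$, and then that the $C$-module structure on $J_{B,\psi_B}(\pi)$ can be recovered from $J_{A,\psi_A}(\pi)$ by inducing back up along the abelian quotient $(Y\cap C)\backslash Y$ — this is exactly the content of \cite[Section 6.1]{C}, whose hypotheses we have assumed, so I would cite that for the precise bookkeeping. This yields the first displayed isomorphism. The chain of equivalences $J_{C,\psi_C}(\pi) = 0 \iff J_{B,\psi_B}(\pi) = 0 \iff J_{D,\psi_D}(\pi) = 0$ then follows formally: $J_{B,\psi_B}$ and $J_{D,\psi_D}$ are obtained from $J_{C,\psi_C}$ by applying further (exact) Jacquet functors along $Y$ and $X$ respectively (using that $\psi_C$ extends trivially and that $X, Y$ preserve $\psi_C$), so vanishing of $J_{C,\psi_C}(\pi)$ implies vanishing of the other two; conversely, by the Stone--von Neumann-type computation above a nonzero $J_{C,\psi_C}(\pi)$ produces a nonzero $J_{B,\psi_B}(\pi)$ (and $J_{D,\psi_D}(\pi)$) because the induction along the abelian quotient of a nonzero module is nonzero.

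\textbf{Expected obstacle.} The routine-but-delicate part is verifying that the Bernstein--Zelevinsky formalism (exactness of Jacquet functors, Frobenius reciprocity \cite[Lemma 2.3.6]{GGS}, and the behavior of twisted Jacquet modules under a chain of nested unipotent subgroups) goes through verbatim for the covering group $\GF$ rather than for a linear reductive group. As noted already in the proof of Proposition~\ref{Prop: super unram}, these structural results for $l$-groups hold for BD-covers once parabolics are replaced by their covers and one works in the category of genuine representations, since the unipotent subgroups $C, X, Y$ lift canonically (the cover splits over unipotents). The genuinely new input relative to \cite{GRS1} is simply that the ambient group is a cover, and since all the subgroups in play are unipotent — where the covering is trivial — no new phenomena arise; the main work is therefore organizational, keeping track of which functor is twisted and which is not, and invoking \cite[Section 6.1]{C} for the precise local root-exchange mechanism.
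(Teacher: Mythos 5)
The paper does not actually prove this lemma: it states the hypotheses as "the local analogue of the six properties preceding Lemma \ref{Lem: root exchange}" and refers to \cite[Section 6.1]{C} for both the precise statement and the proof, so your decision to defer the final bookkeeping to \cite{C} matches the paper exactly. Your overall architecture — iterated Jacquet functors along $A=BX=DY$, the duality of $(X\cap C)\backslash X$ and $(Y\cap C)\backslash Y$ under $(x,y)\mapsto\psi_C([x,y])$, and the observation that nothing new happens on the cover because all the groups involved are unipotent and the cover splits over them — is also the right frame.

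However, the justification you give for the one genuinely non-trivial implication is wrong as written. The easy direction is that $J_{B,\psi_B}(\pi)$ and $J_{D,\psi_D}(\pi)$ are further coinvariants of $J_{C,\psi_C}(\pi)$, so they vanish if it does. For the converse you assert that a nonzero $J_{C,\psi_C}(\pi)$ yields a nonzero $J_{B,\psi_B}(\pi)$ "because the induction along the abelian quotient of a nonzero module is nonzero." But $J_{B,\psi_B}=J_{(Y\cap C)\backslash Y}\circ J_{C,\psi_C}$ is a coinvariants (quotient) functor, not an induction, and coinvariants of a nonzero smooth module along a unipotent group can perfectly well vanish — that is exactly what happens for supercuspidal representations, and producing precisely such vanishing is the whole point of Section \ref{Section: vanishing} of this paper. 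The correct mechanism is Stone--von Neumann: hypotheses (1)--(6) make $\mathcal{X}\times\mathcal{Y}\times(C/\ker\psi_C)$ into a Heisenberg group acting on $J_{C,\psi_C}(\pi)$ with central character $\psi_C$, so $J_{C,\psi_C}(\pi)\cong\omega_{\psi}\otimes T$ with $T$ a multiplicity space, and the coinvariants of $\omega_{\psi}\cong\mathcal{S}(\mathcal{X})$ along either Lagrangian (with $\mathcal{Y}$ acting through the dual characters and $\mathcal{X}$ by translation) are one-dimensional. Hence $J_{B,\psi_B}(\pi)\cong T\cong J_{D,\psi_D}(\pi)$, which gives the displayed isomorphism of $C$-modules and all three vanishing equivalences in one stroke. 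You gesture at "the Stone--von Neumann-type computation above," but the computation you actually describe (Frobenius reciprocity plus the geometric lemma) is not it; without the Heisenberg decomposition the converse implication is unproved.
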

\subsection{How one uses root exchange}

There are many contexts in which the above lemmas are useful, but we pause here to introduce a formalism for a particularly common application. We illustrate this application globally. Let $C$ and $\psi_C$ be as above, only now we assume that $C\subset U_r$. For a positive root $\be$, let $U_\be\subset U_r$ be the corresponding root subgroup. \\

Assume that $x_\be(t)\in C(\A)$ for all $t$ such that the induced character
\[
\mathbb{G}_a(K)\bs\mathbb{G}_a(\A)\xrightarrow{x_\be}C(K)\bs C(\A) \xrightarrow{\psi_C} \cc^\times
\]
is nontrivial. Let $\al, \ga \in \Phi$ such that the following properties hold:
\begin{enumerate}
\item Set $\Phi_C = \{\ga : x_\ga(t) \subset C\}$. Then for all $i,j\in \zz_{>0}$ such that $i\al+j\ga \in \Phi$, 
we have $$i\al+j\ga \in \Phi_C,$$
\item For all $\mu \in \Phi_C$ and all $i,j\in \zz_{>0}$ such that $i\al+j\mu \in \Phi$, we have $$i\al+j\mu \in \Phi_C,$$ and likewise for $\ga$ and $\mu$.
\item We have $$\be =\al+\ga,$$ 
\item Suppose $\be'\in \Phi_C$, with $\be'\neq \be$, such that the induced character on $x_{\be'}(t)$ is also non-trivial. Then there do not exist $i,j$ such that $$\be' = i\al+j\ga.$$
\end{enumerate}
Set  $B = U_\al\cdot C$, $D=U_\ga\cdot C$.\\

When the group $C$ and character $\psi_C$ are clear from the context, we shall refer to the ordered triple of roots $$(\al,\ga,\be)$$ as above as an {\bf exchange triple}. 

 Suppose then that we have the integral (\ref{eqn: int B}), and want to apply Lemma~\ref{Lem: root exchange} to obtain the integral (\ref{eqn: int D}). In this setting, we say that we apply root exchange to the exchange triple $(\al,\ga,\be)$, meaning that we apply Lemma~\ref{Lem: root exchange} to the quadruple $(C, \psi_C, \{x_\al(t)\},\{x_\ga(t)\})$.

Thus, the equivalence
\begin{align*}
\displaystyle \int_{[U_\al\cdot C]}f(vg)\psi_C^{-1}(v)\,dv \equiv 0, \quad \forall g\in \G 
\end{align*}
if and only if 
\begin{align*}
\displaystyle \int_{[U_\ga\cdot C]}f(ug)\psi_D^{-1}(u)\,du \equiv 0, \quad \forall g\in \G,
\end{align*}
from Lemma \ref{Lem: root exchange} tells us that, holding all else constant, we can exchange the integration over the root group of $\al$ with integration over the root group of $\ga$. When we apply root exchange to the triple $(\al, \be, \ga)$, we say that we exchange the root $\al$ for the root $\ga$.

\section{Vanishing Statements}\label{Section: vanishing}

 In this section we seek to prove part (\ref{part b}) of Theorem \ref{Thm: intro Nilpotent orbit}:

\begin{Thm}\label{Thm: vanishing}
Suppose that $\calo'$ is a nilpotent orbit such that either $\calo'> \calo_\Theta = (2^r)$ or $\calo'$ and $\calo_\Theta$ are not comparable. Then $\calo'$ does not support $\Theta_{2r}$.
\end{Thm}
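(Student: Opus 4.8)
The plan is to reduce the vanishing statement to two known inputs: first, the structure of the constant term of $\Theta_{2r}$ along maximal parabolics (Theorems \ref{Thm: local constant term} and \ref{Thm: global constant term}), which forces the relevant Jacquet modules to be built out of $\GL$-theta representations on double covers together with lower-rank symplectic theta representations; and second, the classification results of \cite{GRS2} and \cite{JL2} on which nilpotent orbits can support such $\GL$-theta functions. By Proposition \ref{Prop: local-global}, it suffices to prove the local statement: if $\calo'$ is greater than or incomparable to $\calo_\Theta = (2^r)$, then no generic character attached to $\calo'$ produces a nonzero twisted Jacquet module $J_{\psi_{\calo'}}(\Theta_{2r,\nu})$ at a finite place. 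Passing to the local setting is what lets us use the smooth-admissible machinery of Section \ref{Section: local FJ}.

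The core of the argument is a case analysis on the partition $\calo' = (p_1^{e_1}\cdots p_k^{e_k})$ of $2r$. The key observation is that any partition that is $\geq (2^r)$ or incomparable to it must have some part $p_i \geq 3$ (since $(2^r)$ is the largest partition all of whose parts are $\leq 2$, any larger or incomparable symplectic partition must reach ``higher''). I would organize the parts into two regimes. When $\calo'$ has a part of size $\geq 3$, I plan to exhibit inside $V_2(\calo')$ a unipotent subgroup of the shape handled by the local Fourier-Jacobi formalism (Corollary \ref{Cor: local equiv} and Proposition \ref{Prop: FJ property}), use local root exchange (Lemma \ref{Lem: local root exchange}) to replace $J_{\psi_{\calo'}}$ by a Jacquet module along a larger unipotent that factors through a maximal parabolic $P_{(k,r-k)}$, and then apply Theorem \ref{Thm: local constant term} to rewrite the result in terms of $J$-modules of $\Theta^{(2)}_{\GL_k}\otimes \Theta_{2(r-k)}$. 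Iterating, or combining with the analogous statements for the $\GL$-covers, one reduces to asking whether the orbit data forced by $\calo'$ can be supported by $\Theta^{(2)}_{\GL_k}$; the results of \cite{GRS2} and \cite{JL2} say the only orbit supporting these $\GL$-theta representations corresponds to a partition whose parts differ by at most one, which rules out the extra size-$\geq 3$ part. The incomparable case is handled by the same mechanism: the ``excess'' of $\calo'$ over $(2^r)$ in the dominance order, wherever it sits, always produces a root-subgroup configuration that either does not fit inside the constant-term structure or forces a $\GL$-theta coefficient of a non-admissible shape.

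For the bookkeeping I would set up, for each such $\calo'$, an explicit semisimple element $h_{\calo'}(t)$ and the associated filtration $V_2(\calo')\subset V_1(\calo')\subset U$, identify the Heisenberg quotient $V_1(\calo')/\ker(\psi_{\calo'})$ as in Section \ref{Section: local FJ}, and track carefully which positive roots lie in $\Phi_{V_2(\calo')}$ so that the hypotheses of Lemma \ref{Lem: local root exchange} (the exchange-triple conditions) are verified. The passage to the maximal parabolic is the standard ``complete to a larger unipotent, then the character is trivial on the extra directions by genericity'' move, but one must check that the extra root directions really are available and that conjugating by $h_{\calo'}(t)$ puts them in the unipotent radical of some $P_{(k,r-k)}$.

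The main obstacle I anticipate is the incomparable case and, more generally, making the reduction to $\GL$-theta uniform across all partitions rather than orbit-by-orbit: one needs a clean combinatorial statement of the form ``if $\calo'\not\leq(2^r)$ then $\calo'$ dominates, after the constant-term reduction, an orbit on the $\GL_k$-factor that is strictly larger than the maximal orbit supporting $\Theta^{(2)}_{\GL_k}$,'' and verifying this requires a careful comparison of the dominance order on symplectic partitions of $2r$ with the dominance order on partitions of $k$ produced by the Jacquet-module reduction. I expect this is where \cite{JL2} is used most essentially, and where the restriction to the $4$-fold cover enters: it is precisely the double cover of $\GL_k$ appearing in $\overline{M}_{(k,r-k)}$ (Theorem \ref{Thm: local constant term}) whose theta representation is ``small enough'' for the argument to close.
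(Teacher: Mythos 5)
There is a genuine gap, and it sits exactly where you flag your ``main obstacle.'' Your plan is to make every orbit $\calo'\not\leq(2^r)$ collapse, after root exchange and passage to a constant term along some $P_{(k,r-k)}$, onto a coefficient of $\Theta^{(2)}_{\GL_k}$ of inadmissible shape. That mechanism does not close for the one orbit that actually carries the whole theorem, namely $\calo_2=(41^{2r-4})$. Its generic character is $\psi(v_{1,2}+\al v_{2,2r-1})$, and the entry $v_{2,2r-1}$ lies in a long-root direction (in the ``symmetric'' block $Y$), so after the manipulations the character is never a Gelfand--Graev character on a $\GL$-Levi: no amount of root exchange turns it into a Whittaker coefficient of $\Theta^{(2)}_{\GL_k}$. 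The paper kills this orbit by an entirely different device: it shows that the Fourier--Jacobi module $FJ_{2,\al}(\Theta_{2r})$ (Corollary \ref{Cor: local equiv}) is simultaneously \emph{unramified} and \emph{supercuspidal} as a representation of $\overline{\Sp}_{2r-4}(F)$, which is impossible by Proposition \ref{Prop: super unram}. Supercuspidality is established by an induction on $r$ (Proposition \ref{Prop: base case}, Theorem \ref{Thm: supercuspidal}) in which the vanishing of the Jacquet modules for $((2k)1^{\ast})$ with $k\geq 3$ and for $(3^21^{\ast})$ on the lower-rank theta representations is fed back in. Your proposal contains no substitute for this ``unramified $+$ supercuspidal $\Rightarrow 0$'' step, and without it the argument for $(41^{2r-4})$ does not terminate.

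You have also misassigned the role of \cite{GRS2} and \cite{JL2}. They are not classification results for $\GL$-theta functions (that smallness comes from \cite{KP} and \cite{BG} and is used only at isolated points, e.g.\ in Lemma \ref{Lem: lemma 1} and in the base case). Rather, \cite[Lemma 2.6]{GRS2} and \cite[Props.~3.2, 3.3]{JL2} are the \emph{global propagation} statements: once the orbits $((2k)1^{2r-2k})$, $k\geq 2$, and $(3^21^{2r-6})$ are shown not to support $\Theta_{2r}$, every orbit greater than or incomparable to $(2^r)$ is automatically excluded. This is precisely the uniform combinatorial reduction you say you are missing, and it is done globally, not locally; so your opening move of reducing the entire theorem to a local statement via Proposition \ref{Prop: local-global} forfeits the tool that handles the general $\calo'$, leaving you to treat every partition by hand. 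The viable architecture is: global reduction to the short list of orbits via \cite{GRS2}/\cite{JL2}, then the local unramified-versus-supercuspidal contradiction for $\calo_2$ (with the auxiliary local vanishings for $((2k)1^{\ast})$, $k\geq3$, and $(3^21^{\ast})$ obtained along the way from the surjection $\Ind(\delta^{s_0}\otimes\Theta_{2m})\twoheadrightarrow\Theta_{2m+2}$ and a Bernstein--Zelevinsky double-coset analysis).
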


The proof relies on global results of Ginzburg-Rallis-Soudry \cite{GRS2} and Jiang-Liu \cite{JL2}. If we can show that nilpotent orbits of the form 
$$\calo'=((2k)1^{2r-2k})$$ 
for $2\leq k\leq n$ do not support $\Theta_{2r}$, then by \cite[Lemma 2.6]{GRS2} , it follows that all orbits of the form 
$$\calo'=((2k)p_1^{e_1}\cdots p_l^{e_l})$$
 with $2\leq k\leq r$ also fail to support $\Theta_{2r}$. Combining this vanishing with Propositions 3.2 and 3.3 of Jiang-Liu \cite{JL2}, we now see that nilpotent orbits of type 
 $$\calo' = ((2k+1)^2p_1^{e_1}\cdots p_l^{e_l})$$
  for $2\leq k \leq r$ also fail to support $\Theta_{2r}$. This exhausts those orbits $\calo'$ such that either $\calo'> \calo_\Theta$ or $\calo'$ and $\calo_\Theta$ are not comparable. We are thus reduced to showing that the orbits of the form $\calo_k=((2k)1^{2r-2k})$, as well as $\calo'=(3^21^{2r-6})$, do not support $\Theta_{2r}$. In fact, it suffices to see that the single orbit $\calo_2=(41^{2r-4})$ fails to support $\Theta_{2r}$.
  
  We study the orbit $\calo_2$ by considering the corresponding local Fourier-Jacobi module $FJ_{2,\al}(\Theta_{2r,\nu})$. In particular, subject to mild restrictions on the local field $F=K_\nu$, we show that if this representation is non-zero, it must be both unramified and supercuspidal. This contradicts Proposition \ref{Prop: super unram}. Therefore, by Corollary \ref{Cor: local equiv}, we see that $\calo_2$ does not support $\Theta_{2r}$. The sufficiency of this result to prove Theorem \ref{Thm: vanishing} will become clear as the argument progresses.

\subsection{Local Vanishing results}\label{subsection: local vanishing}

Consider the nilpotent orbit $\calo_2 = (41^{2r-4})$.
The unipotent subgroup $V_1(\calo)$ is the unipotent radical of the standard parabolic subgroup of $\Sp_{2r}$ with Levi subgroup $L_{2,2r-4} \cong \GL_1^2\times \Sp_{2r-4}$, and the subgroup $V_1(\calo)$ is as follows:
\[
\left\{ \left( \begin{array}{cccc}
									u&X&Y\\
									&I_{2r-4}&X^*\\
									&&u^{-1}
\end{array}\right): u\in U_{\GL_2}; Y\in \Sym_J^{2}(F); X\in \Mat_{2\times (2r-4)}, X_{2,i}=0 \right\}.
\]
Here $U_{\GL_2}$ denotes the subgroup of upper triangular unipotent elements of $\GL_2$. The character $\psi_\calo$ may be chosen to be of the form
\[
\psi_{\calo,\al}(v) = \psi(v_{1,2}+\al v_{2,2r-1}),
\]
where $\al\in F^\times/(F^\times)^2$. 

 Let $\nu$ be a finite place of $K$ with with odd residue characteristic. In this case, both $\Theta_{2m}$ and $\omega_{2,\al}$ are unramified representations of $\overline{\Sp}_{2m-4}(K_\nu)$, so that the local Fourier-Jacobi module $FJ_{2,\al}(\Theta_{2m})$ is unramified as a representation of $\overline{\Sp}_{2m-4}(F)$ for all such places. 
We will work only with $F=K_\nu$ satisfying this property.

\begin{Prop}\label{Prop: base case}
Let $\Theta_{6}$ be the theta representation of $\overline{\Sp}_{6}(F)$. 
Then the Fourier-Jacobi module 
\[
FJ_{2,\al}(\Theta_6)=J_{V_1(\calo_2)}(\Theta_6\otimes\omega_{2,\al})
\]
is trivial.
\end{Prop}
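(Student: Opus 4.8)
The plan is to compute the Fourier--Jacobi module $FJ_{2,\al}(\Theta_6)$ explicitly by exploiting the constant-term formula for theta representations (Theorem~\ref{Thm: global constant term}, or rather its local avatar Corollary~\ref{Cor: local constant term}) together with root exchange. Since $r=3$, the orbit $\calo_2=(41^2)$ has $V_1(\calo_2)$ equal to the unipotent radical of the parabolic $Q_{2,1}$ with Levi $\GL_1^2\times\Sp_2$, and $FJ_{2,\al}(\Theta_6)$ is a genuine smooth representation of $\overline{\Sp}_2(F)=\overline{\SL}_2(F)$. First I would use Corollary~\ref{Cor: local equiv} to replace $FJ_{2,\al}(\Theta_6)$ by the twisted Jacquet module $J_{V_2(\calo_2),\psi_{\calo_2,\al}}(\Theta_6)$, which is easier to attack directly, and observe that the character $\psi_{\calo_2,\al}$ restricted to the $\GL_2$-unipotent part is the standard Whittaker character on $U_{\GL_2}$.

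The key computational step is to factor the relevant integral through the Levi $\overline{\GL}_2^{(2)}(F)\times_{\mu_4}\overline{\Sp}_2(F)$ using Theorem~\ref{Thm: local constant term}: taking the $U_{(2,1)}$-Jacquet module of $\Theta_6$ yields $\Theta^{(2)}_{\GL_2}\otimes\Theta_2$, so the remaining integration against $\psi_{\calo_2,\al}$ amounts to taking a certain degenerate Whittaker-type coefficient of $\Theta^{(2)}_{\GL_2}$ on $\overline{\GL}_2^{(2)}(F)$ twisted appropriately, tensored with $\Theta_2$. I expect the upshot to be that $FJ_{2,\al}(\Theta_6)$, \emph{if nonzero}, is forced to be a quotient of an unramified principal series of $\overline{\Sp}_2(F)$ (hence unramified, as already noted in the text before the proposition), while simultaneously the support of the Jacquet functor applied to the smallish theta representation forces it to have no Borel-induced constituents, i.e.\ to be cuspidal. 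Combining these two facts with Proposition~\ref{Prop: super unram} — which says an unramified smooth admissible genuine representation of a BD-cover cannot be supercuspidal — would yield the desired vanishing; alternatively one shows directly that the $\overline{\Sp}_2(F)$-coinvariants computation produces the zero module. I would carry this out by first reducing to $\overline{\SL}_2$, then writing the semi-Whittaker integral of $\Theta^{(2)}_{\GL_2}$ explicitly (using that theta on the double cover of $\GL_2$ has a completely known Whittaker/Fourier expansion, e.g.\ via \cite{BG} or a direct $\SL_2$ computation), and checking that the only surviving term is incompatible with the genuine central character constraint on $\overline{\Sp}_2(F)$.

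The main obstacle, I expect, is controlling the interaction between the Heisenberg/oscillator twist built into $\omega_{2,\al}$ and the $\GL_2$-Whittaker direction after applying the periodicity theorem: one must track carefully how root exchange moves the unipotent integration between the ``$X$'' block (the $\Mat_{2\times 2}$ part with $X_{2,i}=0$) and the symplectic directions inside $\overline{\Sp}_2$, and verify that the six hypotheses of Lemma~\ref{Lem: local root exchange} genuinely hold for the relevant exchange triples $(\al,\ga,\be)$ in type $C_3$. A secondary subtlety is the bookkeeping of the various determinantal twists and the exceptional character $\chi_{\GL,2}$ appearing in Theorem~\ref{Thm: local constant term}, since these shift the relevant exponents and one needs the precise normalization to conclude that the resulting character on the torus of $\overline{\Sp}_2(F)$ is unramified exceptional (so that what is left is precisely $\Theta_2$ or $0$). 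Once the integral is seen to factor as (degenerate coefficient of $\Theta^{(2)}_{\GL_2}$) $\otimes$ $\Theta_2$ with the degenerate coefficient of the $\GL_2$-theta representation vanishing for the character $\psi_{\calo_2,\al}$ in question, the triviality of $FJ_{2,\al}(\Theta_6)$ follows.
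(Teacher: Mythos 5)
Your overall strategy --- show that $FJ_{2,\al}(\Theta_6)$ is simultaneously unramified and supercuspidal and invoke Proposition \ref{Prop: super unram} --- is exactly the paper's. But the computational core of your argument breaks down at the step where you ``factor through the Levi $\overline{\GL}_2^{(2)}(F)\times_{\mu_4}\overline{\Sp}_2(F)$ using Theorem \ref{Thm: local constant term}.'' The character $\psi_{\calo_2,\al}(v)=\psi(v_{1,2}+\al v_{2,5})$ is \emph{not} trivial on the unipotent radical $U_{(2,1)}$: the term $\al v_{2,5}$ lives on the long-root subgroup $U_{2e_2}\subset U_{(2,1)}$, not in the Levi. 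Hence $J_{V_2(\calo_2),\psi_{\calo_2,\al}}$ does not factor as $J_{U_{(2,1)}}$ followed by a coefficient functor on $\Theta_{\GL_2}^{(2)}\otimes\Theta_2$, and Theorem \ref{Thm: local constant term} cannot be applied at that point. Moreover, even if such a factorization were available, the $\GL_2$-direction of the character is the full Whittaker character $\psi(v_{1,2})$, and the Whittaker coefficient of $\Theta_{\GL_2}^{(2)}$ is \emph{non}-vanishing (vanishing of Whittaker coefficients of $\Theta_{\GL_k}^{(2)}$ per \cite{KP} requires $k\geq 3$); so the vanishing mechanism you propose would not produce zero.

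A second gap: you never supply an argument for cuspidality of $FJ_{2,\al}(\Theta_6)$ as an $\overline{\SL}_2(F)$-module; ``the support of the Jacquet functor \dots forces it to have no Borel-induced constituents'' is the assertion to be proved, not a proof. The paper establishes it by computing $J_{U_{\al_3}}(FJ_{2,\al}(\Theta_6))$, which by Proposition \ref{Prop: FJ property} is identified with $J_{V,\psi_V}(\Theta_6)$ for $V=\{v\in U_6: v_{2,3}=v_{4,5}=0\}$ and $\psi_V(v)=\psi(v_{1,2}+\al v_{2,5})$; one then conjugates by $w_1w_2$, applies root exchange to the triple $(-\al_1,\al_1+\al_2,\al_2)$, uses the non-genericity of $\Theta_6$ to enlarge the integration to the full unipotent radical $U_6$, and only then factors through a constant term --- along the $(1,2)$-parabolic, not the $(2,1)$-parabolic --- reducing to a Whittaker coefficient of $\Theta_4$, which vanishes because $\Theta_4$ is non-generic. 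The vanishing ultimately comes from non-genericity of the symplectic theta representations, not from any coefficient of $\Theta_{\GL_2}^{(2)}$; as written, your route would terminate in a nonzero quantity.
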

\begin{proof}

In this case, one easily checks that $L(\calo)(F)$ is the embedded copy of $\SL_2(F)$ corresponding to the long simple root of $\Sp_6$. 
Thus, $FJ_{2,\al}(\Theta_6)$ is a smooth $\overline{\ep}$-genuine representation of $\overline{\SL}_2(F)$.  Let $U_{\al_3}$ be the root subgroup of $\Sp_6(F)$ corresponding to the unipotent radical in $\overline{\SL}_2(F)$.

By our assumption on $\nu$, it suffices to show that the representation is supercuspidal. This will follow if we can show that
\[
J_{U_{\al_3}}(J_{V_1(\calo_2)}(\Theta_6\otimes\omega_{2,\al})) = 0.
\]
By Proposition \ref{Prop: FJ property}, we see that this is equivalent to showing that 
\[
J_{U_{\al_3}}(J_{YV_2(\calo_2),\psi_{2,\al}}(\Theta_6))\cong J_{V, \psi_V}(\Theta_6) = 0.
\]
Here, we have $V = \{ v\in U_6 : v_{2,3}=v_{4,5}=0\}$, and $\psi_V(v) =\psi(v_{1,2}+\al v_{2,5})$.

Conjugating by the Weyl group element $w= w_1w_2$ and applying Lemma \ref{Lem: local root exchange} to the exchange triple $$ (-\ga_{1,1}, \ga_{1,2}, \ga_{2,2})=(-\al_1, \al_1+\al_2, \al_2),$$ we see that this module is isomorphic to $J_{U^0,\psi^0}(\Theta_6)$, where $U^0$ is the unipotent radical of the parabolic subgroup associated to the simple coroot $\al^\vee_1$ and $\psi^0(v) = \psi(v_{2,3}+\al v_{3,4})$.

Since $\Theta_6$ is not generic (see \cite[Section 5]{Gao}), we see that this Jacquet module is isomorphic to $J_{U_6,\psi'}(\Theta_6)$, where $\psi'$ is the trivial extension of $\psi^0$ to the full unipotent radical. This follows from noting that the $\SL_2$-factor of the Levi subgroup corresponding to the simple coroot $\al^\vee_1$ preserves the character $\psi^0$, and thus we may view this Jacquet module as an $\overline{\SL}_2(F)$-representation. As $\Theta_6$ is not generic, this $\overline{\SL}_2(F)$-representation has no non-trivial twisted Jacquet module. Therefore, upon restricting to the unipotent radical, this representation is isomorphic to its Jacquet module.

Note that this Jacquet module factors through the constant term of type $\GL_1\times \Sp_4$, and by Theorem \ref{Thm: local constant term}, we see that
\[
J_{U_6,\psi'}(\Theta_6)\cong J_{U_4, \psi_{(4),\al}}(\Theta_4)= 0,
\]
as $\Theta_4$ is also not generic.\qedhere
\end{proof}

It follows from Corollary \ref{Cor: local equiv} that the Jacquet module associated to $\calo_2$ vanishes.

\begin{Thm}\label{Thm: supercuspidal}
Let $r\geq3$ and let $\Theta_{2r}$ be the theta representation of $\GF$. Then the Fourier-Jacobi module
\[
FJ_{2,\al}(\Theta_{2r})=J_{V_1(\calo_2)}(\Theta_{2r}\otimes\omega_{2,\al})
\]
vanishes.
\end{Thm}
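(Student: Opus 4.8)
The plan is to reduce the general rank statement to the already-established base case $r=3$ from Proposition~\ref{Prop: base case}, by an inductive argument on $r$ that peels off the $\Sp_{2(r-1)}$-part of the relevant constant term. First I would record, exactly as in the proof of Proposition~\ref{Prop: base case}, that $L(\calo_2)_F \cong \Sp_{2r-4}(F)$ and that $FJ_{2,\al}(\Theta_{2r})$ is a smooth $\overline{\ep}$-genuine representation of $\overline{\Sp}_{2r-4}(F)$; by our running assumption on the place $\nu$ it is unramified. By Proposition~\ref{Prop: super unram} it therefore suffices to show that this representation is \emph{not} supercuspidal implies it is zero --- more precisely, that some suitable twisted Jacquet module along the unipotent radical of a proper parabolic of $\overline{\Sp}_{2r-4}(F)$ vanishes would force supercuspidality, contradicting Proposition~\ref{Prop: super unram}; hence the cleaner route is to show directly that every twisted Jacquet module of $FJ_{2,\al}(\Theta_{2r})$ along a proper parabolic vanishes, so the representation is both unramified and supercuspidal, which is impossible, forcing $FJ_{2,\al}(\Theta_{2r})=0$. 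Via Corollary~\ref{Cor: local equiv} and Proposition~\ref{Prop: FJ property}, computing such a Jacquet module is the same as computing $J_{V,\psi_V}(\Theta_{2r})$ for an explicit unipotent $V\subset U_r$ and character $\psi_V$ extending $\psi_{2,\al}$.

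Next I would set up the induction. The key mechanism is Theorem~\ref{Thm: local constant term}: taking the constant term of $\Theta_{2r}$ along the maximal parabolic $P_{(1,r-1)}$ yields $\Theta_{\GL_1}^{(2)}\otimes \Theta_{2(r-1)}$. The unipotent $V$ attached to $\calo_2=(41^{2r-4})$ and the character $\psi_{2,\al}$ can be arranged, after conjugating by a Weyl element of the form $w_1w_2\cdots$, so that the integration decomposes into an integration over the $U_1$-radical of $P_{(1,r-1)}$ (producing the constant term, and hence replacing $\Theta_{2r}$ by $\Theta_{2(r-1)}$) followed by an integration over precisely the unipotent subgroup and character that define $FJ_{2,\al}(\Theta_{2(r-1)})$ --- together with, possibly, an extra $\GL_1$-factor whose non-genericity of $\Theta$ (i.e. $\Theta_{2r}$ is never generic, by \cite[Section~5]{Gao}) lets one collapse any remaining non-degenerate character to a trivial one, just as in the base case footnote. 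This is the step that reduces $FJ_{2,\al}(\Theta_{2r})$ to (a Jacquet module built from) $FJ_{2,\al}(\Theta_{2(r-1)})$, and iterating down to $r=3$ invokes Proposition~\ref{Prop: base case}.

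The main obstacle, and the part needing genuine care, is the bookkeeping of the root exchanges and Weyl conjugations needed to expose the constant-term structure: one must verify that the exchange triples $(\al,\ga,\be)$ used satisfy the four conditions in the ``How one uses root exchange'' formalism, that conjugating $V_2(\calo_2)$ and $YV_2(\calo_2)$ by the chosen Weyl element lands the relevant character in the position $\psi(v_{1,2}+\al v_{2,2r-1})$ compatible with the $\Sp_{2(r-1)}$ embedding, and that the spurious $\GL$-type root subgroups that appear can indeed be handled by non-genericity of $\Theta$ rather than obstructing the reduction. A secondary point is to confirm that the covering-group bookkeeping is harmless: the constant-term map of Theorem~\ref{Thm: local constant term} involves the metaplectic $\overline{\GL}_1^{(2)}$-twist, but since we only ever need the $\Sp$-factor and all characters involved are unramified at $\nu$, the $\GL_1$-theta factor contributes only an unramified twist that does not affect vanishing. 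Once the reduction step is in place, the conclusion is immediate: $FJ_{2,\al}(\Theta_{2r})$ is unramified and would be supercuspidal, contradicting Proposition~\ref{Prop: super unram} unless it is zero, and then Corollary~\ref{Cor: local equiv} transfers this vanishing to $J_{V_2(\calo_2),\psi_{\calo_2}}(\Theta_{2r})$, completing the proof of Theorem~\ref{Thm: supercuspidal} and hence, by the discussion preceding it, Theorem~\ref{Thm: vanishing}.
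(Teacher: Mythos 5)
Your overall skeleton --- induction on $r$ from the base case of Proposition \ref{Prop: base case}, plus the ``unramified and supercuspidal is impossible'' contradiction via Proposition \ref{Prop: super unram}, transferred back through Corollary \ref{Cor: local equiv} --- matches the paper. But the inductive step as you describe it has a genuine gap: the reduction from $FJ_{2,\al}(\Theta_{2r})$ to $FJ_{2,\al}(\Theta_{2(r-1)})$ is not achieved by a single constant term along $P_{(1,r-1)}$ together with root exchange and an appeal to non-genericity. To prove supercuspidality one computes $J_{U_{\al_0}}(FJ_{2,\al}(\Theta_{2r+2}))$ for $\al_0=\mu_3$ the highest root of the embedded $\Sp_{2r-2}$, and after the Weyl conjugation and root exchanges one must Fourier-expand along an abelian subgroup $Z$ of the unipotent radical. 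The non-trivial character orbits in that expansion are not ``spurious $\GL$-type'' terms killed by non-genericity: they realize Fourier coefficients attached to the orbit $(3^21^{2r-4})$. Moreover, the trivial-orbit term can only be pushed down to the $\GL_1\times\Sp_{2r}$ constant term after first enlarging the unipotent group to $V_2(61^{2r-4})$, which requires knowing that $(61^{2r-4})$ does not support $\Theta_{2r+2}$.

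Neither of these auxiliary vanishing statements is free, and your plan omits them. The paper derives both from the inductive hypothesis by two separate arguments: Proposition \ref{Prop: internal induction} establishes the vanishing of the orbits $((2k)1^{2(m-k)})$ for $k\geq3$ via the surjection $\Ind(\delta^{s_0}\otimes\Theta_{2m})\twoheadrightarrow\Theta_{2m+2}$ of Corollary \ref{Cor: local constant term} and a Bernstein--Zelevinsky double-coset analysis; Corollary \ref{Cor: 3^2} then handles $(3^21^{2r-4})$ by a further chain of root exchanges resting on that vanishing. Only with both in hand does the final Jacquet module collapse, via Theorem \ref{Thm: local constant term}, to $J_{V_2(41^{2r-4}),\psi_{(41^{2r-4})}}(\Theta_{2r})$, i.e.\ to the inductive hypothesis. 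As written, your appeal to non-genericity at the point where these larger orbits appear is where the argument fails for general $r$; it suffices only in the base case $r=3$, where the group is small enough that non-genericity of $\Theta_6$ and $\Theta_4$ controls everything.
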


We shall prove this by induction, noting that Proposition \ref{Prop: base case} forms the base case. Thus, we assume the theorem holds for $r$. In light of this assumption, we are able to say much about certain Jacquet modules on $\Theta_{2r+2}$:

\begin{Prop}\label{Prop: internal induction}
Suppose that Theorem \ref{Thm: supercuspidal} holds for $m\leq r$.
Then for all $3\leq k\leq m\leq r+1$, the nilpotent orbits $\calo_{m,k}= ((2k)1^{2(m-k)})$ do not support $\Theta_{2m}$.
\end{Prop}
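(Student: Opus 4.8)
The plan is to reduce the vanishing of the Jacquet modules associated to $\calo_{m,k}= ((2k)1^{2(m-k)})$ for general $k$ in the allowed range to the single case $k=2$ on a smaller symplectic group, which is exactly what Theorem \ref{Thm: supercuspidal} (assumed for ranks $\leq r$) and Proposition \ref{Prop: base case} provide. First I would record that, by Corollary \ref{Cor: local equiv}, it is equivalent to study the Fourier-Jacobi module $FJ_{k,\al}(\Theta_{2m})$ in place of $J_{V_2(\calo_{m,k}),\psi_{\calo_{m,k}}}(\Theta_{2m})$; the former carries a genuine smooth admissible action of $\overline{L(\calo_{m,k})}_F\cong\overline{\Sp}_{2(m-k)}(F)$, and this extra symmetry is what makes the argument run.

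The key step is a constant-term computation. The character $\psi_{k,\al}$ is supported on the Gelfand–Graev part $\psi(v_{1,2}+\cdots+v_{k-1,k})$ together with the ``long'' coordinate $\al v_{k,2m-k+1}$. I would first peel off the Gelfand–Graev string $v_{1,2},\dots,v_{j,j+1}$ by passing through the constant term along the parabolic $P_{(1,\dots,1,m-j)}$: by Theorem \ref{Thm: local constant term} (the periodicity theorem), iterated constant terms of $\Theta_{2m}$ along these maximal parabolics produce tensor products $\Theta_{\GL}^{(2)}\otimes\Theta_{2(m-j)}$, so a nonzero twisted Jacquet module of $\Theta_{2m}$ against $\psi_{k,\al}$ forces a nonzero twisted Jacquet module of $\Theta_{2(m-j)}$ against a character of the same shape but with a shorter Gelfand–Graev string. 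Choosing $j=k-2$ reduces the relevant coefficient on $\Theta_{2m}$ to a coefficient on $\Theta_{2(m-k+2)}$ of exactly the type $\psi_{2,\al}$ attached to the orbit $\calo_2=(41^{2(m-k+2)-4})$ on $\overline{\Sp}_{2(m-k+2)}(F)$. Since $3\le k\le m\le r+1$, we have $m-k+2\le r+1$ and $m-k+2\ge 3$; when $m-k+2\le r$ this is covered by the inductive hypothesis of Theorem \ref{Thm: supercuspidal}, and when $m-k+2=3$ (forcing $k=m$, $m\le r+1$, only possible with the boundary case $m=r+1$, $k=r+1$, giving $\overline{\Sp}_6$) it is covered directly by Proposition \ref{Prop: base case}. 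Either way the reduced coefficient vanishes, hence so does the original one.

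To make the constant-term reduction precise I would interpose root exchange (Lemma \ref{Lem: local root exchange}) exactly as in the proof of Proposition \ref{Prop: base case}: after conjugating by a suitable Weyl element $w\in W(\Sp_{2m})$ one rewrites $J_{YV_2(\calo_{m,k}),\psi_{k,\al}}(\Theta_{2m})$ as a twisted Jacquet module $J_{U,\psi_U}(\Theta_{2m})$ where $U$ is the unipotent radical of a standard parabolic of type $\GL_1^{k-1}\times\Sp_{2(m-k+1)}$ (or $\GL_1^{\,j}\times\Sp$ at intermediate stages), with $\psi_U$ the concatenation of the Gelfand–Graev character with the $\al$-twisted long coordinate; the exchange triples needed are the obvious analogues of $(-\al_1,\al_1+\al_2,\al_2)$ used before, together with the standard move that since $\Theta_{2(m-j)}$ is non-generic (see \cite[Section 5]{Gao}) a twisted Jacquet module against a character that the residual $\SL_2$-Levi factor stabilizes collapses to the ordinary Jacquet module. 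Applying Theorem \ref{Thm: local constant term} to that ordinary Jacquet module then strips the $\GL$-block and drops the rank, and one iterates.

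The main obstacle I expect is bookkeeping rather than a new idea: one must check that after each application of root exchange the shape of the character is genuinely preserved (so that the induction on rank sees a coefficient of exactly the form handled by Theorem \ref{Thm: supercuspidal}, not some slightly different character), and that the Weyl conjugations and the non-genericity collapse are compatible with the genuine $\mu_4$-cover structure and with the block-compatibility of the cocycle used in Theorem \ref{Thm: local constant term}. A secondary point to handle carefully is the range of indices: one needs $m-k+2\ge 3$ so that the base object is $\overline{\Sp}_6$ or larger (smaller symplectic theta representations $\Theta_4,\Theta_2$ are themselves non-generic and would short-circuit the argument, but they never actually arise here because $k\le m$), and one should note explicitly that the boundary value $m=r+1$, $k=r+1$ is precisely the case covered by Proposition \ref{Prop: base case}, which is why that proposition was singled out as the base case of Theorem \ref{Thm: supercuspidal} in the first place.
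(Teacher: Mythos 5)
Your strategy — reducing the $k\ge 3$ orbits to the $k=2$ orbit on a group of rank $m-k+2$ by peeling off the Gelfand--Graev string through constant terms — is not the paper's argument, and its central step has a genuine gap. The paper instead inducts on the rank: it uses Corollary \ref{Cor: local constant term} to realize $\Theta_{2m+2}$ as a quotient of $\Ind(\delta^{s_0}\otimes\Theta_{2m})$ induced from $\overline{P}_{(1,m)}$, and then applies the geometric lemma \cite[Theorem 5.2]{BZ} to the double cosets $P_{(1,m)}(F)\backslash\Sp_{2m+2}(F)/\Sp_{2(m-k-2)}(F)V_2(\calo_{m+1,k})$. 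The surviving cosets contribute either twisted Jacquet modules of $\Theta_{2m}$ attached to orbits $\calo_{m',k'}$ with $m'<m+1$ (killed by the induction on $m$) or, for the relative long element, the $(41^{2m-4})$-coefficient of $\Theta_{2m}$ (killed precisely by the hypothesis that Theorem \ref{Thm: supercuspidal} holds for $m$). No direct reduction of $k$ to $2$ on a single smaller group is ever performed.

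The gap in your version is the claimed factorization through the constant term. The character $\psi_{k,\al}(v)=\psi(v_{1,2}+\cdots+v_{k-1,k}+\al v_{k,2m-k+1})$ is \emph{not} trivial on the unipotent radical of $P_{(j,m-j)}$ for any $j\le k-1$: the coordinate $v_{j,j+1}$ crosses from the $\GL_j$-block into that radical, and for $j\le k$ the long-root coordinate $v_{k,2m-k+1}$ lies entirely inside it. Consequently $J_{V_2(\calo_{m,k}),\psi_{k,\al}}(\Theta_{2m})$ does not factor through $J_{U_{(j,m-j)}}(\Theta_{2m})$, and Theorem \ref{Thm: local constant term} cannot be invoked to drop the rank. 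Repairing this is exactly the content you defer to ``obvious analogues'' of the exchange triple $(-\al_1,\al_1+\al_2,\al_2)$: in Lemma \ref{Lem: lemma 1} the analogous repair (Heisenberg expansion to lengthen the string) consumes the vanishing of the very orbits $((2k)1^{\cdots})$ you are trying to prove, and in Proposition \ref{Prop: base case} the Weyl conjugation that relocates the long-root coordinate is specific to $k=2$; for $k\ge3$ it is not clear such a chain exists without circularity, which is presumably why the paper routes the argument through the induced representation instead. There is also a bookkeeping slip: $m-k+2=3$ forces $k=m-1$, not $k=m$, and the case $k=m$ reduces to rank $m-k+2=2$, i.e.\ to the Whittaker coefficient of $\Theta_4$ — that coefficient does vanish by non-genericity, but the case does arise and is not excluded by $k\le m$ as you claim.
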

\begin{proof}
The case $k=m=3$ is proven so by induction we may assume that it is proven up to $m$. 

By Corollary \ref{Cor: local constant term}, we know that there is a value $s_0\in \cc$ such that there is a surjection
\[
\Ind(\delta^{s_0}\otimes\Theta_{2m}) \twoheadrightarrow \Theta_{2m+2}.
\]
Therefore, it suffices to show that 
\[
\Hom_{V_2(\calo_{m+1,k})}(\Ind(\delta^{s_0}\otimes\Theta_{2m}),\psi_{\calo_{m+1,k}}) =0.
\]

For this task, we apply \cite[Theorem 5.2]{BZ}, and study the double cosets  
\[
P_{(1,m)}(F)\backslash \Sp_{2m+2}(F)/\Sp_{2(m-k-2)}(F)V_2(\calo_{m+1,k})
\]
One can check, using the Bruhat decomposition, that the coset representatives may be chosen to be Weyl group elements or Weyl group elements times certain unipotent elements.

As in the proof of \cite[Lemma 7]{BFG1}, most cases do not contribute as the Weyl group element conjugates a root of $V_2(\calo_{m+1,k})$ where the character is nontrivial into the unipotent radical of $P_{(1,m)}$. There are only two cases that need to be considered:
\begin{enumerate}
\item
The representatives of the form $w=w_1w_2\cdots w_{m+1} u$ give rise to characters corresponding to orbits $\calo_{m',k'}$ where $m'<m+1$. By induction, these also vanish.
\item
The relative long element $w=w_1w_2\cdots w_mw_{m+1}w_m\cdots w_1$ gives rise to the character corresponding to $(41^{2m-4})$ on $\Theta_{2m}$. This vanishes by the assumption that Theorem \ref{Thm: supercuspidal} holds for $m$, and we are done.\qedhere
\end{enumerate}
\end{proof}

\begin{Cor}\label{Cor: 3^2}
The orbit $(3^21^{2r-4})$ does not support $\Theta_{2r+2}$.
\end{Cor}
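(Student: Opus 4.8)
The plan is to deduce this corollary from the vanishing results already in hand by the same mechanism used in the proof of Theorem \ref{Thm: vanishing}, namely the results of Jiang-Liu \cite{JL2} relating orbits of the form $((2k+1)^2 p_1^{e_1}\cdots p_l^{e_l})$ to orbits containing a part $(2k)$. First I would recall that Proposition \ref{Prop: internal induction}, once Theorem \ref{Thm: supercuspidal} is established, shows that every orbit of the form $((2k)1^{2(m-k)})$ with $3 \le k \le m$ fails to support $\Theta_{2m}$, and hence by \cite[Lemma 2.6]{GRS2} every orbit of the form $((2k)p_1^{e_1}\cdots p_l^{e_l})$ with $k \ge 3$ (in fact $k \ge 2$, using also the $\calo_2$ vanishing) fails to support $\Theta_{2r+2}$ as well.

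Next I would invoke Propositions 3.2 and 3.3 of \cite{JL2}: these say that if an orbit $\calo' = ((2k+1)^2 p_1^{e_1}\cdots p_l^{e_l})$ supports an automorphic representation, then so does some larger (or related) orbit obtained by a ``box-moving'' operation, one which replaces the two odd parts $2k+1$ by parts including an even part of size $\ge 2k$. Applying this with $k = 1$ to the orbit $(3^2 1^{2r-4})$ of $\Sp_{2r+2}$ (note $3^2 1^{2r-4}$ is a partition of $2r+2$), the non-vanishing of a Fourier coefficient attached to $(3^2 1^{2r-4})$ on $\Theta_{2r+2}$ would force the non-vanishing of a Fourier coefficient attached to an orbit of the form $(4 \cdots)$ — that is, an orbit with a part equal to $4$ — on $\Theta_{2r+2}$. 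But every such orbit has already been shown not to support $\Theta_{2r+2}$ in the previous step. This contradiction gives the claim.

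The one point requiring care is matching the precise combinatorial hypotheses of \cite[Props. 3.2--3.3]{JL2} to the orbit $(3^2 1^{2r-4})$: I would check that $3^2 1^{2r-4}$ is indeed a symplectic partition (it is, since $3$ occurs with even multiplicity and $1$ occurs with even multiplicity $2r-4$), that the relevant odd part to which the box-moving operation applies is the pair of $3$'s, and that the resulting orbit obtained after the operation does contain a part of size $\ge 4$ that is even — so that it is covered by the vanishing already proven. I expect this bookkeeping, rather than any new analytic input, to be the main (and only real) obstacle; the substance of the argument is entirely contained in Proposition \ref{Prop: internal induction} and the cited results. If the box-moving operation of \cite{JL2} instead directly produces the orbit $(41^{2r-2})$, then the corollary follows immediately from the $k=2$ case of the $\calo_k$ vanishing; I would state the argument to accommodate whichever of these forms the lemma of \cite{JL2} takes.
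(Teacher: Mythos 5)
There is a genuine gap, and it is circularity. Corollary \ref{Cor: 3^2} is established \emph{inside} the induction on $r$ that proves Theorem \ref{Thm: supercuspidal}: at that point the only vanishing available for $\Theta_{2r+2}$ is that of the orbits $((2k)1^{2(r+1-k)})$ with $k\geq 3$ (Proposition \ref{Prop: internal induction}); the case $k=2$, i.e.\ $(41^{2r-2})$, is precisely the goal of the induction step, and the corollary is a lemma on the way to it. The Jiang--Liu move applied to $(3^21^{2r-4})$ at $k=1$ replaces the pair of $3$'s by even parts of size at most $4$ (e.g.\ $(4\,2\,1^{2r-4})$), and controlling any such orbit via \cite[Lemma 2.6]{GRS2} requires exactly the vanishing of $(41^{2r-2})$ on $\Theta_{2r+2}$ — which is not yet known. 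Your parenthetical ``(in fact $k\ge 2$, using also the $\calo_2$ vanishing)'' and your fallback ``then the corollary follows immediately from the $k=2$ case'' both invoke the statement being proved. This is also why the paper restricts its application of \cite{JL2} to $2\leq k\leq r$ in the global reduction at the start of Section \ref{Section: vanishing} and singles out $(3^2 1^{\cdots})$ for separate treatment.

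A second, independent problem is a local/global mismatch. As used immediately afterwards (in showing $FJ_{2,\al}(\Theta_{2r+2})$ is supercuspidal), the corollary is a statement about twisted \emph{Jacquet modules} of the local theta representation, and its proof in the paper is entirely local: Frobenius reciprocity for $\overline{\SL}_2(F)$, conjugation by $w_2$, and a chain of local root exchanges reducing $J_{V_2(\calo),\psi_\calo}(\Theta_{2r+2})$ to a Jacquet module that vanishes using only the $k>2$ cases. Propositions 3.2--3.3 of \cite{JL2} are global statements about Fourier coefficients of automorphic forms; since local vanishing implies global vanishing but not conversely, even a non-circular global argument along your lines would not deliver the local vanishing that the downstream supercuspidality argument needs. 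To repair the proposal you would have to either (i) prove a local analogue of the Jiang--Liu exchange and still find a target orbit whose vanishing is already available at this stage of the induction (none is, since the largest part produced is $4$), or (ii) follow the paper's direct computation.
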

\begin{proof}
Set $\calo=(3^21^{2r-4})$. Then we have $V_2(\calo) = U_{(2,r-1)}$ and we may choose the generic character $$\psi_\calo(v) = \psi(v_{1,3}+ v_{2,2r}).$$ Supposing that  $J_{\psi_\calo}(\Theta_{2r+2})$ does not vanish, it follows that it gives to an unramified representation of  $\overline{L(\calo)}_F=\overline{\SL}_2(F)^\Delta \times\overline{\Sp}_{2r-4}(F)$. Restricting this representation to the diagonally embedded $\SL_2$, we see there exists a character $\chi$ of $Z(\widetilde{T}_1)\subset \overline{\SL}_2(F)$ such that by Frobenius reciprocity,
\[
0\neq \Hom_{\overline{\SL}_2(F)}(J_{V_2(\calo),\psi_\calo}(\Theta_{2r+2}), \Ind(\chi))\cong  \Hom_{\widetilde{T}_1}(J_{V_2(\calo)N,\psi_\calo}(\Theta_{2r+2}), \iota(\chi)).
\]
We begin studying the Jacquet module $J_{V_2(\calo)N,\psi_\calo}(\Theta_{2r+2})$ by conjugating the Weyl group element $w_2$ and applying a root exchange with triple $(\mu_3,-\al_2,\eta_{2,2})$. This removes the diagonal embedding along the two root groups associated to $\al_1$ and $\mu_3$ arising from $N$.

Applying the root exchanges associated to the exchange triples $$(\ga_{3,i}, \eta_{2,i},\eta_{2,2})$$ for $3\leq i\leq r$ in increasing values of $i$ followed by the exchange triples $$(\eta_{3,j},\ga_{2,j},\eta_{2,2})$$ for $3\leq j\leq r$ in decreasing values for $j$, one finds that this constant term is isomorphic to $J_{U',\psi'}(\Theta_{2r+2})$ where 
\[
U' =\{ u\in V_2(421^{2r-4}): u_{2,3} = 0\},
\]
and $\psi'(u) = \psi(u_{1,2} + u_{3,2r})$. Conjugating by the standard lift of the simple reflection associated to the long root $\mu_3$,
and applying root exchange to the triple $(-\mu_3,\eta_{2,2},\ga_{2,2})$, we obtain an isomorphism with the Jacquet module $J_{L_{2,r-1},\psi_2}(\Theta_{2r+2})$.

 However, the vanishing of of the Jacquet modules associated to $((2k)1^{2(r+1-k)})$ for $k>2$ from the proposition forces this final Jacquet module to be trivial. To see this, one applies an inductive argument identical to the global proof of Lemma \ref{Lem: lemma 1}. This is not circular as the argument in the proof of Lemma \ref{Lem: lemma 1} requires only the vanishing of the coefficients associated to the aforementioned orbits. By way of a contradiction, we find that
\[
J_{V_2(\calo),\psi_\calo}(\Theta_{2r+2})=0.\qedhere
\]
\end{proof}

Having established that the inductive hypothesis forces vanishing of the Jacquet modules associated to $\calo_k$ for $k>2$, as well as for the orbit $\calo'=(3^21^{2r-4})$, we may now consider $\calo_2$.
\begin{Prop}
As a representation of $\overline{\Sp}_{2r-2}(F)$, the Fourier-Jacobi module $FJ_{2,\al}(\Theta_{2r+2})$ is supercuspidal.
\end{Prop}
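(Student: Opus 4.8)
The goal is to show that $FJ_{2,\al}(\Theta_{2r+2})$, a smooth genuine representation of $\overline{\Sp}_{2r-2}(F)\cong\overline{L(\calo_2)}_F$, has vanishing Jacquet module along the unipotent radical of every proper parabolic subgroup. Since the Jacquet functor restricts in stages and every proper parabolic lies in a maximal one, it suffices to treat the maximal standard parabolics $Q_{(a)}=L_{(a)}N_{(a)}$ of $\Sp_{2r-2}$ with $L_{(a)}\cong\GL_a\times\Sp_{2(r-1-a)}$, $1\le a\le r-1$. Extending $\psi_{2,\al}$ trivially over $N_{(a)}$ and invoking Proposition \ref{Prop: FJ property} (and composing Jacquet functors), one obtains $J_{N_{(a)}}\bigl(FJ_{2,\al}(\Theta_{2r+2})\bigr)\cong J_{N_{(a)}YV_2(\calo_2),\,\psi'}(\Theta_{2r+2})$, so the question is reduced to a statement about Jacquet modules of the theta representation on $\overline{\Sp}_{2r+2}(F)$ itself. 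This is the higher-rank analogue of the computation of $J_{U_{\al_3}}$ in the proof of Proposition \ref{Prop: base case}, where $L(\calo_2)=\SL_2$ had only one proper parabolic.

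First I would write the unipotent group $C_{(a)}:=N_{(a)}YV_2(\calo_2)$ and the character $\psi'$ explicitly in matrix coordinates inside $\Sp_{2r+2}$; one checks that $C_{(a)}$ is contained in the unipotent radical of the standard parabolic with Levi $\GL_1^2\times\GL_a\times\Sp_{2(r-1-a)}$, and that $\psi'$ is nontrivial only in the two root directions of $\psi_{2,\al}$. I would then conjugate by a Weyl group element bringing the Heisenberg block of $\calo_2$ and the $\GL_a$-block into adjacent standard position, and run a chain of local root exchanges (Lemma \ref{Lem: local root exchange}) --- exactly in the style of the proof of Corollary \ref{Cor: 3^2} --- to normalize $(C_{(a)},\psi')$ into one of two recognizable shapes: either the pair $(V_2(\calo'),\psi_{\calo'})$ attached to a nilpotent orbit $\calo'$ of the form $((2k)1^{2(r+1-k)})$ with $k\ge3$, or to $(3^21^{2r-4})$; or else a configuration that factors through the constant term of $\Theta_{2r+2}$ along a parabolic with a $\GL$-factor, to which Theorem \ref{Thm: local constant term} applies.

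In the first case the Jacquet module vanishes by Proposition \ref{Prop: internal induction} (for the orbits $((2k)1^{2(r+1-k)})$, $k\ge3$) or Corollary \ref{Cor: 3^2} (for $(3^21^{2r-4})$); here one uses that the set of nilpotent orbits supporting a fixed representation is closed downward under dominance, so every orbit dominating one of these likewise fails to support $\Theta_{2r+2}$. In the second case, Theorem \ref{Thm: local constant term} strips off the $\GL$-blocks and reduces the computation to a degenerate Whittaker-type Jacquet module of a theta representation $\Theta_{2m'}$ on a strictly smaller symplectic group, which vanishes because $\Theta_{2m'}$ is not generic --- the same mechanism used at the end of the proof of Proposition \ref{Prop: base case}. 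In every case $J_{N_{(a)}}\bigl(FJ_{2,\al}(\Theta_{2r+2})\bigr)=0$, so $FJ_{2,\al}(\Theta_{2r+2})$ is supercuspidal.

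The main obstacle is the explicit root-exchange bookkeeping in the second paragraph: for each $a$ one must verify that the sequence of exchange triples used to normalize $(C_{(a)},\psi')$ satisfies the conditions preceding Lemma \ref{Lem: root exchange}, and that the Weyl conjugation never carries a root on which $\psi'$ is nontrivial into the unipotent radical of the ambient parabolic (which would obstruct the reduction); this delicate point is precisely where the restriction to the $4$-fold cover is used. Combined with the fact that $FJ_{2,\al}(\Theta_{2r+2})$ is unramified at places of odd residue characteristic, Proposition \ref{Prop: super unram} then forces $FJ_{2,\al}(\Theta_{2r+2})=0$, which is the inductive step completing the proof of Theorem \ref{Thm: supercuspidal}.
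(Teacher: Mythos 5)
Your overall framework is sound: supercuspidality does follow from the vanishing of the Jacquet module along the unipotent radical of each maximal parabolic of $\Sp_{2r-2}$, and Proposition \ref{Prop: FJ property} does convert each such Jacquet module into a twisted Jacquet module of $\Theta_{2r+2}$ itself. But your plan misses the one observation that makes the proof tractable, and in its place leaves the entire technical content unverified. The paper does \emph{not} treat the $r-1$ maximal parabolics separately. It observes that the root subgroup $U_{\al_0}$ of the highest root $\al_0=\mu_3=2\al_3+\cdots+2\al_r+\al_{r+1}$ of the embedded $\Sp_{2r-2}$ is contained in the unipotent radical of \emph{every} maximal parabolic of $\Sp_{2r-2}$; hence $J_{N_{(a)}}=J_{N_{(a)}/U_{\al_0}}\circ J_{U_{\al_0}}$ for each $a$, and it suffices to prove the single vanishing $J_{U_{\al_0}}\bigl(FJ_{2,\al}(\Theta_{2r+2})\bigr)=0$. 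This is exactly the mechanism already used in the base case (where $L(\calo_2)=\SL_2$ happens to have only one parabolic, so the distinction is invisible there), and it collapses your $a$-indexed case analysis to one computation.

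That single computation is then carried out concretely: root exchange on the triples $(\ga_{2,k},\eta_{2,k},\mu_2)$, conjugation by $w_1w_2$ and the exchange triple $(-\al_1,\al_1+\al_2,\al_2)$ to reach $J_{V^w,\psi_{V^w}}(\Theta_{2r+2})$, a Fourier expansion along an abelian subgroup $Z$ whose nontrivial orbit is killed by the $(3^21^{2r-4})$ vanishing (Corollary \ref{Cor: 3^2}), the $(61^{2r-4})$ vanishing to enlarge the group to $V_2(61^{2r-4})$, and finally Theorem \ref{Thm: local constant term} to land on $J_{V_2(41^{2r-4}),\psi_{(41^{2r-4})}}(\Theta_{2r})$, which vanishes by the inductive hypothesis of Theorem \ref{Thm: supercuspidal}. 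Your sketch names the right ingredients (Proposition \ref{Prop: internal induction}, Corollary \ref{Cor: 3^2}, Theorem \ref{Thm: local constant term}, non-genericity), but the assertion that for each $a$ the pair $(C_{(a)},\psi')$ can be ``normalized into one of two recognizable shapes'' is precisely the claim that would need proof, and you acknowledge you have not verified it. As written, the argument is a plan whose decisive step is missing; adopting the highest-root reduction is what turns it into a proof.
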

\begin{proof}
The argument mirrors the one in the base case. Let $\al_0 = \mu_3 = 2\al_3+\cdots +2\al_n+\al_{r+1}$ be the long root corresponding to the highest root in the embedded copy of $\Sp_{2r-2}$. As the root group $U_{\al_0}$ corresponding to this root is contained in the unipotent radical of each maximal parabolic of $\Sp_{2r-2}$, to show that the Jacquet module is supercuspidal, it suffices to see that 
\[
J_{U_{\al_0}}(FJ_{2,\al}(\Theta_{2r+2}))=0.
\]
To demonstrate this, we note that by Proposition \ref{Prop: FJ property} it suffices to show that
\[
J_{U_{\al_0}}(J_{YV_1(\calo_2),\psi_{2,\al}}(\Theta_{2r+2}))\cong J_{V,\psi_V}(\Theta_{2r+2})= 0.
\]
In the above isomorphism, we have used root exchange to remove the roots $\eta_{2,k}$ for $3\leq k\leq n$, which fit into the exchange triples
$
(\ga_{2,k},\eta_{2,k}, \mu_2).\\
$

Conjugating the subgroup $V =V_2(421^{2r-4})$ by the Weyl group element $w=w_1w_2$, and then applying root exchange to the exchange triple $(-\al_1,\al_1+\al_2, \al_2)$, we see that this Jacquet module is isomorphic to $J_{V^w,\psi_{V^w}}(\Theta_{2r+2})$, where
\[
V^w = \left\{ 
				v=	\left(\begin{array}{ccc}
									Z&X&Y\\
									&I_{2r-4}&X^*\\
									&&Z^*

\end{array}\right)\right\}, 
\] where $Y\in \Sym_J^{3}(F),$ $X\in\Mat_{3\times 2r-4}(F)$ such that  $X_{ij}\neq0$ implies $i=2$, and $Z\in U(\GL_3)$ is of the form
\[
Z = \left(\begin{array}{ccc}
									1&&\ast\\
									&1&\ast\\
									&&1

\end{array}\right).
\]
Here, $\psi_{V^w}(v) =  \psi(v_{2,3}+ \al v_{3,2r}).$ It should be noted that $J_{V^w,\psi_{V^w}}(\Theta_{2r+2})$ is a representation of $\overline{\Sp}_{2r-4}(F)$.
 
Consider now the abelian subgroup of $U_{2r+2}$
\[
Z = \left\{ z=	\left(\begin{array}{ccc}
									1&X&0\\
									&I_{2r}&X^*\\
									&&1

\end{array}\right) : X \in F^{2r} \mbox{  such that  } x_1=x_2=x_{2r-1}=x_{2r}=0\right\}.
\]
The subgroup $\overline{\Sp}_{2r-4}(F)$ acts by conjugation on the characters of $Z$ with two orbits. Expanding $J_{V^w,\psi_{V^w}}(\Theta_{2r+2})$ along $Z$, we see that all representatives of the nontrivial orbit vanish as they are isomorphic to a quotient of the Jacquet module associated to the nilpotent orbit $(3^21^{2r-4})$, which were shown to vanish on $\Theta_{2r+2}$ in Corollary \ref{Cor: 3^2}. 

Therefore, the only term which remains is the trivial orbit, so that our Jacquet module vanishes if and only if $J_{ZV^w,\psi^0}(\Theta_{2r+2})$ vanishes, where
\[
\psi^0(v) =  \psi(v_{2,3}+ \al v_{3,2r})
\]
is the trivial extension of $\psi_{V^w}$ to this larger unipotent subgroup.

Finally, the vanishing of $(61^{2r-4})$ forces this Jacquet module to be isomorphic to $J_{R,\psi^0}(\Theta_{2r+2})$, where $R=U_{\al_1}ZV^w = V_2(61^{2r-4})$. Factoring this Jacquet module through the unipotent radical of the maximal parabolic of type $\GL_1\times \Sp_{2r}$,  and applying Theorem \ref{Thm: local constant term}, we see that
\[
J_{R,\psi^0}(\Theta_{2r+2})\cong J_{V_2(41^{2r-4}),\psi_{(41^{2r-4})}}(\Theta_{2r}),
\]
which vanishes by induction.

Thus, we have shown that $$FJ_{2,\al}(\Theta_{2r+2})$$ is supercuspidal as a $\overline{\Sp}_{2r-2}(F)$ representation.
\end{proof}
 
As remarked above, $FJ_{2,\al}(\Theta_{2r})=0$ by Proposition \ref{Prop: super unram}. By Corollary \ref{Cor: local equiv}, we have proven the Theorem \ref{Thm: supercuspidal}. Combining this with Proposition \ref{Prop: internal induction}, Corollary \ref{Cor: 3^2}, and the argument above, we have proven Theorem \ref{Thm: vanishing}.\qed

\section{Non-Vanishing Statement}\label{Section: Non-vanishing}

 We turn now to part (\ref{part a}) of Theorem \ref{Thm: intro Nilpotent orbit}. For the orbit $\calo_\Theta :=\calo_{\Theta,r}=(2^r)$ on $\Sp_{2r}$, we have $$V_2(\calo_\Theta) = \left\{ \left( \begin{array}{cc}
									I_{r}&Y\\
									&I_{r}\\\end{array}\right): Y\in \Sym_{J}^r(\A)\right\}.$$
For $v\in V_2(\calo_\Theta)$, we define the character $\psi_{\calo_\Theta, \underline{\ep}} :V\to \cc^\times$ by
\[
\psi_{\calo_\Theta,\underline{\ep}}(v) = \psi(\ep_1 v_{1,2r} +\cdots + \ep_r v_{r,r+1}),
\]
 where $\ep_i\in K^\times$. With this notation, the Fourier coefficients of $\Theta_{2r}$ associated to the nilpotent orbit $\calo$ are given by
\begin{eqnarray}\label{eqn: theta int}
\displaystyle \int_{[V_2(\calo_\Theta)]} \theta(vg) \psi_{\calo_\Theta,\underline{\ep}}(v)\,dv.
\end{eqnarray}

By the automorphy of $\theta$, we have the freedom to conjugate the above integral by various elements $t\in T_{r}(K)$ which, after a change of variables, implies that our coefficient only depends on $\ep_i \pmod{(K^\times)^2}$. Additionally, $M(\calo_\Theta) \cong \GL_{r}$ is the Siegel Levi subgroup of $\Sp_{2r}$ and one can show that for each choice $\underline{\ep}$,  $L(\calo_\Theta)_K = \Stab_{M(\calo_\Theta),K}(\psi_{\calo_\Theta,\underline{\ep}})$ is isomorphic to  $\OO_{\underline{\ep}}(K)$, the orthogonal group on $K^{r}$ with respect to the quadratic form
\[
q_{\underline{\ep}}(v) =\ep_1v_1^2+\cdots  +\ep_{r}v_{r}^2.
\]

\begin{Thm}\label{Thm: non-vanishing}
The nilpotent orbit $\calo_{\Theta}=(2^r)$ supports $\Theta_{2r}$.
\end{Thm}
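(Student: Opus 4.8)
The plan is to reduce the nonvanishing of the Fourier coefficient \eqref{eqn: theta int} to a statement about semi-Whittaker coefficients of theta functions on the double cover of $\GL$, which are known to be nonzero by Bump–Ginzburg \cite{BG}. As indicated in the introduction, I would handle the even rank case first and then bootstrap to the odd rank case, since the former is where the double cover of $\GL_{2r}$ appears most transparently.

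First I would treat $\Theta_{4r}$, i.e.\ the orbit $\calo_{\Theta,2r} = (2^{2r})$ on $\Sp_{4r}$. Here $V_2(\calo_\Theta)$ is the abelian unipotent radical of the Siegel parabolic $P_{(2r)}$ with Levi $\GL_{2r}$, and the character $\psi_{\calo_\Theta,\underline\ep}$ is stabilized by $\OO_{\underline\ep}$. I would choose $\underline\ep$ corresponding to the split form, so that $L(\calo_\Theta)_K$ contains a copy of $\GL_{2r}$ (a Siegel Levi of the split orthogonal group). The strategy is then to further integrate against a unipotent subgroup inside this $\GL_{2r}$ against a suitable (degenerate) character, and to recognize the resulting iterated integral, via the constant-term formula of Theorem \ref{Thm: global constant term} applied along the Siegel parabolic of $\Sp_{4r}$, as a semi-Whittaker-type coefficient of the theta function $\Theta_{\GL_{2r}}^{(2)}$ on the double cover $\overline{\GL}_{2r}^{(2)}(\A)$. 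The point is that the restriction of $\Theta_{4r}$ to the Siegel Levi is governed by $\Theta_{\GL_{2r}}^{(2)}$, and the character $\psi_{\calo_\Theta,\underline\ep}$ together with the extra unipotent integration assemble into exactly the type of non-maximal Whittaker functional that Bump–Ginzburg show is nonzero on the $\GL$-theta function. Thus the $\Sp_{4r}$-coefficient is nonzero provided a $\GL_{2r}^{(2)}$-semi-Whittaker coefficient is nonzero, which is \cite{BG}.

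Next I would deduce the odd rank case. Given that $(2^{2r})$ supports $\Theta_{4r}$, I want to conclude that $(2^{2r-1})$ supports $\Theta_{4r-2}$; combined with the even case this handles all ranks. For this I would use the constant-term formula Theorem \ref{Thm: global constant term} for the maximal parabolic $P_{(1,2r-1)}$ of $\Sp_{4r}$: the constant term of $\theta_{4r}$ along $U_{(1,2r-1)}$ is expressed in terms of $\Theta^{(2)}_{\GL_1}\otimes\Theta_{4r-2}$. Unwinding the coefficient \eqref{eqn: theta int} for $\calo_{\Theta,2r}$ one isolates, after a Fourier expansion along an appropriate one-parameter subgroup and a root-exchange argument (Lemma \ref{Lem: root exchange}), an inner integral which is precisely the Fourier coefficient of $\Theta_{4r-2}$ against $\psi_{\calo_{\Theta,2r-1},\underline\ep'}$. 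Nonvanishing of the $\Sp_{4r}$ coefficient then forces nonvanishing of the $\Sp_{4r-2}$ coefficient, giving the odd rank statement.

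The main obstacle I anticipate is the bookkeeping in the first step: producing the precise unipotent subgroup and degenerate character inside the stabilizing orthogonal group $\OO_{\underline\ep}$ so that the iterated integral matches \emph{exactly} the semi-Whittaker coefficient appearing in \cite{BG}, and verifying that the intertwining with Theorem \ref{Thm: global constant term} is compatible with the genuine structure of the covers (the determinantal twist relating $\overline{\GL}_k^{(2)}$ and the Kazhdan–Patterson cover, as in the remarks of Section \ref{Section: local Levi}). A secondary technical point is justifying the interchange of integration and the convergence needed to apply root exchange, but since $\Theta_{2r}$ is a residue of an Eisenstein series it is of uniform moderate growth, so Lemma \ref{Lem: root exchange} applies. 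Once the even case is pinned down, the odd case is a routine descent.
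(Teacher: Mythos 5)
Your even-rank step is essentially the paper's argument: after choosing the split character, one uses Weyl conjugations and root exchange to convert the $(2^{2r})$-coefficient of $\Theta_{4r}$ into the split coefficient $\int_{[U_{2r}]}\theta(vg)\psi^0_{2r}(v)\,dv$, which factors through the Siegel constant term and becomes the $(2^r)$-semi-Whittaker coefficient of $\Theta^{(2)}_{\GL_{2r}}$, nonzero by Bump--Ginzburg. The one point you underplay is that inserting the extra unipotent integration without changing nonvanishing is not just root exchange; it also uses the vanishing results (Theorem \ref{Thm: vanishing} and Appendix \ref{Appendix B}) to add the missing root subgroups for free. Still, this half is the intended route.

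The odd-rank step has a genuine gap: you run the reduction in the wrong direction. You propose to descend from $\Theta_{4r}$ to $\Theta_{4r-2}$ by exhibiting the $(2^{2r-1})$-coefficient of $\Theta_{4r-2}$ as an inner integration of the known-nonzero $(2^{2r})$-coefficient of $\Theta_{4r}$, via the constant term along $U_{(1,2r-1)}$. But the generic character $\psi_{\calo_\Theta,\underline{\ep}}$ (equivalently the split-coefficient character $\psi^0_{2r}$) is nontrivial on the root subgroup $U_{\al_1}\subset U_{(1,2r-1)}$, so the coefficient does not factor through that constant term, and no evident root exchange repairs this: the parabolics through whose radicals the split coefficient does factor are of the form $P_{(2k,\cdot)}$, which descend by even steps and never reach odd rank. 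The paper instead ascends from $\Theta_{4r}$ to $\Theta_{4r+2}$: it considers the partial coefficient of $\Theta_{4r+2}$ over $V_2(\calo)^0$ with the character missing one entry, notes that if this vanished identically then so would its further constant term along $V_{1,2r}$, which by Theorem \ref{Thm: global constant term} is the known-nonzero $(2^{2r})$-coefficient of $\Theta_{4r}$ (the easy contrapositive direction), and then completes the missing character entry by restricting to an embedded $\overline{\SL}_2(\A)$, on which the resulting function is a nonzero automorphic form and hence has a nontrivial Fourier coefficient; reversing the root exchanges recovers the full generic character for $(2^{2r+1})$. That final $\overline{\SL}_2$ step is the essential one --- the remark after Proposition \ref{Prop: odd rank} identifies it as precisely the obstruction to generalizing to higher covers --- and it is absent from your sketch, which as written does not yield the odd-rank case.
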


 This will be proven in two stages: first proving non-vanishing for $\overline{\Sp}_{4r}$, then using this to obtain the result for $\overline{\Sp}_{4r+2}$.

\begin{Prop}\label{Prop: nonvan even}
The nilpotent orbit $\calo_{\Theta}=(2^{2r})$ supports $\Theta_{4r}$.
\end{Prop}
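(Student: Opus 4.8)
The plan is to reduce the non-vanishing of the Fourier coefficient of $\Theta_{4r}$ attached to $\calo_\Theta = (2^{2r})$ to a known non-vanishing statement on the double cover of $\GL_{2r}$. The orbit $(2^{2r})$ on $\Sp_{4r}$ is the Siegel orbit: $V_2(\calo_\Theta)$ is the abelian radical of the Siegel parabolic $P_{(2r)}$, and $M(\calo_\Theta) \cong \GL_{2r}$ acts on the characters $\psi_{\calo_\Theta,\underline{\ep}}$ with stabilizer the orthogonal group $\OO_{\underline\ep}$. The key structural input is Theorem~\ref{Thm: global constant term} (with $k = 2r$, so $r-k = 0$): the constant term of $\theta_{4r}$ along $U_{(2r,0)}$ is, up to the explicit exceptional character $\overline{\chi}_{\Theta,2r}$, a theta function $\theta_{\GL_{2r}}$ on the double cover $\overline{\GL}_{2r}^{(2)}(\A)$. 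So I would first unfold: write the integral \eqref{eqn: theta int} over $[V_2(\calo_\Theta)]$, and note that since $V_2(\calo_\Theta)$ is precisely the unipotent radical whose constant term is computed in Theorem~\ref{Thm: global constant term}, the Fourier coefficient of $\Theta_{4r}$ over $V_2(\calo_\Theta)$ against $\psi_{\calo_\Theta,\underline{\ep}}$ becomes the corresponding \emph{semi-Whittaker} (or Fourier--Jacobi-type) coefficient of $\Theta_{\GL_{2r}}^{(2)}$ along the abelianized radical, paired against a quadratic character determined by $\underline\ep$.

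Second, I would identify precisely which semi-Whittaker coefficient of the $\GL$-theta function arises. The character $\psi_{\calo_\Theta,\underline\ep}$ restricted through the constant term should match, for a suitable choice of $\underline\ep$ (e.g.\ $\underline\ep = (1,\dots,1)$ corresponding to the split form $q_{\underline\ep}$, giving $\OO_J$), a coefficient of $\theta_{\GL_{2r}}$ of the type studied by Bump--Ginzburg \cite{BG}. Concretely, theta functions on the double cover of $\GL_{2r}$ are not generic, but they do support a specific non-trivial degenerate Whittaker coefficient indexed by the partition $(2^r)$ of $2r$ — this is exactly the content of the Bump--Ginzburg analysis of the size of $\Theta_{\GL_{2r}}^{(2)}$. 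I would cite the precise statement from \cite{BG} giving non-vanishing of this coefficient, and verify that the character obtained by transporting $\psi_{\calo_\Theta,\underline\ep}$ through Theorem~\ref{Thm: global constant term} is $M$-conjugate to the one appearing there (this is where the freedom to change $\underline\ep$ modulo $(K^\times)^2$, noted just before the theorem, is used — I can arrange a convenient representative).

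Third, I would assemble the chain: non-vanishing of the Bump--Ginzburg coefficient of $\Theta_{\GL_{2r}}^{(2)}$ implies, via the constant-term identity of Theorem~\ref{Thm: global constant term} applied in reverse, that the integral \eqref{eqn: theta int} is not identically zero on $\overline{\Sp}_{4r}(\A)$ for this choice of $\underline\ep$; hence $\calo_\Theta = (2^{2r})$ supports $\Theta_{4r}$. One should be slightly careful that the constant-term formula produces the coefficient for a \emph{vector} $\theta_{\GL_{2r}}$ lying in $\Theta_{\GL_{2r}}^{(2)}$ rather than an arbitrary one, but since \cite{FG} Proposition~1 (Theorem~\ref{Thm: global constant term}) says the correspondence $\theta_{4r} \mapsto (\theta_{\GL_{2r}}, \theta_0)$ hits all of $\Theta_{\GL_{2r}}^{(2)}$ as $\theta_{4r}$ varies, and since the Bump--Ginzburg non-vanishing holds for \emph{some} vector, this is fine.

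The main obstacle I expect is the bookkeeping in the second step: matching the character $\psi_{\calo_\Theta,\underline\ep}$ on $\Sym_J^{2r}(\A)$ against the precise semi-Whittaker character on the unipotent radical of the $\GL_{2r}$-theta function used by Bump--Ginzburg, including tracking the effect of the exceptional twist $\overline{\chi}_{\Theta,2r}$ and confirming that the relevant stabilizer/character data line up (and that the modulus characters match so that no extra non-unitary factor obstructs the conclusion). Once the identification of characters is pinned down, the rest is a formal consequence of Theorem~\ref{Thm: global constant term} and \cite{BG}. The passage from $\overline{\Sp}_{4r}$ to $\overline{\Sp}_{4r+2}$ is then handled separately in the following argument of Section~\ref{Section: Non-vanishing}.
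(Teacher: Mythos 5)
Your proposal correctly identifies the target of the reduction (the non-vanishing of the $(2^r)$ semi-Whittaker coefficient of $\Theta^{(2)}_{\GL_{2r}}$ from Bump--Ginzburg) and correctly isolates the split character as the one to use, but the mechanism you propose for getting there does not work, and the step it replaces is in fact the bulk of the paper's proof. Theorem \ref{Thm: global constant term} computes the \emph{constant term} of $\theta_{4r}$ along $U_{(2r,0)}=V_2(\calo_\Theta)$, i.e.\ the integral against the \emph{trivial} character of the Siegel radical. The coefficient you need to show is non-zero is the integral against a \emph{non-degenerate} character $\psi_{\calo_\Theta,\underline\ep}$ of that same abelian radical; this is a different Fourier mode of $\theta_{4r}$ along $[V_2(\calo_\Theta)]$ and cannot be "passed through" the constant-term identity. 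There is no sense in which $\int_{[V_2(\calo_\Theta)]}\theta(vg)\psi_{\calo_\Theta,\underline\ep}(v)\,dv$ "becomes" a coefficient of $\theta_{\GL_{2r}}$ on the Levi: the character lives on exactly the group that the constant term integrates away, and the twisted coefficient is orthogonal to the constant term rather than a refinement of it.

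What the paper actually does to bridge this gap is a long induction by root exchange. After conjugating the split character into the form $\hat\psi_\calo(v)=\psi(v_{2r-1,2r+1}+v_{2r-3,2r+3}+\cdots+v_{1,4r-1})$, one iteratively conjugates by the reflections $r_{2r-2k}$ in the long roots and applies Lemma \ref{Lem: root exchange} to a carefully ordered list of exchange triples, trading root subgroups inside the Siegel radical for root subgroups in the derived group of $U_{2r}$. After $r$ steps the coefficient is shown to be non-vanishing if and only if $\int_{[R_r]}\theta(vg)\hat\psi_r(v)\,dv$ is, where $R_r=N_r\subset U_{2r}$ and $\hat\psi_r(v)=\psi(v_{1,2}+v_{3,4}+\cdots+v_{2r-1,2r})$ is supported on simple root groups of the $\GL_{2r}$-Levi. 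Only at this point is the character trivial on the Siegel radical, so that (using the vanishing statements of Appendix \ref{Appendix B} to fill in the missing integrations) the integral equals $\int_{[U_{2r}]}\theta(vg)\psi^0_{2r}(v)\,dv$, which factors through the Siegel constant term; Theorem \ref{Thm: global constant term} then converts it into the Bump--Ginzburg semi-Whittaker coefficient of $\theta_{\GL_{2r}}$. Your outline omits this entire conversion, which is the essential content of the proposition. (A minor additional point: the split choice of $\underline\ep$ is not $(1,\dots,1)$ in general but requires $\ep_i\ep_{i+1}=-\al_i^2$ in consecutive pairs.)
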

\begin{proof}

Set $\calo=\calo_\Theta$. To prove the proposition, it is sufficient to show that there is at least one $M(\calo)_K$-orbit of characters such that (\ref{eqn: theta int}) is non-vanishing. To this end, suppose $\underline{\ep}$ is such that $\OO_{\underline{\ep}}(K)$ is split. This is equivalent to the assumption that, up to conjugation by a Weyl group element, for each $1\leq i \leq 2r-1$ odd, $$\ep_i\ep_{i+1} =-\al_i^2$$ for some $\al_i\in K^\times$. With this choice, we refer to $\psi_{\calo,\underline{\ep}}$ as the split character. With this choice, there exists an $l\in M(\calo)_K$ such that, conjugating the integral (\ref{eqn: theta int}) by $l$, we see that non-vanishing of our Fourier coefficient for the choice of split character is equivalent to the non-vanishing of
\begin{align}\label{int base?}
\displaystyle \int_{[V_2(\calo)]} \theta(vg) \hat{\psi}_\calo(v)\,dv,
\end{align}
where 
\[
\hat{\psi}_\calo(v) = \psi( v_{2r-1,2r+1} + v_{2r-3,2r+3}+\cdots + v_{1,4r-1}).
\]
Note that the roots with a nontrivial character on their root subgroups are

 $$\be_k = \eta_{2r-2k+1,2r-2k+1}=\al_{2r-2k+1}+2\al_{2r-2k+2}+\cdots +\al_{2r},$$ 
for $1\leq k \leq 2r$.

Thus, to get to an integral we can work with, for each $1\leq i \leq2r$ set $r_i$ to be the standard lift of the simple reflection for the long root $\mu_i$; namely
\[
r_i = \left( \begin{array}{ccccc}
									I_{i-1}&&&&\\
									&&&-1&\\
									&&I_{4r-2i}&&\\
									&1&&&\\
									&&&&I_{i-1}\\\end{array}\right).
\]

The idea is to iteratively move the character to the appropriate simple roots. We begin by conjugating (\ref{int base?}) by $r_{2r}$. 
This moves the character from the root $\be_1$ to the simple root $\al_{2r-1}$, but also moves $\al_{2r}$ to $-\al_{2r}$. We then need to exchange the integration over this negative root with an integration over $\be_1$, which is permitted by root exchange. That is, we apply Lemma \ref{Lem: root exchange} to the exchange triple $$(-\al_{2r}, \be_1, \al_{2r-1}).$$ After this first step, we obtain the integral
\begin{align*}
\displaystyle \int_{[R_{1}]} \theta(vg) \hat{\psi}_{1}(v)\,dv,
\end{align*}
where 
\[
R_{1} = \left\{ \left( \begin{array}{cccccc}
									I_{2r-2}&&X_1&&X_2&Y\\
									&1&t&y&z&X_2^*\\
									&&1&0&y&\\
									&&&1&-t&X_1^*\\
									&&&&1&\\
									&&&&&I_{2r-2}
\end{array}\right): Y\in \Mat_{2r-2}^0; X_i\in \A^{2r-2};\:t,y,z\in \A\right\},
\]
and 
\[
 \hat{\psi}_{1}(v)= \psi( v_{2r-1,2r} + v_{2r-3,2r+3}+\cdots + v_{1,4r-1}).
\]

We proceed inductively and assume that we have shown that, for $1\leq k\leq r-1$, the non-vanishing of (\ref{eqn: theta int}) is equivalent to the non-vanishing of 
\begin{align}\label{eqn: induction vanish}
\displaystyle \int_{[R_{k}]} \theta(vg) \hat{\psi}_{k}(v)\,dv,
\end{align}
where 
\[
R_{k} = \left\{ \left( \begin{array}{ccc}
									I_{2r-2k}&X&Y\\
									&M_k&X^*\\
									&&I_{2r-2k}\\
									
\end{array}\right): Y\in \Sym_J^{2r-2k}; X_i\in \Mat^E_{(2r-2k)\times(4k)} \: M_k \in N_k\right\},
\]
 
\[
 \Mat^E_{l\times r}= \{ X \in \Mat_{l\times r}: x_{i,j}\neq0 \Rightarrow j \mbox{ even}\},
\]
and
\[
N_k = \{ M\in U(\Sp_{4k}) : \mbox{for } i<j\leq 4k-i+1, m_{i,j}\neq 0 \Rightarrow i\mbox{ odd}\}.
\]
Here we are considering the natural embedding $\Sp_{4k}\hra \Sp_{4r}$ into the Levi subgroup of $P_{(2r-2k,2k)}$, and the character is given by
\begin{align*}
 \hat{\psi}_{k}(v) &= \psi(v_{2r-1,2r} + v_{2r-3,2r-2}+\cdots +v_{2r-2k+1,2r-2k+2})\\
				 &\quad\times \psi(v_{2r-2k-1, 2r+2k+1}+\cdots + v_{1,4r-1}).
\end{align*}
We need to show that the non-vanishing of (\ref{eqn: induction vanish}) is equivalent to the non-vanishing of 
\[
\displaystyle \int_{[R_{k+1}]} \theta(vg) \hat{\psi}_{k+1}(v)\,dv.
\]
To this end,  the next root we need to move is 
$$\be_{k+1} = \al_{2r-2(k+1)+1}+2\al_{2r-2k}+\cdots +\al_{2r}.$$ 

Before conjugating by $r_{2r-2k}$, we apply root exchange to all the root groups in $V_k$ which would be conjugated to negative roots. Thus, we apply root exchange to the triples of type
 $$(\ga_{2(r-k),2(r-i)},\;\eta_{2(r-k)-1,2(r-i)},\;\be_{k+1})$$
 in the order of descending values of $i$, as $i$ ranges over $1\leq i\leq r-1$. These correspond to the roots contained in the bottom row of the ``even columns'' of $X$ which are contained in the Siegel Levi subgroup. 

 Next, we apply root exchange on the triples of type
 $$(\eta_{2(r-k),2(r-i)},\;\ga_{2(r-k)-1,2(r-i)},\;\be_{k+1})$$
 in order of descending values of $j$, where here $1\leq j \leq r$. These roots correspond to those even columns not contained in the Siegel Levi.

 Finally, we apply root exchange to the triple
 $$(\mu_{2(r-k)},\; \al_{2r-2k-1},\;\be_{k+1})$$ 
to obtain the integral
\[
\displaystyle \int_{[R_{k}]} \theta(vg) \hat{\psi}_{k}(v)\,dv,
\]
where
\[
R_{k} = \left\{ \left( \begin{array}{ccc}
									I_{2r-2(k+1)}&X&Y\\
									&M_{k+1}&X^*\\
									&&I_{2r-2(k+1)}\\
									
\end{array}\right): Y\in \Sym_J^{2r-2k};\: M_{k+1} \in N_{k+1}\right\},
\]
where
\[
X\in \Mat^*_{l\times m}= \{ X \in \Mat_{l\times m}: x_{i,j}\neq0 \mbox { and } j<m-1 \Rightarrow j \mbox{ even}\},
\]
and where the character has the same formula as before. Conjugating this integral by $r_{2r-2k}$ we see that the resulting integral is
\[
\displaystyle \int_{[R_{k+1}]} \theta(vg) \hat{\psi}_{k+1}(v)\,dv,
\]
thus finishing our induction. Thus, we have shown that the non-vanishing of (\ref{eqn: theta int}) is equivalent to the non-vanishing of 
\begin{align}\label{eqn: final int nonvan}
\displaystyle \int_{[R_{r}]} \theta(vg) \hat{\psi}_{r}(v)\,dv,
\end{align}
where, in this final form, we recall that
\[
R_r = N_r= \{ m\in U_{2r} : \mbox{for } i<j\leq 4r-i+1, m_{i,j}\neq 0 \Rightarrow i\mbox{ odd}\},
\]
and 
\[
 \hat{\psi}_{r}(v)=\psi(v_{1,2} + v_{3,4}+\cdots +v_{2r-1,2r}).
\]

We claim that this integral is not trivial for some choice of data. If not, setting $U_{(2^r)}$ to be the unipotent radical of the standard parabolic $P_{(2^r)}$ which has Levi subgroup $L\cong \GL_2^r$, one may show (using the results in Appendix \ref{Appendix B}) that (\ref{eqn: final int nonvan}) is equal to the constant term
\begin{align}\label{eqn: final even rank}
\int_{[R_{r}]} \theta(vg) \hat{\psi}_{r}(v)\,dv= \int_{\left[U_{(2^r)}\right]}\int_{[R_{r}]} \theta(uvg) \hat{\psi}_{r}(v)\,dvdu= \int_{[U_{2r}]}\theta(vg)\psi_{2r}^0(v)\,dv,
\end{align}
which is also zero for all choices of data. Here $\psi_{2r}^0$ is as in Section \ref{Section: Notation}. In particular, the integral vanishes when $g=1$. Applying Theorem \ref{Thm: global constant term}, we see that this integral equals, using the notation in \cite{C},
\[
\displaystyle\int_{[U_{\GL_{2r}}]} \theta_{\GL_{2r}}(u)\psi_\lam(u) du,
\]
where $\theta_{\GL_{2r}}\in \Theta^{(2)}_{\GL_{2r}}$ is a theta function on the double cover of $\GL_{2r}$, and $U_{\GL_{2r}}$ is the unipotent radical of the Borel subgroup of $\GL_{2r}$. As noted in Section \ref{Section: local Levi}, we are working with a twist of a pushout of this double cover. The analysis of Bump and Ginzburg in \cite{BG} on the global theta representation applies to this cover without essential change.

 This integral is the $\lam$-semi-Whittaker  coefficient of $\theta_{\GL}$ corresponding to the partition $\lam=(2^r)$. 
This coefficient is non-vanishing on $ \Theta^{(2)}_{\GL_{2r}}$ as seen in \cite{BG}, so that our assumption gives a contradiction. 
 Thus, there exists a choice of data such that the integral (\ref{eqn: final int nonvan}) is non-vanishing, and the proposition is proved.
\end{proof}

To finish the proof of Theorem \ref{Thm: non-vanishing}, we consider the odd rank case:
\begin{Prop}\label{Prop: odd rank}
The orbit $\calo_{\Theta}= (2^{2r+1})$ supports $\Theta_{4r+2}$.
\end{Prop}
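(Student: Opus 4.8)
The plan is to deduce this from Proposition~\ref{Prop: nonvan even} by transporting the $(2^{2r})$-coefficient of $\Theta_{4r}$ upward along the maximal parabolic $P_{(1,2r)}\subset\Sp_{4r+2}$ with Levi $\GL_1\times\Sp_{4r}$, via the constant-term formula of Theorem~\ref{Thm: global constant term} in a Fourier--Jacobi form along its Heisenberg radical $U_{(1,2r)}$.

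First I would choose the generic character $\psi_{\calo_\Theta,\underline\ep}$ so that the quadratic form $q_{\underline\ep}$ on $K^{2r+1}$ has Witt index $r$, with the anisotropic line placed in the first coordinate --- the one $P_{(1,2r)}$ singles out. Conjugating the coefficient (\ref{eqn: theta int}) by a suitable element of $T_{2r+1}(K)$ and a Weyl element, exactly as in the proof of Proposition~\ref{Prop: nonvan even}, I would bring $\psi_{\calo_\Theta,\underline\ep}$ into the ``staircase'' normal form: nontrivial on the long root $\mu_1$, which is the centre of $U_{(1,2r)}$, and on $r$ further root subgroups all of which lie in the Siegel unipotent $N_{4r}:=V_2((2^{2r}))$ of the middle copy of $\Sp_{4r}$. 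On $N_{4r}$ this restricted character is, up to conjugacy by the Siegel Levi of $\Sp_{4r}$, precisely the split character attached to $(2^{2r})$ used in Proposition~\ref{Prop: nonvan even}, while on $U_{(1,2r)}$ it is trivial on the abelianisation and nontrivial on the centre.

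Next, writing $V_2((2^{2r+1}))=Y_1\cdot N_{4r}$, where $Y_1$ is generated by $U_{\mu_1}$ together with the abelian family of root subgroups $U_{\eta_{1,j}}$, $1\le j\le 2r$, forming a Lagrangian of $U_{(1,2r)}$, I would apply a sequence of root exchanges --- using Lemma~\ref{Lem: root exchange} to exchange each $U_{\eta_{1,j}}$ for the complementary $U_{\ga_{1,j}}$ relative to the character $\psi_{\mu_1}$ on $U_{\mu_1}$, carried out in an order interleaved with the relevant Weyl conjugations so that the six hypotheses hold at every stage, following the template of the even-rank induction and of Corollary~\ref{Cor: 3^2}, and using Appendix~\ref{Appendix B} as there to adjoin the missing constant terms. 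The effect is that the non-vanishing of (\ref{eqn: theta int}) becomes equivalent to the non-vanishing of $\int_{[N_{4r}]}\bigl(\int_{[U_{(1,2r)}]}\theta(u\,ng)\,\psi_{\mu_1}(u)\,du\bigr)\,\psi'(n)\,dn$, with $\psi'$ the split character on $N_{4r}$. The inner integral is a Fourier--Jacobi coefficient of $\Theta_{4r+2}$ along the Heisenberg group $U_{(1,2r)}$ with nontrivial central character; by the global counterpart of the Fourier--Jacobi functor of Section~\ref{Section: local FJ}, together with Theorem~\ref{Thm: global constant term} --- this is the point at which the structure of the $\GL_1$-factor enters --- it equals a non-vanishing multiple of $\theta_{4r}(ng)$ for some $\theta_{4r}\in\Theta_{4r}$. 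The surviving integral is then a conjugate of the $(2^{2r})$-Fourier coefficient of $\theta_{4r}$ against the split character, which is nonzero for a suitable $\theta_{4r}$ by Proposition~\ref{Prop: nonvan even}; unwinding the reductions produces data for which (\ref{eqn: theta int}) is nonzero, so $\calo_\Theta=(2^{2r+1})$ supports $\Theta_{4r+2}$.

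The main obstacle is the combinatorics of the root-exchange step. A simultaneous exchange of all the $U_{\eta_{1,j}}$ is not legitimate, since the complementary Lagrangian fails to normalise the intermediate group; the exchanges must be ordered carefully and interleaved with Weyl conjugations, and at every stage one must check that the character is never pushed onto a root group in a way that would make it correspond to an orbit strictly larger than $(2^{2r})$ --- such an orbit would, by Theorem~\ref{Thm: vanishing}, fail to support $\Theta_{4r+2}$, and the argument would collapse. Placing the anisotropic line of $q_{\underline\ep}$ at the first coordinate and routing the whole reduction through $U_{(1,2r)}$ is precisely what keeps this under control.
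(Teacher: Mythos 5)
Your proposal breaks down at its central step: the claim that the inner integral $\int_{[U_{(1,2r)}]}\theta(ung)\,\psi_{\mu_1}(u)\,du$, a Fourier--Jacobi coefficient of $\Theta_{4r+2}$ along the Heisenberg radical $U_{(1,2r)}$ with \emph{nontrivial central character}, ``equals a non-vanishing multiple of $\theta_{4r}(ng)$.'' Theorem \ref{Thm: global constant term} computes only the \emph{constant term} along $U_{(k,r-k)}$, i.e.\ the integral against the trivial character; it says nothing about coefficients that are nontrivial on the centre $U_{\mu_1}$. A Fourier--Jacobi coefficient with nontrivial central character is a genuinely different object (it is the kind of coefficient whose behaviour Sections \ref{Section: local FJ} and \ref{Section: vanishing} have to analyze at length), and there is no identification of it with a vector in $\Theta_{4r}$ available in the paper or in general. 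Since your entire reduction routes the ``extra'' coordinate of the rank-$(2r+1)$ character onto $\mu_1$ and then tries to discharge it through this unproved identity, the argument does not close.

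The paper's proof goes the other way around and thereby avoids this issue. One first drops the problematic coordinate entirely: the character $\psi_0$ is taken trivial in that direction and the group $V_2(\calo)^0$ omits the corresponding root subgroup, so that composing with the \emph{constant term} along $V_{1,2r}$ is legitimate and Theorem \ref{Thm: global constant term} reduces nonvanishing of this partial coefficient to Proposition \ref{Prop: nonvan even}. The missing coordinate is then recovered by a different mechanism: after the root exchanges on the triples $(\eta_{1,2k},-\ga_{1,2k-1},\eta_{2k,2k})$, the resulting function restricts to a nonzero automorphic function on the $\overline{\SL}_2$ of the highest root, and a nonzero automorphic function on $\overline{\SL}_2(\A)$ has some nontrivial Fourier coefficient $\xi\in K^\times$; undoing the exchanges turns this $\xi$ into the last entry of a generic character for $(2^{2r+1})$. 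This $\SL_2$ genericity step is exactly the ingredient the Remark after the proposition flags as essential (and as the obstruction for higher covers), and it is absent from your proposal.
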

\begin{Rem}
If one sought to prove analogous results for higher degree covers, a critical obstruction occurs at this stage. The non-vanishing of the Fourier coefficient (\ref{eqn: SL2 int}) below follows from standard results for automorphic forms on $\SL(2)$. In the case of studying Fourier coefficients of $\Theta^{(2n)}_{2r}$ with $n>2$, one must to prove genericity results for residues of certain Eisenstein series to obtain the analogous non-vanishing result. Accomplishing this is a fundamental obstacle to considering generalizations to higher degree covers.  
\end{Rem}\begin{proof}
Now set $\calo=(2^{2r+1}),$ and consider the integral 
\begin{align}\label{eqn:int 1 odd}
\displaystyle\int_{[V_2(\calo)^0]}\theta(vg)\psi_{0}(v)\,dv,
\end{align}
where 
\[
V_2(\calo)^0= \left\{ \left( \begin{array}{cc}
									I_{2r+1}&Y\\
									&I_{2r+1}\\\end{array}\right): Y\in \Sym_J^{2r+1}(\A), \mbox{ and } y_{2r+1,1} = 0\right\},
\]
and 
\[
\psi_{0}(v) =  \psi(\ep_1 v_{2,4r+1} +\cdots + \ep_m v_{2r+1,2r+2}).
\]
Here we make the same assumption on $\ep_i$ as before, so that the corresponding orthogonal group is split. We claim that this integral is non-vanishing for some choice of data. If not, then the integral
\[
\displaystyle\int_{[V_{1,2r}]}\int_{[V_2(\calo)^0]}\theta(uvg)\psi_{0}(v)\,dvdu,
\]
where $V_{1,2r}$ is the unipotent radical of the maximal parabolic in $\Sp_{4r+2}$ with Levi $L_{1,2r}\cong \GL_1\times\Sp_{4r}$, is zero for all choices of data.  However,  Theorem \ref{Thm: global constant term} tells us that this integral is equal to 
\[
\int_{[V(2^{2r})]}\theta_{4r}(v)\psi_{\underline{\ep}}(v)\,dv,
\]
which is non-vanishing for some choice of data by Proposition \ref{Prop: nonvan even}. Thus, the integral (\ref{eqn:int 1 odd}) is non-vanishing for some choice of data.

As before, our assumption that the stabilizer $\OO_{\underline{\ep}}(K)$ is a split group tells us that there exists an $h\in M(\calo)_K$ such that if we conjugate (\ref{eqn:int 1 odd}) by $h$ we obtain

\begin{align*}
\displaystyle\int_{[V_2(\calo)^0]}\theta(vg')\hat{\psi}_{0}(v)\,dv,
\end{align*}
where 
\[
\hat{\psi}_{0}(v)=\psi( v_{2r,2r+1} + v_{2r-2,2r+3}+\cdots + v_{3,4m-3}).
\]
Note that the roots with a nontrivial character on the corresponding root group are 
\[
\eta_{2k,2k} = \al_{2k}+2\al_{2k+1}+\cdots+ \al_{2r},
\]
for $1\leq k\leq r$. 

Consider the following  $r$ root triples
\[
(\eta_{1,2k},-\ga_{1,2k-1}, \eta_{2k,2k}).
\]
One can check that these are in fact exchange triples, to which we apply Lemma \ref{Lem: root exchange}. Thus, we have that the function
\begin{align*}
f(g) =\displaystyle\int_{[V']}\theta(vg)\hat{\psi}_{0}(v)\,dv,
\end{align*}
is a non-zero function in $g$.
Here, we have
\[
V'= \left\{ \left( \begin{array}{cccc}
									1&&&\\
									u&I_{2r}&&\\
									&&I_{2r}&\\
									&&u^*&1\\
\end{array}\right)
						\left(\begin{array}{cc}
									I_{2r+1}&Y\\
									&I_{2r+1}\\\end{array}\right)\right\}, 
\]
with
\begin{align*}
Y\in \Sym_J^{2r+1}(\A), \:&\mbox{ and }\qquad y_{i,1} = 0 \mbox{ for $i$ odd},\\
u\in \A^{2r}\qquad &\mbox{ with }\qquad u_j = 0 \mbox{ for $j$ even.}
\end{align*}

The point of this last root exchange is that when we restrict $g\in \overline{\Sp}_{4r+2}(\A)$ to the copy of $\overline{\SL}_2(\A)$ corresponding to the highest root, $f$ is a nonzero automorphic function on this $\overline{\SL}_2$. That is, $f$ is invariant under the $K$-points of 
\[
\iota\left(\begin{array}{cc}
				a&b\\
				c&d\\
				\end{array}\right) = 
	\left(\begin{array}{ccc}
				a&&b\\
				&I_{4r}&\\
				c&&d\\
				\end{array}\right).
\]
As a nonzero automorphic function of $\overline{\SL}_2(\A)$, $f\circ \iota$ has a non-trivial Fourier coefficient; thus, there exists an $\xi\in K^\times$ such that the integral
\begin{align}\label{eqn: SL2 int}
\displaystyle \int_{[\A]} f\left(\iota\left(\begin{array}{cc}
				1&t\\
				&1\\
				\end{array}\right)g\right)\psi(\xi t) \,dt 
\end{align}
is not zero.

Reversing the previous root exchanges, and conjugating by an appropriate element in $M(\calo)_K$, we see that the non-vanishing of (\ref{eqn: SL2 int}) is equivalent to the non-vanishing of 
\[
\displaystyle \int_{[V_2(\calo)]} \theta(vg) \psi_{\calo,\underline{\ep}}(v)\,dv.\qedhere
\]

\end{proof}

As a final note, we remark that the equality analogous to (\ref{eqn: final even rank}) holds in the odd rank case. We record this as follows.
\begin{Prop}\label{Prop: Split coefficient}
Set $\calo=(2^{r})$. Then the integral
\[
\int_{[V_2(\calo)]} \theta_{r}(vg)\psi_{\calo,\ep}(u)\,du \neq 0
\]
if and only if
\[
\theta^{U_{r},\psi_{r}^0}(g) = \int_{[U_{r}]}\theta(vg)\psi_{r}^0(v)\,dv\neq 0.
\]
\end{Prop}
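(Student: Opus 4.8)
The plan is to read this off from the computations already carried out in the proofs of Propositions \ref{Prop: nonvan even} and \ref{Prop: odd rank}, together with the bookkeeping of Appendix \ref{Appendix B} and the vanishing Theorem \ref{Thm: vanishing}. First I would observe that, for the split character $\psi_{\calo,\underline{\ep}}$, the string of Weyl conjugations and applications of Lemma \ref{Lem: root exchange} performed in those proofs consists entirely of reversible steps, so it produces an honest biconditional
\[
\int_{[V_2(\calo)]}\theta_r(vg)\psi_{\calo,\underline{\ep}}(v)\,dv \not\equiv 0 \quad\Longleftrightarrow\quad \int_{[R_r]}\theta_r(vg)\hat{\psi}_r(v)\,dv \not\equiv 0 ,
\]
where $R_r = N_r \subset U_r$ and $\hat{\psi}_r = \psi_r^0|_{R_r}$ are exactly the unipotent subgroup and character reached at the end of those arguments. (In the even rank case this is literally the chain of equivalences from (\ref{eqn: theta int}) to (\ref{eqn: final int nonvan}); the odd rank case reduces to it by the constant term along $V_{1,2r}$, as in the proof of Proposition \ref{Prop: odd rank}.) Thus the proposition is reduced to the single identity of automorphic functions on $[\Sp_{2r}]$,
\[
\int_{[R_r]}\theta_r(vg)\hat{\psi}_r(v)\,dv = \theta^{U_r,\psi_r^0}(g) ,
\]
which is the ``analogous equality'' referred to just before the statement and is the content of (\ref{eqn: final even rank}) in the even rank case.

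To establish this identity I would Fourier-expand the left-hand side one root subgroup at a time, along the positive root subgroups $U_\beta$ of $U_r$ not contained in $R_r$, applying Lemma \ref{Lem: root exchange} whenever a given $U_\beta$ must first be moved into a position where it normalizes the current group of integration and fixes the current character. At each step the trivial-character term simply enlarges the group being integrated over; running over all such $\beta$, and using $U_{(2^r)}R_r = U_r$ with $\psi_r^0$ the trivial extension of $\hat{\psi}_r$, the accumulated trivial-character term is exactly $\int_{[U_r]}\theta_r(vg)\psi_r^0(v)\,dv = \theta^{U_r,\psi_r^0}(g)$. Each nontrivial-character term produced along the way is majorized, after further root exchange, by Fourier coefficients of $\theta_r$ attached to nilpotent orbits strictly larger than, or incomparable with, $\calo = (2^r)$; by Theorem \ref{Thm: vanishing} all of these vanish identically. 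Hence only the trivial-character term survives, the identity holds unconditionally, and combining it with the biconditional above proves the proposition. Note there is no circularity: Theorem \ref{Thm: vanishing} excludes only orbits \emph{strictly} above, or incomparable with, $(2^r)$, so the orbit $(2^r)$ itself is never asserted to vanish.

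The step I expect to be the main obstacle is exactly this last one — verifying that \emph{every} nontrivial Fourier mode appearing in the expansion really is controlled by an orbit strictly above, or incomparable with, $(2^r)$, and in particular that no mode attached to $(2^r)$ itself (with some non-split character, which Theorem \ref{Thm: vanishing} would not kill) is created along the way. This is precisely what forces one to track, after each conjugation and each root exchange, which unipotent subgroup and which character one is integrating against and to match it against the weighted Dynkin diagram of the associated orbit; this is the role played by Appendix \ref{Appendix B}. If such a $(2^r)$-mode with a non-split character did appear, one would dispose of it using the freedom to conjugate by $T_r(K)$ noted before (\ref{eqn: theta int}) — which shows the $(2^r)$-coefficient depends only on $\underline{\ep}$ modulo squares — together with the splitness hypothesis, so that the mode is $T_r(K)$-conjugate to the coefficient one started from rather than a genuinely new term. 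Apart from this combinatorial bookkeeping, the argument is a routine iteration of root exchange and Fourier expansion, entirely parallel to the proofs of Propositions \ref{Prop: nonvan even} and \ref{Prop: odd rank}.
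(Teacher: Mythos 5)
Your proposal is correct and matches the paper's route: the paper records this proposition as an immediate consequence of the chain of root exchanges and Weyl conjugations in the proofs of Propositions \ref{Prop: nonvan even} and \ref{Prop: odd rank} together with the identity (\ref{eqn: final even rank}), whose nontrivial Fourier modes are killed exactly as you describe by the lemmas of Appendix \ref{Appendix B} (which in turn rest on Theorem \ref{Thm: vanishing}). The only cosmetic difference is that the paper packages your "no stray $(2^r)$-mode" bookkeeping into Lemmas \ref{Lem: lemma 1} and \ref{Lem: lemma 2}, where the nontrivial orbits in each Heisenberg expansion are shown to factor through Gelfand--Graev coefficients attached to strictly larger or incomparable orbits, so the issue you flag as the main obstacle does not in fact arise.
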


We refer to the function $\theta^{U_{r},\psi_{r}^0}(g)$ as the \emph{split coefficient} of $\theta$. 
\section{Local Correspondence}\label{Section: local correspondence}
In this section, we study the local restriction problem in the case that $\pi$ and $\tau$ are both principal series representations. More precisely, let $W_0$ be a symplectic space over our local field $F$. We require the same restrictions on $F$ imposed  in Section \ref{Section: vanishing}. For example, $F$ may be the completion of a number field $K$ at a finite place of odd residue characteristic. This ensures that the results of that section hold for theta representations over $F$.

Let $W_n = X_n\oplus W_0\oplus X_n^\ast$ be the $\dim(W) +2n$ dimensional symplectic space as in the introduction. Setting $V_n = X_n\oplus X_n^\ast$, we have the embedding of metaplectic covers
\[
\overline{\Sp}(W_0)_F\times_{\mu_4}\overline{\Sp}(V_n)_F \to \overline{\Sp}(W_n)_F.
\]
Setting $\dim(W_0) = 2m$, this embedding restricts to an embedding 
\[
Z(\overline{T}_m)\times_{\mu_4}Z(\overline{T}_n)\to Z(\overline{T}_{m+n}).
\]
 As noted in Section \ref{Section: local theory}, there are two unramified theta representations, $\Theta_{2(m+n)}$, for $\overline{\Sp}(W_n)_F$ corresponding to the two choices of genuine unramified exceptional characters, corresponding to the two genuine unramified distinguished characters:
setting $y_i = 2\al_i^\vee$, these characters are determined by
\[
\overline{\chi}^0_{\pm}(y_i(a),\zeta) =\begin{cases}\zeta\qquad\:\:\;: \:\:\;\; i<r\\ \pm\zeta(a,a)_2: \; i=r \end{cases}.
\]
Here $(\cdot,\cdot)_2$ is the quadratic Hilbert symbol. 

 The importance of this choice is discussed in \cite{GG}, where it is shown that the choice of such a character is equivalent to choosing a distinguished splitting of the $L$-group short exact sequence
\[
1\lra \overline{\Sp}(W_n)^\vee\lra {}^L\overline{\Sp}(W_n)\lra WD_F\lra 1,
\]
where $WD_F=W_F\times \SL_2(\cc)$ denotes the Weil-Deligne group of $F$. See \cite{W2} and \cite{W3}.
We fix a such choice, $\overline{\chi}^0$, and note that the restriction to either subtorus $Z(\overline{T}_m)$ or $Z(\overline{T}_n)$ is also a distinguished character for the smaller rank group. In particular, if we choose $\overline{\chi}^0_\pm$ on $Z(\overline{T}_{m+n})$, then the restriction to $Z(\overline{T}_{\ast})$ for $\ast\in\{m,n\}$ is also the unitary distinguished character $\overline{\chi}^0_\pm$. 
Therefore, our choice of unramified genuine distinguished character in the construction of $\Theta_{2(m+n)}$ uniquely determines an unramified distinguished character for both smaller symplectic groups.  Changing the choice multiplies the Satake parameters by the nontrivial element of $Z(\overline{\G}^\vee)$.

 Without loss, assume $m\leq n$. Fix a polarization $W_n = X\oplus Y$ and choose a basis of $X$, $\{e_1\}_{i=1}^{m+n}$. Let $\{f_i\}_{i=1}^{m+n}$ be the dual basis:
\[
\la e_i,f_j\ra = \delta_{i, m+n-j+1}.
\]
We do this so that by restriction, we have induced  polarizations $$W_0= \sspan_F\{e_i : n< i\}\oplus \sspan_F\{f_j : j\leq n\}$$ and $$V_n = \sspan_F\{e_i : i\leq n\} \oplus\sspan_F\{ f_j: n< j\}.$$
Acting by the appropriate Weyl group element $\hat{w}$ (which respects the polarization of $W_n$), we have
\[
\hat{w}(W_0) =  \sspan_F\{e_i : 2\leq i \leq 2n \mbox{  even}\}\oplus Y(\hat{w}(W))
\]
and
\[
\hat{w}(V_n) = \sspan_F\{e_i : 1\leq i < 2n+1 \mbox{  odd or } i> 2n\}\oplus Y(\hat{w}(V_n)),
\]
where $Y(V)\subset Y$ indicates the appropriate dual to the subspace $V\cap X$. In this section, we use the induced embedding of $\overline{\Sp}(W_0)_F\times_{\mu_4}\overline{\Sp}(V_n)_F$ into $\overline{\Sp}(W_n)_F$. 

The main reason for this choice is to facilitate easier comparison with our results on the Jacquet modules of $\Theta_{2(m+n)}$.

Having fixed a basis of $W_0$, we now identify $\Sp(W_0)_F = \Sp_{2m}(F)$ and recall the parametrization of our maximal torus $T_m$
\[
M(t_1,\ldots , t_m) = \diag(t_1,\ldots,t_m,t_m^{-1},\ldots, t_1^{-1}).
\]
With this parametrization, it is easy to parametrize genuine characters of $Z(\overline{T_m})$: $\overline{\chi}=\overline{\chi}^0\chi$, with 
\[
\chi(M(t_1,\ldots , t_m) ,\ep)=\prod_i\chi_i(t_i),
\]
where $\chi_i:F^\times\to\cc^\times$ are multiplicative characters. Since we are discussing characters of  $Z(\overline{T_m})$, it is implicit in the above formulas that $t_i\in (F^\times)^2$.

\begin{Thm}\label{Thm: local correspondence}
 Suppose $\chi : Z(\overline{T_m})\to \cc^\times$ and $\mu: Z(\overline{T_n})\to \cc^\times$ are characters such that
\[
\Hom_{\overline{\Sp}_{2m}(F)\times_{\mu_4}\overline{\Sp}_{2n}(F)}(\Theta_{2(m+n)}, \Ind(\chi)\otimes\Ind(\mu))\neq 0.
\]
Set $k=n-m$. Then there exist $k$ indices $\{j_1,j_2,\ldots,j_k\}\subset \{1,\ldots, n\}$ such that
\[
\mu_{j_i} = |\cdot|^{-(2(k-i)+1)/4}.
\]
Furthermore, we have 
\[
\left\{ \chi_i^{\pm1}\right\}_{1\leq i\leq m}=\left\{ \mu_j^{\pm1}\right\}_{j\notin \{j_1,j_2,\ldots,j_k\}}\\
\]

\end{Thm}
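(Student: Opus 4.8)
The strategy is to leverage the Jacquet module computation from Section \ref{Section: local Levi} together with a careful unfolding of the restriction of $\Theta_{2(m+n)}$ along the parabolic subgroup that produces the embedded pair $\overline{\Sp}_{2m}(F)\times_{\mu_4}\overline{\Sp}_{2n}(F)$. Recall that via the Weyl element $\hat{w}$ chosen above, the image of the embedding is contained in a Levi factor of a parabolic $P$ whose unipotent radical $U$ is abelian (of Heisenberg-like shape) once we account for the $\GL_1$-strings. The first step is to describe the twisted Jacquet module of $\Theta_{2(m+n)}$ with respect to $U$ and a suitable character as an induced module along the lines of Theorem \ref{Thm: local constant term}, iterated $k = n-m$ times. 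Concretely, each application of Corollary \ref{Cor: local constant term} peels off a $\GL_1$-factor and produces a twist by $\Theta_{\GL_1}^{(2)}$, which is a genuine character; tracking the character $\chi_{\GL,k}$ from \eqref{eqn: GL-character} (with $r = m+n$) shows that the $i$-th peeled factor contributes precisely the character $|\cdot|^{(2(m+n-j)+3)/4}$ on that coordinate. Reindexing by the $k$ coordinates $\{j_1,\dots,j_k\}$ that get "used up" on the $\overline{\Sp}_{2n}(F)$-side, and comparing with the normalization $\delta_B^{1/2}$ built into the definition of $\mathrm{Ind}$, yields the claimed values $\mu_{j_i} = |\cdot|^{-(2(k-i)+1)/4}$.

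Second, for the remaining $m$ coordinates on each side, I would argue that after removing the $k$ special indices the residual restriction is governed by the theta representation $\Theta_{2m}$ on the diagonally embedded copy — i.e. the "stable range minus $k$" part of the restriction sees $\Theta_{2m}$ paired with itself under an honest dual-pair-type seesaw. The key input here is that $\Theta_{2m}$ is the Langlands quotient of $\mathrm{Ind}(\overline{\chi})$ for the exceptional $\overline{\chi}$, so $\mathrm{Hom}$ into $\mathrm{Ind}(\chi)\otimes\mathrm{Ind}(\mu)$ being nonzero forces, by Bernstein-Zelevinsky reciprocity and Mackey theory for the covering group (the covering versions being available as in the proof of Proposition \ref{Prop: super unram}), a matching of the unramified data: the Satake parameter of $\mathrm{Ind}(\chi)$ must appear among the Satake parameters of the Jacquet module of $\mathrm{Ind}(\mu)$ along the appropriate parabolic, up to the Weyl group action (which accounts for the $\pm 1$ ambiguity in $\{\chi_i^{\pm1}\}$ versus $\{\mu_j^{\pm1}\}$). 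This is exactly the statement $\{\chi_i^{\pm 1}\}_{1\le i\le m} = \{\mu_j^{\pm 1}\}_{j\notin\{j_1,\dots,j_k\}}$.

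Third, I would assemble the two halves: the Jacquet-module filtration from Appendix \ref{Appendix A} guarantees that the "leading" piece of $J_U(\Theta_{2(m+n)})$ is the one carrying the exceptional character, and that all other subquotients either vanish against generic data or contribute strictly smaller nilpotent support — here Theorem \ref{Thm: intro Nilpotent orbit} is crucial, since it pins down that the only orbit supporting $\Theta$ is $(2^r)$, ruling out contributions that would spoil the exact matching. The main obstacle, I expect, is the bookkeeping in the iterated constant-term computation: one must verify that the $k$ successive applications of Corollary \ref{Cor: local constant term} can be arranged so that the $\GL_1$-twists land on a consistent set of coordinates $\{j_1,\dots,j_k\}$ and that the induced exponents telescope to exactly $|\cdot|^{-(2(k-i)+1)/4}$ rather than some shifted version — this requires being careful about which $\delta_P^{s_0}$ shift appears in the surjection of Corollary \ref{Cor: local constant term} versus the embedding, and about the compatibility of the distinguished character $\overline{\chi}^0$ under restriction (which was checked at the start of this section). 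A secondary subtlety is ensuring the Mackey-theory step is valid for the $\mu_4$-cover, i.e. that double cosets for $P\backslash \overline{\Sp}_{2(m+n)}(F)/(\overline{\Sp}_{2m}(F)\times_{\mu_4}\overline{\Sp}_{2n}(F))$ behave as in the linear case; this follows from the splitting of the cover over unipotent subgroups and the block-compatibility of the cocycle recorded in Section \ref{Section: Case at hand}.
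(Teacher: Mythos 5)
Your proposal assembles most of the right ingredients (Theorem \ref{Thm: local constant term} and its corollary, the Appendix \ref{Appendix A} filtration, the orbit computation of Theorem \ref{Thm: intro Nilpotent orbit}, Mackey theory and Bernstein reciprocity for the cover), but the core mechanism is missing and one step as stated would fail. The paper's argument is an induction on $m$: one applies transitivity of induction to $\Ind(\mu)$, uses Frobenius reciprocity to land in $J_{U_{1,n-1}}(\Theta_{2(m+n)})$, and then the two-step filtration of Proposition \ref{Prop: filtration} forces a dichotomy at each stage. If the quotient $J_{U_{1,n+m-1}}(\Theta)$ carries the Hom space, comparing the $\GL_1$-action with $\chi_{\GL,1}$ produces one special exponent and decreases $n$ but not $m$; if the submodule $V(\Theta,\psi_1)$ carries it, one needs Proposition \ref{Prop: local 411} (whose proof rests on the vanishing Theorem \ref{Thm: supercuspidal}), Bernstein's second adjunction, and the geometric lemma applied to $J_{\overline{U}_{1,m-1}}(\Ind(\chi))$ to conclude $\mu_1=\chi_j^{\pm1}$ for some $j$, decreasing both $m$ and $n$. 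The indices $j_1,\dots,j_k$ and their exponents $|\cdot|^{-(2(k-i)+1)/4}$ are therefore \emph{not} extractable by "peeling off $k$ $\GL_1$-factors first": they emerge wherever the second branch of the dichotomy happens to occur, and the exponent at the $i$-th occurrence is correct only because $k=n-m$ is preserved by the first branch and decremented by the second. Your plan fixes the special coordinates in advance and telescopes the exponents, which begs exactly the question the filtration is there to answer; it also omits the sub-case (handled explicitly at the end of the paper's proof) where the quotient branch occurs while analyzing $\Ind(\chi)$ rather than $\Ind(\mu)$, where one must check the resulting character still equals $\mu_1^{-1}$.

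The second half of your proposal, matching $\{\chi_i^{\pm1}\}$ with $\{\mu_j^{\pm1}\}$ via "an honest dual-pair-type seesaw" in which "$\Theta_{2m}$ is paired with itself," is not available here: the restriction of $\Theta_{2(m+n)}$ to $\overline{\Sp}_{2m}(F)\times_{\mu_4}\overline{\Sp}_{2n}(F)$ does not factor through a tensor product of smaller theta representations (there is no Schr\"odinger-type model, and $\Theta$ is not minimal, as the paper stresses in the introduction). The actual matching comes coordinate-by-coordinate from the character comparison on the diagonal $\GL_1$ inside $\overline{M}_{(2,\,n+m-2)}$, using $\chi_{\GL,2}(\diag(a,a))=|a|^{m+n}$ against the accumulated modular characters. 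A smaller point: the unipotent radical $U_{1,n-1}$ is a Heisenberg group, not abelian, which is precisely why Lemma \ref{Lem: SES} must first pass through $J_{U_{\al_0}}$ before the Bernstein--Zelevinsky-style analysis applies.
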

\begin{Rem}
In the above notation, we have made explicit use of our choice of distinguished character $\overline{\chi}^0$. Therefore, the characters $\left\{ \chi_i^{\pm1}\right\}$ encode the Satake parameters relative to this choice in the case $\chi$ is unramified, and likewise with $\{\mu_j^{\pm1}\}$.
\end{Rem}
\begin{proof}
We argue by induction on $m$. The case of $\SL_2\times\SL_2\hra \Sp_4$ is trivial to work out and we leave the details to the reader. The argument is of the same form as the general case.

By transitivity of induction, we identify
\[
\Ind(\mu) \cong \Ind_{\overline{P}_{(1,n-1)}}^{\overline{\Sp}_{2n}(F)}\left(\mu_1\delta_{(1,n-1)}^{1/2}\otimes\Ind(\mu')\right),
\]
where $P_{(1,n-1)} = M_{(1,n-1)}U_{(1,n-1)}$ is the maximal parabolic of $\Sp_{2n}(F)$ with Levi subgroup $M_{(1,n-1)}\cong \GL_1\times \Sp_{2n-2}$, and $\mu':Z(\overline{T_{n-1}})\to \cc^\times$ is the character induced by deleting $\mu_1$. Additionally, $\delta_{(1,n-1)}$ is the modular character of $P_{(1,n-1)}$. 

Therefore by Frobenius reciprocity,
\begin{align}\label{eqn: first frobenius}
 \Hom_{\overline{\Sp}_{2m}(F)\times_{\mu_4}\overline{M}_{(1,n-1)}}\left(J_{U_{1,n-1}}(\Theta_{2(m+n)}), \mu_1\delta_{(1,n-1)}^{1/2}\otimes\Ind(\chi)\otimes\Ind(\mu')\right)\neq0
\end{align}
At this point, we note the following proposition:

\begin{Prop}\label{Prop: filtration}
There exists a ${\overline{\Sp}_{2m}(F)\times_{\mu_4}\overline{M}_{(1,n-1)}}$-equivariant filtration 
\[
0\subset V(\Theta,\psi_1)\subset J_{U_{1,n-1}}(\Theta_{2(m+n)})
\]
such that 
\begin{align}\label{eqn: Mackey isom}
V(\Theta,\psi_1)\cong J_{U_{1,n-1}}\left(\ind_{\overline{Q}_{2,n+m-2}'}^{\overline{Q}_{1,n+m-1}}\left(J_{U_{1,n+m-1},\psi_1}(\Theta_{2(m+n)})\right)\right),
\end{align}
and 
\[
 J_{U_{1,n-1}}(\Theta_{2(m+n)})/V(\Theta,\psi_1) \cong J_{U_{1,n+m-1}}(\Theta_{2(m+n)}).\qedhere
\]
\end{Prop}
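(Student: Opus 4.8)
The plan is to prove Proposition \ref{Prop: filtration} by a Fourier expansion of $J_{U_{1,n-1}}(\Theta_{2(m+n)})$ along a natural enlargement of $U_{1,n-1}$ inside $\overline{\Sp}_{2(m+n)}$; this is a local, covering-group instance of the geometric lemma for the restriction along $\overline{\Sp}_{2m}\times_{\mu_4}\overline{\Sp}_{2n}\hookrightarrow\overline{\Sp}_{2(m+n)}$, and the two pieces of the filtration will be its ``closed-orbit'' and ``open-orbit'' contributions. First I would use the choice of polarization of $W_n$ together with the Weyl element $\hat{w}$ (which, as noted above, is arranged precisely so that the relevant subgroups line up) to identify $U_{1,n-1}$ with a subgroup of $U_{1,n+m-1}$, the unipotent radical of the parabolic $Q_{1,n+m-1}\subset\Sp_{2(m+n)}$ whose Levi is $\GL_1\times\Sp_{2(n+m-1)}$, in such a way that $U_{1,n+m-1}$ is a Heisenberg group whose center is contained in $U_{1,n-1}$ and whose quotient $U_{1,n+m-1}/U_{1,n-1}$ is abelian, stable under $\overline{\Sp}_{2m}\times_{\mu_4}\overline{M}_{(1,n-1)}$, and isomorphic as an $\overline{\Sp}_{2m}$-module to $W_0$. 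Block-compatibility of the $4$-fold cocycle, recorded in Section \ref{Section: local theory}, ensures that all of these subgroups split compatibly in the cover, so that the genuine structure is carried along with no extra work.

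Next I would decompose $J_{U_{1,n-1}}(\Theta_{2(m+n)})$, as a representation of $\overline{\Sp}_{2m}\times_{\mu_4}\overline{M}_{(1,n-1)}$, by Fourier expansion along the abelian quotient $U_{1,n+m-1}/U_{1,n-1}$. The group $\overline{\Sp}_{2m}$ acts on its Pontryagin dual with exactly two orbits — the zero orbit and a single nonzero orbit, because $\Sp_{2m}$ is transitive on nonzero vectors of $W_0$ — and, by exactness of the Jacquet functor, this yields a two-step filtration $0\subset V(\Theta,\psi_1)\subset J_{U_{1,n-1}}(\Theta_{2(m+n)})$. The zero orbit contributes the quotient $J_{U_{1,n+m-1}}(\Theta_{2(m+n)})$, since extending $J_{U_{1,n-1}}$ trivially across $U_{1,n+m-1}/U_{1,n-1}$ is precisely $J_{U_{1,n+m-1}}$; this is the second isomorphism asserted in the proposition. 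The submodule $V(\Theta,\psi_1)$ is the contribution of a representative $\psi_1$ of the nonzero orbit, which may be taken to be the standard Gelfand-Graev character $v\mapsto\psi(v_{1,2})$. By the little-group method, i.e. Frobenius reciprocity as in \cite[Lemma 2.3.6]{GGS} (compare the arguments of Section \ref{Section: local FJ}), $V(\Theta,\psi_1)$ is the representation of $\overline{\Sp}_{2m}\times_{\mu_4}\overline{M}_{(1,n-1)}$ induced from the twisted Jacquet module $J_{U_{1,n+m-1},\psi_1}(\Theta_{2(m+n)})$ along the stabilizer $\Stab(\psi_1)$ of the character $\psi_1$. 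Unwinding this induced module, using transitivity of parabolic induction and of the Jacquet functor together with an explicit description of $\Stab(\psi_1)$ and of how $\overline{Q}_{2,n+m-2}'\subset\overline{Q}_{1,n+m-1}$ sits relative to the twisted embedding, should then put it into the form \eqref{eqn: Mackey isom}, with the outer $J_{U_{1,n-1}}$ absorbing the unipotent directions that are not already accounted for.

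The step I expect to be the main obstacle is this last identification: matching $\Stab(\psi_1)$ inside the product group with the parabolic $\overline{Q}_{2,n+m-2}'$ under the twisted embedding, and verifying that the residual Heisenberg directions of $U_{1,n+m-1}$ and the $\overline{\GL}^{(2)}_1$-factor of $\overline{M}_{(1,n-1)}$ are transported correctly through Mackey's formula in the genuine setting, so that the two $\GL_1$-factors in the Levi of $\overline{Q}_{2,n+m-2}'$ appear exactly as claimed. A secondary point requiring care is checking that the filtration has length exactly two — equivalently, that the complement of $U_{1,n-1}$ in $U_{1,n+m-1}$ is abelian modulo $U_{1,n-1}$, so that one Fourier expansion exhausts $J_{U_{1,n-1}}(\Theta_{2(m+n)})$ with no hidden subquotients; this is exactly where the specific choice of polarization made in the preceding subsection enters. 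The remaining ingredients — exactness of the twisted Jacquet functor, the two-orbit count, and Frobenius reciprocity for $l$-groups and their covers (via \cite{BZ}, as used in Section \ref{Section: local FJ}) — are routine.
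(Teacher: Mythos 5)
Your proposal is correct in substance but takes a genuinely different route from the paper's Appendix \ref{Appendix A}. There, the key step is Lemma \ref{Lem: SES}, a statement about an arbitrary smooth $\pi$: since $U_{1,r-1}$ is a Heisenberg group with center $U_{\al_0}$, one first passes to $J_{U_{\al_0}}(\pi)$ and runs the mirabolic-style filtration of \cite[Prop.~5.12(d)]{BZ2} along the abelian quotient $U_{1,r-1}/U_{\al_0}\cong F^{2(r-1)}$ under the action of the full Levi $M_{(1,r-1)}$ of $\Sp_{2(m+n)}$ (two orbits, stabilizer of the nontrivial character giving $\overline{Q}_{2,r-2}'$); the Proposition then follows by applying the exact functor $J_{U_{1,n-1}}$ to the resulting short exact sequence, using that $U_{\al_0}\subset U_{1,n-1}$. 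You instead expand $J_{U_{1,n-1}}(\Theta_{2(m+n)})$ directly along the smaller abelian quotient $U_{1,n+m-1}/U_{1,n-1}\cong W_0$ under the action of $\overline{\Sp}_{2m}$ (and the $\GL_1$-factor of $\overline{M}_{(1,n-1)}$), again with two orbits. Both filtrations have the same submodule, namely the kernel of the natural projection $J_{U_{1,n-1}}(\Theta_{2(m+n)})\twoheadrightarrow J_{U_{1,n+m-1}}(\Theta_{2(m+n)})$, so your description of $V(\Theta,\psi_1)$ --- compact induction from the stabilizer of $\psi_1$ inside the product group --- is necessarily isomorphic to the paper's. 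What your route buys is that this restricted form is exactly what the downstream argument uses: in Case 1 of the proof of Theorem \ref{Thm: local correspondence} the paper applies \cite[Theorem 5.2]{BZ} to the double cosets $Q_{2,n+m-2}'\backslash \Sp_{2(n+m)}(F)/\Sp_{2m}(F)\times P_{(1,n-1)}$ and finds that only $\gamma=1$ contributes, which recovers your form. What it costs is that the literal identity \eqref{eqn: Mackey isom} --- $J_{U_{1,n-1}}$ applied to an induction internal to $\overline{\Sp}_{2(m+n)}$ --- does not drop out of your expansion; the ``main obstacle'' you flag (matching your stabilizer with $\overline{Q}_{2,n+m-2}'$ under the twisted embedding) is precisely that double-coset computation, which the paper defers rather than performs inside the proof of the Proposition. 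If you prove your variant and use it directly in Section \ref{Section: local correspondence}, nothing is lost; if you insist on the stated formula, you must still carry out that Mackey comparison.
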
 

For sake of continuity, the proof of this proposition is postponed to Appendix \ref{Appendix A}. Thus, we have that
\begin{align*}
0&< \dim_\cc\left(\Hom\left(J_{U_{1,n-1}}(\Theta_{2(m+n)}),  \mu_1\delta_{(1,n-1)}^{1/2}\otimes\Ind(\chi)\otimes\Ind(\mu')\right)\right)\\
&\leq \dim_\cc\left(\Hom\left(V(\Theta,\psi_1),  \mu_1\delta_{(1,n-1)}^{1/2}\otimes\Ind(\chi)\otimes\Ind(\mu')\right)\right)\\
&\qquad\quad+\dim_\cc\left(\Hom\left(J_{U_{1,n+m-1}}(\Theta_{2(m+n)}),  \mu_1\delta_{(1,n-1)}^{1/2}\otimes\Ind(\chi)\otimes\Ind(\mu')\right)\right).
\end{align*}
Therefore, we have two cases to consider:\\

{\bf Case 1:}
Suppose that
\begin{align}\label{eqn: Hom space}
\Hom\left(V(\Theta,\psi_1), \mu_1\delta_{(1,n-1)}^{1/2}\otimes\Ind(\chi)\otimes\Ind(\mu')\right)\neq 0.
\end{align}
According to the isomorphism (\ref{eqn: Mackey isom}), we may apply \cite[Theorem 5.2]{BZ} and, setting $$H_\gamma = \left(\Sp_{2m}(F)\times P_{(1,n-1)}\right) \cap \gamma^{-1}Q_{2,n+m-2}'\gamma,$$ we see that this $\Hom$-space is glued together from the vector spaces
\[
\Hom_{H_\gamma}\left( {}^\gamma\left(J_{U_{1,n+m-1},\psi_1}(\Theta_{2(m+n)})\right), \delta^{-1}_{(1,n-1)}\delta_{H_\gamma}\otimes \mu_1\delta_{(1,n-1)}^{1/2}\otimes\Ind(\chi)\otimes\Ind(\mu')\right),
\]
as $\gamma$ ranges over double coset representatives 
\[
\gamma\in Q_{2,n+m-2}'\backslash \Sp_{2(n+m)}(F)/\Sp_{2m}(F)\times P_{(1,n-1)},
\]
where we are viewing the group in the right quotient as a subgroup of $\Sp_{2(n+m)}(F)$  It is an enjoyable exercise (of a similar spirit to the proof of \cite[Prop. 3.4]{K}) to see that there are only three double cosets. By applying \cite[Theorem 5.2]{BZ}, it is immediate that the only coset which contributes to (\ref{eqn: Hom space}) corresponds to $\gamma =1$.
 We thus obtain a non-zero $H = H_1 = (\Sp_{2m}(F)\times P_{(1,n-1)})\cap Q_{1,n+m-1}'$- equivariant morphism
\[
J_{U_{1,n+m-1},\psi_1}(\Theta_{2(m+n)})\longrightarrow  \delta^{-1}_{(1,n-1)}\delta_{H}\otimes \mu_1\delta_{(1,n-1)}^{1/2}\otimes\Ind(\chi)\otimes\Ind(\mu').
\]
We have the following proposition:
\begin{Prop}\label{Prop: local 411}
Suppose $r>1$ and consider the local theta representation $\Theta_{2r}$ on $\overline{\Sp}_{2r}(F)$. Consider the unipotent radical $U_{(2,r-2)}$ of the maximal parabolic subgroup with Levi subgroup $M_{(2,r-2)}\cong\GL_2\times\Sp_{2r-4}$. Then there is a surjection of $\overline{L}'_{2,r-2}(F)$-modules 
\[
J_{U_{(2,r-2)}}(\Theta_{2r}) \twoheadrightarrow J_{U_{1,r-1},\psi_1}(\Theta_{2r}),
\]
where $L'_{k,r-k} =\GL_1^\Delta\times\Sp_{2(r-k)}\subset L_{k,r-k}$ for all $1\leq k\leq r$.
\end{Prop}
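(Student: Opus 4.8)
The plan is to realize the desired map as the canonical projection of $J_{U_{(2,r-2)}}(\Theta_{2r})$ onto an iterated twisted Jacquet module, and then to identify that module with $J_{U_{1,r-1},\psi_1}(\Theta_{2r})$. First I would set up the intermediate unipotent groups. Let $N=V_{2,r-2}$ be the unipotent radical of $Q_{2,r-2}=P_{(1,1,r-2)}$, let $U_{\al_1}$ be the root subgroup of $\al_1$ (which is the unipotent radical of the Borel of the $\GL_2$-factor of $M_{(2,r-2)}$), and let $N_0$ be the Heisenberg unipotent radical of $P_{(1,r-2)}$ inside the $\Sp_{2(r-1)}$-factor of $L_{1,r-1}$. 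Then $N$ admits the two semidirect-product descriptions $N=U_{\al_1}\ltimes U_{(2,r-2)}$ (with $U_{(2,r-2)}$ normal) and $N=N_0\ltimes U_{1,r-1}$ (with $U_{1,r-1}$ normal). The character $\psi^0_2$ of $N$ from Section~\ref{Section: Notation} (namely $\psi^0_2(v)=\psi(v_{1,2})$) is supported on $U_{\al_1}$, is trivial on $U_{(2,r-2)}$ and on $N_0$, and restricts on $U_{1,r-1}$ to $\psi_1$.

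Since a twisted Jacquet functor is a quotient functor, there is a canonical $\overline{L}'_{2,r-2}$-equivariant surjection $J_{U_{(2,r-2)}}(\Theta_{2r})\twoheadrightarrow J_{U_{\al_1},\psi}\bigl(J_{U_{(2,r-2)}}(\Theta_{2r})\bigr)$, where $L'_{2,r-2}=\GL_1^\Delta\times\Sp_{2(r-2)}$ and $\GL_1^\Delta$ is the centre of the $\GL_2$-factor, which stabilizes the Whittaker character $\psi$ of $U_{\al_1}$ (after the Jacquet module along $U_{\al_1}$ the unipotent itself has been used up). By transitivity of the Jacquet functor along the first decomposition of $N$ this target equals $J_{N,\psi^0_2}(\Theta_{2r})$, and by transitivity along the second decomposition — using that $N_0$ normalizes $U_{1,r-1}$ and preserves $\psi_1$ — one gets $J_{N,\psi^0_2}(\Theta_{2r})\cong J_{N_0}\bigl(J_{U_{1,r-1},\psi_1}(\Theta_{2r})\bigr)$. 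Hence it remains to show that the canonical projection $J_{U_{1,r-1},\psi_1}(\Theta_{2r})\twoheadrightarrow J_{N_0}\bigl(J_{U_{1,r-1},\psi_1}(\Theta_{2r})\bigr)$ is an isomorphism, i.e. that the Heisenberg group $N_0$ acts trivially on $J_{U_{1,r-1},\psi_1}(\Theta_{2r})$; composing with the surjection above then finishes the proof, and a check of the torus actions (the subtorus $\{t_1=t_2\}$ stabilizes $\psi_1$ and $\Sp_{2(r-2)}$ commutes with $U_{\al_1}$) shows the composite is $\overline{L}'_{2,r-2}$-equivariant.

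The triviality of the $N_0$-action is the heart of the matter, and the plan is to prove it by induction on $r$, the case $r=2$ being a direct computation with the Whittaker module of the $\GL_2$-theta representation. For the inductive step I would conjugate $J_{U_{1,r-1},\psi_1}(\Theta_{2r})$ by a Weyl element carrying $\al_1$ into the $\Sp_{2(r-1)}$-block and apply local root exchange (Lemma~\ref{Lem: local root exchange}) to rewrite it as an integral factoring through the constant term of type $\GL_2\times\Sp_{2(r-2)}$, to which the periodicity theorem (Theorem~\ref{Thm: local constant term}) applies; this should yield $J_{U_{1,r-1},\psi_1}(\Theta_{2r})\cong J_{U_{\al_1},\psi}(\Theta_{\GL_2}^{(2)})\otimes\Theta_{2(r-2)}$, which manifestly has trivial $N_0$-action and is exactly the iterated Jacquet module above. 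The main obstacle is controlling the intermediate terms produced by the root exchanges: one must show they vanish, which is where the smallness of $\Theta_{2r}$ is genuinely used — after exchange they are governed by twisted Jacquet modules attached to nilpotent orbits that are $\ge(2^r)$ or incomparable to $(2^r)$ (in particular those of type $(2k)1^{2(r-k)}$ with $k\ge 2$ and $(3^2)1^{2r-6}$), which are killed by Theorem~\ref{Thm: vanishing}, transported to the local setting via Corollary~\ref{Cor: local equiv} and the arguments of Section~\ref{Section: vanishing}. Arranging these reductions so that all intermediate terms line up precisely with the already-established vanishing statements (equivalently, so that the periodicity theorem reduces the induction cleanly to the $\GL_2$-case) is the only non-formal step.
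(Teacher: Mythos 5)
Your formal reduction is exactly the paper's: the two semidirect decompositions of $V_{2,r-2}$, the identification $J_{U_{\al_1},\psi}\bigl(J_{U_{(2,r-2)}}(\Theta_{2r})\bigr)\cong J_{N_0}\bigl(J_{U_{1,r-1},\psi_1}(\Theta_{2r})\bigr)$, and the reduction of the proposition to the statement that $N_0=U_{1,r-2}$ acts trivially on $J_{U_{1,r-1},\psi_1}(\Theta_{2r})$ all match the paper's argument. The gap is in how you propose to prove that triviality. Weyl conjugation and root exchange preserve the dimension of the unipotent group being quotiented by, so they cannot convert $J_{U_{1,r-1},\psi_1}$ (a quotient by a $(2r-1)$-dimensional group) into something factoring through the constant term along $U_{(2,r-2)}$ (dimension $4r-5$); the missing $2r-3$ directions are precisely $N_0/Z(N_0)$, so the inductive step as you state it presupposes the conclusion. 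What is actually required is a Fourier expansion along $N_0$: the center $Z(N_0)=U_{\mu_2}$ is handled by the vanishing of $(41^{2r-4})$, and the remaining non-trivial characters form a single $\Sp_{2(r-2)}(F)$-orbit whose representative produces the Gelfand-Graev module $J_{V_{2,r-2},\psi_2}(\Theta_{2r})$, so the whole proposition is equivalent to the vanishing of that module.

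That module is not a twisted Jacquet module attached to any nilpotent orbit, so Theorem \ref{Thm: vanishing} does not apply to it directly, and the orbits you invoke --- in particular $(3^21^{2r-6})$, which enters elsewhere in the paper but not here --- are not the relevant input. The paper kills it by an upward climb (the local analogue of Lemma \ref{Lem: lemma 1}): assuming $J_{V_{k,r-k},\psi_k}(\Theta_{2r})\neq0$, the vanishing of $((2k)1^{2(r-k)})$ adjoins the long root group $U_{\mu_{k+1}}$; expanding along the next Heisenberg layer, the trivial-character term factors through $J_{U_{(k+1,r-k-1)}}$ and dies because $\Theta_{\GL_{k+1}}^{(2)}$ is non-generic for $k+1\geq 3$ --- an input absent from your proposal --- while the non-trivial term forces $J_{V_{k+1,r-k-1},\psi_{k+1}}(\Theta_{2r})\neq 0$; at $k=r$ this contradicts the non-genericity of $\Theta_{2r}$. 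So the induction that closes is on the depth $k$ of the Gelfand-Graev character, not on the rank $r$: an induction on $r$ cannot get started, because $J_{V_{2,r-2},\psi_2}(\Theta_{2r})$ does not factor through a lower-rank theta representation until after the triviality you are trying to prove has been established. If you replace your inductive step by this climb (or simply cite the local analogue of Lemma \ref{Lem: lemma 1}), the rest of your argument goes through.
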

\begin{proof}
Our assumptions on the field $F$ ensure that Theorem \ref{Thm: supercuspidal} holds. If we let $V$ denote the subgroup  $V = U_{1,r-1}U_{\mu_2}$, where $U_{\mu_2}$ is the one dimensional root subgroup associated to the long positive root
\[
\mu_2=2\al_2+\cdots +2\al_{r-1}+\al_r,
\]
then there is a $\overline{M}'_{(2,r-2)}(F)$-isomorphism
\[
J_{U_{1,r-1},\psi_1}(\Theta_{2r})\cong J_{V,\psi_1}(\Theta_{2r}),
\]
where by a slight abuse of notation, we denote by $\psi_1$ the character of $V$ which trivially extends $\psi_1$. This is equivalent to the statement that the nilpotent orbit $\calo=(41^{2r-4})$ does not support $\Theta_{2r}$.

The proof can proceed in similar fashion to the global argument in Lemma \ref{Lem: lemma 1} and Lemma \ref{Lem: lemma 2}. It is clear that the result follows if we show that the unipotent subgroup $U_{1,r-2}\subset \Sp_{2(r-1)}(F)\subset \Sp_{2r}(F)$ acts trivially on $ J_{V,\psi_1}(\Theta_{2r})$.  If not, then there is some nontrivial character of $\psi:  U_{1,r-2}/U_{\mu_2}\to \cc^\times$ such that 
\[
J_{U_{1,r-2}/U_{\mu_2}, \psi'}(J_{V,\psi_1}(\Theta_{2r}))\cong J_{V_{2,r-2},\psi_2}(\Theta_{2r}) \neq0.
\]
We have made use of the fact that $\Sp_{2(r-1)}(F)\subset \Sp_{2r}(F)$ acts transitively on such characters, so that we obtain the twisted Jacquet module with respect to the Gelfand-Graev character 
\[
\psi_2: V_{2,r-2}\to \cc^\times, \qquad\qquad v\mapsto \psi(v_{1,2}+v_{2,3}).
\]
Again applying Theorem \ref{Thm: supercuspidal}, we have an isomorphism of $\overline{L}'_{3,r-3}(F)$-modules
\[
J_{V_{2,r-2},\psi_2}(\Theta_{2r})\cong J_{V_{2,r-2}U_{\mu_3},\psi_2}(\Theta_{2r}),
\]
where $\mu_3=2\al_3\cdots +2\al_{r-1}+\al_r$ is the next long root. This is equivalent to the statement that the nilpotent orbit $\calo= (61^{2r-6})$ does not support $\Theta_{2r}$.

We know, as in the proof of Lemma \ref{Lem: lemma 1}, that $U_{1,r-3}/U_{\mu_3}$ cannot act trivially on this Jacquet module, for if it did then we could apply Theorem \ref{Thm: local constant term} to obtain a non-zero Whittaker module on $\Theta_{GL_3}^{(2)}$, a contradiction as this representation is non-generic. Hence, as before we conclude the non-vanishing of $J_{V_{3,r-3},\psi_3}(\Theta_{2r}).$

Continuing in this way, we finally arrive at the non-vanishing of $J_{U_r, \psi_r}(\Theta_{2r})$, where we recall that $U_r$ denotes the full unipotent radical of the Borel subgroup of $\Sp_{2r}(F)$ and $\psi_r$ is a generic Gelfand-Graev character of $U_r$. This contradicts the fact that $\Theta_{2r}$ is non-generic, completing the proof.
\end{proof}
It follows from the Proposition along with Theorem \ref{Thm: local constant term} that we in fact have a non-zero $H\cap\overline{L}'_{2,n+m-2}(F)U_{1,n+m-2}$- equivariant morphism
\[
J_{N_1,\psi'}(\Theta_{\GL_2}^{(2)})\otimes\Theta_{2(m+n-2)}\longrightarrow\delta^{-1}_{(1,n-1)}\delta_{H}\otimes \mu_1\delta_{(1,n-1)}^{1/2}\otimes\Ind(\chi)\otimes\Ind(\mu').
\]

Here $N_1\subset \GL_2(F)$ is the subgroup of upper triangular unipotent matrices, and 
\[
\psi'\left(\begin{array}{cc}	
					1&x\\
					&1
					\end{array}\right) = \psi(x).
\]
To simplify the notation, we collect all the characters on the right hand side as 
\[
\lam=\delta^{-1}_{(1,n-1)}\delta_{H}\otimes \mu_1\delta_{(1,n-1)}^{1/2}
\]
Note that 
\begin{align*}
H\cap\overline{L}'_{2,n+m-2}(F)U_{1,n+m-2} &= \Sp_{2(n-1)}\times(\GL_1^\Delta\times\Sp_{2(m-1)})U_{1,m-1}\\
											&\cong \Sp_{2(n-1)}\times Q_{1,m-1},
\end{align*} 
By the properties of induction, this implies that 
\begin{align}\label{eqn: Hom space 2}
\Hom\left(\Ind_{\overline{\Sp}_{2(n-1)}\times \overline{Q}_{1,m-1}}^{\overline{\Sp}_{2(n-1)}\times \overline{\Sp}_{2m}}\left(J_{N_1,\psi'}(\Theta_{\GL_2}^{(2)})\otimes\Theta_{2(m+n-2)}\right),\lam\otimes\Ind(\chi)\otimes\Ind(\mu')\right)\neq0.
\end{align}

We make use of a result of Bernstein (see \cite{B} for the proof): 
\begin{Prop}\label{Prop: Bernstein}
If $P=MU$ is a parabolic subgroup of $G$. Let $P_-=MU_-$ denote the opposite parabolic of $P$. If $W$ is a smooth representation of the Levi subgroup $M$ and $V$ is a smooth representation of $G$, there there is an isomorphism
\[
\Hom_G(\Ind_P^G(W),V) \cong \Hom_M(W,J_{U_-}(V)).
\]
\end{Prop}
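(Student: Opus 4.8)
The plan is to recognize the statement as Bernstein's \emph{second adjointness theorem}: it asserts that the normalized Jacquet functor $J_{U_-}$ along the opposite parabolic $P_-$ is \emph{right} adjoint to normalized parabolic induction $\Ind=\Ind_P^G$, complementing ordinary Frobenius reciprocity, which exhibits $J_U$ as its \emph{left} adjoint (so that $\Hom_M(J_U(V'),W')\cong\Hom_G(V',\Ind_P^G W')$). I would establish it in two stages. First I would treat admissible representations of finite length, by a purely formal reduction to Frobenius reciprocity using Casselman's duality theorem; then I would bootstrap to arbitrary smooth representations, which is the genuinely difficult part.

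For the admissible case, let $W$ be an admissible representation of $M$ and $V$ an admissible finite-length representation of $G$. I would assemble the standard ingredients: (a) $\widetilde{\Ind_P^G W}\cong\Ind_P^G\widetilde W$, valid because $P\backslash G$ is compact and the normalization of $\Ind$ makes the modular characters cancel; (b) the smooth contragredient is an exact, reflexive anti-equivalence on admissible representations, so that $\Hom(X,Y)\cong\Hom(\widetilde Y,\widetilde X)$; (c) Casselman's theorem $J_U(\widetilde V)\cong\widetilde{J_{U_-}(V)}$ as representations of $M$ (the opposite unipotent radical entering precisely here); and (d) Jacquet's lemma, that $J_{U_-}(V)$ is again admissible. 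Chaining these with first adjointness gives
\[
\Hom_G(\Ind_P^G W, V)\cong\Hom_G(\widetilde V,\Ind_P^G\widetilde W)\cong\Hom_M(J_U(\widetilde V),\widetilde W)\cong\Hom_M(\widetilde{J_{U_-}(V)},\widetilde W)\cong\Hom_M(W,J_{U_-}(V)),
\]
and one checks that the composite is the natural candidate morphism, hence functorial in $V$ and $W$.

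For general smooth representations the dualization step collapses, and here I would follow Bernstein: decompose $\Rep(G)$ into its Bernstein subcategories indexed by inertial classes of cuspidal supports; use that $\Ind_P^G$ and $J_{U_-}$ both preserve finite generation, together with the fact that $\Ind_P^G$ commutes with filtered colimits (again because $P\backslash G$ is compact), to reduce to finitely generated objects; and then, via the uniform admissibility theorem and the existence of progenerators in each block, reduce finitely generated objects to the admissible case already handled. I expect this non-admissible bootstrap to be the main obstacle: it is the technical core of \cite{B} and admits no elementary shortcut, so in practice I would simply invoke it.

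Finally, concerning the setting actually needed in this paper, $G$ is a Brylinski--Deligne covering group (such as $\overline{\Sp}_{2r}(F)$ or a $\mu_4$-amalgam of such), and I would note that every ingredient above --- normalized parabolic induction and Jacquet functors for covers with their first adjointness, Jacquet's lemma, Casselman's duality, the Bernstein decomposition, and the requisite finiteness statements --- holds for covering groups by the usual principle (as with \cite[Theorem 2.4.a]{BZ}, the proofs go through once parabolic subgroups are replaced by their covers). Bernstein's argument therefore carries over mutatis mutandis and yields the Proposition in the required generality, with the passage beyond admissible representations remaining the one substantive point.
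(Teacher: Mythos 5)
Your proposal is correct: the statement is exactly Bernstein's second adjointness, and the paper itself gives no argument beyond citing \cite{B}, which is precisely where you also land for the substantive non-admissible step. Your reduction of the admissible case to Frobenius reciprocity via Casselman duality and your remarks on transporting the result to covering groups are accurate, so the proposal is consistent with (and more detailed than) what the paper does.
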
\qed

Applying this proposition to (\ref{eqn: Hom space 2}), we find that
\begin{align*}
\Hom\left(J_{N_1,\psi'}(\Theta_{\GL_2}^{(2)})\otimes\Theta_{2(m+n-2)},\lam\otimes J_{\overline{U}_{1,m-1}}\left(\Ind(\chi)\right)\otimes\Ind(\mu')\right)\neq0
\end{align*}

Applying the geometric lemma of \cite{BZ} to this Jacquet module, we have that there is a $\overline{M}_{(1,m-1)}$-equivariant filtration of $J_{\overline{U}_{1,m-1}}\left(\Ind(\chi)\right)$ such that the corresponding factors are representations of the form
\[
\chi_i^{\pm1}\delta_{(1,m-1)}^{-1/2}\otimes\Ind(\chi'),
\]
with $k\in\{2,3,\ldots, m\},$ and where $\chi':Z(\overline{T_{m-1}})\to \cc^\times$ is a character such that
\[
\chi'_j\in \{ \chi_k^{\pm1} : k\in \{2,3,\ldots, m\}, k\neq i\}.
\]
Therefore, there exists some $k$ such that our morphism induces a non-zero morphism (without loss we can assume $\chi_k^{-1}$ appears):
\[
J_{N_1,\psi'}(\Theta_{\GL_2}^{(2)})\otimes\Theta_{2(m+n-2)}\longrightarrow\lam\otimes \chi_k^{-1}\delta_{(1,m-1)}^{-1/2}\otimes\Ind(\chi')\otimes\Ind(\mu').
\]
We now conclude: the diagonal $\GL_1$ acts on the right hand side by the accumulated character
\[
\lam \chi_k^{-1}\delta_{(1,m-1)}^{-1/2}=\mu_1\chi_k^{-1}\delta^{-1}_{(1,n-1)}\delta_{H}\delta_{(1,n-1)}^{1/2}\delta_{(1,m-1)}^{-1/2}=\mu_1\chi_k^{-1}|\cdot|^{m+n}
\]
where the modular characters are easy to compute, and we note $\delta_H = \delta_{(1,n-1)}\delta_{(1,m-1)}$. On the other hand, it acts on the left hand side by the appropriate exceptional character on $\GL_2$ as indicated in Theorem \ref{Thm: local constant term}:
\[
\chi_{\GL,2}\left(\begin{array}{cc}a&\\&a\end{array}\right) = |a|^{m+n}.
\]
Therefore $\mu_1 = \chi_k$, and we are left with 
\[
\Hom_{\overline{\Sp}_{2(m-1)}(F)\times_{\mu_4}\overline{\Sp}_{2(n-1)}(F)}\left(\Theta_{2(m+n-2)},\Ind(\chi')\otimes\Ind(\mu')\right)\neq 0.
\]
By induction, this case is proved. We note that in this case the difference in the rank of the smaller symplectic groups $k$ remains unchanged.\\

{\bf Case 2:}
Suppose instead that 
\[
\Hom\left(J_{U_{1,n+m-1}}(\Theta_{2(m+n)}),  \mu_1\delta_{(1,n-1)}^{1/2}\otimes\Ind(\chi)\otimes\Ind(\mu')\right)\neq0.
\]
 Applying Theorem \ref{Thm: local constant term}, we have a non-zero ${\overline{\Sp}_{2m}(F)\times_{\mu_4}\overline{M}_{(1,n-1)}}$-equivariant map
\[
\chi_{\GL,1}\otimes\Theta_{2(m+n-1)} \longrightarrow \mu_1\delta_{(1,n-1)}^{1/2}\otimes\Ind(\chi)\otimes\Ind(\mu').
\]
Here, $\chi_{\GL,1}$ is the character on $\overline{\GL_1}(F)$ arising in Theorem \ref{Thm: local constant term}. Comparing the action of this $\GL_1$-component of $ \overline{M}_{(1,n-1)}$, we find that
\[
\chi_{\GL,1} = \mu_1\delta^{1/2}_{(1,n-1)}
\]
so that $\mu_1 = |\cdot|^{-(2k-1)/4}$, where recall that $k=n-m$. We therefore obtain 
\[
\Hom_{\overline{\Sp}_{2m}(F)\times_{\mu_4}\overline{\Sp}_{2(n-1)}(F)}(\Theta_{2(m+n-1)}, \Ind(\chi)\otimes\Ind(\mu'))\neq 0.
\]
Since our analysis has utilized the choice of embedding $$\Sp_{2m}\times\Sp_{2(n-1)}\hra \Sp_{2(m+n-1)},$$ continuing the same argument would lead us to now apply transitivity of induction to $\Ind(\chi)$ and study the Jacquet module $J_{V_{1,m-1}}(\Theta_{2(m+n-1)})$. As above, this Jacquet module has a filtration and there are two cases to consider. If Case 1 occurs, then the analysis is the same and we reduce to 
\[
\Hom_{\overline{\Sp}_{2(m-1)}(F)\times_{\mu_4}\overline{\Sp}_{2(n-2)}(F)}(\Theta_{2(m+n-3)}, \Ind(\chi'')\otimes\Ind(\mu''))\neq 0.
\]
Repeat this ($i$ times, say) until Case 2 occurs. Thus, we have a non-zero map
\[
\overline{\chi}_{1}\otimes\Theta_{2(m+n-2i-1)} \longrightarrow \chi_1^{(i)}\delta_{(1,m-i-1)}^{1/2}\otimes\Ind(\chi^{(i)})\otimes\Ind(\mu^{(i+1)})),
\]
where $\chi^{(i)}:Z(\overline{T}_{m-i})\to\cc^\times$ and $\mu^{(i+1)}:Z(\overline{T}_{n-i-1})\to\cc^\times$ are the characters arising from the argument in Case 1 applied $i$ times. An identical computation to the one above now shows that 
\[
 \chi_{1}^{(i)}=|\cdot|^{-(2(1-k)-1)/4} = \mu_1^{-1}.
\]
This shows that if Case 2 occurs when analyzing $\Ind(\chi)$, we still obtain $\chi_1\in \{\mu_j^{\pm 1}\}$. This is the only case not immediate from induction. Applying the case-by-case argument above, the theorem is proven.\qedhere

\end{proof}
In Section \ref{Section: CAP}, we will interpret this result in terms of Arthur's conjectures, subject to a natural generalization of Shimura's correspondence.

\section{Global Lifting}\label{Section: Global Lift}
Suppose now that $K$ is a number field such that $|\mu_4(K)|=4$. As in the introduction, let $W_0$ be a $2m$-dimensional symplectic space over $K$, and let
\[
W_n=X_n\oplus W_0\oplus X_n^\ast,\:\:\: V_n=X_n\oplus X_n^\ast 
\]
 be symplectic spaces of dimension $2(m+n)$ (resp. $2n$). Then there is an inclusion of symplectic groups $$\Sp(W_0)_\A\times\Sp(V_n)_\A\hra \Sp(W_n)_\A.$$ Here $\A =\A_K$ is the adele ring associated to $K$. 
Then the above embedding is covered by an embedding of metaplectic groups 
\[
\overline{\Sp}(W_0)_\A\times_{\mu_4}\overline{\Sp}(V_n)_\A\hra \overline{\Sp}(W_n)_\A
\]

Let $\pi$ be an $\ep$-genuine cuspidal automorphic representation of $\overline{\Sp}(W_0)_\A$, and let $\Theta_{W_n} := \Theta_{2(m+n)}$ be the theta representation of $\overline{\Sp}(W_n)_\A$. Consider the integral pairing
\begin{align}\label{eqn: pairing}
\displaystyle \int_{[\Sp(W_0)]}\overline{\varphi(g)}\theta_{n}((h,g))\,dg,
\end{align}
where $\varphi\in \pi$ and $\theta_{n}\in \Theta_{W_n}$. The vector space generated by such integrals as we vary over all vectors in $\pi$ and $\Theta_{W_n}$ forms an $\ep$-genuine automorphic representation of $\overline{\Sp}(V_n)_\A$, which we denote by $\Theta_n(\pi)$. Note that it is immediate from the spectral decomposition that if $\pi$ is cuspidal then, $\Theta_0(\pi)=0$, and for any $\pi$ we set $\Theta_{-1}(\pi)=0$.

We begin our global study by noting the following theorem:
\begin{Thm}\label{Thm: Non-vanishing}
Let $\pi$ be as above. Then $\Theta_{4m}(\pi)\neq 0$, so that the $\pi$ lifts nontrivially to $\overline{\Sp}(V_{4m})_\A \cong \overline{\Sp}_{8m}(\A)$.
\end{Thm}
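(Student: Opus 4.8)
The plan is to establish this stable-range non-vanishing by exhibiting a nonzero Fourier coefficient of the automorphic function $h\mapsto\int_{[\Sp(W_0)]}\overline{\varphi(g)}\theta_{n}((h,g))\,dg$ on $\overline{\Sp}(V_{4m})_\A$, following Rallis' strategy for the classical tower but substituting the nilpotent-orbit analysis of $\Theta$ for the Schr\"odinger model that is unavailable in our setting. Since $\dim W_0=2m$ and we take $n=4m$, there is ample ``room'' in the added isotropic space $X_n$; it is this \emph{stable-range} situation that makes the unfolding terminate favorably, and the value $n=4m$ is exactly what the combinatorics below requires.

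First I would fix a unipotent subgroup $N\subset\Sp(V_n)$ together with a generic character $\psi_N$ attached to the nilpotent orbit of $\Sp(V_n)$ predicted by Ginzburg's dimension equation, and reduce the theorem to showing that
\[
\int_{[N]}\left(\int_{[\Sp(W_0)]}\overline{\varphi(g)}\,\theta_{n}((uh,g))\,dg\right)\psi_N(u)^{-1}\,du
\]
is not identically zero in $h$. Interchanging the two (convergent) integrations and conjugating inside $\Sp(W_n)$ by an explicit Weyl element, the problem becomes the study of an integral of the theta function $\theta_n$ over a unipotent subgroup of $\Sp(W_n)$, twisted along $[\Sp(W_0)]$ by $\overline{\varphi}$. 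The technical heart is then a sequence of applications of the root-exchange lemma (Lemma~\ref{Lem: root exchange}) — legitimate precisely because $n=4m$ supplies enough root subgroups inside the $X_n$-part — transforming this into the Fourier coefficient of $\Theta_{2(m+n)}$ associated to the orbit $\calo_\Theta=(2^{m+n})$ with its split character $\psi_{\calo_\Theta,\underline{\ep}}$. By Theorem~\ref{Thm: non-vanishing} (part~(\ref{part a}) of Theorem~\ref{Thm: intro Nilpotent orbit}) this coefficient is nonzero on $\Theta_{2(m+n)}$, and Proposition~\ref{Prop: Split coefficient} identifies it with the split coefficient $\theta^{U_{m+n},\psi_{m+n}^0}$.

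It then remains to see that integrating this nonzero datum against $\overline{\varphi}$ over $[\Sp(W_0)]$ does not destroy the non-vanishing. For this I would invoke the global constant-term (periodicity) theorem, Theorem~\ref{Thm: global constant term}, to peel off the $X_n$-directions one block at a time: the restriction of the split coefficient of $\Theta_{2(m+n)}$ to the relevant copy of $\overline{\Sp}(W_0)_\A$ is expressed, up to explicit characters, as the product of a semi-Whittaker coefficient of a theta function on a double cover of a general linear group — nonzero by Bump--Ginzburg \cite{BG} — with the split coefficient $\theta^{U_m,\psi_m^0}$ of the theta representation $\Theta_{2m}$ on $\overline{\Sp}(W_0)_\A$, which is nonzero by Proposition~\ref{Prop: Split coefficient} together with Theorem~\ref{Thm: non-vanishing} in rank $m$. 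Because the stable-range choice $n=4m$ leaves a large supply of free variables over which $\varphi$ is integrated, the pairing with $\overline{\varphi}$ collapses to a nonzero period of the cusp form $\varphi$ against this explicit non-vanishing function, and one concludes $\Theta_{4m}(\pi)\neq0$.

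The hardest step will be the middle one: choosing the initial coefficient data on $\Sp(V_n)$ and the Weyl element so that the chain of root exchanges can actually be carried out and terminates exactly at the split-character Fourier coefficient of $\Theta_{W_n}$ — it is this matching that pins down the value $n=4m$ — and, along the way, justifying the interchanges of adelic integrals using the rapid decay of $\varphi$ and the moderate growth of $\theta_n$. Once this bookkeeping is in place, the cited non-vanishing inputs (Theorem~\ref{Thm: non-vanishing}, Proposition~\ref{Prop: Split coefficient}, and \cite{BG}) and the cuspidality of $\pi$ close the argument.
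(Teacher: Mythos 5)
Your proposal takes a genuinely different route from the paper, and the route has a gap at exactly the step that carries all the difficulty. The paper does \emph{not} try to exhibit a nonzero Fourier coefficient of the lift directly. It argues by contradiction: assuming $\Theta_{4m}(\pi)=0$ for \emph{all} $\varphi\in\pi$ and \emph{all} $\theta_{4m}\in\Theta_{W_{4m}}$, it deduces the vanishing of every coefficient of the form
\[
\int_{[\Sp(W_0)]}\int_{[V_2(\calo)]}\overline{\varphi(g)}\,\theta_{4m}((v,g))\,\psi_\calo(v)\,dv\,dg,\qquad \calo=(2^{4m}1^{2m}),
\]
then replaces the $\psi_\calo$-coefficient by Fourier--Jacobi coefficients built from the classical oscillator representation $\omega_\psi$ of the pair $(\SO_{4m},\Mp_{2m})$ (Ikeda's density theorem \cite{I}), and invokes the stable-range argument of Rallis \cite[Theorem I.2.2]{R} (as in \cite{BFG2}) to conclude that $\overline{\varphi}(g)$ times the Fourier--Jacobi coefficient of $\theta_{4m}$ is identically zero. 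Since $\varphi\not\equiv0$, the coefficients of $\Theta_{10m}$ attached to $(2^{4m}1^{2m})$ all vanish, contradicting Theorem \ref{Thm: non-vanishing} for $(2^{5m})$. The quantifier reversal (``vanishes for all data'' $\Rightarrow$ ``a fixed automorphic function vanishes identically'') is the essential mechanism, and it is exactly what your direct approach lacks.

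Concretely, the gap is your final sentence: ``the pairing with $\overline{\varphi}$ collapses to a nonzero period of the cusp form $\varphi$ against this explicit non-vanishing function.'' After your unfolding one is left with an integral of the shape
\[
\int_{U(\A)\backslash\Sp(W_0)_\A} W(\overline{\varphi})(g)\;\theta^{U,\psi^0}(g)\,dg,
\]
i.e.\ a pairing of two individually nonzero functions, exactly as in Proposition \ref{Prop: Whittaker}. Non-vanishing of each factor does not imply non-vanishing of the integral; this is precisely why the identity of Proposition \ref{Prop: Whittaker} only yields the one-directional Corollary following it, and why even $n(\pi)=m+1$ for generic $\pi$ remains conjectural in Section \ref{Section: Whittaker coefficients}. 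To close your argument you would need some injectivity or density statement letting you vary the theta datum so as to separate $\varphi$ from zero --- which is what Ikeda's density plus the Rallis stable-range argument supply, and what forces the choice $n=4m$ (the stable range), not any root-exchange bookkeeping. Your inputs (Theorem \ref{Thm: non-vanishing}, Proposition \ref{Prop: Split coefficient}, \cite{BG}) are the right non-vanishing ingredients for the theta kernel itself, but they cannot by themselves control the pairing against an arbitrary cusp form.
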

We sketch the proof as this result is analogous to Theorem 2 in \cite{BFG2}, and is proved in a similar manner. Supposing toward a contradiction that the integral
\begin{equation*}
 \int_{[\Sp(W_0)]}\overline{\varphi(g)}\theta_{4m}((h,g))\,dg 
\end{equation*}
vanishes for all data, it is clear that the integral
\begin{equation}\label{eqn: assumption}
 \int_{[\Sp(W_0)]}\int_{[V_2(\calo)]}\overline{\varphi(g)}\theta_{4m}((v,g))\psi_\calo(v)\,dv\,dg, 
\end{equation}
where $\calo = (2^{4m}1^{2m})$ and  $$\psi_\calo(v) =\psi(v_{4m+1,6m+1} + \cdots v_{1,10m-1})$$ must also vanish for all data. Note that $V_1(\calo)$ has the structure of a generalized Heisenberg group with center equal to $V_2(\calo)$, so that there is a homomorphism $\lam : V_1(\calo)_\A\to \mathcal{H}_{8m^2+1}(\A)$ to the Heisenberg group on $8m^2+1$ variables. 

Denote by $\omega_\psi$ the global oscillator representation of the dual pair $(\SO_{4m},\Mp_{2m})\subset\Mp_{8m^2}$ and let $\tilde{\theta}_{\phi}$ be the theta function associated to $\phi\in \omega_\psi$. Results of Ikeda on Fourier-Jacobi coefficients \cite{I} tell us that functions of the form
\begin{equation}\label{eqn: FJ coeff}
\tilde{\theta}_{\phi_1}(1,g)\int_{[V_1(\calo)]}\theta_{4m}\left(v(1,g)\right)\tilde{\theta}_{\phi_2}\left(\lam(v)(1,g)\right)\,dv
\end{equation}
are dense in the space of functions of the form
\begin{equation}\label{eqn: coeff}
\int_{[V_2(\calo)]}\theta_{4m}(v,g)\psi_{\calo}(v)\,dv.
\end{equation}
Thus, the vanishing (\ref{eqn: assumption}) tells us that if we set 
\[
H(g) = \overline{\varphi}(g)\int_{[V_1(\calo)]}\theta_{4m}\left(v(1,g)\right)\tilde{\theta}_{\phi_2}\left(\lam(v)(1,g)\right)\,dv, 
\]
then we have 
\begin{equation*}
\int_{[\Sp(W_0)]}H(g)\tilde{\theta}_{\phi}(1,g)\,dg = 0
\end{equation*}
for all $\phi\in \omega_\psi$. As in \cite{BFG2}, since we are considering a high enough lift, the arguments used in the proof of \cite[Theorem I.2.2]{R} allow us to conclude that $H(g) \equiv 0$. This forces all functions of the form (\ref{eqn: FJ coeff}) to vanish, and hence the density results force all functions of the form (\ref{eqn: coeff}) to also vanish. However, it is a straightforward check to show that the non-vanishing of Fourier coefficients associated to $\calo_{\Theta, 5m} = (2^{5m})$ in Theorem \ref{Thm: non-vanishing} forces these Fourier coefficients associated to the orbit $\calo= (2^{4m}1^m)$ to be non-vanishing for some choice of data. We therefore obtain a contradiction.

\section{Towering property of the theta lift}\label{Section: cuspidality}
In this section we prove the rest of Theorem \ref{Thm: intro cuspidality}.

\begin{Thm}\label{Thm: cuspidality}
Suppose that $\pi$ is a cuspidal automorphic representation of $\overline{\Sp}(W_0)_\A$. Then
\begin{enumerate}
\item\label{1} If $\Theta_k(\pi)=0$, then $\Theta_{k-1}(\pi)=0$;
\item\label{2} If $\Theta_{k-1}(\pi)=0$, then $\Theta_k(\pi)$ is cuspidal.
\end{enumerate}
\end{Thm}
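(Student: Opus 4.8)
The plan is to follow the template of Rallis's tower property \cite{R}, in the form adapted to covering groups in \cite{BFG2}; the new ingredients are the constant-term formula for $\Theta$ (Theorem \ref{Thm: global constant term}) and the precise vanishing of the Fourier coefficients of $\Theta_{2r}$ recorded in Theorem \ref{Thm: vanishing}. Write $\dim W_0 = 2m$, and recall the commuting embedding $\Sp(W_0) \times \Sp(V_k) \hookrightarrow \Sp(W_k)$ with $W_k = X_k \oplus W_0 \oplus X_k^\ast$.

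For part (\ref{1}), fix data $\varphi \in \pi$ and $\theta_k \in \Theta_{W_k}$, producing $F_k \in \Theta_k(\pi)$, a function on $\overline{\Sp}(V_k)_\A$. I would compute the constant term of $F_k$ along the unipotent radical $V_{1,k-1}$ of the maximal parabolic $Q_{1,k-1} \subset \Sp(V_k)$ with Levi $\GL_1 \times \Sp(V_{k-1})$. Since $\Sp(V_k)$ and $\Sp(W_0)$ commute inside $\Sp(W_k)$, one may interchange the integration over $[V_{1,k-1}]$ with that over $[\Sp(W_0)]$, and is left with a partial constant term of $\theta_k = \theta_{2(m+k)}$ along $V_{1,k-1}$, now viewed inside the unipotent radical $U_{(1,m+k-1)}$ of the parabolic of $\Sp(W_k)$ stabilizing the same isotropic line. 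Fourier-expanding this partial constant term along the abelian quotient $U_{(1,m+k-1)}/V_{1,k-1} \cong W_0$ and using root exchange (Lemma \ref{Lem: root exchange}), the trivial mode becomes, by Theorem \ref{Thm: global constant term}, a character twist of $\theta_{2(m+k-1)}$ restricted to $\Sp(W_0) \times \Sp(V_{k-1})$, and pairing it against $\overline{\varphi}$ over $[\Sp(W_0)]$ yields the corresponding $F_{k-1} \in \Theta_{k-1}(\pi)$. Each nontrivial mode is a Fourier coefficient of $\theta_{2(m+k)}$ which, after the standard collapse of the $\Sp(W_0)$-orbit sum, is attached either to a nilpotent orbit strictly larger than or incomparable to $(2^{m+k})$ — so it vanishes identically by Theorem \ref{Thm: vanishing} — or to a constant term of $\varphi$ along a proper parabolic of $\Sp(W_0)$ — so it vanishes by cuspidality of $\pi$. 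Hence the constant term of $F_k$ is exactly a character twist of $F_{k-1}$, and conversely every vector of $\Theta_{k-1}(\pi)$ is realized this way (the global analogue of Corollary \ref{Cor: local constant term}); in particular $\Theta_k(\pi) = 0$ forces $\Theta_{k-1}(\pi) = 0$.

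For part (\ref{2}), assume $\Theta_{k-1}(\pi) = 0$; by part (\ref{1}) this gives $\Theta_{k'}(\pi) = 0$ for all $0 \le k' \le k-1$. Cuspidality of $\Theta_k(\pi)$ is the statement that, for each $1 \le i \le k$, the constant term of every $F_k$ along the unipotent radical of the maximal parabolic of $\Sp(V_k)$ with Levi $\GL_i \times \Sp(V_{k-i})$ vanishes. I would run the same interchange-of-integration computation over the larger unipotent; applying Theorem \ref{Thm: global constant term} with the $\GL_i$-Levi, Fourier expansion and root exchange, the constant term decomposes into terms each of which is: (a) a ($\GL_i$-induced) twist of a vector of $\Theta_{k-i}(\pi)$, hence zero by the hypothesis and part (\ref{1}); (b) an integral of $\overline{\varphi}$ against a constant term of $\theta$ along a parabolic of $\Sp(W_k)$ whose intersection with $\Sp(W_0)$ is a proper parabolic, hence zero by cuspidality of $\pi$; or (c) a Fourier coefficient of $\theta_{2(m+k)}$ attached to an orbit larger than or incomparable to $(2^{m+k})$, hence zero by Theorem \ref{Thm: vanishing}. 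Therefore all these constant terms vanish and $\Theta_k(\pi)$ is cuspidal.

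The main obstacle will be the bookkeeping in step (c): organizing, via iterated root exchanges, the various Fourier coefficients of $\theta_{2(m+k)}$ that appear into the standard form of Section \ref{Section: Fourier}, and checking that the orbit attached to each genuinely exceeds or is incomparable with $(2^{m+k})$, so that Theorem \ref{Thm: vanishing} applies. This is precisely where the restriction to the $4$-fold cover enters: for the $2n$-fold cover with $n > 2$ one has $\calo(\Theta^{(2n)}_{2(m+k)}) > (2^{m+k})$, the type-(c) contributions need not vanish, and part (\ref{2}) genuinely fails — cf.\ the Remark following Theorem \ref{Thm: intro Nilpotent orbit}.
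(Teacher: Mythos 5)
Your plan is correct and follows essentially the same route as the paper: constant term along $V_{1,k-1}$, Fourier expansion along the abelian quotient/Heisenberg groups with the two-orbit action of $\Sp(W_0)(K)$, Theorem \ref{Thm: global constant term} producing the lower lift from the trivial orbit, and the vanishing Theorem \ref{Thm: vanishing} (via the Appendix~\ref{Appendix B} lemmas) plus cuspidality of $\pi$ killing the nontrivial orbits. The only caveat is that the "bookkeeping" you defer in part (\ref{2}) is where most of the paper's work lies — an iterated unfolding over double cosets $Q_{1,\ast}\backslash\Sp\slash Q_{1,\ast}$ and a collapse of the character sum under the mirabolic $\GL_{l-1}$ before cuspidality can be invoked — but your (a)/(b)/(c) trichotomy is the correct organizing principle for that computation.
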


\begin{proof}
We first prove the persistence property (\ref{1}). By assumption we have that for all choices of data, the integral (\ref{eqn: pairing}) vanishes. Thus, if $V_{1,n-1}\subset \Sp(V_n)$ is the unipotent radical of the maximal parabolic preserving an isotropic line, then
\begin{align}\label{eqn: preim}
\int_{[\Sp(W)]}\int_{[V_{1,n-1}]}\overline{\varphi(g)}\theta_{n}((u,g))du\,dg = 0.
\end{align}
We have that $V_{1,n-1}\subset V_{1,n+m-1}$ with an abelian complimentary subgroup $\left(V_{1,n+m-1}/V_{1,n-1}\right) \cong \mathbb{G}_a^{2m}$. Acting via conjugation, we identify this vector space with the standard representation of $\Sp(W)$. Expanding the inner integral of (\ref{eqn: preim}) against this subgroup, we have that $\Sp(W)_K\cong \Sp_{2m}(K)$ acts on the character group with two orbits. Applying Theorem \ref{Thm: global constant term} with $a=1$, the trivial orbit gives the integral
\[
\int_{[\Sp_{2n}]}\overline{\varphi(g)}\theta_{n-1}((1,g))\,dg,
\]
which, as we range over all data, gives an arbitrary element in $\Theta_{n-1}(\pi)$. Thus, the proposition follows if we show that the contribution of the non-trivial orbit in the above expansion vanishes.

Let $Q_{1,m-1}$ be the maximal parabolic subgroup of $\Sp_{2m}$ preserving a line. We may choose a representative of the nontrivial $\Sp_{2m}(F)$-orbit to be the character $\tilde{\psi}(u) = \psi(u_{1,m+1})$, the stabilizer of which is $Q'_{1,m-1}\subset Q_{1,m-1}$, the subgroup obtained by omitting the $\GL_1(F)$ piece of the Levi. Unfolding, we find that the non-trivial orbit contributes
\[
	\int_{Q'_{1,m-1}(K)\backslash\Sp_{2m}(\A)}\int_{[V_{1,n+m-1}]}\overline{\varphi(g)}\theta_{n}(u(1,g))\tilde{\psi}(u)\,du\,dg.
\]

Consider the Weyl group element $w\in W$ preserving $U_1$ such that $w\cdot \tilde{\psi} = \psi_1^0$. 
Using automorphy of $\theta_{n}$,  we may conjugate the inner integration by $w$ and change variables so that the above integral equals
\begin{align}\label{eqn: P^0}
\int_{Q'_{1,m-1}(K)\backslash\Sp_{2m}(\A)}\overline{\varphi(g)}\theta^{V_{1,n+m-1},\psi^0_{1}}(w(1,g))\,dg,
\end{align}
 where 
\[
\theta^{V_{k,m-k},\psi^0_{k}}(g) = \int_{[V_{k,m-k}]}\theta(vg)\psi^0_{k}(v)\,dv.
\]

Recall that $V_{1,m-1}$ is the unipotent radical of $Q_{1,m-1}$. We may decompose the outer integral so that we obtain
\[
\int_{[V_{1,m-1}]}\overline{\varphi(vg)}\theta^{V_{1,m+n-1},\psi^0_{1}}(w(1,vg))\,dv
\]
as an inner integration of (\ref{eqn: P^0}). Identifying $V_{1,m-1}$ with its image in $\Sp_{2(m+n)}$, we see that $w V_{1,m-1}w^{-1} \subset V_{2,m+n-2}$, so that by Lemma \ref{Lem: lemma 2} we may change variables in $\theta^{V_{1},\psi^0_{1}}$ to remove the dependence on $v$, implying that the integral equals
\[
c\int_{[V_{1,m-1}]}\overline{\varphi(vg)}\,dv,
\]
for some constant $c\in \cc$. By the cuspidality of $\pi$, this is trivial.
Therefore, the non-trivial orbit does not contribute. This completes the proof of part (\ref{1}).

We now turn to proving part (\ref{2}). For the sake of this proof, let $R_l $ denote the image of the unipotent radical $U_{(l,n-l)}$ of the maximal parabolic subgroups of $\Sp_{2n}$. We need to show that the vanishing of $\Theta_{n-1}(\tau)$ implies the vanishing of the integral
\begin{align}\label{eqn: constant term}
\int_{[\Sp_{2m}]}\int_{[R_l]}\overline{\varphi(g)}\theta_{n}((v,g))\,dv\,dg. 
\end{align}

We begin by noting that the root subgroup associated to the highest root $\al_0=\mu_1$ is a subgroup of the images of $R_l$ for all $l$. Thus, by identifying this root subgroup with $Z(H_1)$, we may expand the inner integration against $[H_1/Z(H_1)]$. 

As before, $\Sp_{2(n+m-1)}(K)$ acts on the characters of this group with two orbits. By Theorem \ref{Thm: global constant term}, the trivial orbit gives an element of the lifting $\Theta_{m-1}(\tau)$, and so the corresponding contribution in the Fourier expansion vanishes. Left with the non-trivial orbit, we unfold to see that (\ref{eqn: constant term}) is equal to
\begin{align}\label{eqn: step 1}
\sum_{\ga\in Q_{1,n+m-2}(K)\backslash\Sp_{2(n+m-1)}(K)} \int_{[\Sp_{2m}]}\int_{[R_l]}\overline{\varphi(g)}\vartheta^{V_1,\psi_1^0}_{n}(\ga(v,g))\,dv\,dg,
\end{align}
where $Q_{1,n+m-2}$ is the maximal parabolic subgroup of $\Sp_{2(n+m-1)}(K)$ preserving a line, and $V_i= V_{i,n+m-l}$ is the simplifying notation we use in this argument.

Note that we have replaced $\overline{\theta}_{n}^{V_1,\psi_1^0}(g)$ with
\[
\vartheta^{V_1,\psi_1^0}_{n}(g) = \sum_{\xi\in K^\times}\theta_{n}^{V_1,\psi_1^0}(h_1(\xi)g),
\]
where
\[
h_1(\xi) = \diag(1,\xi,1,\ldots,1,\xi^{-1},1).
\]

 We do this to simplify the Mackey theory we need to move forward. We may further decompose the above sum by considering the double cosets
\[
Q_{1,n+m-2}(K)\backslash\Sp_{2(n+m-1)}(K)/Q_{1,n+m-2}(K)
\]
which has 3 elements with the representatives $1, w_2,$ and $w_{0,2}$ where $$w_{0,2} = w_2w_3\cdots w_n\cdots w_3w_2$$ is the ``relative long element'' corresponding to the  parabolic subgroup $Q_{1,n+m-2}$. 

We claim that the only double coset to contribute to (\ref{eqn: step 1}) is the coset represented by $w_2$. In fact, to see that the case of $w_{0,2}$ does not contribute, we need only comment that the short root group $U_{\al}$, where 
\[
\al = \eta_{1,2} = \al_1+2\al_2+\cdots+ 2\al_{n-1}+\al_n
\]
is conjugated by $\tilde{w}_0$ to the root group $U_{\al_1}$. Thus, since this root group is contained in $R_l$ for all $l$, we see that as an inner integration we have
\begin{align*}
&\int_{U_\al(\A)R_l(K)\backslash R_l(\A)}\int_{[U_\al]}\vartheta^{V_1,\psi_1^0}_{n}(\tilde{w}_0x_\al(t)(v,g))\,dt\,\,dv\\
&=\int_{U_\al(\A)R_l(K)\backslash R_l(\A)}\sum_{\xi\in K^\times}\left(\int_{[\A]}\psi(\xi t)\,dt\right)\theta^{V_1,\psi_1^0}_{n}(h(\xi)\tilde{w}_0(v,g))\,\,dv.
\end{align*}
This vanishes since $\xi\neq0$, and thus
\[
\int_{[\A]}\psi(\xi t)\,dt=0.
\]
The contribution of $w=1$ also vanishes. In this case, the contribution equals
\begin{align*}
\sum_{\xi\in K^\times}\int_{[\Sp_{2m}]}\int_{[R_l]}\overline{\varphi(g)}\theta^{V_1,\psi_1^0}_{n}(h(\xi)(v,g))\,\,dv\,dg.
\end{align*}
By Lemma \ref{Lem: lemma 2}, we see that we may replace $\theta_{n}^{V_1,\psi_1^0} = \theta_{n}^{V_2,\psi_2^0}$. Factoring this integral along the unipotent radical $U_{(2,n+m-2)}$, and applying Theorem \ref{Thm: global constant term} with $a=2$, we have an inner integration contained in the lifting $\Theta_{m-2}(\tau)$. By part (\ref{1}) and the assumption that $\Theta_{n-1}(\pi)=0$, this vanishes.

Therefore, the only double coset which contributes to (\ref{eqn: step 1}) is the one represented by $w_2$. With this simplification, we may rewrite (\ref{eqn: step 1}) as
\begin{align*}
\sum_{\ga\in Q_{1,n+m-3}\backslash \Sp_{2(n+m-2)}(K)}\sum_{\ep\in K} \int_{[\Sp_{2m}]}\int_{[R_l]}\overline{\varphi(g)}\vartheta^{V_1,\psi_1^0}_{n}(w_2\ga x_{\al_2}(\ep)(v,g))\,dv\,dg.
\end{align*}
We now consider the set of double cosets $Q_{1,n+m-3}\backslash \Sp_{2(n+m-2)}(K)/Q_{1,n+m-3}$, which again has only 3 elements, represented by $1, w_3,$ and $ w_{0,3}$. The argument is the same as the previous case, and by induction we find that our integral reduces to
\begin{align*}
\int_{Q_{1,m-1}(K)\backslash\Sp_{2m}(\A)}\int_{[R_l]}\overline{\varphi(g)}\sum_{\ep_2,\cdots,\ep_n}\vartheta^{V_1,\psi_1^0}_{n}(w_2 x_{\al_2}(\ep_2)\cdots w_n x_{\al_n}(\ep_n)(v,g))\,dv\,dg.
\end{align*}
Setting $\tilde{w}_2 = w_2w_3\cdots w_n=\diag(w,I_{2m-2},w^\ast)$, where

\[w=\left(\begin{array}{ccc}
										1&&\\
										&&1\\
										&I_{n-1}&
										\end{array}\right),\]
										 and 
$r(\ep_2,\cdots, \ep_m) = \diag(u(\ep),I_{2m-2}, u(\ep)^*)$, where 

\[
u(\ep) = \left(\begin{array}{ccc}
					1&\cdots&0\\
					&&\ep_2\\
					&\ddots&\vdots\\
					&&\ep_n\\
					&&1
					\end{array}\right)\in \GL_{n+1}(F),
\]
we may rewrite this as 
\begin{align*}
\int_{Q_{1,m-1}(K)\backslash\Sp_{2m}(\A)}\int_{[R_l]}\overline{\varphi(g)}\sum_{\ep_2,\cdots,\ep_n}\vartheta^{V_1,\psi_1^0}_{n}(\tilde{w}_2 r(\ep_1,\ldots,\ep_n)(v,g))\,dv\,dg.
\end{align*}
We may immediately remove several of the above summands as follows: decompose our unipotent radical $R_l = L\cdot R_l^0$ where 
\[
L_1' =\la x_\al(t) : \al \in \{\ga_{1,l},\ldots, \ga_{1,n-1}\}\ra\cong \mathbb{G}_a^{n-l}.
\] 
Decomposing the integration over $R_l$ and conjugating $$\lam = x_{\ga_{1,l}}(t_{l})\cdots x_{\ga_{1,n-1}}(t_{n-1})\in L_\A$$ past $\tilde{w}_2r(\ep_1,\ldots,\ep_n)$, we change variables an obtain an inner integration of the form
\[
\int_{[\A^{n-l+1}]}\psi(\ep_{l+1}t_l+\cdots +\ep_nt_{n-1})\,dt_i,
\]
which vanishes unless $\ep_{l+1}=\cdots=\ep_n=0$. Therefore, we have reduced the integral to the case
\begin{align}\label{eqn: unravel step 2}
\int_{Q_{1,m-1}(K)\backslash\Sp_{2m}(\A)}\int_{[R^1_l]}\overline{\varphi(g)}\sum_{\ep_2,\cdots,\ep_l}\vartheta^{V_1,\psi_1^0}_{n}(\tilde{w}_2 r(\ep_2,\ldots,\ep_{l},0,\cdots,0)(v,g))\,dv\,dg,
\end{align}
where here $R_l^1$ is the subgroup of elements of $R$ such the projection to the root groups $U_\al$ with $\al\in\{\ga_{1,k},\eta_{1,k}: 1\leq k\leq n+m\}$ are trivial. The reason we have this integral is, as before, decomposing $R_l^0 = L_1''\cdot R_l^1$ where $L'$ is the rest of the top row, we can conjugate $L'$ past $\tilde{w}_2$ and absorb the integration over $L'$ into the integration over $V_1$ in $\vartheta^{V_1,\psi_1^0}_{n}(g)$.

If $l=1$ or $\ep_i=0$ for all $i$, then the above integral vanishes. To see this, note that we may decompose the integration over $Q_{1,m-1}(K)\backslash\Sp_{2m}(\A)$ to obtain as an inner integration over $[L_{1,m-1}]$. As noted in the proof of part (\ref{1}), $\vartheta^{V_1,\psi_1^0}_{n}$ is invariant under $\tilde{w}_2 L_{1,m-1}\tilde{w}_2^{-1}$, so that upon changing variables we find the constant term
\[
\int_{[L_{1,m-1}]}\varphi(lg)\,dl =0
\] 
as an inner integration. In the case $l>1$, we have thus reduced to the case $\ep_i\neq 0$. 

Consider, within the Levi subgroup $M_{(l,n-l)} \cong \GL_l\times\Sp_{2(n-l)}$ corresponding to our unipotent radical, the subgroup $\GL_{l-1}$ embedded as the (lower) mirabolic subgroup of the $\GL_l$ factor. Thus, if $\zeta\in \GL_{l-1}(K)$, then $\zeta$ embeds into $\Sp_{2n}$ as
\[
\underline{\zeta} = \diag(1,\zeta, I_{2(m+n-l)},\zeta^*,1).
\]
This subgroup acts by conjugation on the set $\{r(\ep_1,\ldots,\ep_l,0,\ldots,0) :\ep_i\neq0\}$ with one orbit. One can check that this $\GL_{l-1}$ commutes with embedded $\overline{\Sp}_{2m}(\A)$. As it is a subgroup of the embedded parabolic subgroup $$M_{(l,n-l)}\subset\Sp_{2n}\subset \Sp_{2(m+n)},$$ it clearly normalizes $R_l$. Using these two observations, we may rewrite (\ref{eqn: unravel step 2}) as
\begin{align*}
\sum_{\zeta\in H_1\backslash\GL_{l-1}(K)}\int_{Q_{1,m-1}(K)\backslash\Sp_{2m}(\A)}\int_{[R^1_l]}\overline{\varphi(g)}\vartheta^{V_1,\psi_1^0}_{n}(\tilde{w}_2 r_1(v,g)\underline{\zeta})\,dv\,dg.
\end{align*}
Here, $r_1 := r(1,0,\ldots, 0)= x_{\ga_{2,n}}(1)$ and $H_1$ is the stabilizer in $\GL_{l-1}$ of $r_1$. Note that we have used the fact that $\tilde{w}_2\underline{\zeta}^{-1}\tilde{w}^{-1}_2$ stabilizes the integral $\vartheta^{V_1,\psi_1^0}_{n}(g)$.

Thus, by replacing $\theta_{n}$ with a translate, we see that it suffices to show that
\[
\int_{Q_{1,m-1}^0(K)\backslash\Sp_{2m}(\A)}\int_{[R^1_l]}\overline{\varphi(g)}\theta^{V_1,\psi_1^0}_{n}(\tilde{w}_2 r_1(v,g))\,dv\,dg=0,
\]
where we have unraveled the definition of $\vartheta_n$ and let $Q_{1,m-1}'$ denote the same parabolic with the $\GL_1$-factor omitted.
By Lemma \ref{Lem: lemma 2}, we may replace $\theta^{V_1,\psi_1^0}_{n} = \theta^{V_2,\psi_2^0}_{2n}$.  

If $l=2$, then we may conclude that the above integral vanishes. To see this, one can show that $$Z(H_3)V_{2,m+n-2}\subset\tilde{w}_2r(1)R_2^1(\tilde{w}_2r(1))^{-1}\subset V_{3,m+n-3}.$$  In fact, after conjugating and changing variables, our integral becomes
\begin{align*}
\int_{Q_{1,m-1}^0(K)\backslash\Sp_{2m}(\A)}\int_{[V_3^1]}\overline{\varphi(g)}\theta_{n}(\tilde{w}_2 r_1(v,g))\psi_3^1(v)\,dv\,dg.
\end{align*}
Here we have $V_3^1\subset V_{3,m+n-3}$  defined by
\[
V_3^1 = \{v\in V_{3,m+n-3} : v_{3,j} \neq 0 \implies j\leq m+1 \mbox{ or } j\geq 2(m+n)-n\},
\]
against the character $\psi_3^1(v) = \psi(v_{1,2})$.

Thus, we may expand along the complimentary subgroup $(V_3\setminus{V_3^1})\cong \mathbb{G}_a^{2m-2} \subset H_3$ (note that the center of $H_3$ is already being integrated against). The subgroup $\Sp_{2m-2}(K)\subset \overline{\Sp}_{2m}(\A)$ acts via conjugation with two orbits on the characters of this subgroup.

The trivial orbit does not contribute since, similarly to the proof of part (\ref{1}), we note that $\tilde{w}_2r_1V_{1,m-1}(\tilde{w}_2r_1)^{-1}\subset V_{3,n+m-3}$, and so conjugating and changing variables, we obtain
\[
\int_{[V_{1,m-1}]}\overline{\varphi(u'g)}du'
\]
as an inner integration. This vanishes by the cuspidality of $\pi$. For the nontrivial orbit, define the character $\psi_3^2: V_{3,m+n-3}\to \cc^\times$ to be 
\[
\psi_3^2(v) = \psi(v_{1,2}+v_{3,m+2}).
\]
Then, we see that our integral becomes
\begin{align*}
\sum_{\ga\in Q^0_{1,m-2}\backslash \Sp_{2m-2}(K)} \int_{Q_{1,m-1}^0(K)\backslash\Sp_{2m}(\A)}\overline{\varphi(g)}\theta^{V_3,\psi_3^2}_{n}(\ga\tilde{w}_2 r_1(1,g))\,dv\,dg\\
= \int_{Q_{2,m-2}^0(K)\backslash\Sp_{2m}(\A)}\overline{\varphi(g)}\theta^{V_3,\psi_3^2}_{n}(\tilde{w}_2 r_1(1,g))\,dv\,dg,
\end{align*}
where we used the fact that $\ga$ commutes with $\tilde{w}_2r_1$ to allow the unfolding.
 Let $\tilde{w}_4 = w_4\cdots w_{n+1}$ be the Weyl group element preserving $V_{3,m+n-3}$ such that $\tilde{w}_4\cdot\psi_3^2 = \psi_3^0$. Conjugating by this element, we see that our integral becomes
\begin{align*}
\int_{Q_{2,m-2}^0(K)\backslash\Sp_{2m}(\A)}\overline{\varphi(g)}\theta^{V_3,\psi_3^0}_{n}(\tilde{w}_4\tilde{w}_2 r_1(1,g))\,dv\,dg.
\end{align*}
We claim that this integral vanishes. To see this, note that we may factor the unipotent radical of $Q_{2,m-2}$ $$V_{2, m-2} = N\cdot U_{(2,m-2)},$$
where $U_{(2,m-2)}$ is the unipotent radical of the maximal parabolic subgroup $P_{(2,m-2)}$ of  $\Sp_{2m}$. Conjugating the integration along $U_{(2,m-2)}$ across $\tilde{w}_4\tilde{w}_2 r_1$, we see that $\theta_{2n}^{V_3,\psi_3^0}$ is invariant under this subgroup. Thus, we obtain an inner integration of the form 
\[
\int_{[U_{(2,m-2)}]}\overline{\varphi(ug)}\,dg,
\]
which vanishes by cuspidality. This completes the case of $l=2$.

For $l\geq 3$, note that the root subgroup associated to the long root $\mu_2$ is contained in $R_l^1$. Doubling the integration along this root group, we may write our integral as 
\[
\int_{Q_{1,m-1}^0(K)\backslash\Sp_{2m}(\A)}\int_{[R^1_l]}\int_{[\A]}\overline{\varphi(g)}\theta^{V_2,\psi_2^0}_{n}(\tilde{w}_2 r_1x_{\mu_2}(t)(v,g))\,dt\,dv\,dg,
\]
and conjugating past $\tilde{w}_2r_1$, we obtain
\begin{align*}
\int_{Q_{1,m-1}^0(K)\backslash\Sp_{2m}(\A)}\int_{[R^1_l]}\int_{[Z(H_3)]}\overline{\varphi(g)}\theta^{V_2,\psi_2^0}_{n}(z\tilde{w}_2 r_1(v,g))dz\,dv\,dg.
\end{align*}
Therefore, we may expand $\int_{[Z(H_3)]}\theta^{V_2,\psi_2^0}(z\tilde{w}_2 r_1(v,g))dz$ along $[H_3/Z(H_3)]$. 
As before, we obtain two orbits under the action of the subgroup $\Sp_{2(m+n-3)}(K)$. Again, we see that cuspidality kills off the trivial orbit, and we are left to analyze the nontrivial orbit.

 This process is inductive, and the final case will depend on the relation between $m$ and $n$. In both cases, one is able to decompose the integration so that we have an inner integration along an appropriate unipotent radical of $\Sp_{2m}$. After conjugating across the accrued Weyl group elements and unipotent elements, the period along $\theta_{n}$ is invariant under the resulting subgroup. We obtain the constant term of $\varphi$ as an inner integration, and this vanishes by the cuspidality assumption. 

Specifically, if $m<n$, then in the finale we obtain the integral
\begin{align*}\label{eqn: m<n}
\int_{U_m(F)\backslash\Sp_{2m}(\A)}\int_{[R_l^t]}\overline{\varphi(g)}\theta^{V_{2m},\psi_{2m}^0}(\tilde{w}r(v,g))\,dv\,dg.
\end{align*}
Here $U_m$ is the unipotent radical of the Borel subgroup of $\Sp_{2m}$, $\tilde{w}\in W(\Sp(W_n))$ is an explicit Weyl group element, $r$ is an explicit unipotent element, and $R_l^t$ is a subgroup of $R_l$. To see that this integral vanishes, we decompose the integration to obtain an inner integration over $U_m(F)\backslash U_m(\A)$. One may then show that $\theta^{V_{2m},\psi_{2m}^0}$ is invariant under $(\tilde{w}r)U_m(\A)(\tilde{w}r)^{-1}$, so that after a change of variables we obtain
\[
\int_{[U_m]}\overline{\varphi(ug)}du
\]
as an inner integral. This vanishes by cuspidality. 

The case of $m>n$ is similar. This final analysis is similar to the considerations in \cite{BFG2}. We omit the final details, as the form of the argument above deals with all potential issues. This completes the proof of Theorem \ref{Thm: cuspidality}.
\end{proof}

\begin{Rem}
Consider for the moment the classical theta lifting between the metaplectic double cover $\Mp(W)$ and special orthogonal groups $\SO(2r+1)$. If $\pi$ is a genuine cuspidal automorphic representation of $\Mp(W)$ which lifts non-trivially to $\SO(2r+1)$, then the Rallis inner product formula may be used to show that the lift is square integrable. 

We suspect that a similar result holds in our present case, and that the analysis of the next section would then be able to identify such lifts with certain residual representations. Indeed, our identification of the Satake parameters of the local unramified lift in Section \ref{Section: local correspondence} enables us to predict the precise location of the poles of the corresponding Eisenstein series. Such an analysis will entail the study of a generalized doubling integral, which we hope to return to in future work.
\end{Rem}

\section{CAP Representations and Arthur Parameters}\label{Section: CAP}
In this section, we draw certain conclusions from the previous sections. We also relate our results to Arthur's conjectures.
\subsection{CAP representations} We say that two irreducible automorphic representations $\pi=\otimes'_\nu\pi_\nu$ and $\sigma=\otimes'_\nu\sigma_\nu$ are nearly equivalent if $\pi_\nu\cong\sigma_\nu \mbox{    for almost all places }\nu.$
Recall the definition of a CAP representation of a reductive group $\G$ over $K$. 
\begin{Def}
A cuspidal representation $\pi=\otimes'_\nu\pi_\nu$ of $\G(\A)$ is said to be {\bf CAP} (or, cuspidal associated to parabolic) if there exists
\begin{enumerate}
\item
a parabolic subgroup $P=MN$ of $\G$,
\item a cuspidal unitary representation $\tau$ of $M(\A)$,
\item an unramified character $\chi$ of $M(\A)$,
\end{enumerate}
such that $\pi$ is nearly equivalent to a irreducible constituent of $\Ind_P^G(\tau\otimes \chi)$. Equivalently, $\pi_\nu$ is isomorphic to the Langlands quotient of $\Ind_P^G(\tau_\nu\otimes\chi_\nu)$ for almost all $\nu$. In this case, we say that $\pi$ is CAP with respect to $(P,\tau,\chi)$.
\end{Def}
In the case of $\G$ quasi-split, Arthur's conjecture on square-integrable automorphic forms produces a precise determination of those triples $(P,\tau,\chi)$ for which CAP representations exist. 

For covering groups, there is an obvious generalization of the above definition, where the parabolic $P=MN$ is replaced with the full inverse image $\overline{P}=\overline{M}N$. This concept has been studied previously in the case of $\Mp_{2n}$, the two-fold metaplectic cover of $\Sp_{2n}$ (see, for example, \cite{Y}).

Recall that if $\pi$ is a cuspidal representation of $\overline{\Sp}(W_0)$, then we denote by $n(\pi)$ the index such that $\Theta_{n(\pi)}(\pi)\neq0$, while $\Theta_{n(\pi)-1}(\pi) =0$. By Theorem \ref{Thm: Non-vanishing} such an index exists and is unique.
Set $t(\pi) = n(\pi)-n.$ 

\begin{Thm}\label{Thm: CAP reps}
Suppose $\pi$ is a genuine cuspidal automorphic representation of $\overline{\Sp}(W_0)$, and suppose $\Theta(\pi):= \Theta_{n(\pi)}(\pi)$ is the first nontrivial lift. Then $$\Theta(\pi) = \hat{\bigoplus_{\lambda\in \Lambda}}\tau_\lambda$$ is semi-simple, and the irreducible summands all lie in the same near equivalence class. 
\begin{enumerate}
\item If $t(\pi) \geq 0$, then each $\tau_\lam$ is CAP with respect to the triple $(Q_{t(\pi),n},\pi, \overline{\chi}_{\Theta,2t(\pi)})$.
Here, we have $\overline{\chi}_{\Theta,2t(\pi)} = \overline{\chi}^0\chi_{\Theta,2t(\pi)}$
where  
\[
\chi_{\Theta,2t(\pi)}((M(t_1,\ldots,t_{t(\pi)}),m),\zeta) = \prod_{i=1}^{t(\pi)}|t_i|^{(2(t(\pi)-i)+1)/4}.
\]
\item If $t(\pi) <0$, then $\pi$ is  CAP with respect to $(Q_{-t(\pi),n(\pi)},\tau_\lam,\overline{\chi}_{\theta,-2t(\pi)})$ for any $\lam \in \Lam$.
\end{enumerate}
\end{Thm}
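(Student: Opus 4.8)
The plan is to establish Theorem \ref{Thm: CAP reps} in three stages: first the structural claim (semisimplicity and near-equivalence of the summands), then the CAP property in the case $t(\pi)\geq 0$, and finally deduce the case $t(\pi)<0$ by a symmetry/reciprocity argument. For the structural claim, I would argue that $\Theta(\pi)$, being the first non-vanishing lift, is cuspidal by Theorem \ref{Thm: cuspidality}(\ref{2}), hence decomposes discretely; the genuine cuspidal spectrum of $\overline{\Sp}(V_{n(\pi)})_\A$ is semisimple, so $\Theta(\pi)=\widehat{\bigoplus}_{\lambda\in\Lambda}\tau_\lambda$. Near-equivalence of the $\tau_\lambda$ should follow from the fact that, at almost every place $\nu$, the local theta lift of the unramified constituent $\pi_\nu$ is determined by Theorem \ref{Thm: local correspondence}: the unramified parameter of $\tau_{\lambda,\nu}$ is pinned down by that of $\pi_\nu$ together with the forced factors $\mu_{j_i}=|\cdot|^{-(2(k-i)+1)/4}$, independently of $\lambda$. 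Thus all $\tau_\lambda$ share the same Satake parameters at almost all places.

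For the CAP property with $t(\pi)=n(\pi)-n\geq 0$, the strategy is to compare near-equivalence classes. At each unramified place $\nu$, Theorem \ref{Thm: local correspondence} tells us that $\tau_{\lambda,\nu}$ is the unramified constituent of a principal series whose inducing data splits into a ``$\pi_\nu$-part'' $\{\chi_i^{\pm1}\}$ and the distinguished string $\{|\cdot|^{-(2(k-i)+1)/4}: 1\le i\le k\}$, where here $k$ plays the role of $t(\pi)$ in the relevant embedding $\Sp(W_0)\times\Sp(V_{t(\pi)})\hookrightarrow \Sp(V_{n(\pi)})$ after removing the ``doubling'' part $W_0$. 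This is precisely the statement that $\tau_{\lambda,\nu}$ is the Langlands quotient of $\Ind_{\overline{Q}_{t(\pi),n}}^{\overline{\Sp}(V_{n(\pi)})}(\pi_\nu\otimes\chi_{\Theta,2t(\pi),\nu})$, where the character string is recognized as $\chi_{\Theta,2t(\pi)}$ by matching $|\cdot|^{(2(t(\pi)-i)+1)/4}$ with $|\cdot|^{-(2(k-i)+1)/4}$ after reindexing. Globalizing: since this holds at almost all $\nu$ and $\pi$ is cuspidal unitary, $\tau_\lambda$ is nearly equivalent to a constituent of $\Ind_{Q_{t(\pi),n}}^{\Sp(V_{n(\pi)})}(\pi\otimes\overline{\chi}_{\Theta,2t(\pi)})$, which is the definition of CAP with respect to $(Q_{t(\pi),n},\pi,\overline{\chi}_{\Theta,2t(\pi)})$. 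Verifying that $\overline{\chi}_{\Theta,2t(\pi)}=\overline{\chi}^0\chi_{\Theta,2t(\pi)}$ is the \emph{unramified} character appearing — in particular that the distinguished-character bookkeeping is consistent across the embedding — is exactly the content of the remarks in Section \ref{Section: local correspondence} on restriction of $\overline{\chi}^0$ to subtori.

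For the case $t(\pi)<0$, I would run the same local computation ``in reverse.'' Now $n > n(\pi)$, and the embedding relevant to comparing $\pi$ with any $\tau_\lambda$ is $\Sp(V_{n(\pi)})\times\Sp(V_{-t(\pi)})\hookrightarrow\Sp(W_0)$ (after removing the doubling space). Applying Theorem \ref{Thm: local correspondence} with the roles of the two symplectic groups interchanged, at almost all $\nu$ the Satake parameter of $\pi_\nu$ is that of $\tau_{\lambda,\nu}$ together with a distinguished string of length $-t(\pi)$, i.e.\ $\pi_\nu$ is the Langlands quotient of $\Ind_{\overline{Q}_{-t(\pi),n(\pi)}}^{\overline{\Sp}(W_0)}(\tau_{\lambda,\nu}\otimes\chi_{\Theta,-2t(\pi),\nu})$. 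Globalizing as before gives that $\pi$ is CAP with respect to $(Q_{-t(\pi),n(\pi)},\tau_\lambda,\overline{\chi}_{\Theta,-2t(\pi)})$ for any fixed $\lambda\in\Lambda$ (one needs $\tau_\lambda$ cuspidal unitary, which holds since $\Theta(\pi)$ is cuspidal, and any single summand suffices).

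The main obstacle I anticipate is not any single deep input but the careful bookkeeping required to pass from the local statement of Theorem \ref{Thm: local correspondence} — which is phrased in terms of sets of characters $\{\chi_i^{\pm1}\}$ and $\{\mu_j^{\pm1}\}$ and a forced exponent string — to the assertion that $\tau_{\lambda,\nu}$ is literally the Langlands quotient of an induced representation from the \emph{specific} parabolic $Q_{t(\pi),n}$ with the \emph{specific} character $\overline{\chi}_{\Theta,2t(\pi)}$. In particular one must: (i) identify which copy of $\Sp(V_{t(\pi))}$ inside $\Sp(V_{n(\pi)})$ carries the ``extra'' theta data (here Proposition \ref{Prop: Split coefficient} and the identification of $n(\pi)$, $t(\pi)$ are what fix the index); (ii) confirm that the exponent string $\{|\cdot|^{-(2(k-i)+1)/4}\}$ coming from Theorem \ref{Thm: local correspondence} matches, after reindexing $i\mapsto t(\pi)+1-i$, the exponents $(2(t(\pi)-i)+1)/4$ defining $\chi_{\Theta,2t(\pi)}$; and (iii) track the distinguished character $\overline{\chi}^0$ through all the relevant tori so that the genuine character on the $\GL_1$-blocks of the Levi is exactly $\chi_{\Theta,2t(\pi)}$ and not some twist. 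Each of these is routine given the explicit formulas in Sections \ref{Section: local theory} and \ref{Section: local correspondence}, but assembling them correctly is where care is needed.
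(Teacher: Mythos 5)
Your overall strategy matches the paper's: cuspidality from Theorem \ref{Thm: cuspidality} gives the discrete (hence semisimple) decomposition, and the content of both CAP claims is extracted from Theorem \ref{Thm: local correspondence} at the unramified places. But there is one step you never supply, and it is the only non-bookkeeping step in the argument: Theorem \ref{Thm: local correspondence} is an implication whose \emph{hypothesis} is the non-vanishing of the local Hom-space
\[
\Hom_{\overline{\Sp}(W_0)_{K_\nu}\times_{\mu_4}\overline{\Sp}(V_n)_{K_\nu}}\left(\Theta_{n(\pi),\nu},\ \tau_{\lambda,\nu}\otimes\pi_\nu^\vee\right)\neq 0,
\]
and you simply assert that the local correspondence ``determines'' $\tau_{\lambda,\nu}$ without explaining why this hypothesis holds at each place. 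The paper bridges this as follows: since $\tau_\lambda$ is a summand of the cuspidal representation $\Theta(\pi)$, the Petersson inner product yields a non-zero element of $\Hom_{\overline{\Sp}(V_{n(\pi)})_\A}(\tau_\lambda^\vee\otimes\Theta(\pi),\cc)$, hence a non-zero global trilinear form on $\tau_\lambda^\vee\otimes\pi\otimes\Theta_{n(\pi)}$; because all three representations are restricted tensor products, this form factors and forces a non-zero local trilinear form at \emph{every} place $\nu$, which is exactly the required local Hom-space non-vanishing. Without this (or some substitute), nothing ties the unramified parameters of $\tau_{\lambda,\nu}$ to those of $\pi_\nu$, and the near-equivalence and CAP claims do not follow. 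The remainder of your write-up — the reindexing of the exponent string, the tracking of $\overline{\chi}^0$ through the subtori, and the reversal of roles when $t(\pi)<0$ — is consistent with how the paper concludes once the local Hom-spaces are known to be non-zero.
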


\begin{proof}
By Theorem \ref{Thm: cuspidality}, we know that $\Theta(\pi)\hra L^2_{cusp}(\Sp(V_{n(\pi)})_F\backslash \overline{\Sp}(V_{n(\pi)})_\A)$ is cuspidal. It is therefore semi-simple, and we may write
\[
\Theta(\pi) = \hat{\bigoplus_{\lambda\in \Lambda}}\tau_\lambda,
\]
where for each $\lambda\in \Lambda$, $\tau_\lambda$ is an irreducible cuspidal automorphic representation of $\overline{\Sp}(V_{n(\pi)})_\A$.
Fix an arbitrary $\lambda$. Using the Petersson inner product, we see then that
\[
\Hom_{\overline{\Sp}(V_{n(\pi)})_\A}(\tau_\lam^\vee\otimes\Theta(\pi),\cc)\neq 0,
\]
so that we obtain a nontrivial global trilinear form
\[
\Hom_{\overline{\Sp}(W_0)_\A\times_{\mu_4}\overline{\Sp}(V_n)_\A}(\tau_\lam^\vee\otimes\pi\otimes\Theta_{n(\pi)},\cc)\neq0.
\]
Since all three representations are restricted tensor products, we obtain at each place $\nu$ a non-zero local trilinear form. That is,
\[
\Hom_{\overline{\Sp}(W_0)_\A\times_{\mu_4}\overline{\Sp}(V_n)_\A}(\tau_{\lam,\nu}^\vee\otimes\pi_\nu\otimes\Theta_{n(\pi),\nu},\cc)\neq0.
\]
This implies, for any place $\nu$, the corresponding local hom-space is non-zero:
\[
\Hom_{\overline{\Sp}(W_0)_{K_\nu}\times_{\mu_4}\overline{\Sp}(V_n)_{K_\nu}}(\Theta_{n(\pi),\nu},\tau_{\lam,\nu}\otimes\pi^\vee_\nu)\neq0,
\]
Suppose $\nu$ is a finite place such that all three local representations are unramified. This holds for all but finitely many places. Thus, we may apply Theorem \ref{Thm: local correspondence} to conclude the appropriate relationship between the local factors.
\end{proof}
This constitutes the first construction of CAP representations on higher degree covering groups. In Section \ref{Section: Whittaker coefficients}, we provide evidence based on the dimension equation that $t(\pi) >0$ holds if $\pi$ is generic.

\subsection{Conjectural Shimura Lift}\label{subsection: Shimura}
Consider a finite place $\nu$ of $K$ such that the completion $F=K_\nu$ has odd residue characteristic. Recall that to the $4$-fold cover $\overline{\Sp}_{2r}$ one may associate the dual root datum

\[
(Y_{Q,4}, \{\al_{Q,4}^\vee\}, \Hom(Y_{Q,4},\zz), \{\al_{Q,4}\}),
\]
which corresponds to the complex dual group $\overline{\Sp}_{2r}^\vee =\Sp_{2r}(\cc)$. To this root datum, we may also attach the split linear algebraic group over $F$, $\mathbb{G}_{Q,4} =\SO_{2r+1}$. Gan-Gao \cite{GG} speculate that $\mathbb{G}_{Q,4}$ should be viewed as the principal endoscopic group of $\overline{\Sp}_{2r}$. Set $\mathbb{G}_{Q,4}(F) = \G_{Q,4}$.

Let $T_{Q,4}=Y_{Q,4}\otimes F^\times\subset \G_{Q,4}$ be the induced split maximal torus. Then the inclusion $Y_{Q,4}\hra Y$ induces an isogeny
\[
i: T_{Q,4} \to T_r.
\] Let $\chi:i(T_{Q,4})\to \cc^\times$ be a character of the subtorus $i(T_{Q,4})$ of $T_r$. For example, $\chi$ could be an unramified character arising from a character of the lattice $Y_{Q,4}$. Upon choosing a distinguished character $\overline{\chi}^0$, we previously saw how to obtain the principal series representation $$\Ind_{\overline{B_r}}^{\overline{\Sp}_{2r}(F)}(\overline{\chi}^0\chi).$$
 As noted in \cite[Section 15]{GG}, this choice induces a well-defined $W$-equivariant lifting 
\begin{equation}\label{eqn: unram lift}
\Ind_{\overline{B_r}}^{\overline{\Sp}_{2r}(F)}(\overline{\chi}^0\chi)\mapsto\Ind_{B_{Q,4}}^{G_{Q,4}}(\chi\circ i).
\end{equation}
This lifting provides a bijection between principal series representations of $\overline{\Sp}_{2r}(F)$ and principal series representations of $\G_{Q,4}$ induced from characters $\chi'$ such that $\ker(i)\subset \ker(\chi')$.

One may check that in the case of $\chi$ unramified, this lifting has the effect of squaring the Satake parameter. More explicitly, if 
\[
\chi(M(t_1,\ldots,t_r)) =\prod_{i=1}^r\chi_i(t_i),
\]
 and if we parametrize $T_{Q,4}\subset \G_{Q,4}$ by $D(s_1,\ldots,s_r) = \diag(s_1,\ldots,s_r,1,s_r^{-1},\ldots,s_1^{-1}),$ then
\[
\chi\circ i(D(s_1,\ldots,s_r)) = \prod_{i=1}^r\chi_i^2(s_i).
\]
A similar unramified correspondence exists for $\Mp_{2r}$ and $\SO_{2r+1}$, as follows from \cite{K}, Theorem 2.5. In the classical Shimura correspondence between $\Mp_2$ and $\PGL_2$, this is made explicit in \cite[Sec. 2.17]{Gan2}. With these results in mind, one is led to the (naive) conjecture of a generalized Shimura-type correspondence (dependent on our distinguished character $\overline{\chi}^0$):
\[
Shim_{\overline{\chi}^0}:\Rep_{\ep}(\overline{\Sp}_{2r}(F))\longrightarrow \Rep(\G_{Q,4}),
\]
of which this is the restriction to the principal series. One might also hope for an analogous global correspondence
\begin{align}\label{eqn: Shim}
Shim_{\overline{\chi}^0}: \mathcal{A}_{\ep}(\overline{\Sp}_{2r}(\A))\longrightarrow \mathcal{A}(\G_{Q,4}(\A))
\end{align}
 of (genuine) automorphic representations. For example, the classical dependence upon a choice of additive character $\psi$ is here encoded in the choice of distinguished character $\overline{\chi}^0$. Further evidence for such a conjecture may be seen in the metaplectic correspondences known for covers of general linear groups (see \cite{F} and \cite{FK}).

\subsection{Arthur's conjectures}\label{subsection: Arthur}
For a reductive linear algebraic group $\G$ over a number field $K$, Arthur has conjectured a decomposition 
\[
L^2_{disc}(\G(K)\backslash \G( \A)) = \hat{\bigoplus_\Psi}L^2_\Psi,
\]
where the Hilbert space direct sum runs over equivalence classes of global Arthur parameters $\Psi$. These are continuous maps
\[
\Psi : L_K \times \SL_2(\cc)\to \G^\vee, 
\]
satisfying certain axioms. Here $L_K$ is the conjectural Langlands group of $K$. For each $\Psi$, $L^2_\Psi$ is a direct sum of near equivalent representations. For a summary of Arthur's conjecture, see \cite{A}.

The primary motivation for considering the conjectural lifting (\ref{eqn: Shim}) is the hope of extending the notion of Arthur parameters from $\G_{Q,4}$ to $\overline{\Sp}_{2r}$.  This has only recently been accomplished by Gan-Ichino  \cite{GI} in the case of $\Mp_{2r}$. The proof uses the local Shimura correspondence between $\Mp_{2r}$ and $\SO_{2r+1}$ along with the global theta liftings from $\Mp_{2r}$ to $\SO_{2(r+k)+1}$ for $k>r$. This indicates that the conjectural correspondence (\ref{eqn: Shim}), even if it exists, is likely insufficient to prove a transfer of definition to $\overline{\Sp}_{2r}$. Likely further development of the trace formula in the context of BD-covering groups is necessary. Such investigations are underway in the work of Li.
 
  In any case, given an Arthur parameter $\Psi$ for $\SO_{2r+1}$, the bijection above enables us to define a near equivalence class $\mathcal{A}_{\Psi,\overline{\chi}^0}$ of genuine representations for $\overline{\Sp}_{2r}(\A)$ as follows.
To the global parameter $\Psi$, we have a family of local parameters
\[
\Psi_\nu : L_{K_\nu}\times \SL_2(\cc)\to \Sp_{2r}(\cc),
\]  
where if $\nu$ is a finite place, then $L_{K_\nu} = WD_{K_\nu}$ is the Weil-Deligne group of $K_\nu$. For almost all finite places, we obtain a Satake parameter
\[
s_{\Psi_\nu} = \Psi_\nu\left( Fr_\nu, \left(\begin{array}{cc}q_\nu^{1/2}&\\&q_\nu^{-1/2}\end{array}\right)\right),
\]
where $Fr_\nu$ is a Frobenius element and $q_\nu$ is the size of the residue field of $K_\nu$. This corresponds to an irreducible spherical representation $\pi_{\Psi_\nu}$ of $\G_{Q,4}(K_\nu)=\SO_{2r+1}(K_\nu)$. Using the unramified lifting (\ref{eqn: unram lift}), we obtain an unramified representation of $\overline{\Sp}_{2r}(K_\nu)$, $\pi_{\Psi_\nu,\overline{\chi}^0}$. Thus, $\Psi$ determines the near equivalence class 
\[
\mathcal{A}_{\Psi,\overline{\chi}^0}:=\left\{\pi=\otimes'_\nu \pi_\nu : \pi_\nu \cong \pi_{\Psi_\nu,\overline{\chi}^0} \mbox{ for almost all } \nu\right\}.
\]
We are led to the following:
  \begin{Conj} 
   For a choice of distinguished character $\overline{\chi}^0$, 
 \begin{align}\label{eqn: conj}
 \mathcal{A}_\ep(\overline{\Sp}_{2r}(\A)) = \hat{\bigoplus_\Psi}\mathcal{A}_{\Psi,\overline{\chi}^0}.
 \end{align}
 \end{Conj}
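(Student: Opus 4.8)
The plan is to follow the template established by Gan--Ichino \cite{GI} for $\Mp_{2r}$, transporting Arthur's endoscopic classification across a correspondence with the quasi-split odd orthogonal group. The two inputs are: (i) Arthur's classification of $L^2_{disc}(\SO_{2r+1}(K)\backslash\SO_{2r+1}(\A))$ into $\Psi$-packets, which is available for split odd special orthogonal groups; and (ii) a correspondence, local and global, between $\ep$-genuine automorphic representations of $\overline{\Sp}_{2r}(\A)$ and automorphic representations of $\G_{Q,4}(\A)=\SO_{2r+1}(\A)$ refining the unramified Shimura lift (\ref{eqn: unram lift}). Granting (ii) with enough precision, one pulls back the decomposition on the right-hand side of Arthur's theorem for $\SO_{2r+1}$ to obtain (\ref{eqn: conj}): the classes $\mathcal{A}_{\Psi,\overline{\chi}^0}$ are by construction the images of Arthur's packets under the unramified lift, so the genuine content is exhaustion (every $\ep$-genuine discrete automorphic representation lies in some $\mathcal{A}_{\Psi,\overline{\chi}^0}$), disjointness (distinct $\Psi$ give near-inequivalent classes), and the multiplicity statement inside each class.

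The local step is to upgrade Theorem \ref{Thm: local correspondence}, which handles principal series, to a correspondence defined on all genuine irreducible admissible representations of $\overline{\Sp}_{2r}(F)$ that is compatible with parabolic induction and Jacquet restriction. The genuine Satake isomorphism (as in \cite{McN}, \cite{GG}) already pins down the unramified matching and squares Satake parameters; at ramified places I would argue via $R$-groups and intertwining operators, using the constant-term computations of Theorem \ref{Thm: local constant term} (and an analogue for general parabolics) to propagate the correspondence up from Levi subgroups. The desired output is a system of local $\Psi$-packets for $\overline{\Sp}_{2r}(F)$ together with a character identity stable enough to feed a trace-formula comparison; the dependence on $\overline{\chi}^0$ enters precisely as the choice of splitting of the $L$-group sequence (\ref{eqn: L-group}), with a different choice twisting everything by $Z(\overline{\G}^\vee)$.

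The global step admits two complementary routes. The first is via explicit liftings: combine the generalized theta lift of this paper with a descent construction in the spirit of Friedberg--Ginzburg \cite{FG} (replacing the theta kernel) to relate $\overline{\Sp}_{2r}(\A)$ either to $\SO_{2r+1}(\A)$ or to an isobaric sum on $\GL_N(\A)$; the orbit control $\calo(\Theta)=(2^r)$ together with non-vanishing (Theorem \ref{Thm: non-vanishing}) would supply surjectivity onto each near-equivalence class, while the towering/cuspidality package (Theorem \ref{Thm: cuspidality}) controls the multiplicities, exactly as in the classical descent method. The second route is to develop the stable Arthur--Selberg trace formula for BD-covers (the approach anticipated by the author in the work of Li) and compare it with the stabilized trace formula for the principal endoscopic group $\SO_{2r+1}$ in the sense of \cite{GG}; this yields existence and the multiplicity formula simultaneously, and it is the route I would ultimately bet on. Either way one must still check the structural claims hidden in the statement: that $L^2_{disc,\ep}(\overline{\Sp}_{2r})$ is a genuine Hilbert direct sum over $\Psi$ (disjointness from strong multiplicity one on the $\SO_{2r+1}$ side plus injectivity of the unramified lift) and that each $\mathcal{A}_{\Psi,\overline{\chi}^0}$ is non-empty.

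The main obstacle, as the paper itself flags, is that there is currently no theta correspondence from $\overline{\Sp}_{2r}$ to a linear classical group in the generality needed — the lift studied here stays among symplectic covers — so the explicit route first requires building the correct descent map, which in turn needs $\calo(\Theta^{(2n)}_{2r})$ for the auxiliary covers and a genericity statement for residues of Eisenstein series on higher-degree covers, both open. The trace-formula route sidesteps this but demands the full stabilization machinery for BD-covers, including a suitable fundamental lemma and a comparison of local transfer factors with those of $\SO_{2r+1}$; I expect essentially all of the difficulty to concentrate there. A secondary obstruction is that even the local packet structure for $\overline{\Sp}_{2r}(F)$ beyond principal series is unknown, so input (ii) and the local half of any trace-formula comparison are themselves substantial undertakings rather than routine.
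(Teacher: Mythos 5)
The statement you are addressing is stated in the paper as a \emph{Conjecture}, and the paper offers no proof of it: after formulating (\ref{eqn: conj}) the author simply notes the analogy with Gan--Li and then \emph{assumes} the decomposition for the rest of Section \ref{Section: CAP} in order to compute the Arthur parameter of the lift. So there is no argument in the paper against which to match yours. Your proposal faithfully reproduces the paper's own motivational framing --- the unramified Shimura-type lift (\ref{eqn: unram lift}) to the principal endoscopic group $\SO_{2r+1}$, the Gan--Ichino template for $\Mp_{2r}$, and the expectation (also voiced in the paper) that a stabilized trace formula for BD-covers in the work of Li is the ultimate route --- but it is a research program, not a proof.

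The concrete gap is that every load-bearing step in your outline is conditional on results that do not currently exist, as you yourself acknowledge in your final paragraph. Input (ii), a local correspondence on \emph{all} genuine irreducible representations compatible with parabolic induction, is only known here for unramified principal series (Theorem \ref{Thm: local correspondence}, and even that only constrains Satake parameters rather than producing packets); the explicit global route requires a descent or theta correspondence from $\overline{\Sp}_{2r}$ to a linear group that has not been constructed and whose prerequisites ($\calo(\Theta^{(2n)}_{2r})$, genericity of residual Eisenstein series on higher covers) are open; and the trace-formula route requires stabilization, transfer factors, and a fundamental lemma for BD-covers, none of which are available. None of the three assertions you correctly isolate --- exhaustion, disjointness, and the multiplicity statement --- is actually established by your argument. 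As a statement of what a future proof should look like, your proposal is sound and consistent with the paper's intent; as a proof of the conjecture, it does not close any of the gaps, and the statement remains conjectural.
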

 This is analogous to Theorem 2.3 in \cite{GL}.

In the remainder of this section, we assume the existence of the decomposition (\ref{eqn: conj}) and use it to interpret our construction in terms of Arthur parameters of $\SO_{2r+1}(\A)$. Let $\pi$ be a genuine cuspidal representation of $\overline{\Sp}_{2r}(\A)$, and suppose $\Theta(\pi):= \Theta_{r+k}(\pi)$ is the first nontrivial lift. Suppose that $\pi \in \mathcal{A}_{\Psi,\overline{\chi}^0},$ and consider the parameter
\[
\Psi_\pi : L_K\times\SL_2(\cc) \to \Sp_{2r}(\cc).
\]

If $\{\chi_{\nu,i}^{\pm1}(\varpi_\nu)\}$ are the Satake parameters of $\pi$ at $\nu$ relative to $\overline{\chi}^0$, then Theorem \ref{Thm: local correspondence} tells us that the corresponding parameters for an irreducible summand $\tau_\pi$ of $\Theta(\pi)$ are given by 
\[
\{\chi_{\nu,i}^{\pm1}(\varpi_\nu)\} \cup\{ {q_\nu}^{(2(k-i)+1)/4} : 1\leq i \leq k\}.
\]
Passing through the unramified lifting (\ref{eqn: unram lift}), we see that, up to action by the Weyl group,
\[
s_{\Psi_{\pi,\nu}} = M(\chi^2_{\nu,1}(\varpi_\nu),\ldots, \chi^2_{\nu,r}(\varpi_\nu)) \in T_r(\cc).
\]
Thus, the parameter of the lift $\Psi_{\Theta(\pi)}$ must satisfy
\[
s_{\Psi_{\Theta(\pi),\nu}} = M({q_\nu}^{(2k-1)/2}, \ldots, {q_\nu}^{1/2},\chi^2_{\nu,1}(\varpi_\nu),\ldots, \chi^2_{\nu,r}(\varpi_\nu)) \in T_{r+k}(\cc),
\]
for almost all $\nu$.

Consider the unipotent orbit $\calo_{k} =((2k)1^{2r})$ of $\Sp_{2(r+k)}(\cc)$. This corresponds to a conjugacy class of morphisms $\phi_k:\SL_2(\cc) \to \Sp_{2(r+k)}(\cc)$. Then a simple computation based on the above requirements shows that the Arthur parameter $\Psi_{\Theta(\pi)}$ associated to the near equivalence class in which $\Theta(\pi)$ sits should be the composition

\begin{eqnarray*}
\begin{CD}
L_K\times\SL_2(\cc)@>Id\times \Delta>> L_K\times\SL_2(\cc)\times \SL_2(\cc)@>\Psi_\pi\times Id>>\Sp_{2r}(\cc)\times\SL_2(\cc)@>\iota\times \phi_k>>\Sp_{2(r+k)}(\cc),\\
\end{CD}
\end{eqnarray*}
where $\iota: \Sp_{2r}(\cc) \to \Sp_{2(r+k)}(\cc)$ has image in the centralizer of the image of $\phi_k$. In the notation of \cite{A1}, we have
\[
\Psi_{\Theta(\pi)}=\Psi_\pi\boxplus S_{2k},
\]
where $S_{2k}$ is the unique $2k$-dimensional irreducible representation of $\SL_2(\cc)$.
In particular, the Arthur parameter $\Psi_\Theta$ is non-tempered, in agreement with the representation $\Theta(\pi)$ being a CAP representation.

\section{Whittaker Coefficients and the Dimension Equation}\label{Section: Whittaker coefficients}
In this final section, we investigate the case that $\pi$ is assumed to be a generic cuspidal representation of $\overline{\Sp}(W_0)_\A\cong \overline{\Sp}_{2m}(\A)$; that is, we assume $\pi$ has a non-trivial Whittaker model. With this assumption, one expects that $n(\pi) = m+1$. Moreover, we anticipate that $\Theta_{m+1}(\pi)$ is a generic representation of $\overline{\Sp}(V_{m+1})_\A\cong\overline{\Sp}_{2m+2}(\A)$.

 The reason is the dimension equation of Ginzburg (see \cite{G3}), which we recall now.
Suppose $\pi$ is an automorphic representation of a reductive group or finite degree cover there of $\G$, and assume $\calo(\pi)$ is a single nilpotent orbit. Recall that the Gelfand-Kirillov dimension of $\pi$ is given by
\[
\dim(\pi) = \frac{1}{2}\dim_\cc(\calo(\pi)).
\]

The general philosophy of Ginzburg is that one may utilize the dimension equation to anticipate when a lifting such as the one under consideration is non-vanishing. In our case, the equation amounts to
\begin{align}\label{eqn: dimension equation}
\dim(\pi) +\dim(\Theta_{W_n}) = \dim_\cc\left(\Sp_{2m}(\cc)\right) + \dim(\Theta_n(\pi)).
\end{align}
Our computation of $\calo(\Theta_n)$ in Theorem \ref{Thm: intro Nilpotent orbit} tells us that $$\dim(\Theta_{W_n}) = \frac{(n+m+1)(n+m)}{2}.$$ If we assume $\pi$ is generic, so that $\dim(\pi) = m^2$, we find equality when $n=m+1$ and $\Theta_{m+1}(\pi)$ is generic.

For the remainder of this section, we compute the Whittaker coefficient of a vector in $\Theta_{m+1}(\pi)$ and showing that it may be expressed in terms of an integral pairing of the Whittaker coefficients of $\pi$ and the split coefficient (see Proposition \ref{Prop: Split coefficient}) of $\Theta_{m+1}$. Let $\psi_{n,\al}: U_n(K)\backslash U_n(\A)\to \cc^\times$ be the Gelfand-Graev character of the full unipotent radical of the Borel subgroup of $\Sp_{2n}$ corresponding to the square class $\al\in K^\times/(K^\times)^2$. Thus,
\[
\psi_{n,\al}(u) =\psi(u_{1,2}+u_{2,3}+\cdots+u_{n-1,n}+\al u_{n,n+1}).
\]
For an automorphic form $\varphi$ of $\overline{\Sp}_{2n}(\A)$, we set $$W_\al(\varphi)(g)=\int_{[U_n]}\varphi(ug)\psi_{n,\al}(u)du$$ to be the Whittaker coefficient of $\varphi$ with respect to this character.
\begin{Prop}\label{Prop: Whittaker}
Let $\pi$ be an irreducible cuspidal generic representation of $\overline{\Sp}_{2m}(\A)$, and let $\varphi\in \pi$. For a vector $\theta_{m+1}\in \Theta_{W_{m+1}}$, consider the pairing
\[
F_{\varphi,\theta}(h) = \displaystyle \int_{[\Sp_{2m}]}\overline{\varphi(g)}\theta_{m+1}((h,g))\,dg,
\]
where $h\in \overline{\Sp}_{2m+2}(\A)$. Then we have
\begin{align}\label{eqn: period identity}
W_\al(F_{\varphi,\theta})(1) = \int_{U_m(\A)\backslash\Sp_{2m}(\A)} W_\al(\overline{\varphi})(g)\theta_{m+1}^{U_{2m+1},\psi_{2m+1,\al}^0}(g)\,dg.
\end{align}
\end{Prop}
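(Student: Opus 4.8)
The plan is to unfold the Whittaker integral $W_\al(F_{\varphi,\theta})(1) = \int_{[U_{m+1}]}F_{\varphi,\theta}(u)\psi_{m+1,\al}(u)\,du$ and interchange it with the inner integration over $[\Sp_{2m}]$ defining $F_{\varphi,\theta}$. Here $U_{m+1}$ is the unipotent radical of the Borel of $\Sp(V_{m+1}) \cong \Sp_{2m+2}$, which sits inside $\Sp(W_{m+1})$ under the chosen embedding. The first step is therefore to write
\[
W_\al(F_{\varphi,\theta})(1) = \int_{[U_{m+1}]}\int_{[\Sp_{2m}]}\overline{\varphi(g)}\,\theta_{m+1}((u,g))\,\psi_{m+1,\al}(u)\,dg\,du,
\]
and to identify the combined unipotent group $U_{m+1} \cdot$ (the image of) the relevant unipotent subgroups of $\Sp_{2m}$ inside $U_{2m+2} \subset \Sp(W_{m+1}) \cong \Sp_{2(2m+1)}$. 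The idea is that integrating $\theta_{m+1}$ over the full unipotent radical $U_{m+1}$ of the $V_{m+1}$-factor, together with a suitable unipotent piece coming from the $\Sp_{2m}$-integration, should build up (after conjugation by an explicit Weyl element) an integral of $\theta$ against a character on a large unipotent subgroup of $\Sp(W_{m+1})$.

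The second step is to decompose the $[\Sp_{2m}]$-integration using the Iwasawa-type decomposition $\Sp_{2m}(\A) = U_m(\A) T_m(\A) K$, or more precisely to first fold in the unipotent radical $U_m$ of the Borel of $\Sp_{2m}$: replacing $\int_{[\Sp_{2m}]}$ by $\int_{U_m(\A)\backslash \Sp_{2m}(\A)}\int_{[U_m]}$. The inner integral over $[U_m]$ then combines with $\psi_{m+1,\al}$ on $U_{m+1}$; the key observation will be that $U_m$ (embedded in $\Sp(W_{m+1})$ in the middle block) together with $U_{m+1}$ generates, after conjugating by the Weyl group element $\tilde w$ that interleaves the two sets of coordinates (as in the non-vanishing arguments of Section~\ref{Section: Non-vanishing}), a unipotent subgroup on which $\theta_{m+1}$ is being integrated against a generic-type character. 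At this point I expect to recognize the inner $[U_m]$-integral against the restricted character as producing exactly $W_\al(\overline\varphi)(g)$, while the remaining integration of $\theta_{m+1}$ over the complementary unipotent directions produces the split coefficient $\theta_{m+1}^{U_{2m+1},\psi_{2m+1,\al}^0}(g)$ by Proposition~\ref{Prop: Split coefficient} (applied with $r = 2m+1$), after possibly one application of root exchange to straighten out the abelian ``off-diagonal'' root groups and one use of automorphy of $\theta$ to absorb a sum over $K^\times$ as in the proof of Theorem~\ref{Thm: cuspidality}. Assembling the pieces gives
\[
W_\al(F_{\varphi,\theta})(1) = \int_{U_m(\A)\backslash \Sp_{2m}(\A)} W_\al(\overline\varphi)(g)\,\theta_{m+1}^{U_{2m+1},\psi_{2m+1,\al}^0}(g)\,dg.
\]

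\textbf{Main obstacle.} The hard part will be the bookkeeping in the second step: verifying that the specific Weyl element $\tilde w$ conjugating the product $U_{m+1}\cdot U_m$ (with the chosen interleaved embedding of $\Sp_{2m}\times\Sp(V_{m+1})$) lands it inside $U_{2m+1}\subset \Sp(W_{m+1})$ with the character $\psi_{m+1,\al}\otimes(\text{Whittaker char of }U_m)$ matching $\psi_{2m+1,\al}^0$ restricted appropriately, and that every root group that must be discarded (those on which the character becomes trivial after conjugation, or those producing vanishing $\int_{[\A]}\psi(\xi t)\,dt$ factors) can indeed be removed either by root exchange (Lemma~\ref{Lem: root exchange}) or by cuspidality of $\pi$. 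One must also check convergence/interchange of the $[U_{m+1}]$ and $[\Sp_{2m}]$ integrals, which follows from the rapid decay of cusp forms and moderate growth of $\theta$. None of these steps is deep, but getting the indices of the interleaving Weyl element and the resulting character to line up exactly with $\psi_{2m+1,\al}^0$ requires care; this is the analogue of the inductive root-exchange computation in Proposition~\ref{Prop: nonvan even}, specialized to a single step, and I would model the argument closely on that proof together with the unfolding in the proof of Theorem~\ref{Thm: cuspidality}.
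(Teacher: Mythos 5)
Your overall strategy --- interchange the $[U_{m+1}]$ and $[\Sp_{2m}]$ integrations and then reduce to the split coefficient of $\theta_{m+1}$ paired against the Whittaker coefficient of $\varphi$ --- is the same as the paper's, whose own proof is only a two-line sketch deferring to the computation in the proof of Theorem \ref{Thm: cuspidality}. However, the specific mechanism you give in your second step does not work as stated. The claimed decomposition $\int_{[\Sp_{2m}]} = \int_{U_m(\A)\backslash\Sp_{2m}(\A)}\int_{[U_m]}$ is false: $\Sp_{2m}(K)\backslash\Sp_{2m}(\A)$ does not fiber over the non-compact quotient $U_m(\A)\backslash\Sp_{2m}(\A)$ with fiber $[U_m]$, and at that stage no character of $U_m$ is present in the integrand, so the inner $[U_m]$-integral could at best produce a constant term of $\overline{\varphi}$ (which vanishes by cuspidality), never $W_\al(\overline{\varphi})$. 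The character $\psi_{m+1,\al}$ lives on the $V_{m+1}$-factor and does not "combine with" or restrict to anything on the embedded $U_m$.

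The domain $U_m(\A)\backslash\Sp_{2m}(\A)$ and the Gelfand--Graev character on $U_m$ must instead be generated by unfolding: one expands $\theta_{m+1}$ iteratively along the embedded Heisenberg groups $H_k\subset \Sp(W_{m+1})$ (whose centers are the long-root subgroups already being integrated against $\psi_{m+1,\al}$); at each stage the embedded rational symplectic group of smaller rank acts on the characters of $H_k/Z(H_k)$ with two orbits, the trivial orbit is killed by cuspidality of $\pi$ or contributes a lower lift which vanishes, and the nontrivial orbit is a sum over $Q'(K)\backslash\Sp(K)$ that successively collapses the rational quotient in the $g$-variable while depositing one more term of the Gelfand--Graev character on the $\varphi$-side and on the $\theta$-side. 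It is this iteration --- not an Iwasawa decomposition --- that terminates in the right-hand side of (\ref{eqn: period identity}). You do cite the correct model (the unfolding in Theorem \ref{Thm: cuspidality} and the root exchanges of Proposition \ref{Prop: nonvan even}), so the repair is to replace the "fold in $U_m$" step by that expansion; but as literally written the central step of your argument is not valid.
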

\begin{proof}
To prove this, we must compute the integral
\begin{align}\label{eqn: Whittaker}
\displaystyle \int_{[\Sp_{2m}]}\int_{[U_{m+1}]}\overline{\varphi(g)}\theta_{m+1}(u,g)\psi_{m+1,\al}(u)du\,dg.
\end{align}
The computation is similar to the one seen in the proof of Theorem \ref{Thm: cuspidality}. One expands $\theta_{m+1}$ iteratively along embedded Heisenberg groups, and reduces the integral to a single orbit under the action of an embedded symplectic group of smaller rank. We omit the details.
\end{proof}
From this proposition as well as Proposition \ref{Prop: Split coefficient}, we may immediately conclude the following.
\begin{Cor}
Fix $\al\in K^\times/(K^\times)^2$. If $\Theta_{m+1}(\pi)$ is generic with respect to the $\al$-coefficient,  then so is $\pi$.
\end{Cor}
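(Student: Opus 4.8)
The final statement is the Corollary immediately following Proposition~\ref{Prop: Whittaker}: \emph{if $\Theta_{m+1}(\pi)$ is generic with respect to the $\al$-coefficient, then so is $\pi$.} The plan is to read off this implication by contraposition from the period identity (\ref{eqn: period identity}) combined with the interpretation of the split coefficient given in Proposition~\ref{Prop: Split coefficient}.

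\textbf{Setup.} Suppose $\pi$ is \emph{not} $\al$-generic, i.e.\ $W_\al(\overline\varphi)(g) \equiv 0$ for all $\varphi \in \pi$ and all $g \in \Sp_{2m}(\A)$. I want to conclude that $W_\al(F_{\varphi,\theta})(h) \equiv 0$ for all data, hence that $\Theta_{m+1}(\pi)$ is not $\al$-generic. First I would invoke Proposition~\ref{Prop: Whittaker}, which computes the Whittaker coefficient of the lift at the identity as
\[
W_\al(F_{\varphi,\theta})(1) = \int_{U_m(\A)\backslash \Sp_{2m}(\A)} W_\al(\overline\varphi)(g)\,\theta_{m+1}^{U_{2m+1},\psi_{2m+1,\al}^0}(g)\,dg.
\]
Under the non-genericity assumption on $\pi$, the integrand vanishes identically, so $W_\al(F_{\varphi,\theta})(1) = 0$ for every choice of $\varphi$ and $\theta_{m+1}$.

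\textbf{From the identity at $1$ to vanishing everywhere.} The remaining point is that vanishing of the Whittaker coefficient of $F_{\varphi,\theta}$ at $h=1$, for \emph{all} data, forces it to vanish at every $h \in \overline{\Sp}_{2m+2}(\A)$. This is standard: for fixed $h$, the function $g \mapsto \theta_{m+1}((h,g))$ is again a vector in (a space spanned by right translates of) $\Theta_{W_{m+1}}$, and right translation of $\theta_{m+1}$ by $h$ only moves us within $\Theta_{W_{m+1}}$; equivalently $W_\al(F_{\varphi,\theta})(h) = W_\al(F_{\varphi, R_h\theta})(1)$ after absorbing $h$ into the theta-vector, using that $\overline{\Sp}(V_{m+1})_\A$ normalizes the relevant data and that the Whittaker integration is over $U_{m+1}$, which $h$ translates through. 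Since this holds for all $\theta_{m+1}$, the conclusion $W_\al(F_{\varphi,\theta}) \equiv 0$ follows. As the vectors $F_{\varphi,\theta}$ span $\Theta_{m+1}(\pi)$ by definition, $\Theta_{m+1}(\pi)$ has no nonzero $\al$-Whittaker functional, i.e.\ is not $\al$-generic. Taking the contrapositive gives the Corollary.

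\textbf{Main obstacle.} The genuine content is entirely in Proposition~\ref{Prop: Whittaker}, whose proof is only sketched; the Corollary itself is a formal consequence once that identity is granted, together with Proposition~\ref{Prop: Split coefficient} identifying $\theta_{m+1}^{U_{2m+1},\psi^0_{2m+1,\al}}$ as the split coefficient. The one place requiring a small amount of care is the reduction from ``$W_\al(F_{\varphi,\theta})(1)=0$ for all data'' to ``$W_\al(F_{\varphi,\theta})\equiv 0$'': one should make sure that translating $\theta_{m+1}$ by $h \in \overline{\Sp}(V_{m+1})_\A$ indeed keeps it inside the automorphic realization of $\Theta_{W_{m+1}}$ (it does, since $\Theta_{W_{m+1}}$ is an automorphic representation of $\overline{\Sp}(W_{m+1})_\A$ and the image of $\overline{\Sp}(V_{m+1})_\A$ sits inside $\overline{\Sp}(W_{m+1})_\A$), so no genericity is lost in passing to an arbitrary point. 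I do not anticipate any further difficulty.
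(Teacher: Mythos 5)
Your proposal is correct and is exactly the formal deduction the paper intends when it says the Corollary follows "immediately" from Propositions \ref{Prop: Whittaker} and \ref{Prop: Split coefficient}: the contrapositive reading of the period identity (\ref{eqn: period identity}), together with the observation that $W_\al(F_{\varphi,\theta})(h)=W_\al(F_{\varphi,R_{(h,1)}\theta})(1)$ since $(uh,g)=(u,g)(h,1)$ and $\Theta_{W_{m+1}}$ is stable under right translation. No gap.
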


\appendix
\section{Proof of the Filtration}\label{Appendix A}
In this appendix, we sketch the proof of Proposition \ref{Prop: filtration}. 
\begin{Lem}\label{Lem: SES}
Suppose $\pi$ is a smooth representation of $\Sp_{2r}(F)$ or a finite central extension thereof. If $U_{\al_0}$ denotes the root subgroup for the highest root $\al_0$, then there is a short exact sequence
\begin{align}\label{eqn: SES}
0 \longrightarrow \ind_{\overline{Q}_{2,r-2}'}^{\overline{Q}_{1,r-1}}\left(J_{U_{1,r-1},\psi_1}(\pi)\right) \longrightarrow J_{U_{\al_0}}(\pi)\longrightarrow J_{U_{1,r-1}}(\pi)\longrightarrow 0.
\end{align}
Here $Q_{2,r-2}'$ indicates the subgroup of $Q_{2,r-2}$ containing only $\GL_1^\Delta\times \Sp_{2(r-2)}\subset \GL_1^2\times \Sp_{2(r-2)}$.
\end{Lem}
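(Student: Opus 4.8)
The plan is to prove Lemma~\ref{Lem: SES} by a standard Mackey-theory / geometric-lemma analysis of the Jacquet functor $J_{U_{\al_0}}$ applied to $\pi$, using the fact that $U_{\al_0} = Z(H_1)$ is the one-dimensional center of the Heisenberg subgroup $H_1 \subset U_r$ and sits inside the unipotent radical $U_{1,r-1}$. First I would observe that $U_{1,r-1} = H_1 \cdot U_{1,r-1}'$ where $U_{1,r-1}'$ is a complementary piece, and that $H_1/Z(H_1) \cong \mathbb{G}_a^{2(r-1)}$ carries the standard representation of the Levi $\Sp_{2(r-1)}$. The key move is to compute $J_{U_{\al_0}}(\pi) = J_{Z(H_1)}(\pi)$ and then expand the remaining $H_1/Z(H_1)$-action: the group $\Sp_{2(r-1)}(F)$ acts on the Pontryagin dual of $H_1/Z(H_1)$ with exactly two orbits, the trivial character and a single nontrivial orbit. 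The trivial orbit yields, after the full expansion, the quotient term $J_{U_{1,r-1}}(\pi)$; the nontrivial orbit, whose stabilizer inside the relevant Levi is $Q_{2,r-2}'$ (the subgroup of $Q_{2,r-2}$ retaining only $\GL_1^\Delta \times \Sp_{2(r-2)}$), yields by Frobenius reciprocity the compactly induced piece $\ind_{\overline{Q}_{2,r-2}'}^{\overline{Q}_{1,r-1}}(J_{U_{1,r-1},\psi_1}(\pi))$.

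Concretely, the steps in order would be: (1) identify $U_{\al_0}$ with $Z(H_1)$ and record that $U_{\al_0} \subset U_{1,r-1}$ with abelian complement of the appropriate shape; (2) apply the filtration attached to the action of the Levi on the abelian quotient $H_1/Z(H_1)$ — this is the exactness of Jacquet functors combined with the orbit decomposition, giving a two-step filtration of $J_{U_{\al_0}}(\pi)$ whose graded pieces correspond to the trivial and nontrivial orbits; (3) identify the trivial-orbit graded piece: after integrating over all of $H_1/Z(H_1)$ trivially one recovers $J_{U_{1,r-1}}(\pi)$, since $U_{1,r-1} = Z(H_1)(H_1/Z(H_1))U_{1,r-1}'$ and the trivial character on the middle factor is exactly the restriction of the trivial character defining $J_{U_{1,r-1}}$; (4) identify the nontrivial-orbit graded piece: choose the orbit representative $\psi_1$, compute its stabilizer in $Q_{1,r-1}$ to be $Q_{2,r-2}'$, and apply the geometric lemma of \cite{BZ} (or directly Frobenius reciprocity / second adjunction in the form used elsewhere in the paper) to recognize this contribution as $\ind_{\overline{Q}_{2,r-2}'}^{\overline{Q}_{1,r-1}}(J_{U_{1,r-1},\psi_1}(\pi))$; (5) assemble the short exact sequence and check $Q_{1,r-1}$-equivariance of all maps. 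For the covering-group case one notes, as the paper does repeatedly, that all these manipulations (Jacquet functors, \cite[Thm 2.4 and Thm 5.2]{BZ}) go through verbatim after replacing parabolics by their covers, since the relevant unipotent subgroups split canonically.

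The main obstacle I anticipate is step (4): correctly bookkeeping the stabilizer of the nontrivial character and verifying that the induced representation appearing is the \emph{full} compact induction from $\overline{Q}_{2,r-2}'$ rather than from some intermediate subgroup, together with matching up the normalization (modular characters, the shift by $\delta$-factors) so that the character $\psi_1$ in $J_{U_{1,r-1},\psi_1}$ is the intended one. This requires a careful choice of coset representatives and a clean computation of the action of the embedded $\GL_1^\Delta$ and $\Sp_{2(r-2)}$ on the orbit, but it is the kind of computation that is routine in the Bernstein--Zelevinsky formalism; I would cite the parallel arguments in \cite{BFG1}, \cite{GRS2}, and \cite{K} rather than reproduce every detail. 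Once Lemma~\ref{Lem: SES} is in hand, Proposition~\ref{Prop: filtration} follows by applying $J_{U_{1,n-1}}$ to the sequence (using exactness of the Jacquet functor) with $r = m+n$ and reorganizing, together with Theorem~\ref{Thm: local constant term} to identify the relevant constant terms; I would indicate this reduction at the end of the appendix.
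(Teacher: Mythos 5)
Your proposal matches the paper's argument: the paper likewise proves the lemma by observing that the unipotent radical of $Q_{1,r-1}$ is the Heisenberg group $H_1$ with center $U_{\al_0}$, quotienting out that center, and then running the Bernstein--Zelevinsky orbit filtration on the abelian quotient $H_1/Z(H_1)$ exactly as in \cite[Prop.~5.12(d)]{BZ2} for $\GL_n$, with the trivial orbit giving $J_{U_{1,r-1}}(\pi)$ and the nontrivial orbit giving the compactly induced piece from the stabilizer $\overline{Q}_{2,r-2}'$. The paper omits the same details you defer to the Bernstein--Zelevinsky formalism, so your sketch is correct and follows essentially the same route.
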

\begin{proof}
This follows in a similar fashion to \cite[Prop. 5.12 (d)]{BZ2} in which the case of $\GL_n(F)$ is handled. The only difference is the need in the symplectic case to study the Jacquet module $J_{U_{\al_0}}(\pi)$ due to the fact that the unipotent radical of $Q_{1,r-1}$ is a Heisenberg group with center $U_{\al_0}$ rather than abelian. 

After quotienting out by the center of the unipotent radical, the analysis is similar to the general linear case. We omit the details.
\end{proof}
We now apply the Jacquet functor $J_{U_{1,n-1}}(\cdot)$ to the short exact sequence (\ref{eqn: SES}) in the case that $\pi = \Theta_{2(m+n)}$. Note that, under the our chosen embedding, $U_{1,n-1}$ contains the root group for the highest root of $\Sp_{2(n+m)}$. Using the exactnesses of the Jacquet functor, we have the short exact sequence
\begin{align}\label{eqn: theta SES}
0 \longrightarrow V(\Theta,\psi_1)\longrightarrow J_{U_{1,n-1}}(\Theta_{2(m+n)})\longrightarrow J_{U_{1,n+m-1}}(\Theta_{2(m+n)})\longrightarrow 0
\end{align}
where
\[
V(\Theta,\psi_1)=J_{U_{1,n-1}}\left(\ind_{\overline{Q}_{2,n+m-2}'}^{\overline{Q}_{1,n+m-1}}\left(J_{U_{1,n+m-1},\psi_1}(\Theta_{2(m+n)})\right)\right).
\]
 This completes the proof of Proposition \ref{Prop: filtration}. This lemma also shows us why attempting to study the analogous restriction question in the case of the metaplectic group $\Mp_{2r}$ with the classical theta representation $\omega_{2r,\psi}$ will not work: the twisted Jacquet modules 
\[
J_{U_{1,r-1},\psi_1}(\omega_{2r,\psi})
\]
which arise in the short exact sequence vanish due to the minimality of $\omega_{2r,\psi}$. This implies that the analogous local restriction question forces unique parameters for the characters $\chi$ and $\mu$. Of course, this is due to the isomorphism
\[
\omega_{2r,\psi}\bigg|_{\Mp_{2k}\times\Mp_{2r-2k}}\cong \omega_{2k,\psi}\otimes\omega_{2r-2k,\psi}.
\]

\section{Technical Global Lemmas}\label{Appendix B}
In this section, we gather a few technical lemmas which are used in Section \ref{Section: cuspidality}. Note that while everything we state here is global, the arguments in Section \ref{subsection: local vanishing} allow us to conclude the analogous local results at almost all places. Let $\Theta_{2r}$ be the theta representation on $\overline{\Sp}_{2r}(\A)$.
\begin{Lem}\label{Lem: lemma 1}
Let $\theta\in \Theta_{2r}$ and let $2\leq k\leq r$. Then the integrals
\[
\int_{[V_{k,r-k}]}\theta(vg)\psi_k(v)\,dv
\]
vanish for all $g\in\GA$. Here, recall that
\[
\psi_k(v) = \psi(v_{1,2}+v_{2,3}+\cdots + v_{k,k+1}).
\]
\end{Lem}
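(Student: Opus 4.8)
The plan is to prove this by induction on $k$, running in parallel with the induction in the proof of Theorem~\ref{Thm: cuspidality} and drawing on the constant-term formula (Theorem~\ref{Thm: global constant term}) together with the non-genericity of theta representations on smaller symplectic groups. The base case is $k=2$: the integral $\int_{[V_{2,r-2}]}\theta(vg)\psi_2(v)\,dv$ is precisely a Fourier coefficient of $\Theta_{2r}$ attached to the nilpotent orbit $(41^{2r-4})=\calo_2$ (after identifying $V_{2,r-2}=V_2(\calo_2)$ and matching $\psi_2$ with a generic character $\psi_{\calo_2}$), and this vanishes by Theorem~\ref{Thm: vanishing}, since $\calo_2 > \calo_\Theta=(2^r)$. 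Alternatively one invokes the local result Theorem~\ref{Thm: supercuspidal} via Proposition~\ref{Prop: local-global} and Corollary~\ref{Cor: local equiv}.

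For the inductive step, suppose the vanishing holds for $k-1$ (and all ranks) and consider $\int_{[V_{k,r-k}]}\theta(vg)\psi_k(v)\,dv$. First I would note that $V_{k,r-k}$ contains the unipotent radical $V_{1,r-1}$ of the maximal parabolic $Q_{1,r-1}$, and that the inner-most root group appearing in $\psi_k$ (the entry $v_{k,k+1}$) sits inside the $\Sp_{2(r-1)}$-block; the idea is to peel off one layer. Concretely, I would factor the integration so that an inner integration over the appropriate unipotent subgroup produces, via Theorem~\ref{Thm: global constant term} applied to the parabolic of type $\GL_1\times\Sp_{2(r-1)}$, a period of the form $\int_{[V_{k-1,(r-1)-(k-1)}]}\theta_{2(r-1)}(v h)\psi_{k-1}(v)\,dv$ against a theta function $\theta_{2(r-1)}\in\Theta_{2(r-1)}$, possibly twisted by the $\GL_1$-character $\overline{\chi}_{\Theta,1}$. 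By the inductive hypothesis this vanishes, hence so does the original integral. One subtlety: extracting the constant term cleanly requires first using the non-genericity of $\Theta_{2r}$ (see \cite[Section 5]{Gao}) to replace the twisted Jacquet module along the relevant $\SL_2$-block by its untwisted counterpart, exactly as in the local arguments of Section~\ref{subsection: local vanishing} (e.g.\ the footnote in the proof of Proposition~\ref{Prop: base case}); this is what allows the character $\psi_k$ to be pushed to a standard position from which Theorem~\ref{Thm: global constant term} applies.

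I would organize the step as: (i) identify $V_{k,r-k}$ and $\psi_k$ with $V_2(\calo)$ and $\psi_\calo$ for $\calo=((2k)1^{2r-2k})$, or work directly with the matrix description; (ii) use automorphy of $\theta$ to conjugate $\psi_k$ into a convenient form; (iii) apply root exchange (Lemma~\ref{Lem: root exchange}) as needed to move the integration onto root subgroups lying in the unipotent radical of a maximal parabolic; (iv) invoke non-genericity of $\Theta_{2r}$ to trivially extend the character to the full unipotent radical, reducing the Jacquet module to an honest constant term; (v) apply Theorem~\ref{Thm: global constant term} to factor through $\Theta_{2(r-k)}$ (or $\Theta_{2(r-1)}$ inductively), and conclude vanishing either from the inductive hypothesis or directly from the non-genericity of the smaller theta representation when $k$ reaches the rank. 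The analogous reduction for the orbit $(3^21^{2r-6})$, handled via Corollary~\ref{Cor: 3^2}, may also be needed as input but does not itself require proof here.

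\textbf{Main obstacle.} The delicate point is step (iii)--(iv): correctly tracking which root subgroups get conjugated to negative roots when one moves the character into standard position, and verifying that the hypotheses of root exchange (the four conditions defining an exchange triple) are met at each stage, so that the Jacquet module along $V_{k,r-k}$ with character $\psi_k$ is genuinely isomorphic to one that factors through a maximal-parabolic constant term. This bookkeeping is precisely the kind carried out in Section~\ref{subsection: local vanishing} and in the proof of Theorem~\ref{Thm: cuspidality}, and the claim in the excerpt that ``the argument in the proof of Lemma~\ref{Lem: lemma 1} requires only the vanishing of the coefficients associated to the aforementioned orbits'' signals that the induction is self-contained once this combinatorial reduction is set up.
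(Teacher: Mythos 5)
Your proposal runs the induction in the wrong direction and both of its pillars have problems. First, the base case: the integral $\int_{[V_{2,r-2}]}\theta(vg)\psi_2(v)\,dv$ with $\psi_2(v)=\psi(v_{1,2}+v_{2,3})$ is \emph{not} the Fourier coefficient attached to $\calo_2=(41^{2r-4})$. The generic character for that orbit is $\psi(v_{1,2}+\al v_{2,2r-1})$, ending on the long root $\mu_2$, and it lives on $V_2(\calo_2)$, which is a proper subgroup of $V_{2,r-2}$ (it omits the root groups $\ga_{2,j}$ and $\eta_{2,j}$); your character instead ends on the short simple root $\al_2$ and its stabilizer in the Levi has a different Cartan type. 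So Theorem \ref{Thm: vanishing} (or the local Theorem \ref{Thm: supercuspidal}) cannot simply be quoted for $k=2$. In fact the $k=2$ case is the \emph{hardest} one and in the paper it is the last to be established. Second, the reduction $(k,r)\to(k-1,r-1)$: you propose to factor out the constant term along the parabolic of type $\GL_1\times\Sp_{2(r-1)}$ and apply Theorem \ref{Thm: global constant term}, but $\psi_k$ is nontrivial on $U_{(1,r-1)}=V_{1,r-1}$ (it contains $\psi(v_{1,2})$), so the inner integration is a twisted period, not a constant term. The non-genericity trick you invoke only lets you \emph{add} integration over root groups on which the character is trivial (because the nontrivial terms of that Fourier expansion vanish); it never removes an existing twist. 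Without a valid mechanism for this step, the induction does not close.

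The paper argues in the opposite direction. The base case is $k=r$: there $\psi_r$ is the generic Whittaker character, so the coefficient is attached to the regular orbit $(2r)$ and vanishes by Theorem \ref{Thm: vanishing}. For the step from $k+1$ down to $k$, one first Fourier-expands along the long root group $U_{\mu_{k+1}}$ (which lies in the Levi $L_{k,r-k}$, not in $V_{k,r-k}$); the nontrivial terms are coefficients killed by Theorem \ref{Thm: vanishing}, so one may insert this integration for free. Having now integrated over the center of the Heisenberg group $H_{k+1}$, one expands along $[H_{k+1}/Z(H_{k+1})]$; the nontrivial $\Sp_{2(r-k-1)}(K)$-orbit produces exactly the $(k+1)$-coefficient, which vanishes by the \emph{downward} inductive hypothesis, and the surviving trivial orbit factors as the constant term along $U_{(k+1,r-k-1)}$ followed by a genuine Whittaker coefficient of $\Theta^{(2)}_{\GL_{k+1}}$. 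The final, indispensable input is that these $\GL$-Whittaker coefficients vanish for $k+1\geq 3$ by \cite{KP} — this is precisely where the hypothesis $k\geq 2$ enters, and it is absent from your outline, which only appeals to non-genericity of symplectic theta representations.
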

\begin{proof}
Note that the case of $k=r$ is clear as the above integral corresponds to a Whittaker coefficient of $\Theta_{2r}$ (corresponding to the nilpotent orbit $(2r)$), which vanishes by Theorem \ref{Thm: vanishing}.

We shall induct in $m=r-k$, the above giving the base case. Suppose now that the lemma is true for $m$, and consider $m+1=r-k$. 

Expanding along the root subgroup corresponding to the long root $\mu_{k+1}$, we see that the nontrivial Fourier coefficients correspond to the nilpotent orbit $((2k)1^{2(r-k)})$, which vanishes by Theorem \ref{Thm: vanishing}. Thus, we have that
\[
\int_{[V_{k,r-k}]}\theta(vg)\psi_k(v)\,dv = \int_{[U_{\mu_{k+1}}V_{k,r-k}]}\theta(vg)\psi_k(v)\,dv.
\]
We have integrated over the center of the Heisenberg group $H_{k+1}$, which enables us to expand the above integral along $[H_{k+1}/Z(H_{k+1})]\cong (F\backslash\A)^{2(r-k+1)}$. The subgroup $\Sp_{2(r-k)}(F)$ acts by conjugation on the dual of this quotient with two orbits. A representative of the nontrivial orbit is of the form
\[
\int_{[H_{k+1}/Z(H_{k+1})]}\int_{[U_{\mu_{k+1}}V_{k,r-k}]}\theta(vg)\psi_k(v)\psi(v_{k+1,k+2})\,dv = \int_{[V_{k+1,r-k-1}]}\theta(v'g)\psi_{k+1}(v')\,dv',
\]
which vanishes by induction. Therefore, setting $\psi_{k+1}^1:V_{k+1}\to\cc^\times$ to be the trivial extension of $\psi_k$ to $V_{k+1,r-k-1}$, we have that
\[
\int_{[V_k]}\theta(vg)\psi_k(v)\,dv =\int_{[V_{k+1,r-k-1}]}\theta(v'g)\psi_{k+1}^1(v')\,dv'.
\]
This last integral may be decomposed with inner integration the constant term along $U_{(k,r-k)}$. By Theorem \ref{Thm: global constant term}, this constant term is a theta function on a smaller symplectic group times an integral of the form
\[
\int_{[U_{\GL_{k+1}}]}\theta_{\GL_{k+1}}(uh)\psi_{gen}(u)du,
\]
where $U_{\GL_{k+1}}$ is the unipotent radical of the Borel subgroup of $\GL_{k+1}$, $\theta_{\GL_{k+1}}\in \Theta_{\GL_{k+1}}^{(2)}$, and
\[
\psi_{gen}(u) = \psi(u_{1,2}+\cdots + u_{k,k+1}).
\]
We see that this integral is a Whittaker coefficient of $\theta_{\GL_{k+1}}$, which vanishes as $k\geq 2$ (see \cite{KP}). Thus,
\[
\int_{[V_{k,r-k}]}\theta(vg)\psi_k(v)\,dv =0.\qedhere
\]

\end{proof}
For the next lemma, we simplify the notation by setting $V_{2k-1}=V_{2k-1,r-2k+1}$. 
\begin{Lem}\label{Lem: lemma 2}
Let $\theta\in \Theta_{2r}$ and let $1\leq k\leq r$. The function
\[
\theta^{V_{2k-1},\psi^0_{2k-1}}(g) = \int_{[V_{2k-1}]}\theta(vg)\psi^0_{2k-1}(v)\,dv
\]
is invariant under $H_{2k}$.
\end{Lem}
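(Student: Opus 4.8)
\textbf{Proof proposal for Lemma \ref{Lem: lemma 2}.}

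The plan is to show that the function $\theta^{V_{2k-1},\psi^0_{2k-1}}(g)$ is invariant under the Heisenberg group $H_{2k}$ by analyzing separately the action of the center $Z(H_{2k}) = U_{\mu_{2k}}$ and the action of $H_{2k}/Z(H_{2k})$. First I would note that $Z(H_{2k})$ is the root subgroup $U_{\mu_{2k}}$ attached to the long root $\mu_{2k}$; adding this root group to $V_{2k-1}$ enlarges the unipotent integration to $U_{\mu_{2k}}V_{2k-1}$, and the resulting Fourier coefficient (after trivial extension of $\psi^0_{2k-1}$) corresponds to a nilpotent orbit of the form $((2k)1^{2(r-k)})$ or smaller. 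These orbits do not support $\Theta_{2r}$ by Theorem \ref{Thm: vanishing}, so the nontrivial term in the expansion of $\theta^{V_{2k-1},\psi^0_{2k-1}}$ along $[U_{\mu_{2k}}]$ vanishes and we get
\[
\theta^{V_{2k-1},\psi^0_{2k-1}}(g) = \int_{[U_{\mu_{2k}}V_{2k-1}]}\theta(vg)\psi^0_{2k-1}(v)\,dv,
\]
in particular $\theta^{V_{2k-1},\psi^0_{2k-1}}$ is invariant under $Z(H_{2k})$.

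Next I would handle $H_{2k}/Z(H_{2k}) \cong \mathbb{G}_a^{2(r-k)}$, which after the previous step is a subgroup acting on $\theta^{V_{2k-1},\psi^0_{2k-1}}$ that we have not yet integrated over. Here the strategy is to expand $\theta^{V_{2k-1},\psi^0_{2k-1}}$ along $[H_{2k}/Z(H_{2k})]$ and observe that the group $\Sp_{2(r-k)}(K) \subset L(\calo)_K$ acts by conjugation on the character group of this quotient with exactly two orbits, the trivial one and a single nontrivial orbit. For a representative of the nontrivial orbit, the resulting Fourier coefficient is, after a change of variables, of the form $\int_{[V_{2k,r-2k}]}\theta(v'g)\psi_{2k}(v')\,dv'$ (a Fourier coefficient with a longer Gelfand--Graev-type character on $V_{2k}$); this vanishes for all $g$ by Lemma \ref{Lem: lemma 1}, since $2k \geq 2$. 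Hence only the trivial orbit survives, which means $\theta^{V_{2k-1},\psi^0_{2k-1}}$ is also invariant under (a Lagrangian, and then by symmetry all of) $H_{2k}/Z(H_{2k})$. Combining with the invariance under $Z(H_{2k})$ gives invariance under all of $H_{2k}$.

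The main obstacle I anticipate is bookkeeping: one must check carefully that the characters and root subgroups match up so that (i) the expansion along $[U_{\mu_{2k}}]$ really produces coefficients attached to orbits ruled out by Theorem \ref{Thm: vanishing}, and (ii) the nontrivial-orbit representative in the $[H_{2k}/Z(H_{2k})]$ expansion is precisely a $V_{2k}$-coefficient of the type killed by Lemma \ref{Lem: lemma 1} after the appropriate conjugation by a torus or Weyl element. There is also a minor subtlety in deducing full $H_{2k}$-invariance from invariance under $Z(H_{2k})$ together with the vanishing of the nontrivial Fourier mode along a Lagrangian complement, since $H_{2k}$ is nonabelian; but this follows from the standard fact that a function on a Heisenberg group invariant under the center and with vanishing nontrivial Fourier coefficients along one maximal abelian subgroup is constant along the whole group. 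None of these steps requires new input beyond Theorem \ref{Thm: vanishing}, Lemma \ref{Lem: lemma 1}, and Theorem \ref{Thm: global constant term}, so I expect the argument to be routine once the matrix coordinates are set up correctly.
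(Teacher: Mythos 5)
Your two-step skeleton — first establish invariance under the center $Z(H_{2k})=U_{\mu_{2k}}$ by killing the nontrivial Fourier modes, then expand along the abelian quotient $H_{2k}/Z(H_{2k})$ and kill the nontrivial $\Sp$-orbit of characters — is exactly the structure of the paper's proof. However, both of your vanishing justifications are misattributed, and this is more than bookkeeping. The character being integrated is $\psi^0_{2k-1}(v)=\psi(v_{1,2}+v_{3,4}+\cdots+v_{2k-1,2k})$, which omits the entries $v_{2,3},v_{4,5},\dots,v_{2k-2,2k-1}$. Appending $\psi(\xi v_{2k,2r-2k+1})$ therefore does \emph{not} yield a generic character for the orbit $((2k)1^{2(r-k)})$ — that would require the full consecutive string $v_{1,2}+v_{2,3}+\cdots+v_{2k-1,2k}$ — and no torus or Weyl conjugation preserving the relevant unipotent group converts the gapped character into a Gelfand--Graev one, so Theorem \ref{Thm: vanishing} cannot be cited directly. (Also, "or smaller" would be fatal even if the identification held: orbits below $(2^r)$ are not ruled out by that theorem.) The same problem recurs in your second step: the nontrivial-orbit representative is $\psi^0_{2k-1}(v)\psi(v_{2k,2k+1})$, not the Gelfand--Graev character $\psi_{2k}$, so Lemma \ref{Lem: lemma 1} does not apply as stated.

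The missing idea — which you list among the needed ingredients but never actually deploy — is to route both vanishing claims through the constant term along $U_{(2k-2,r-2k+2)}$ and Theorem \ref{Thm: global constant term}. The gapped character is trivial on $U_{(2k-2,r-2k+2)}\subset V_{2k-1}$, since the entries $v_{1,2},\dots,v_{2k-3,2k-2}$ lie in the $\GL_{2k-2}$-block of the Levi and the entries $v_{2k-1,2k}$, $v_{2k,2r-2k+1}$ (resp.\ $v_{2k,2k+1}$) lie in the $\Sp_{2(r-2k+2)}$-block. Taking this constant term first, the periodicity theorem replaces $\theta$ by $\theta_{\GL_{2k-2}}\otimes\theta_{2(r-2k+2)}$, and the residual integration in the symplectic block carries the character $\psi(v'_{1,2}+\xi v'_{2,\ast})$ — genuinely the $(41^{\ast})$-coefficient of the \emph{smaller} theta representation, killed by Theorem \ref{Thm: vanishing} — in the first step, and the character $\psi(v'_{1,2}+v'_{2,3})$ on $V_{2,\ast}$ — killed by Lemma \ref{Lem: lemma 1} applied to the smaller theta representation — in the second. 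Without this reduction your two key vanishing statements are unsupported, so as written the proof has a genuine gap.
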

\begin{proof}
We expand the function $f_k(g)=\theta^{V_{2k-1},\psi^0_{2k-1}}(g) $ along the long root group $U_{\mu_{2k}}$:
\[
f_k(g) = \sum_{\xi\in F}\int_{[\A]}f_1(x_{\mu_{2k}}(t)g)\psi(\xi t)\,dt.
\]
For $\xi\neq 0$, we see that this integral may be decomposed into the constant term with respect to $U_{(2k-2,r-2k+2)}$ followed by a Fourier coefficient of type $(41^{2(r-2k)-2})$ on $\Theta_{2(r-2k+2)}$, which vanishes by Theorem \ref{Thm: vanishing}. Thus, we have that
\[
f_k(g) = \int_{[U_{\mu_{2k}}V_{2k-1}]}\theta(vg)\psi^0_{2k-1}(v)\,dv.
\]
We now expand along $[H_{2k}/Z(H_{2k})]\cong (F\backslash\A)^{2(r-2k)}$. The subgroup $\Sp_{2(r-2k)}(F)$ acts by conjugation on the dual of this quotient with two orbits. A representative of the nontrivial orbit is of the form
\[
\int_{[V_{2k}]}\theta(vg)\psi^0_{2k-1}(v)\psi(v_{2k,2k+1})\,dv, 
\]
which may also be decomposed as a constant term along $U_{(2k-2,r-2k+2)}$. Applying Theorem \ref{Thm: global constant term}, we obtain a constant times an integral of the form  
\[
\displaystyle\int_{[V_{2,(r-2k)-2}]}\theta_{2(r-2k)}(v'g')\psi_2(v')\,dv',
\]
 where $\theta_{2(r-2k)}\in \Theta_{2(r-2k)}$. This vanishes by the previous lemma.
Thus, we are left with the constant term and we obtain
\[
f_k(g) = \int_{[H_{2k}]}f(hg)dh.\qedhere
\]
\end{proof}






\bibliographystyle{alpha}

\bibliography{mybib}

\begin{thebibliography}{99}
\bibitem{A} J. Arthur, \emph{Unipotent Representations: Conjectures}, in Orbites Unipotentes et Representations, Asterique Vol. 171-172 (1989).

\bibitem{A1} J. Arthur, \emph{The Endoscopic Classification of Representations: Orthogonal and Symplectic groups}, American Mathematical Society Colloquium Publications 61, American Mathematical Society, Providence RI, 2013.

\bibitem{B} C. Bushnell, \emph{Representations of reductive $p$-adic groups: localization of Hecke algebras and applications}, J. of London Math. Soc. 63, No. 2 (2001), pp. 364-386.

\bibitem{AB1} J. Adams and D. Barbasch, \emph{Reductive dual pair correspondence for Complex groups}, J. Funct. Anal. 132 (1995), no.1, pp. 1-42.

\bibitem{AB2} J. Adams and D. Barbasch, \emph{Genuine representations of the metaplectic group}, Compos. Math. 113 (1998), no.1, pp. 23-66.

\bibitem{BD} J.-L. Brylinski and P. Deligne. \emph{Central extensions of reductive groups by $K_2$}, Publ. Math. Inst. Hautes \'{E}tudes Sci. {\bf 94} pp. 5-85, (2001).

\bibitem{BZ} I. N. Bernstein and A.V. Zelevinsky, \emph{Induced Representations of $p$-adic Groups I}, Ann. scient. Ec. Norm. Sup., 10, pp. 441-472, (1977).

\bibitem{BZ2} I. N. Bernstein and A.V. Zelevinsky, \emph{Representations of the group $\GL(n,F)$, where $F$ is a local non-Archimedean field,} Russian Math. Surveys 31 (1976).

\bibitem{BFG1} D. Bump, S. Friedberg, and D. Ginzburg, \emph{Small Representations for Odd orthogonal groups}, IMRN, no. 25, pp. 1363-1393, (2003).

\bibitem{BFG2} D. Bump, S. Friedberg, and D. Ginzburg, \emph{Lifting Automorphic Representations on the Double Cover of Orthogonal groups}, Duke Math. Journal,Vol. 131, No.2, (2006).

\bibitem{BG} D. Bump and D. Ginzburg, \emph{Symmetric Square $L$-functions on $\GL(r)$}, Annals of Mathematics, Vol. 136, No. 1, pp. 137-205 (1992).

\bibitem{BLS} W. Banks, J. Levy and M. Sepanski, \emph{Block-Compatible metaplectic cocycles}, J. Reine, Angew. Math., 507 (1999).

\bibitem{C} Y. Cai, \emph{Fourier Coefficients for Theta Representations on Covers of General Linear Groups}, to appear in Trans. Amer. Math. Soc. (2019).

\bibitem{CM}D. Collingwood and W. McGovern, \emph{Nilpotent Orbits in semi-simple Lie Algebras}, Van Nostrand Reinhold, (1993).

\bibitem{F} Y. Flicker, \emph{Automorphic Forms on Covering Groups of $\GL(2)$}, Invent. Math. 57 (1980), no.2, pp. 119-182.

\bibitem{FK} Y. Flicker and D. Kazhdan, \emph{Metaplectic Correspondences}, Publ. Math. Inst. Hautes \'{E}tudes Sci. 64 (1986), pp. 53-110.

\bibitem{FL} M. Finkelberg and S. Lysenko,  \emph{Twisted Geometric Satake equivalence}, J. Inst. Math. Jussieu, 9 (2010).

\bibitem{FG} S. Friedberg and D. Ginzburg, \emph{Theta functions on Covers of Symplectic groups},  Bull. Iranian Math. Soc. 43 (2017), no.4,89-116.

\bibitem{Gan}W.T. Gan, \emph{The Metaplectic Tensor Product as an Instance of Langlands Functoriality}, preprint (2016).

\bibitem{Gan2}W.T. Gan, \emph{The Shimura Correspondence a la Waldspurger}, Lecture notes, Postech Theta Festival, Pohang, South Korea, www.math.nus.edu.sg/~matgwt/postech.pdf, (2011).

\bibitem{GG}W.T. Gan and F. Gao, \emph{The Langlands-Weissman Program for Brylinski-Deligne Extensions}, Asterisque 2018, no. 398, 187-275.

\bibitem{GI} W.T. Gan and A. Ichino,  \emph{The Shimura-Waldspurger Correspondence for $\Mp_{2n}$}, Ann. of. Math. (2) 188 (2018), no.3 965-1016.

\bibitem{GL} W.T. Gan and W.W. Li, \emph{The Shimura-Waldspurger Correspondence for $\Mp(2n)$}, preprint (2016).

\bibitem{GS} W.T. Gan and G. Savin, \emph{Representations of metaplectic groups I: epsilon dichotomy and local Langlands  correspondence}, Compos. Math. 148 (2012), no.6, pp. 1655-1694.

\bibitem{Gao} F. Gao, \emph{Distinguished Theta Representations for certain covering groups}, Pacific J. Math. 290 (2017), no. 2, 333-379.

\bibitem{Gao2} F. Gao, \emph{Generalized Bump-Hoffstein Conjecture for Coverings of the General Linear group}, J. Algebrra 499 (2018), 183-228.

\bibitem{G1}D. Ginzburg, \emph{Certain Conjectures Relating Unipotent orbits to Automorphic Representations}, Israel Journal of Math., 151  pp. 323-355, (2006). 

\bibitem{G3}D. Ginzburg, \emph{Towards a classification of global integral constructions and functorial liftings using the small representations method}, A\,dvances in Mathematics  254, pp. 157-186 (2014).

\bibitem{GRS1} D. Ginzburg, S. Rallis, and D. Soudry, \emph{The Descent Map from Automorphic Representations on $\GL_n$ to Classical Groups}, World Scientific, (2011).

\bibitem{GRS2} D. Ginzburg, S. Rallis, and D. Soudry, \emph{On Fourier Coefficients of Automorphic Forms of Symplectic Groups}, manuscripta math. 111, pp. 1-16 (2003).

\bibitem{GGS} R. Gomez, D. Gourevitch, and S. Sahi, \emph{Whittaker Supports for Representations of Reductive groups},  arXiv:1610.00284v3 (2016).

\bibitem{I} T. Ikeda, \emph{On the theory of Jacobi forms and Fourier-Jacobi coefficients of Eisenstein series}, J. Math. Kyoto Univ. 34 (1994).

\bibitem{JL2} D. Jiang and B. Liu, \emph{On Special Unipotent orbits and Fourier coefficients of Automorphic Forms of Symplectic Groups}, Journal of Number Theory,  14, pp. 343-387 (2015).

\bibitem{KP} D. Kazhdan and S. Patterson, \emph{Metaplectic Forms}, Publ. Math. Inst. Hautes \'{E}tudes Sci. 59 (1984).

\bibitem{K} S. Kudla, \emph{On the local theta-correspondence}, Invent. math. 83, pp. 229-255 (1986).

\bibitem{McN} P. McNamara, \emph{Principal Series Representations of Metaplectic Groups over Local Fields}, In: Multiple Dirichlet Series, L-Functions and Automorphic Forms. Progress in Mathematics, Vol. 300. Birkha?user/Springer, New York (2012), pp. 299-327.

\bibitem{M} C. Moeglin, \emph{Front d'onde des repr\'{e}sentations des groups classiques p-adiques}, American Journal of Mathematics, 118 (1996), pp. 1313-1346.

\bibitem{MW1} C. Moeglin and J.-L. Waldspurger, \emph{Mod\`{e}les se Whittaker d\'{e}g\'{e}n\'{e}r\'{e}s pour des groupes p-adiques}, Mathematische Zeitschrift, 196 (1987), pp. 427-452.

\bibitem{R} S. Rallis,  \emph{On the Howe Duality Conjecture}, Compositio Mathematica, 51, No.3, pp. 333-399 (1984).

\bibitem{Re} R.C. Reich, \emph{Twisted geometric Satake equivalence via gerbes on the factorizable Grassmanian}, Representation Theory {\bf 16} (2012).

\bibitem{W1} M. Weissman, \emph{The Fourier-Jacobi Map and Small Representations}, Representation Theory, vol. 7 (2003).

\bibitem{W2} M. Weissman, \emph{Split Metaplectic groups and their $L$-groups}, J. reine angew. Math. 696 (2014).

\bibitem{W3} M. Weissman, \emph{$L$-groups and parameters for covering groups}, Asterisque (2018), no 398, 33-186.
 
\bibitem{Y} S. Yamana,  \emph{The CAP Representations indexed by Hilbert Cusp Forms}, preprint, arXiv:1609.07879v1 (2016).
\end{thebibliography}

\end{document}